\documentclass[a4paper,11pt,twoside,notitlepage]{amsart}

\usepackage{amsmath,amssymb,amsthm,amscd}
\usepackage{mathtools}
\usepackage{mathrsfs}

\usepackage{graphicx}
\usepackage{xcolor}
\usepackage{tikz}

\usepackage[ocgcolorlinks,linkcolor=blue]{hyperref}
\usepackage{url}
\usepackage[shortlabels]{enumitem}
\usepackage{comment}
\usepackage{verbatim}

\usepackage{bm}
\usepackage{bbm}
\usepackage{dsfont}
\usepackage{relsize}
\usepackage{esint}
\usepackage[normalem]{ulem}
\usepackage[utf8]{inputenc}

\newcommand\redsout{\bgroup\markoverwith{\textcolor{red}{\rule[0.5ex]{2pt}{0.4pt}}}\ULon}

\numberwithin{equation}{section}

\DeclareMathOperator{\Cof}{Cof}

\DeclareMathOperator{\Id}{Id}

\allowdisplaybreaks
\newcommand{\para}[1]{\vspace{3mm}\noindent\textbf{#1.}}

\mathtoolsset{showonlyrefs}
\graphicspath{{images/}}

\newtheorem{theorem}{Theorem}[section]
\newtheorem{lemma}[theorem]{Lemma}
\newtheorem*{lemma*}{Lemma}

\newtheorem{corollary}[theorem]{Corollary}

\newtheorem{proposition}[theorem]{Proposition}

\newtheorem{remark}[theorem]{Remark}
\newtheorem{assumption}{Assumption}[section]

\title[Gaussian curvature equation]{Gauge rigidity in an inverse problem for the prescribed Gaussian curvature equation}

\author[Y.-H. Lin]{Yi-Hsuan Lin}
\address{Department of Applied Mathematics, National Yang Ming Chiao Tung University, Hsinchu, Taiwan \& Fakult\"at f\"ur Mathematik, University of Duisburg-Essen, Essen, Germany}
\curraddr{}
\email{yihsuanlin3@gmail.com}

\subjclass[2020]{Primary 35R30; Secondary 35J60, 35J96, 53A05}

\keywords{inverse problems, prescribed Gaussian curvature equation, higher order linearization, nonlinear elliptic equations, complex geometric optics, gauge invariance}

\newcommand {\p} {\partial}

\DeclareMathOperator{\supp}{supp} 

\begin{document}

\begin{abstract}
	We prove a uniqueness result for an inverse boundary problem associated with the prescribed Gaussian curvature equation
	\[
	\det D^2u=K(x)(1+|\nabla u|^2)^2
	\]
	for graphs over a planar domain. The comparison is made on a common open class of smooth boundary values for which both admissible Dirichlet problems are well posed. We show that, if the corresponding nonlinear Dirichlet-to-Neumann maps agree on this class and the two prescribed curvatures have the same first boundary jet, then the curvatures agree in the whole domain.
	
	The main difficulty comes from a gauge obstruction already present at the first linearization. In logarithmic form, the linearized equation is a two-dimensional non-divergence form elliptic equation with drift. Its boundary data determine the coefficients only up to a boundary-fixing change of variables and a scalar gauge factor. For the prescribed Gaussian curvature equation this gauge is not an artifact of the method: the gradient dependence leaves a residual gauge which cannot be removed by the first variation.
	
	The proof uses the nonlinear structure to remove this remaining gauge. We derive a second-linearized interaction identity in the first-linearized gauge. In this identity the covariant Hessian of the gauge map appears in the leading part. Testing the identity with two complementary CGO families gives a pair of residual equations for the gauge variables. Combined with the drift and conductivity identities from the first linearization, these 
	equations form a closed system for the residual gauge. A boundary unique continuation argument for this system gives the trivial gauge and hence determines the prescribed
	curvature.
\end{abstract}

	\maketitle
	
	\tableofcontents

	\section{Introduction}\label{sec:introduction}
	
	The prescribed Gaussian curvature equation for graphs is a natural fully nonlinear equation in geometric analysis. If $u$ is locally strictly convex on a planar domain $\Omega\subset\mathbb R^2$, the graph $\{(x,u(x)):x\in\Omega\}\subset\mathbb R^3$ has Gaussian curvature $\frac{\det D^2u}{(1+|\nabla u|^2)^2}$. Prescribing this curvature gives
	\begin{equation}\label{eq:GC-main}
		\begin{cases}
			\det D^2u=K(x)(1+|\nabla u|^2)^2 & \text{in }\Omega,\\
			u=\varphi & \text{on }\partial\Omega.
		\end{cases}
	\end{equation}
	The inverse problem asks whether the boundary response of this nonlinear Dirichlet problem determines the interior function $K$.
	
	We first specify the forward class. The elliptic branch of \eqref{eq:GC-main} is tied to convexity, and one should not expect a solution map for arbitrary boundary values without an admissibility condition. We therefore compare the two curvatures only on boundary data for which both Dirichlet problems are well posed. We assume that there is a nonempty open set $\mathcal U\subset C^\infty(\partial\Omega)$ such that, for $j=1,2$ and $\varphi\in\mathcal U$,
	\begin{equation}\label{eq:GC-main-j-intro}
		\begin{cases}
			\det D^2u^{(j)}=K_j(x)(1+|\nabla u^{(j)}|^2)^2 & \text{in }\Omega,\\
			u^{(j)}=\varphi & \text{on }\partial\Omega
		\end{cases}
	\end{equation}
	has a unique smooth admissible solution $u_j(\varphi)$ with $D^2u_j(\varphi)>0$ on $\overline\Omega$. On this common class the nonlinear DN map is $\Lambda_{K_j}(\varphi)=\partial_\nu u_j(\varphi)|_{\partial\Omega}$.
	
	The first linearization does not by itself fix the coordinates. After logarithmic normalization one obtains a planar non-divergence form elliptic equation with drift. The inverse boundary problem for this linearized equation leaves a boundary-fixing diffeomorphism $J$ and a positive density $\rho$; equivalently, $L_1(f\circ J)=\rho(L_2f)\circ J$ for all smooth $f$. Thus the first variation puts the two linearized problems in the same gauge class, but not in the same gauge.
	
	The divergence form of the same first linearized equation supplies the missing compatibility relation. In the original affine coordinates it gives $\rho=(\det DJ)(K_2\circ J)/K_1$. After passing to a global isothermal coordinate for the first principal tensor, the conductivity weight is $\widehat K_j=(K_j\circ\chi^{-1})/m_\chi$. We write $q=\log(\widehat K_1/(\widehat K_2\circ J))$ and $\theta=\log\det DJ-q$, so that $\rho=e^\theta$.
	
	The second variation is then taken with this residual gauge still present. Pulling the second equation back by $J$ gives an interaction identity in which the covariant Hessian of $J$ appears explicitly. A CGO testing argument gives a residual equation for $\mathbf r=1-\rho^{-1}$ and $M^\alpha=\rho^{-1}\mathcal H^\alpha$, where $\mathcal H^\alpha$ is the covariant Hessian of $J$. The mirror CGO family gives the paired residual equation. The Cauchy-transform terms coming from the critical CGO remainders are kept in the equations and are localized only in the unique continuation argument.
	
	The last step combines the two residual equations with the drift, conductivity, and Jacobian identities from the first linearization. With $W=J-\Id$, $s=q-2\theta$, and $Z=(\theta,s,W)$, the principal part has the following form: $\theta$ satisfies a scalar Laplace equation, $s$ is controlled by the two scalar blocks $\overline\partial^2s$ and $\partial^2s$, and $W$ satisfies a componentwise drift equation. Boundary Carleman propagation gives $\theta=s=0$ and $W=0$. Hence $J=\Id$, $q=0$, and the two curvatures agree.
	
	We now state the main theorem. Throughout the paper, $\Omega\subset\mathbb R^2$ is a bounded simply connected domain with $C^\infty$-smooth boundary.
	
	\begin{assumption}\label{assum:common-forward-solvability}
		Let $K_j\in C^\infty(\overline\Omega)$, $j=1,2$, satisfy
		$K_j\ge c_0>0$ on $\overline\Omega$ for some constant $c_0>0$. We assume
		that there exists a nonempty open set
		$\mathcal U\subset C^\infty(\partial\Omega)$ such that, for each $j=1,2$
		and each $\varphi\in\mathcal U$, the Dirichlet problem
		\begin{equation}\label{eq:common-forward-problem}
			\begin{cases}
				\det D^2u^{(j)}
				=
				K_j(x)\bigl(1+|\nabla u^{(j)}|^2\bigr)^2 & \text{in }\Omega,\\
				u^{(j)}=\varphi & \text{on }\partial\Omega
			\end{cases}
		\end{equation}
		has a unique solution
		$u_j(\varphi)\in C^\infty(\overline\Omega)$ with
		$D^2u_j(\varphi)>0$ in $\overline\Omega$.
	\end{assumption}
	
	Assumption~\ref{assum:common-forward-solvability} fixes the common domain on
	which the two nonlinear DN maps are defined. It is not used as an additional
	interior rigidity input. Under this assumption, for each
	$\varphi\in\mathcal U$ we define
	\begin{equation}\label{def:DN-map}
		\Lambda_{K_j}(\varphi)=\partial_\nu u_j(\varphi)|_{\partial\Omega},\quad j=1,2.
	\end{equation}
	
	\begin{theorem}\label{thm:main}
		Let $\Omega\subset\mathbb R^2$ be a bounded simply connected domain with
		$C^\infty$-smooth boundary. Let $K_j\in C^\infty(\overline\Omega)$,
		$j=1,2$, satisfy Assumption~\ref{assum:common-forward-solvability}, and let
		$\Lambda_{K_j}$ be the nonlinear DN maps defined on the common open set
		$\mathcal U$. Suppose that the boundary derivatives of $K_1$ and $K_2$
		agree up to order $1$, in the sense that, in every boundary normal coordinate
		system,
		\begin{equation}\label{eq:BC for the curvature}
			\partial_\tau^a\partial_\nu^b K_1
			=
			\partial_\tau^a\partial_\nu^b K_2
			\quad
			\text{on }\partial\Omega
			\quad
			\text{for all }a,b\geq0,\ a+b\leq1.
		\end{equation}
		Suppose also that
		\begin{equation}\label{DN map same}
			\Lambda_{K_1}(\varphi)=\Lambda_{K_2}(\varphi)
			\quad
			\text{for all }\varphi\in\mathcal U.
		\end{equation}
		Then $K_1=K_2$ in $\Omega$.
	\end{theorem}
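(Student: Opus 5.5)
The plan follows the outline in the introduction: higher order linearization around a fixed admissible background, then removal of the gauge that survives the first variation.

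\textbf{Step 1: first linearization.} Fix $\varphi_0\in\mathcal U$ and set $u_j^0=u_j(\varphi_0)$. Since $\mathcal U$ is open and the solutions are unique, the implicit function theorem for the elliptic linearized operator makes $\epsilon\mapsto u_j(\varphi_0+\epsilon f)$ smooth for $f\in C^\infty(\partial\Omega)$. Differentiating \eqref{eq:GC-main-j-intro} once gives
\[
\Cof(D^2u_j^0):D^2v_j-4K_j(1+|\nabla u_j^0|^2)\nabla u_j^0\cdot\nabla v_j=0 .
\]
By \eqref{DN map same}, the two linear problems have the same DN map. Dividing by $\det D^2u_j^0$ (the logarithmic normalization) produces a non-divergence elliptic operator $L_j$ with drift. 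Planar results for such operators, proved via isothermal coordinates and the Calderón problem for anisotropic conductivities, should give a boundary-fixing diffeomorphism $J$ and a density $\rho$ with $L_1(f\circ J)=\rho\,(L_2f)\circ J$. The divergence-form structure, where $\Cof D^2u$ is divergence free, then gives $\rho=(\det DJ)(K_2\circ J)/K_1$. Passing to the global isothermal chart $\chi$, I will record the drift, conductivity and Jacobian identities relating $\theta$, $q$ and $W=J-\Id$.

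\textbf{Step 2: second linearization in the gauge.} Differentiate \eqref{eq:GC-main-j-intro} in $\epsilon_1,\epsilon_2$. This gives an equation for $w_j$ whose source is bilinear in $(v_j^{(1)},v_j^{(2)})$: $\det(D^2v^{(1)},D^2v^{(2)})$ plus the gradient terms coming from $(1+|\nabla u|^2)^2$. Equality of the second-order DN maps, paired against a third solution of the adjoint first-linearized equation, gives an integral identity. I will pull the $j=2$ identity back by $J$, using $v_2\circ J=v_1$ from Step 1. The chain rule then produces $D^2(v\circ J^{-1})$, and this contains the covariant Hessian $\mathcal H^\alpha$ of $J$ multiplying first derivatives. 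The identity becomes $\int(\mathbf r\,\mathcal B+M^\alpha\cdot\mathcal C_\alpha)\,v^{(3)}=0$, with $\mathbf r=1-\rho^{-1}$ and $\mathcal B$, $\mathcal C_\alpha$ bilinear in the first-order solutions.

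\textbf{Step 3: CGO testing.} Insert CGO solutions $e^{\Phi/h}(a+R_h)$ of the isothermal-form linearized equation, with $\Phi$ holomorphic. Take a second with phase $\overline\Phi$, and a third with zero phase to absorb the Cauchy-transform remainders. Stationary phase should yield a pointwise residual equation combining $\overline\partial$-derivatives of $\mathbf r$ and $M$. The mirror family gives the conjugate equation. Together with the Step 1 identities, these express $\Delta\theta$, $\overline\partial^2 s$, $\partial^2 s$ ($s=q-2\theta$) and a drift operator on $W$ as lower-order combinations of $Z=(\theta,s,W)$ and its first derivatives.

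\textbf{Step 4: boundary unique continuation.} From \eqref{eq:BC for the curvature} and boundary determination for the linearized DN map, $J$ agrees with $\Id$ to first order on $\partial\Omega$, and $\theta,s$ vanish with their normal derivatives there. So $Z$ has vanishing Cauchy data. A Carleman estimate for the coupled system, with a weight convex near the boundary, propagates $Z=0$ from the boundary and then through the connected domain. Hence $J=\Id$ and $q=0$, which gives $K_1=K_2$.

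I expect the main obstacle to be Step 3 together with Step 4. The critical CGO remainders are only $o(1)$ in $L^p$ and leave Cauchy-transform nonlocal terms. These must be kept and shown to be lower order in the Carleman estimate. It must also be checked that the system is genuinely closed and that its principal part admits a Carleman weight. My first approach is to treat the Cauchy-transform terms as compact perturbations localized near the boundary and absorb them through the large parameter.
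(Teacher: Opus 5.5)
Your architecture is the paper's: the gauge pair $(J,\rho)$, the identity $\rho=(\det DJ)(K_2\circ J)/K_1$ from the divergence form, the second variation pulled back by $J$ so that $\mathcal H^\alpha$ appears, plus and mirror residual equations in $\mathbf r$ and $M^\alpha$, and a boundary Carleman argument for $Z=(\theta,s,W)$. The gap is in Step~3: your CGO configuration would not produce the residual equations that Step~4 relies on.

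\textbf{Step~3.} Taken literally, $e^{\Phi/h}$, $e^{\overline\Phi/h}$ and a phaseless adjoint solution combine to the real factor $e^{2\operatorname{Re}\Phi/h}$, so there is no interior stationary point. If you meant $e^{-\overline\Phi/h}$, the phase $2\mathsf i\operatorname{Im}\Phi/h$ is stationary only at critical points of $\Phi$, so both forward solutions are critical. Since $v^*$ carries no phase, every derivative can then be moved off the exponentials:
\[
\int G\,\partial^2(a_1e^{\Phi/h})\,\overline\partial^2(a_2e^{-\overline\Phi/h})\,dx=\int \mathcal D(G,a_1,a_2)\,e^{(\Phi-\overline\Phi)/h}\,dx ,
\]
with $\mathcal D$ an $h$-independent fourth-order expression. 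This has three consequences.
\begin{itemize}
\item The scalar term becomes $O(h)$ with four derivatives of $\mathbf r$, instead of the $O(h^{-1})$ block $8\overline\partial^2(C_a\mathbf r)$.
\item The gradient terms $I_{\eta\xi}$ and $I_{\nabla\nabla}$, which the paper shows drop out, now enter at the same order.
\item You must control two critical remainders and their cross terms at that scale. The single-critical-factor estimates of Lemma~\ref{lem:critical-remainder-reusable} do not cover this.
\end{itemize}

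The missing idea is the paper's triple $\Phi_1=z+z^2/8$, $\Phi^*=-z+z^2/8$, $\overline{\Phi_2}=-\overline z^2/4$, with $\Phi^*+\Phi_1+\overline{\Phi_2}=\mathsf i x_1x_2$. Only $v^{(2)}$ is critical. $v^{(1)}$ and $v^*$ are noncritical with complete expansions, because the dilation keeps $\pm4$ outside the translated domain. The nonvanishing factor $\partial\Phi^*\partial\Phi_1$ in the amplitude $2\overline z(1-z^2/16)$ is what yields $8\pi\overline\partial^2(\Lambda_0F_*F_1F_2\mathbf r)(0)$ and $-2\pi\overline\partial(\Lambda_0F_*F_1F_2\kappa_\beta\mathcal A(M^\beta))(0)$.

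A phaseless solution also does not ``absorb'' Cauchy remainders. The single critical remainder survives only through the $h^{-2}\partial\widetilde r_2$ pairing. Lemma~\ref{lem:Cauchy-transform-critical-pairing} evaluates that pairing, and it is kept as $\mathcal N_{\mathbf r}$ and $\mathcal K_{\rm nl}$.

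\textbf{Closing the system.} Even granting the right residual equations, you assert the closure rather than supply it.
\begin{itemize}
\item The plus equation contains $\overline\partial(\kappa_\alpha\mathcal A(\mathcal H^\alpha))$, which is third order in $W$. The paper differentiates the drift equation $\Delta W^\alpha=N_i{}^\alpha\partial_i(s+2\theta)+\mathfrak R^\alpha$ to get $\overline\partial(\kappa_\alpha\mathcal A(D^2W^\alpha))=2\overline\partial^2(s+2\theta)$ modulo drift-resolvent terms. Hence $8Ce^{-\theta}\overline\partial^2\theta-4Ce^{-\theta}\overline\partial^2(s+2\theta)=-4Ce^{-\theta}\overline\partial^2 s$.
\item This cancellation is why $s=q-2\theta$ is the right variable. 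It is also why $\theta$ is handled by its own Laplace equation (Lemma~\ref{lem:theta-equation-section6}).
\item Boundary initiation needs more than first-order Cauchy data. The zero extension must lie in $H^3$ in $W$, so you need $D^2W=0$ on $\partial\Omega$. The paper derives this from the drift equation at the boundary.
\item Compactness gives no control in exponentially weighted norms. The Cauchy terms are absorbed through the uniform weighted bound from the convexified first-order estimate $\|w_\tau v\|^2\le C\varepsilon\|w_\tau\overline\partial v\|^2$; the logarithmic weight itself is harmonic and gives no gain. This bound is applied to inputs of order at most one, after the $M$-polarization terms have been reduced through the drift equation.
\end{itemize}
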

	
	The boundary condition \eqref{eq:BC for the curvature} is used to fix the boundary representative of the gauge. After that normalization is fixed, the interior argument shows that the residual pair $(J,\rho)$ left by the first linearization is forced to be $(\Id,1)$.

	\para{Earlier literature}
	Linearization is a standard way to bring nonlinear inverse problems back to linear boundary value problems; an early example is \cite{isakov1993uniqueness_parabolic}. In many equations the first variation is not enough, and one has to use higher-order terms in the boundary response. Second-order arguments occur, for instance, in \cite{AYT2017direct, CNV2019reconstruction, KN002, sun1996quasilinear, sun2010inverse, sun1997inverse}. Nonlinear interaction methods were developed in a systematic form for wave equations in \cite{KLU2018}, and they now appear in a wide range of nonlinear inverse problems.
	
	For elliptic equations, semilinear problems have been studied in \cite{FO20, LLLS2019nonlinear, LLLS2019partial, LLST2022inverse, KU2019remark, KU2019partial, FLL2021inverse}, including partial data results. Quasilinear elliptic problems are treated in \cite{KKU2022partial, CFKKU2021calderon, LW24_quasi, liimatainen2026inverse}. The minimal surface equation has been considered in \cite{ABN_20_minimal, carstea2024inverse, CLLT2023inverse_minimal, CLT24, nurminen2023inverse}. Further related results for nonlinear elliptic and fractional elliptic equations can be found in \cite{LL2019global, LL2020inverse, LSX_22_IP, harrach2022simultaneous, ST_23_single, LL25_book}; see also the survey \cite{lassas2025introduction}.
	
	The fully nonlinear case has some different features. The first variation is often a non-divergence form equation, and higher variations naturally involve Hessians of linearized solutions. In \cite{LL2025IP_Monge_Ampere} an inverse source problem for the Monge--Amp\`ere equation was studied in the plane, and another fully nonlinear inverse source problem was considered in \cite{LLW26_fully_nonlinear}. A related Monge--Amp\`ere inverse source problem is treated in \cite{CG_2026_MA}. The point of the present paper is different: the gradient dependence in the prescribed Gaussian curvature equation leaves a genuine residual gauge after the first linearization. The second variation is used to eliminate precisely this remaining gauge.

	\para{Organization of the article} 
	Sections~\ref{sec:prel} and~\ref{sec:first-linearization} contain the forward setup, boundary determination, and the first-linearized gauge. Section~\ref{sec:second-linearization} derives the fixed-gauge second identity. Sections~\ref{sec:second-asymptotics} and~\ref{sec:second-asymptotics-mirror} extract the plus and mirror residual equations. Section~\ref{sec:reduction-first-linearized-conductivity} rewrites these equations as an augmented system for $(\theta,s,W)$. Section~\ref{sec:ucp-and-gauge-elimination} proves the boundary unique continuation argument and eliminates the gauge.

	\section{Preliminaries}\label{sec:prel}
	
	\subsection{The solvability framework}
	
	We begin with the forward class used for the linearization argument. Throughout the paper, $\Omega\subset\mathbb R^2$ is bounded and simply connected with $C^\infty$ boundary, and the pair $K_1,K_2$ satisfies Assumption~\ref{assum:common-forward-solvability}.
	
	For a fixed curvature $K$, the equation can be written as $\det D^2u=\psi(x,u,\nabla u)$ with $\psi(x,z,p)=K(x)(1+|p|^2)^2$. The function $\psi$ is smooth, positive, and independent of $z$. On the common solvability class the solutions are smooth and admissible, and the Hessian remains positive up to the boundary. All linearizations below are taken along such nondegenerate branches.
	
	\begin{proposition}[Local smooth dependence]\label{prop:wellposedness}
		Assume that the pair $K_1,K_2$ satisfies
		Assumption~\ref{assum:common-forward-solvability}. Fix
		$j\in\{1,2\}$, $\phi\in\mathcal U$, $m\ge2$, and $\alpha\in(0,1)$.
		Then the following assertions hold.
		\begin{enumerate}[\rm(i)]
			\item\label{item:wellposed-existence}
			There exists a unique smooth admissible solution $u_j(\phi)$ of
			\begin{equation}\label{eq:wellposed-background}
				\begin{cases}
					\det D^2u_j(\phi)=K_j(x)(1+|\nabla u_j(\phi)|^2)^2
					& \text{in }\Omega,\\
					u_j(\phi)=\phi & \text{on }\partial\Omega.
				\end{cases}
			\end{equation}
			Moreover, $u_j(\phi)\in C^\infty(\overline\Omega)$ and
			$D^2u_j(\phi)>0$ on $\overline\Omega$.
			
			\item\label{item:wellposed-smooth-dependence}
			There exists a neighborhood $\mathcal U_{m,\alpha}$ of $\phi$ in
			$C^{m,\alpha}(\partial\Omega)$ such that, for every
			$\varphi\in\mathcal U\cap\mathcal U_{m,\alpha}$, there is a unique
			admissible solution
			$u_j(\varphi)\in C^{m,\alpha}(\overline\Omega)$ close to
			$u_j(\phi)$ satisfying
			\begin{equation}\label{eq:wellposed-nearby}
				\begin{cases}
					\det D^2u_j(\varphi)
					=
					K_j(x)(1+|\nabla u_j(\varphi)|^2)^2
					& \text{in }\Omega,\\
					u_j(\varphi)=\varphi & \text{on }\partial\Omega.
				\end{cases}
			\end{equation}
			The map $\varphi\mapsto u_j(\varphi)$ is smooth in each
			$C^{m,\alpha}$ scale, and therefore smooth in the $C^\infty$ topology.
		\end{enumerate}
	\end{proposition}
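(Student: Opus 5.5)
Part (i) is a restatement of Assumption~\ref{assum:common-forward-solvability}, since $\phi\in\mathcal U$. The only substantive point is that $D^2u_j(\phi)>0$ on the compact set $\overline\Omega$. By continuity this gives a uniform bound $D^2u_j(\phi)\ge \lambda_0 I$ for some $\lambda_0>0$, and this uniform ellipticity will be used in part (ii).

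For part (ii) I will apply the implicit function theorem in Banach spaces. Fix $m\ge2$ and $\alpha\in(0,1)$, and define
\[
F:\ C^{m,\alpha}(\overline\Omega)\times C^{m,\alpha}(\partial\Omega)\to C^{m-2,\alpha}(\overline\Omega)\times C^{m,\alpha}(\partial\Omega),
\]
\[
F(u,\varphi)=\Bigl(\det D^2u-K_j(x)(1+|\nabla u|^2)^2,\ u|_{\partial\Omega}-\varphi\Bigr).
\]
Both components are polynomial in $(D^2u,\nabla u)$ with smooth coefficients, and $C^{m-2,\alpha}$ is an algebra. Hence $F$ is $C^\infty$ (indeed real-analytic) between these spaces, and $F(u_j(\phi),\phi)=0$.

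The partial derivative $D_uF(u_j(\phi),\phi)$ sends $v$ to the pair consisting of $Lv$ and $v|_{\partial\Omega}$, where
\[
Lv=\Cof(D^2u_j(\phi)):D^2v-4K_j(1+|\nabla u_j(\phi)|^2)\nabla u_j(\phi)\cdot\nabla v .
\]
The cofactor matrix of a positive definite $2\times2$ matrix is positive definite. Therefore $L$ is uniformly elliptic, with smooth coefficients, and it has no zeroth-order term. This last fact is the key structural point: $\psi$ does not depend on $z$. By the maximum principle the Dirichlet problem for $L$ has at most one solution. Schauder theory (Fredholm alternative plus global $C^{m,\alpha}$ estimates up to the smooth boundary) then shows that $D_uF$ is an isomorphism $C^{m,\alpha}(\overline\Omega)\to C^{m-2,\alpha}(\overline\Omega)\times C^{m,\alpha}(\partial\Omega)$.

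The implicit function theorem now gives neighborhoods $\mathcal U_{m,\alpha}$ of $\phi$ and $\mathcal V$ of $u_j(\phi)$ with the following property: for each $\varphi\in\mathcal U_{m,\alpha}$ there is exactly one $u\in\mathcal V$ with $F(u,\varphi)=0$, and the map $\varphi\mapsto u(\varphi)$ is $C^\infty$. Shrinking $\mathcal V$ so that $\|u-u_j(\phi)\|_{C^2}<\lambda_0/2$ keeps $D^2u>0$, so these solutions are admissible.

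It remains to check, for $\varphi\in\mathcal U\cap\mathcal U_{m,\alpha}$, that this local solution coincides with the global solution $u_j(\varphi)$ from the assumption. This is the delicate step. I would argue by uniqueness of admissible solutions: the local solution is admissible and lies in $C^{m,\alpha}$. Since $\varphi$ is smooth and the equation is uniformly elliptic along it, bootstrapping with Schauder estimates makes it $C^\infty(\overline\Omega)$. Uniqueness in Assumption~\ref{assum:common-forward-solvability} then identifies it with $u_j(\varphi)$.

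Finally, smoothness in the $C^\infty$ topology. The neighborhoods depend on $(m,\alpha)$, but all the local solutions are identified with the same $u_j(\varphi)$. So the maps agree on overlaps, and smoothness in every $C^{m,\alpha}$ scale yields smoothness in the Fréchet space $C^\infty$, as claimed.

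I expect the main obstacle to be the invertibility of the linearization, specifically checking that the drift term does not destroy injectivity. This holds precisely because $\psi$ does not depend on $z$, so $L$ has no zeroth-order term and the maximum principle applies.
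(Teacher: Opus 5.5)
Your proposal is correct and follows essentially the same route as the paper. It uses the implicit function theorem, gets invertibility of the linearized operator from the absence of a zeroth-order term (maximum principle) together with Fredholm/Schauder theory, and identifies the local branch with $u_j(\varphi)$ through the uniqueness in Assumption~\ref{assum:common-forward-solvability}. The differences are minor: you put the boundary condition into the target space $C^{m-2,\alpha}(\overline\Omega)\times C^{m,\alpha}(\partial\Omega)$, where the paper works on the zero-trace space $X$ with a right inverse $E$ of the trace. You also make explicit that the local solution must be bootstrapped to $C^\infty(\overline\Omega)$ before that uniqueness applies, a step the paper leaves implicit.
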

	
	\begin{proof}
		The first assertion is the forward solvability assumed in Assumption~\ref{assum:common-forward-solvability}. For the local dependence, use the implicit function theorem. Let
		\begin{equation*}
			X:=\{w\in C^{m,\alpha}(\overline\Omega):w|_{\partial\Omega}=0\},
		\end{equation*}
		and choose a bounded linear right inverse
		$E:C^{m,\alpha}(\partial\Omega)\to C^{m,\alpha}(\overline\Omega)$ of the
		trace map. For $\varphi$ close to $\phi$, write
		\begin{equation*}
			u=u_j(\phi)+E(\varphi-\phi)+w,
			\quad
			w\in X.
		\end{equation*}
		The equation is equivalent to $\mathcal F(w,\varphi)=0$, where
		\begin{equation*}
			\mathcal F(w,\varphi)=\det D^2u-K_j(x)(1+|\nabla u|^2)^2.
		\end{equation*}
		This defines a smooth map
		\begin{equation*}
			\mathcal F:X\times C^{m,\alpha}(\partial\Omega)
			\to
			C^{m-2,\alpha}(\overline\Omega)
		\end{equation*}
		near $(0,\phi)$.
		
		The derivative in the $w$ variable at $(0,\phi)$ is
		\begin{equation*}
			D_w\mathcal F(0,\phi)v
			=
			\Cof(D^2u_j(\phi))^{ij}\partial_{ij}v
			-
			4K_j(x)(1+|\nabla u_j(\phi)|^2)
			\nabla u_j(\phi)\cdot\nabla v.
		\end{equation*}
		Since $D^2u_j(\phi)>0$ on $\overline\Omega$, the cofactor matrix is
		uniformly positive definite. Thus, this is a uniformly elliptic
		non-divergence form operator with no zeroth-order term. The maximum principle gives uniqueness for the homogeneous Dirichlet problem, and the Fredholm alternative, together with Schauder theory, gives an isomorphism
		\begin{equation*}
			D_w\mathcal F(0,\phi):X\to C^{m-2,\alpha}(\overline\Omega).
		\end{equation*}
		The implicit function theorem gives a unique local admissible branch
		$\varphi\mapsto u_j(\varphi)$ in $C^{m,\alpha}$. After shrinking the
		neighborhood, the Hessian remains positive definite. Since
		Assumption~\ref{assum:common-forward-solvability} gives uniqueness on the
		common class, this local branch agrees with the solution
		$u_j(\varphi)$ for $\varphi\in\mathcal U\cap\mathcal U_{m,\alpha}$.
		Bootstrapping in the H\"older scale gives smooth dependence in
		$C^\infty$.
	\end{proof}
	
	\subsection{Boundary determination}
	
	The next elementary boundary reconstruction fixes the boundary normalization used later in the first linearized inverse problem. The point is that, once $K|_{\partial\Omega}$ is known, the nonlinear DN map gives the boundary values of the logarithmically normalized first linearized coefficients.
	
	\begin{lemma}[Boundary determination]\label{lem:boundary-determination}
		Assume the hypotheses of Theorem~\ref{thm:main}. Then, for each
		$\varphi\in\mathcal U$, the nonlinear DN map together with
		$K_j|_{\partial\Omega}$ determines the boundary values of
		\begin{equation*}
			A_\varphi^{(j)}=(D^2u_\varphi^{(j)})^{-1},
			\quad
			B_\varphi^{(j)i}
			=
			-\frac{4\partial_i u_\varphi^{(j)}}{1+|\nabla u_\varphi^{(j)}|^2},
			\quad
			j=1,2.
		\end{equation*}
		In particular, if \eqref{eq:BC for the curvature} and \eqref{DN map same}
		hold, then $A_\varphi^{(1)}=A_\varphi^{(2)}$ on $\partial\Omega$.
	\end{lemma}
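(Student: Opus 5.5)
The plan is to recover the full Cauchy data of $u^{(j)}_\varphi$ on $\partial\Omega$ and then read off the boundary Hessian from the equation itself. On $\partial\Omega$ the Dirichlet datum $\varphi$ is known, and $\Lambda_{K_j}(\varphi)=\partial_\nu u^{(j)}_\varphi|_{\partial\Omega}$ is given by the DN map. Together they determine the full gradient $\nabla u^{(j)}_\varphi|_{\partial\Omega}$, since the tangential derivative is $\partial_\tau\varphi$. This immediately gives the boundary values of $B^{(j)}_\varphi$, because $B^{(j)i}_\varphi$ depends only on $\nabla u^{(j)}_\varphi$.

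For the Hessian, I will work in boundary normal coordinates $(\tau,\nu)$ near a boundary point, using arclength along $\partial\Omega$ and the signed distance. Write $\kappa$ for the curvature of $\partial\Omega$. The tangential--tangential entry $\partial_\tau^2 u$ (covariant, in the Euclidean metric) equals $\partial_\tau^2\varphi$ plus a term $\kappa\,\partial_\nu u$, with the sign depending on conventions. The mixed entry $\partial_\tau\partial_\nu u$ equals $\partial_\tau(\partial_\nu u)$ plus a term $\kappa\,\partial_\tau\varphi$. Both are therefore known from $\varphi$ and $\Lambda_{K_j}(\varphi)$. The only unknown entry is $u_{\nu\nu}$.

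Since $\{\tau,\nu\}$ is an orthonormal frame, $\det D^2u = u_{\tau\tau}u_{\nu\nu}-u_{\tau\nu}^2$. The equation then gives
\begin{equation*}
u_{\tau\tau}\,u_{\nu\nu}=K_j|_{\partial\Omega}\bigl(1+|\nabla u|^2\bigr)^2+u_{\tau\nu}^2 \quad\text{on }\partial\Omega.
\end{equation*}
Because $D^2u^{(j)}_\varphi>0$ on $\overline\Omega$, we have $u_{\tau\tau}=\tau^TD^2u\,\tau>0$. So we can divide and solve for $u_{\nu\nu}$ in terms of $K_j|_{\partial\Omega}$ and the known data. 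This determines $D^2u^{(j)}_\varphi|_{\partial\Omega}$, and hence its inverse $A^{(j)}_\varphi$, which exists by positivity.

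For the final claim, the hypothesis \eqref{DN map same} gives identical Cauchy data for $j=1,2$, and \eqref{eq:BC for the curvature} with $a=b=0$ gives $K_1=K_2$ on $\partial\Omega$. The formulas above then yield equal boundary Hessians, so $A^{(1)}_\varphi=A^{(2)}_\varphi$ on $\partial\Omega$. The same argument gives $B^{(1)}_\varphi=B^{(2)}_\varphi$ there.

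The main point needing care is the curvature correction terms when expressing the Hessian in the moving boundary frame. These are routine: they involve only $\kappa$, $\varphi$ and $\partial_\nu u$, so they are determined by the data. The key structural input is $u_{\tau\tau}>0$, which comes from admissibility.
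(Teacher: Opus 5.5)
Your proposal is correct and follows essentially the same route as the paper. You recover the full boundary gradient from $\varphi$ and $\Lambda_{K_j}(\varphi)$, which also gives $B^{(j)}_\varphi$; you obtain $H_{\tau\tau}$ and $H_{\tau\nu}$ by tangential differentiation, making explicit the curvature corrections that the paper only attributes to ``the known geometry of $\partial\Omega$''. You then solve $H_{\tau\tau}H_{\nu\nu}-H_{\tau\nu}^2=K_j(1+|\nabla u|^2)^2$ for $H_{\nu\nu}$ using $H_{\tau\tau}>0$ from admissibility, and finish the $j=1,2$ comparison with $K_1=K_2$ on $\partial\Omega$ and equal Cauchy data, exactly as the paper does.
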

	
	\begin{proof}
		Fix $j\in\{1,2\}$, $\varphi\in\mathcal U$, and
		$p\in\partial\Omega$. Choose boundary normal coordinates $(s,t)$ near $p$,
		where $s$ is arclength on $\partial\Omega$ and $t$ is the inward normal
		coordinate. We write $\tau$ for the unit tangent vector and
		$\nu_{\rm in}$ for the inward unit normal.
		
		The Dirichlet datum gives $u_\varphi^{(j)}=\varphi$ on the boundary, hence
		all tangential derivatives of $u_\varphi^{(j)}$ there are known. The nonlinear DN map gives the outer normal derivative, and therefore also $\partial_tu_\varphi^{(j)}|_{\partial\Omega}$ after fixing the sign
		convention. Taking tangential derivatives, we know $u_\varphi^{(j)}$, $\partial_su_\varphi^{(j)}$, $\partial_tu_\varphi^{(j)}$, $\partial_{ss}u_\varphi^{(j)}$, and $\partial_{st}u_\varphi^{(j)}$ on $\partial\Omega$. 
		
		The only second-order boundary component still unknown is the normal-normal derivative. At $p$, write the Euclidean Hessian in the orthonormal frame $(\tau,\nu_{\rm in})$ as
		\begin{equation*}
			H_{\tau\tau}=D^2u_\varphi^{(j)}(\tau,\tau),\quad 
			H_{\tau\nu}=D^2u_\varphi^{(j)}(\tau,\nu_{\rm in}),\quad
			H_{\nu\nu}=D^2u_\varphi^{(j)}(\nu_{\rm in},\nu_{\rm in}).
		\end{equation*}
		The first two quantities are determined by the known first and tangential second boundary jets of $u_\varphi^{(j)}$, together with the known geometry of $\partial\Omega$. The only remaining unknown second-order component is $H_{\nu\nu}=\partial_{tt}u_\varphi^{(j)}$.
		
		In the frame $(\tau,\nu_{\rm in})$, $\det D^2u_\varphi^{(j)}=H_{\tau\tau}H_{\nu\nu}-H_{\tau\nu}^2$. Evaluating the equation on $\partial\Omega$ gives
		\begin{equation}\label{eq:boundary-normal-normal-recovery}
			H_{\tau\tau}\,\partial_{tt}u_\varphi^{(j)}-H_{\tau\nu}^2=K_j(1+|\nabla u_\varphi^{(j)}|^2)^2
			\quad \text{on }\partial\Omega.
		\end{equation}
		The right-hand side is known from $K_j|_{\partial\Omega}$ and the known first boundary jet of $u_\varphi^{(j)}$. Since the solution is admissible, $D^2u_\varphi^{(j)}>0$ on $\overline\Omega$, $H_{\tau\tau}>0$ on $\partial\Omega$. Thus, \eqref{eq:boundary-normal-normal-recovery} uniquely determines $\partial_{tt}u_\varphi^{(j)}$ on the boundary.
		
		We have recovered the full second-order boundary jet of
		$u_\varphi^{(j)}$. Hence
		$A_\varphi^{(j)}=(D^2u_\varphi^{(j)})^{-1}$ is determined on
		$\partial\Omega$. The coefficient
		$B_\varphi^{(j)}$ depends only on the first boundary jet, so it is also
		determined.
		
		If \eqref{eq:BC for the curvature} and \eqref{DN map same} hold, then
		$K_1=K_2$ on $\partial\Omega$ and the two nonlinear DN maps give the same first boundary jet for the two solutions with the same boundary value $\varphi$. The reconstruction above, therefore, gives the same second-order boundary Hessian for $u_\varphi^{(1)}$ and $u_\varphi^{(2)}$. Hence,
		$A_\varphi^{(1)}=A_\varphi^{(2)}$ on $\partial\Omega$.
	\end{proof}
	
	\begin{remark}\label{rem:normal-derivative-curvature-not-used-boundary-determination}
		The full first-order matching in \eqref{eq:BC for the curvature} is not used
		in Lemma~\ref{lem:boundary-determination}. The lemma only uses
		$K_1=K_2$ on $\partial\Omega$ to identify the boundary values of the
		principal coefficients. The first-order boundary matching is used later to
		obtain the boundary Cauchy data for the residual variables in
		Section~\ref{sec:ucp-and-gauge-elimination}.
	\end{remark}

	\section{The first linearization}\label{sec:first-linearization}
	
	We first linearize the equation in logarithmic form. This normalization removes a harmless positive factor from the principal coefficient and leaves the gradient dependence as a drift term.
	
	To compare this with Proposition~\ref{prop:wellposedness}, differentiate the determinant form. The principal coefficient is $\operatorname{Cof}(D^2u_\varphi)$. Since $\operatorname{Cof}(D^2u_\varphi)=(\det D^2u_\varphi)(D^2u_\varphi)^{-1}$ and $\det D^2u_\varphi>0$ on the admissible branch, multiplication by $(\det D^2u_\varphi)^{-1}$ gives the operator used below. This multiplication does not change the Dirichlet problem, the first variations, or the linearized boundary data.
	
	For the moment let the curvature be a single function $K$. If $u_\varphi$ is the admissible solution with boundary value $\varphi$, then the equation is equivalently
	\[
	\log\det D^2u_\varphi-2\log(1+|\nabla u_\varphi|^2)-\log K=0.
	\] 
	For matrices of the same size we write $A:B=\operatorname{tr}(A^TB)$; for symmetric matrices this is $A:B=A^{ij}B_{ij}$. Repeated indices are summed over $1,2$.
	
	\begin{lemma}[Formula for the first linearized operator]
		\label{lem:first-lin-formula}
		Let $\varphi\in\mathcal U$ and $h\in C^\infty(\partial\Omega)$. For $|t|$
		sufficiently small, $\varphi+th\in\mathcal U$. Define $v_h=\frac{d}{dt}\big|_{t=0}u_{\varphi+th}$, then $v_h$ satisfies
		\begin{equation*}
			\begin{cases}
				L_\varphi v_h=0 & \text{in }\Omega,\\
				v_h=h & \text{on }\partial\Omega,
			\end{cases}
		\end{equation*}
		where
		\begin{equation}\label{eq:first-lin-op}
			L_\varphi=A_\varphi^{ij}\partial_{ij}+B_\varphi^i\partial_i,
		\end{equation}
		with
		\begin{equation}\label{eq:A_varphi and B_varphi}
			A_\varphi=(D^2u_\varphi)^{-1},
			\quad
			B_\varphi^i=-\frac{4\partial_i u_\varphi}{1+|\nabla u_\varphi|^2}.
		\end{equation}
	\end{lemma}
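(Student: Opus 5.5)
The plan is to differentiate the logarithmic form of the equation along the smooth branch supplied by Proposition~\ref{prop:wellposedness}. Since $\mathcal U$ is open in $C^\infty(\partial\Omega)$, the curve $t\mapsto\varphi+th$ stays in $\mathcal U$ for $|t|$ small. By Proposition~\ref{prop:wellposedness}\eqref{item:wellposed-smooth-dependence}, applied with $\phi=\varphi$ and some fixed $m\ge2$, $\alpha\in(0,1)$, the map $t\mapsto u_{\varphi+th}$ is smooth into $C^{m,\alpha}(\overline\Omega)$. Hence $v_h=\frac{d}{dt}\big|_{t=0}u_{\varphi+th}$ exists in $C^{m,\alpha}(\overline\Omega)$, and by bootstrapping in $m$ it lies in $C^\infty(\overline\Omega)$. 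Its boundary trace is obtained by differentiating $u_{\varphi+th}|_{\partial\Omega}=\varphi+th$ in $t$, which gives $v_h=h$ on $\partial\Omega$. This uses that the trace map is continuous and linear on $C^{m,\alpha}$.

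For the interior equation, we use that $D^2u_{\varphi+th}>0$ on $\overline\Omega$ for small $|t|$. This holds by positivity at $t=0$ together with continuity in $C^2$. Consequently the identity
\[
\log\det D^2u_{\varphi+th}-2\log\bigl(1+|\nabla u_{\varphi+th}|^2\bigr)-\log K=0
\]
holds pointwise on $\overline\Omega$ for every small $t$, and the left side is a smooth function of $t$ with values in $C^{m-2,\alpha}$. We differentiate it at $t=0$ term by term. By Jacobi's formula,
\[
\frac{d}{dt}\log\det M(t)=\operatorname{tr}\bigl(M^{-1}M'\bigr),
\]
so the first term yields $(D^2u_\varphi)^{-1}:D^2v_h=A_\varphi^{ij}\partial_{ij}v_h$. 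The second term yields
\[
-\frac{2\cdot2\,\nabla u_\varphi\cdot\nabla v_h}{1+|\nabla u_\varphi|^2}=B_\varphi^i\partial_iv_h.
\]
The last term $\log K$ does not depend on $t$. Adding these gives $L_\varphi v_h=0$, with $A_\varphi$ and $B_\varphi$ as in \eqref{eq:A_varphi and B_varphi}.

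As a consistency check, dividing the determinant-form derivative $D_w\mathcal F$ from the proof of Proposition~\ref{prop:wellposedness} by $\det D^2u_\varphi=K(1+|\nabla u_\varphi|^2)^2$ reproduces the same operator. This is because $\operatorname{Cof}(D^2u_\varphi)=(\det D^2u_\varphi)(D^2u_\varphi)^{-1}$.

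The main point requiring care is justifying the interchange of the $t$-derivative with the nonlinear operations, namely $\log\det$ and $\log(1+|\cdot|^2)$. It suffices that $u\mapsto\log\det D^2u-2\log(1+|\nabla u|^2)$ is a smooth (Fréchet) map from the open set $\{D^2u>0\}\subset C^{m,\alpha}$ into $C^{m-2,\alpha}$. This holds because it is a composition of smooth pointwise functions with the bounded linear maps $u\mapsto D^2u$ and $u\mapsto\nabla u$. The chain rule then gives the formula, and no further estimates are needed.
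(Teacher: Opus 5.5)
Your proposal is correct and follows essentially the same route as the paper: you get $v_h\in C^\infty(\overline\Omega)$ and $v_h|_{\partial\Omega}=h$ from the smooth dependence in Proposition~\ref{prop:wellposedness}, and then differentiate the logarithmic form of the equation at $t=0$ using Jacobi's formula and the chain rule for $-2\log(1+|\nabla u|^2)$. Your extra justification of the interchange of derivatives (Fr\'echet smoothness of the nonlinear map on $\{D^2u>0\}\subset C^{m,\alpha}$) and your consistency check against $D_w\mathcal F$ divided by $\det D^2u_\varphi$ only spell out steps the paper leaves implicit.
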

	
	\begin{proof}
		Let $u_t:=u_{\varphi+th}$. By Proposition~\ref{prop:wellposedness},
		$v_h\in C^\infty(\overline\Omega)$. Differentiating
		$u_t|_{\partial\Omega}=\varphi+th$ at $t=0$ gives
		$v_h|_{\partial\Omega}=h$.
		
		Differentiating the logarithmic equation at $t=0$ gives
		\begin{equation*}
			(D^2u_\varphi)^{-1}:D^2v_h-\frac{4\nabla u_\varphi}{1+|\nabla u_\varphi|^2}\cdot\nabla v_h=0\quad \text{in }\Omega.
		\end{equation*}
		This is exactly $L_\varphi v_h=0$ with the coefficients in
		\eqref{eq:A_varphi and B_varphi}.
	\end{proof}
	
	We shall also use the following geometric form of the first linearized operator. We use the convention
	\begin{equation*}
		\Delta_gv=g^{ij}\partial_{ij}v-g^{ij}\Gamma_{ij}^k(g)\partial_kv.
	\end{equation*}
	
	\begin{lemma}\label{lem:geom-rewrite}
		Let $g_\varphi$ be the Riemannian metric whose inverse coefficients are $(g_\varphi)^{ij}=A_\varphi^{ij}$, then 
		\begin{equation}\label{eq:first-lin-geom}
			L_\varphi=\Delta_{g_\varphi}+Y_\varphi\cdot\nabla,
		\end{equation}
		where
		\begin{equation*}
			Y_\varphi=B_\varphi+X_{g_\varphi},
			\quad X_{g_\varphi}^k=(g_\varphi)^{ij}\Gamma_{ij}^k(g_\varphi).
		\end{equation*}
	\end{lemma}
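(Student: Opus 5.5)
The identity \eqref{eq:first-lin-geom} is a direct computation from the convention for $\Delta_g$ fixed just before the statement. The plan has three steps: verify that $g_\varphi$ is a genuine Riemannian metric, expand $\Delta_{g_\varphi}$ in coordinates, and compare with $L_\varphi$ from Lemma~\ref{lem:first-lin-formula}.

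First, I check that $g_\varphi$ is well defined. By Proposition~\ref{prop:wellposedness}, $D^2u_\varphi$ is smooth and positive definite on $\overline\Omega$, so $A_\varphi=(D^2u_\varphi)^{-1}$ is smooth, symmetric, and uniformly positive definite there. Hence $(g_\varphi)_{ij}=\partial_{ij}u_\varphi$ defines a smooth Riemannian metric on $\overline\Omega$ with inverse $(g_\varphi)^{ij}=A_\varphi^{ij}$. Its Christoffel symbols $\Gamma^k_{ij}(g_\varphi)$ are smooth. Since $g_\varphi$ is a Hessian metric, they take the explicit form $\Gamma^k_{ij}=\tfrac12 A_\varphi^{kl}\partial_{ijl}u_\varphi$, although this is not needed. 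It follows that $X_{g_\varphi}$ is a smooth vector field on $\overline\Omega$.

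Second, I expand $\Delta_{g_\varphi}$ using the stated convention:
\[
\Delta_{g_\varphi}v=A_\varphi^{ij}\partial_{ij}v-(g_\varphi)^{ij}\Gamma^k_{ij}(g_\varphi)\partial_kv=A_\varphi^{ij}\partial_{ij}v-X_{g_\varphi}\cdot\nabla v.
\]
Rearranging gives $A_\varphi^{ij}\partial_{ij}v=\Delta_{g_\varphi}v+X_{g_\varphi}\cdot\nabla v$.

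Third, I substitute this into \eqref{eq:first-lin-op}:
\[
L_\varphi v=\Delta_{g_\varphi}v+(B_\varphi+X_{g_\varphi})\cdot\nabla v,
\]
which is \eqref{eq:first-lin-geom} with $Y_\varphi=B_\varphi+X_{g_\varphi}$.

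There is no real obstacle here. The only care needed is the sign convention. With $\Delta_g=g^{ij}\partial_{ij}-g^{ij}\Gamma^k_{ij}\partial_k$, the contracted Christoffel term has to be added back, which is why $X_{g_\varphi}$ enters $Y_\varphi$ with a plus sign.
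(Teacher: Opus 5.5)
Your proof is correct and is the same as the paper's: you expand $\Delta_{g_\varphi}$ with the stated convention to get $A_\varphi^{ij}\partial_{ij}v-X_{g_\varphi}\cdot\nabla v$, then substitute into $L_\varphi$. Your preliminary check that $g_\varphi$ is a genuine smooth Riemannian metric, which follows from $D^2u_\varphi>0$ on $\overline\Omega$, is a harmless addition that the paper leaves implicit.
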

	
	\begin{proof}
		By the convention above,
		\begin{equation*}
			\Delta_{g_\varphi}v=A_\varphi^{ij}\partial_{ij}v
			-A_\varphi^{ij}\Gamma_{ij}^k(g_\varphi)\partial_kv
			=A_\varphi^{ij}\partial_{ij}v
			-X_{g_\varphi}\cdot\nabla v.
		\end{equation*}
		Substituting this into \eqref{eq:first-lin-op} gives $L_\varphi v=\Delta_{g_\varphi}v+(B_\varphi+X_{g_\varphi})\cdot\nabla v$.
	\end{proof}
	
	We compare the first linearized operators corresponding to $K_1$ and $K_2$. For $j=1,2$, let $u_\varphi^{(j)}$ be the solution with boundary value $\varphi$, and let $A_\varphi^{(j)}$, $B_\varphi^{(j)}$, $g_\varphi^{(j)}$, and $Y_\varphi^{(j)}$ denote the corresponding first linearized coefficients and geometric data.
	
	For the linearized inverse problem, the boundary operator is the conormal trace
	associated with the principal tensor. Thus, if $v_h^{(j)}$ solves
	\begin{equation}\label{eq:first-lin-equ-j=12}
		\begin{cases}
			L_\varphi^{(j)}v_h^{(j)}=0 & \text{in }\Omega,\\
			v_h^{(j)}=h & \text{on }\partial\Omega,
		\end{cases}
	\end{equation}
	we define 
	\begin{equation}\label{eq:first_DN_map}
		\Lambda_j': C^\infty(\p\Omega)\to C^\infty(\p\Omega), \quad h\mapsto A_\varphi^{(j)}\nabla v_h^{(j)}\cdot\nu\big|_{\partial\Omega},
	\end{equation}
	for $j=1,2$. The next lemma identifies this conormal trace with the linearized boundary data coming from the nonlinear Euclidean DN map.
	
	\begin{lemma}[Identification of the first linearized DN map]
		\label{lem:first-lin-Cauchy-data}
		Adopt the assumptions of Theorem~\ref{thm:main}. Fix
		$\varphi\in\mathcal U$. Then \eqref{eq:BC for the curvature} and
		\eqref{DN map same} imply
		\begin{equation}\label{eq:same_linearized_DN_map}
			\Lambda_1'h=\Lambda_2'h \quad 	\text{on }\partial\Omega
		\end{equation}
		for all $h\in C^\infty(\partial\Omega)$.
	\end{lemma}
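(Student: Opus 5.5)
Differentiating the identity $\Lambda_{K_1}(\varphi+th)=\Lambda_{K_2}(\varphi+th)$, valid for $|t|$ small because $\mathcal U$ is open, gives equality of the Euclidean normal derivatives of the first variations. We then convert this into equality of the conormal traces $A^{(j)}_\varphi\nabla v_h^{(j)}\cdot\nu$, using Lemma~\ref{lem:boundary-determination} to identify the principal coefficients on $\partial\Omega$.

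\textbf{Step 1 (Euclidean normal derivatives).} By Proposition~\ref{prop:wellposedness}, $t\mapsto u_j(\varphi+th)$ is smooth into $C^{m,\alpha}(\overline\Omega)$ for every $m$. Hence $t\mapsto\partial_\nu u_j(\varphi+th)|_{\partial\Omega}$ is differentiable, and its derivative at $t=0$ is $\partial_\nu v_h^{(j)}|_{\partial\Omega}$. Here $v_h^{(j)}$ is the solution of \eqref{eq:first-lin-equ-j=12}, by Lemma~\ref{lem:first-lin-formula}. That lemma also gives uniqueness, since $L^{(j)}_\varphi$ has no zeroth-order term and the maximum principle applies. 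Differentiating \eqref{DN map same} along $\varphi+th$ therefore gives
\[
\partial_\nu v_h^{(1)}=\partial_\nu v_h^{(2)} \quad\text{on }\partial\Omega .
\]
Since also $v_h^{(1)}=v_h^{(2)}=h$ on $\partial\Omega$, the tangential derivatives agree. So the full gradients coincide: $\nabla v_h^{(1)}=\nabla v_h^{(2)}$ on $\partial\Omega$.

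\textbf{Step 2 (principal coefficients on the boundary).} By Lemma~\ref{lem:boundary-determination}, whose hypotheses are \eqref{eq:BC for the curvature} with $a=b=0$ together with \eqref{DN map same}, we have $A^{(1)}_\varphi=A^{(2)}_\varphi$ on $\partial\Omega$. This requires the full boundary Hessian $D^2u^{(j)}_\varphi$, and we obtain it by solving \eqref{eq:boundary-normal-normal-recovery} for $\partial_{tt}u$. Combining with Step 1,
\[
A_\varphi^{(1)}\nabla v_h^{(1)}\cdot\nu=A_\varphi^{(2)}\nabla v_h^{(2)}\cdot\nu \quad\text{on }\partial\Omega,
\]
which is \eqref{eq:same_linearized_DN_map}.

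\textbf{Step 3 (density and main obstacle).} The differentiation in Step 1 is legitimate only for $h$ where the curve stays in $\mathcal U$. Since $\mathcal U$ is open in $C^\infty(\partial\Omega)$, this holds for every $h\in C^\infty(\partial\Omega)$ once $|t|$ is small, so no density argument is needed.

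The only delicate point is that the conormal trace involves the interior quantity $A_\varphi^{(j)}$. Its boundary values depend on $\partial_{tt}u^{(j)}_\varphi$, which is not given directly by the Cauchy data. This is exactly the role of Lemma~\ref{lem:boundary-determination}, which recovers it from the equation using $K_1=K_2$ on $\partial\Omega$. Beyond that, the argument is a direct chain rule.
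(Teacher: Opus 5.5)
Your proposal is correct and follows essentially the same route as the paper. You differentiate the equality of the nonlinear DN maps along $\varphi+th$ (legitimate since $\mathcal U$ is open) to get equal normal derivatives of $v_h^{(j)}$, combine this with the common Dirichlet data $h$ to get $\nabla v_h^{(1)}=\nabla v_h^{(2)}$ on $\partial\Omega$, and then invoke Lemma~\ref{lem:boundary-determination} for $A_\varphi^{(1)}=A_\varphi^{(2)}$ on $\partial\Omega$ to conclude equality of the conormal traces.
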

	
	\begin{proof}
		By Lemma~\ref{lem:boundary-determination}, the nonlinear DN map together with $K_1=K_2$ on $\partial\Omega$ determines the boundary values of the logarithmically normalized principal coefficients. Hence,
		\begin{equation}\label{eq:boundary-A-equality-first-lin-proof}
			A_\varphi^{(1)}	=A_\varphi^{(2)}\quad\text{on }\partial\Omega.
		\end{equation}
		The first-order boundary matching of the curvature is not needed for this conormal identification; it will be used later to fix the first boundary derivative of the residual curvature ratio.
		
		Let $u_\varepsilon^{(j)}$ be the solution corresponding to $K_j$ with boundary value $\varphi+\varepsilon h$. Since $\mathcal U$ is open, this path lies in $\mathcal U$ for all sufficiently small $|\varepsilon|$. The equality of the nonlinear DN maps gives $\partial_\nu u_\varepsilon^{(1)}=\partial_\nu u_\varepsilon^{(2)}$ on $\partial\Omega$. Differentiating at $\varepsilon=0$ yields
		\begin{equation}\label{eq:normal-derivative-first-variation-equality}
			\partial_\nu v_h^{(1)}=\partial_\nu v_h^{(2)}\quad	\text{on }\partial\Omega.
		\end{equation}
		
		Since both first variations have the same Dirichlet value $h$, their
		tangential derivatives agree on the boundary. Together with \eqref{eq:normal-derivative-first-variation-equality}, this gives $\nabla v_h^{(1)}=\nabla v_h^{(2)}$ on $\partial\Omega$. Combining this gradient equality with
		\eqref{eq:boundary-A-equality-first-lin-proof} gives $A_\varphi^{(1)}\nabla v_h^{(1)}\cdot\nu=A_\varphi^{(2)}\nabla v_h^{(2)}\cdot\nu$ on $\partial\Omega$.	This is exactly $\Lambda_1'h=\Lambda_2'h$.
	\end{proof}
	
	We use the following planar result for non-divergence form equations. This is the only input from the first-linearized inverse problem; the later sections use the second variation to remove the gauge that appears here.
	
	In Lemma~\ref{lem:imported-planar-gauge}, the DN map is the conormal map in \eqref{eq:first_DN_map}. For $L_j=L_\varphi^{(j)}$, Lemma~\ref{lem:first-lin-Cauchy-data} gives equality of these conormal maps. Lemma~\ref{lem:boundary-determination} fixes the boundary values of the logarithmically normalized principal tensors. With this boundary representative fixed, the tensor conormal trace $A_j\nabla v\cdot\nu$ and the metric normal trace used in \cite[Lemma~4.2]{LL2025IP_Monge_Ampere} differ by the same known nonzero boundary factor for $j=1,2$. Indeed, if $g_j^{-1}=A_j$, then $A_j\nabla v\cdot\nu=(A_j\nu\cdot\nu)^{1/2}\partial_{\nu_{g_j}}v$ on $\partial\Omega$. The factor $(A_j\nu\cdot\nu)^{1/2}$ is known from the fixed boundary principal tensor and is the same for $j=1,2$. Thus the two DN conventions give the same gauge information.
	
	\begin{lemma}[Planar non-divergence gauge theorem]\label{lem:imported-planar-gauge}
		Let $\Omega\subset\mathbb R^2$ be bounded and simply connected with $C^\infty$ boundary. For $j=1,2$, let $L_j=A_j^{ab}\partial_{ab}+B_j^a\partial_a$ be a smooth, uniformly elliptic operator on $\overline\Omega$ with no zeroth-order term. Equivalently, write $L_j=\Delta_{g_j}+Y_j\cdot\nabla$, where $g_j^{ab}=A_j^{ab}$. Assume that the conormal DN maps of $L_1$ and $L_2$ agree, and that the boundary representative of the gauge class is fixed as above. Then there exists a smooth diffeomorphism $J:\overline\Omega\to\overline\Omega$ and a smooth positive function $\rho$ on $\overline\Omega$, with $J|_{\partial\Omega}=\Id$ and $\rho|_{\partial\Omega}=1$, such that
		\begin{equation}\label{eq:imported-gauge-intertwining}
			L_1(f\circ J)=\rho(L_2f)\circ J\,\text{ for all }f\in C^\infty(\overline\Omega).
		\end{equation}
		In particular,
		\begin{equation}\label{eq:imported-gauge-principal}
			DJ\,A_1\,DJ^T=\rho(A_2\circ J),
		\end{equation}
		and, for $\alpha=1,2$,
		\begin{equation}\label{eq:imported-gauge-drift}
			A_1^{ij}\partial_{ij}J^\alpha+B_1^i\partial_iJ^\alpha=\rho(B_2^\alpha\circ J).
		\end{equation}
	\end{lemma}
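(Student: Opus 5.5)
The strategy is to reduce the non-divergence problem to the anisotropic Calderón-type problem for a Laplace--Beltrami operator with drift in the plane, where isothermal coordinates are available. First, the gauge class is put into normal form. For each $j$, I choose global isothermal coordinates $\chi_j:\overline\Omega\to\overline{\Omega_j'}$ for $g_j$. Here simple connectivity and the Riemann mapping theorem, together with Beltrami-equation regularity up to the boundary, give smooth diffeomorphisms. Since the boundary principal tensors agree by Lemma~\ref{lem:boundary-determination}, I will normalize $\chi_1,\chi_2$ so that they coincide on $\partial\Omega$. In these coordinates $g_j=c_j\,|dz|^2$, and the operator becomes
\[
c_j^{-1}\bigl(\Delta+\widetilde Y_j\cdot\nabla\bigr),
\qquad \widetilde Y_j=c_j(\chi_j)_*Y_j .
\]
Multiplication by the positive factor $c_j^{-1}$ is exactly the density $\rho$ ambiguity: it does not change the solution space, and it changes the conormal data only by a boundary factor. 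That boundary factor is fixed and common to $j=1,2$ by the remark preceding the lemma.

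Second, I would invoke the planar uniqueness theorem for $\Delta+\widetilde Y\cdot\nabla$, as in \cite[Lemma~4.2]{LL2025IP_Monge_Ampere}. The conormal DN maps of $L_1$ and $L_2$ agree, so after transport by the boundary-matched maps $\chi_j$ the Dirichlet-to-Neumann maps of $\Delta+\widetilde Y_1\cdot\nabla$ and $\Delta+\widetilde Y_2\cdot\nabla$ on the common image domain agree. The CGO/$\overline\partial$ argument of Astala--Päivärinta type for first-order perturbations in two dimensions then gives $\widetilde Y_1=\widetilde Y_2$. There is no gauge for the drift here, because there is no zeroth-order term: constants are solutions, and this kills the usual $e^{\phi}$ conjugation. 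Setting $J=\chi_2^{-1}\circ\chi_1$, I get $J|_{\partial\Omega}=\Id$, and $\rho=(c_2\circ\chi_1)/(c_1\circ\chi_1)$ up to the Jacobian bookkeeping.

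Third, I would verify \eqref{eq:imported-gauge-intertwining} by pure chain rule. Since $J$ intertwines the isothermal pictures, $L_1(f\circ J)=\rho\,(L_2f)\circ J$. Moreover $\rho|_{\partial\Omega}=1$, because the conformal factors agree there: $A_1=A_2$ on $\partial\Omega$ and $DJ=\Id$ on $\partial\Omega$. The tangential part of $DJ$ is automatic. The normal part follows from matching the normal derivatives of $\chi_j$, which I would fix via the common conormal data, or absorb into the normalization. The two displayed identities then follow by testing with quadratic and linear functions. Take $f=x^\alpha$: since $L_2x^\alpha=B_2^\alpha$, this gives \eqref{eq:imported-gauge-drift}. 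Comparing second-order symbols of both sides gives \eqref{eq:imported-gauge-principal}.

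The main obstacle is the boundary normalization of the isothermal charts, namely ensuring $DJ=\Id$, and not merely $J=\Id$, on $\partial\Omega$, with full smoothness up to the boundary. I would first try boundary determination of the full jet of $g_j$ from the DN map, via the factorization of the DN map as a pseudodifferential operator. This would show that the normal derivatives of $\chi_1$ and $\chi_2$ can be chosen to agree on the boundary.
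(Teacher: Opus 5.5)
\textbf{Gap in Step 1.} You claim that, because $A_1=A_2$ on $\partial\Omega$, you can normalize the isothermal charts so that $\chi_1=\chi_2$ on $\partial\Omega$. This is not a normalization.

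Take both charts onto the closed unit disk. Every other isothermal chart is obtained by post-composing with a conformal map. Hence charts with equal boundary traces exist if and only if the circle map $\chi_2\circ\chi_1^{-1}|_{\{|z|=1\}}$ is the restriction of a disk automorphism. That map is determined by $g_1,g_2$ throughout $\Omega$, through the Beltrami equation, and not by their boundary values.

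In fact, matching traces is equivalent to the existence of a boundary-fixing $J$ with $g_1$ conformal to $J^*g_2$. That is the principal half of the conclusion you are asked to prove. It must therefore be derived from the equality of the DN maps, and your proposal gives no argument for it.

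Sylvester's device does not help here. It reads off the trace of an isothermal chart from the DN map because the chart's components are solutions. But here those components solve $\Delta_{g_j}u=0$, not $L_ju=0$, whenever $Y_j\neq0$, and the drift cannot be removed by a change of coordinates.

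Without this step, Step~2 cannot start. The pushed-forward operators live on differently parametrized boundaries. The boundary factor converting $A_j\nabla v\cdot\nu$ into the Euclidean normal derivative in the $\chi_j$-picture is not common to $j=1,2$. Finally, $\chi_2^{-1}\circ\chi_1$ does not fix $\partial\Omega$.

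\textbf{The obstacle you flag is not one.} The lemma does not require $DJ=I$ on $\partial\Omega$, and it is automatic anyway. Suppose $J|_{\partial\Omega}=\Id$ and $A_1=A_2$ on $\partial\Omega$. In the frame $(\tau,\nu)$, write $DJ=\begin{pmatrix}1&\alpha\\0&\beta\end{pmatrix}$ with $\beta>0$, and $A_1=\begin{pmatrix}a&b\\b&c\end{pmatrix}$. The $(2,2)$ and $(1,2)$ entries of the principal identity give $\rho=\beta^2$ and $\alpha=(\beta-1)b/c$. The $(1,1)$ entry then reduces to $(\beta^2-1)\det A_1=0$. Hence $\beta=1$, $\rho=1$ and $\alpha=0$ on $\partial\Omega$.

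\textbf{What the paper does instead.} It does not construct $J$ at all. It applies \cite[Lemma~4.2]{LL2025IP_Monge_Ampere} directly to $-L_j=-\Delta_{g_j}+X_j\cdot\nabla$ with $X_j=-Y_j$. That lemma already supplies a boundary-fixing $J$ and $c>0$ with $c|_{\partial\Omega}=1$, $g_1=cJ^*g_2$ and $X_1=c^{-1}J^*X_2$. Setting $\rho=c^{-1}$, the intertwining identity follows from the conformal covariance of the two-dimensional Laplacian and the pullback of vector fields. The coefficient identities then follow by the chain rule. This is essentially your Step~3, and your test with $f=x^\alpha$ is a clean way to read off the drift identity.

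So you have two options. Cite that result for the full anisotropic gauge statement, rather than only for the Euclidean drift case, which makes your Steps~1--2 unnecessary. Or supply a genuine boundary determination of the isothermal map from the DN map of $L_j$, for instance through CGO asymptotics in the spirit of Astala--Lassas--P\"aiv\"arinta.
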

	
	\begin{proof}
	Set
	\[
	\mathcal L_j:=-L_j=-\Delta_{g_j}-Y_j\cdot\nabla=-\Delta_{g_j}+X_j\cdot\nabla,
	\quad X_j:=-Y_j.
	\]
	Multiplying the equation by $-1$ does not change the Dirichlet solutions or the equality of the boundary measurements. Thus, with the DN convention fixed before the lemma, the equality of the conormal DN maps for $L_1$ and $L_2$ gives the corresponding equality of boundary data for $\mathcal L_1$ and $\mathcal L_2$ in the convention of \cite[Lemma~4.2]{LL2025IP_Monge_Ampere}.
	
	Applying \cite[Lemma~4.2]{LL2025IP_Monge_Ampere} to $\mathcal L_j=-\Delta_{g_j}+X_j\cdot\nabla$ gives a boundary-fixing diffeomorphism $J$ and a positive function $c$, with $c|_{\partial\Omega}=1$, such that
	\[
	g_1=cJ^*g_2,\quad X_1=c^{-1}J^*X_2.
	\]
	Since $X_j=-Y_j$, this is equivalent to
	\[
	g_1=cJ^*g_2,\quad Y_1=c^{-1}J^*Y_2.
	\]
	Define $\rho:=c^{-1}$, then $\rho|_{\partial\Omega}=1$. We first derive the operator identity. In dimension two, multiplying a metric by $c$ multiplies the scalar Laplacian by $c^{-1}$. Hence, for any $f\in C^\infty(\overline\Omega)$,
	\[
	\Delta_{g_1}(f\circ J)=c^{-1}\Delta_{J^*g_2}(f\circ J)=c^{-1}(\Delta_{g_2}f)\circ J=\rho(\Delta_{g_2}f)\circ J.
	\]
	Also, by the definition of the pullback of a vector field,
	\[
	Y_1\cdot\nabla(f\circ J)=c^{-1}(J^*Y_2)\cdot\nabla(f\circ J)=c^{-1}(Y_2\cdot\nabla f)\circ J=\rho(Y_2\cdot\nabla f)\circ J.
	\]
	Adding the two identities gives \eqref{eq:imported-gauge-intertwining}.
	
	The coefficient identities follow by comparing terms. The relation $g_1=cJ^*g_2$ means, in coordinates,
	\[
	(g_1)_{ij}=c\,\partial_iJ^\alpha\partial_jJ^\beta(g_2)_{\alpha\beta}\circ J.
	\]
	Taking inverses gives $A_1=c^{-1}(DJ)^{-1}(A_2\circ J)(DJ)^{-T}$. Since $\rho=c^{-1}$, multiplying this identity by $DJ$ on the left and by $DJ^T$ on the right gives \eqref{eq:imported-gauge-principal}.
	
	Finally, the chain rule gives
	\[
	\partial_i(f\circ J)=(\partial_\alpha f)\circ J\,\partial_iJ^\alpha,\quad
	\partial_{ij}(f\circ J)=((\partial_{\alpha\beta}f)\circ J)\partial_iJ^\alpha\partial_jJ^\beta+((\partial_\alpha f)\circ J)\partial_{ij}J^\alpha.
	\]
	Substituting these into \eqref{eq:imported-gauge-intertwining} and comparing the coefficients of $\partial_\alpha f$ gives \eqref{eq:imported-gauge-drift}.
\end{proof}
	
	A \emph{boundary-fixing gauge pair} is a pair $(J_\varphi,\rho_\varphi)$ where $J_\varphi:\overline\Omega\to\overline\Omega$ is a diffeomorphism with $J_\varphi|_{\partial\Omega}=\Id$, and $\rho_\varphi\in C^\infty(\overline\Omega)$ is positive with $\rho_\varphi|_{\partial\Omega}=1$. We write $\Id$ for the identity map and $I$ for the $2\times2$ identity matrix.
	
	The logarithmically normalized first linearized operator is in the class of Lemma~\ref{lem:imported-planar-gauge}. The drift is part of the operator. Lemma~\ref{lem:first-lin-Cauchy-data} gives the required equality of conormal data, while Lemma~\ref{lem:boundary-determination} fixes the boundary representative of the principal tensor. Hence Lemma~\ref{lem:imported-planar-gauge} applies for each fixed $\varphi\in\mathcal U$.
	
	\begin{proposition}[Unique determination up to gauge]\label{thm:fixed-background-gauge}
		Adopt the assumptions of Theorem~\ref{thm:main}. Then, for any $\varphi\in\mathcal U$, there exists a boundary-fixing gauge pair $(J_\varphi,\rho_\varphi)$ such that
		\begin{equation}\label{eq:fixed-background-principal-gauge}
			DJ_\varphi\,A_\varphi^{(1)}\,DJ_\varphi^T=\rho_\varphi\bigl(A_\varphi^{(2)}\circ J_\varphi\bigr).
		\end{equation}
		Moreover, for $\alpha=1,2$,
		\begin{equation}\label{eq:fixed-background-drift-gauge}
			A_\varphi^{(1)ij}\partial_{ij}J_\varphi^\alpha + B_\varphi^{(1)i}\partial_iJ_\varphi^\alpha
			=\rho_\varphi \bigl(B_\varphi^{(2)\alpha}\circ J_\varphi\bigr).
		\end{equation}
		Equivalently,
		\begin{equation}\label{eq:fixed-background-operator-intertwining}
			L_\varphi^{(1)}(f\circ J_\varphi)=\rho_\varphi\bigl(L_\varphi^{(2)}f\bigr)\circ J_\varphi
			\quad \text{for all } f\in C^\infty(\overline\Omega).
		\end{equation}
		In addition,
		\begin{equation}\label{eq:boundary-gauge-data-section3}
			J_\varphi|_{\partial\Omega}=\Id,
			\quad
			\rho_\varphi|_{\partial\Omega}=1,
			\quad
			A_\varphi^{(1)}=A_\varphi^{(2)}
			\quad
			\text{on }\partial\Omega.
		\end{equation}
	\end{proposition}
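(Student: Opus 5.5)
The proposition follows by assembling the lemmas of this section and applying Lemma~\ref{lem:imported-planar-gauge} with $L_j=L_\varphi^{(j)}$, for fixed $\varphi\in\mathcal U$. The plan has three steps.

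\textbf{Step 1: check the hypotheses of Lemma~\ref{lem:imported-planar-gauge}.} By Proposition~\ref{prop:wellposedness}, $u_\varphi^{(j)}\in C^\infty(\overline\Omega)$ and $D^2u_\varphi^{(j)}>0$ on the compact set $\overline\Omega$. Hence $A_\varphi^{(j)}=(D^2u_\varphi^{(j)})^{-1}$ is smooth and uniformly positive definite. The drift $B_\varphi^{(j)}$ in \eqref{eq:A_varphi and B_varphi} is smooth, and Lemma~\ref{lem:first-lin-formula} shows that $L_\varphi^{(j)}$ has no zeroth-order term. Lemma~\ref{lem:geom-rewrite} supplies the equivalent form $\Delta_{g_j}+Y_j\cdot\nabla$. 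Lemma~\ref{lem:first-lin-Cauchy-data} gives $\Lambda_1'=\Lambda_2'$ for the conormal maps \eqref{eq:first_DN_map}.

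\textbf{Step 2: fix the boundary representative.} Lemma~\ref{lem:boundary-determination} gives $A_\varphi^{(1)}=A_\varphi^{(2)}$ on $\partial\Omega$, which is the last identity in \eqref{eq:boundary-gauge-data-section3}. As explained before Lemma~\ref{lem:imported-planar-gauge}, this equality means the conormal traces and the metric normal traces differ by the same known factor $(A\nu\cdot\nu)^{1/2}$ for $j=1,2$. So the boundary data agree in the convention of the imported result.

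\textbf{Step 3: read off the conclusions.} Lemma~\ref{lem:imported-planar-gauge} then yields $J_\varphi$ and $\rho_\varphi$ with $J_\varphi|_{\partial\Omega}=\Id$ and $\rho_\varphi|_{\partial\Omega}=1$, so $(J_\varphi,\rho_\varphi)$ is a boundary-fixing gauge pair. The same lemma gives the intertwining \eqref{eq:fixed-background-operator-intertwining}, and its consequences \eqref{eq:imported-gauge-principal}–\eqref{eq:imported-gauge-drift} are exactly \eqref{eq:fixed-background-principal-gauge}–\eqref{eq:fixed-background-drift-gauge}.

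The equivalence claimed in the proposition also runs the other way. If \eqref{eq:fixed-background-principal-gauge} and \eqref{eq:fixed-background-drift-gauge} hold, expand $L_\varphi^{(1)}(f\circ J_\varphi)$ by the chain rule. The second-order part is $\bigl(DJ_\varphi A_\varphi^{(1)}DJ_\varphi^T\bigr)^{\alpha\beta}(\partial_{\alpha\beta}f)\circ J_\varphi$. The first-order part is $\bigl(A_\varphi^{(1)ij}\partial_{ij}J_\varphi^\alpha+B_\varphi^{(1)i}\partial_iJ_\varphi^\alpha\bigr)(\partial_\alpha f)\circ J_\varphi$. Substituting the two identities gives $\rho_\varphi(L_\varphi^{(2)}f)\circ J_\varphi$. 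The forward direction is the coefficient comparison already done in the proof of Lemma~\ref{lem:imported-planar-gauge}.

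The only delicate point is Step 2. There one must make sure that the normalization "the boundary representative of the gauge class is fixed", required by the imported lemma, is exactly what Lemma~\ref{lem:boundary-determination} provides. In particular, the boundary values $J_\varphi|_{\partial\Omega}=\Id$ and $\rho_\varphi|_{\partial\Omega}=1$ must come out without any extra boundary conformal factor. I would handle this by noting that any boundary conformal factor is pinned down by \eqref{eq:fixed-background-principal-gauge} restricted to $\partial\Omega$. There $A_\varphi^{(1)}=A_\varphi^{(2)}$ and $J_\varphi=\Id$, and $DJ_\varphi$ has tangential part equal to the identity. The $(\tau,\tau)$ component of \eqref{eq:fixed-background-principal-gauge} then involves only tangential derivatives of $J_\varphi$, which equal those of $\Id$. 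So it reads $A_\varphi^{(1)}\tau\cdot\tau=\rho_\varphi\,A_\varphi^{(2)}\tau\cdot\tau$, and positivity of $A$ forces $\rho_\varphi=1$ on $\partial\Omega$.
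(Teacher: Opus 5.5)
Your Steps 1--3 are the paper's proof. Lemma~\ref{lem:first-lin-Cauchy-data} gives equal conormal data, Lemma~\ref{lem:geom-rewrite} puts $L_\varphi^{(j)}$ in the form required by Lemma~\ref{lem:imported-planar-gauge}, that lemma yields $(J_\varphi,\rho_\varphi)$ with \eqref{eq:fixed-background-principal-gauge}--\eqref{eq:fixed-background-operator-intertwining}, and Lemma~\ref{lem:boundary-determination} gives $A_\varphi^{(1)}=A_\varphi^{(2)}$ on $\partial\Omega$. Your chain-rule check of the converse direction of the equivalence is also fine.

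The supplementary argument in your last paragraph is wrong as written, though harmless to the proof.

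\begin{itemize}
\item In the contravariant identity $DJ_\varphi A_\varphi^{(1)}DJ_\varphi^T=\rho_\varphi A_\varphi^{(2)}\circ J_\varphi$, the $(\tau,\tau)$ entry is $(DJ_\varphi^T\tau)\cdot A_\varphi^{(1)}(DJ_\varphi^T\tau)$.
\item Here $DJ_\varphi^T\tau=\nabla(\tau\cdot J_\varphi)$ contains the normal derivative $\tau\cdot\partial_\nu J_\varphi$. That quantity is not fixed by $J_\varphi|_{\partial\Omega}=\Id$.
\item Concretely, write $DJ_\varphi=\begin{pmatrix}1&\alpha\\0&\beta\end{pmatrix}$ in the basis $(\tau,\nu)$, as in Corollary~\ref{cor:boundary-first-jet-gauge}. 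The $(\tau,\tau)$ entry is then $a+2\alpha b+\alpha^2c$, not $a$. So the identity $A^{(1)}\tau\cdot\tau=\rho\,A^{(2)}\tau\cdot\tau$ does not follow.
\end{itemize}

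The conclusion $\rho_\varphi=1$ on $\partial\Omega$ can be recovered in either of two ways.

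\begin{itemize}
\item Invert to the metrics $g_j=A_j^{-1}$. The relation becomes $g_1=\rho^{-1}DJ^T(g_2\circ J)DJ$. Then $g_1(\tau,\tau)=\rho^{-1}g_2(DJ\tau,DJ\tau)=\rho^{-1}g_2(\tau,\tau)$ really does involve only $DJ\tau=\tau$.
\item Alternatively, take determinants to get $\rho^2=\beta^2$. The $(\nu,\nu)$ entry, which only sees $DJ^T\nu=\beta\nu$, gives $\rho=\beta^2$. Together these force $\rho=1$.
\end{itemize}

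None of this is needed, however. $\rho_\varphi|_{\partial\Omega}=1$ is already part of the output of Lemma~\ref{lem:imported-planar-gauge}: it is $c|_{\partial\Omega}=1$ from the imported result. That is exactly how the paper obtains it.
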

	
	\begin{proof}
		By Lemma~\ref{lem:first-lin-Cauchy-data}, the DN maps of the first linearized equations agree. By Lemma~\ref{lem:geom-rewrite}, each first linearized operator has the form $L_\varphi^{(j)}=\Delta_{g_\varphi^{(j)}}	+		Y_\varphi^{(j)}\cdot\nabla$. Applying Lemma~\ref{lem:imported-planar-gauge} gives a boundary-fixing gauge pair $(J_\varphi,\rho_\varphi)$ satisfying \eqref{eq:fixed-background-operator-intertwining}. The coefficient identities \eqref{eq:fixed-background-principal-gauge} and
		\eqref{eq:fixed-background-drift-gauge} are exactly
		\eqref{eq:imported-gauge-principal} and \eqref{eq:imported-gauge-drift}
		applied to $L_\varphi^{(1)}$ and $L_\varphi^{(2)}$. Finally,
		$A_\varphi^{(1)}=A_\varphi^{(2)}$ on $\partial\Omega$ follows from
		Lemma~\ref{lem:boundary-determination}.
	\end{proof}
	
	\begin{corollary}[Boundary first jet of the gauge]
		\label{cor:boundary-first-jet-gauge}
		Under the assumptions of Proposition~\ref{thm:fixed-background-gauge}, one
		has $DJ_\varphi=I$ on $\partial\Omega$.
	\end{corollary}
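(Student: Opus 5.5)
The plan is to read off the boundary first jet of $J_\varphi$ from two facts: the boundary condition $J_\varphi|_{\partial\Omega}=\Id$, and the principal gauge identity \eqref{eq:fixed-background-principal-gauge} restricted to $\partial\Omega$. The one remaining ambiguity will then be removed by an orientation argument.

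\emph{Step 1: tangential derivatives.} Fix $p\in\partial\Omega$ and write $P:=DJ_\varphi(p)$. Since $J_\varphi=\Id$ on $\partial\Omega$, differentiating along the boundary gives $P\tau=\tau$ at $p$. Hence $N:=P-I$ annihilates $\tau$, so $N=w\,\nu^T$ for some vector $w\in\mathbb R^2$, where $\nu$ is the unit normal at $p$. It remains to show that $w=0$.

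\emph{Step 2: the quadratic constraint on $w$.} Evaluate \eqref{eq:fixed-background-principal-gauge} at $p$ and use \eqref{eq:boundary-gauge-data-section3}, namely $\rho_\varphi=1$, $J_\varphi(p)=p$ and $A^{(1)}_\varphi=A^{(2)}_\varphi=:A$ at $p$. This gives $PAP^T=A$. Substitute $P=I+w\nu^T$ and set $a:=A\nu$ and $c:=\nu^TA\nu>0$ (positive since $A>0$). The identity becomes
\begin{equation*}
	w\,a^T+a\,w^T+c\,w\,w^T=0 .
\end{equation*}
Pairing this with $x\otimes x$ yields
\begin{equation*}
	(w\cdot x)\bigl(2a\cdot x+c\,w\cdot x\bigr)=0 \quad\text{for all } x\in\mathbb R^2 .
\end{equation*}
A product of two linear forms vanishes identically only if one factor is identically zero. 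So either $w=0$, or $w=-2A\nu/c$.

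\emph{Step 3: excluding the reflection.} In the second case,
\begin{equation*}
	P=I-\frac{2A\nu\,\nu^T}{\nu^TA\nu},
\end{equation*}
and $\det P=1-2\,\nu^TA\nu/(\nu^TA\nu)=-1$. In fact $P$ is the $A$-orthogonal reflection fixing $\tau$. We rule this out by orientation. The map $J_\varphi$ is a diffeomorphism of the connected set $\overline\Omega$, so $\det DJ_\varphi$ has constant sign on $\overline\Omega$. This sign equals the local degree of $J_\varphi$. Since $J_\varphi|_{\partial\Omega}=\Id$, the degree of $J_\varphi$ at any interior point equals that of $\Id$, which is $+1$. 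Hence $\det DJ_\varphi>0$ everywhere, which contradicts $\det P=-1$. Therefore $w=0$, so $DJ_\varphi=I$ at every boundary point.

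The main point needing care is Step 3: the algebra in Step 2 alone allows the reflection. To exclude it one must invoke orientation preservation, for which the degree argument (or boundary-fixing homotopy invariance) is enough.
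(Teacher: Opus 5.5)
Your proposal is correct and follows essentially the paper's route: fix the tangential column using $J_\varphi=\Id$ on $\partial\Omega$, impose $PAP^T=A$ at the boundary point, and exclude the $A$-orthogonal reflection by a sign condition. The only difference is how the sign is justified. You obtain $\det DJ_\varphi>0$ from a global degree argument. The paper instead gets $\beta>0$ locally, from the fact that $J_\varphi$ fixes $\partial\Omega$ pointwise and maps the interior side to the interior side. Here $\beta=\det DJ_\varphi(p)$ in the $(\tau,\nu)$ frame, so $\beta^2c=c$ forces $\beta=1$ directly.
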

	
	\begin{proof}
		By \eqref{eq:fixed-background-principal-gauge} and
		\eqref{eq:boundary-gauge-data-section3},
		\begin{equation*}
			DJ_\varphi A_\varphi^{(1)}DJ_\varphi^T
			=
			A_\varphi^{(1)}
			\quad
			\text{on }\partial\Omega.
		\end{equation*}
		Fix $p\in\partial\Omega$. Let $\tau$ be the unit tangent vector and $\nu$ the outward unit normal at $p$. Since $J_\varphi=\Id$ on $\partial\Omega$, we have $DJ_\varphi(p)\tau=\tau$. Hence, in the ordered orthonormal basis
		$(\tau,\nu)$,
		\begin{equation*}
			DJ_\varphi(p)=\begin{pmatrix}
				1&\alpha\\
				0&\beta
			\end{pmatrix}
		\end{equation*}
		for some $\alpha,\beta\in\mathbb R$. Since $J_\varphi$ maps the interior
		side to the interior side and fixes the boundary pointwise, $\beta>0$.
		
		Write $A_\varphi^{(1)}(p)=\begin{pmatrix}a&b\\b&c\end{pmatrix}$. Since $A_\varphi^{(1)}(p)$ is positive definite, $c>0$. A direct computation
		gives
		\begin{equation*}
			DJ_\varphi(p)A_\varphi^{(1)}(p)DJ_\varphi(p)^T
			=
			\begin{pmatrix}
				a+2\alpha b+\alpha^2c&\beta(b+\alpha c)\\
				\beta(b+\alpha c)&\beta^2c
			\end{pmatrix}.
		\end{equation*}
		Comparing with $A_\varphi^{(1)}(p)$, the $(2,2)$ entry gives
		$\beta^2c=c$. Since $c>0$ and $\beta>0$, we get $\beta=1$. The $(1,2)$
		entry then gives $b+\alpha c=b$, hence $\alpha=0$. Therefore
		$DJ_\varphi(p)=I$. Since $p$ was arbitrary, $DJ_\varphi=I$ on
		$\partial\Omega$.
	\end{proof}
	
	\begin{remark}
		The first-order coefficient does not transform by a pure pullback. The term
		$(A_\varphi^{(1)})^{ij}\partial_{ij}J_\varphi^\alpha$ in
		\eqref{eq:fixed-background-drift-gauge} is the chain-rule contribution from
		the principal part. In the second linearization, the same mechanism produces
		the Hessian terms involving $D^2J_\varphi^\alpha$.
	\end{remark}
	
	\subsection{The conductivity form and the curvature ratio gauge equations}
	\label{subsec:first-linearized-conductivity-form}
	
	Fix the background boundary value and suppress the subscript $\varphi$. Recall that
	\begin{equation*}
		L_jv=A_j^{ab}\partial_{ab}v+B_j^a\partial_av,\quad A_j=(D^2u_j)^{-1},\quad B_j^a=-\frac{4\partial_au_j}{1+|\nabla u_j|^2}.
	\end{equation*}
	Let $\sigma_j=1+|\nabla u_j|^2$. Since $\Cof(D^2u_j)^{ab}=(\det D^2u_j)A_j^{ab}$ and $\det D^2u_j=K_j\sigma_j^2$, one has $K_jA_j^{ab}=\Cof(D^2u_j)^{ab}/\sigma_j^2$. The Piola identity gives $\partial_a\Cof(D^2u_j)^{ab}=0$, and hence,
	\begin{equation*}
		\partial_a(K_jA_j^{ab})=-2\sigma_j^{-3}(\partial_a\sigma_j)\Cof(D^2u_j)^{ab}.
	\end{equation*}
	Using $\partial_a\sigma_j=2\partial_m u_j\,\partial_{am}u_j$, we get
	\begin{equation*}
		\partial_a(K_jA_j^{ab})=-4\sigma_j^{-3}(\det D^2u_j)A_j^{ab}\partial_m u_j\,\partial_{am}u_j=-4K_j\frac{\partial_bu_j}{\sigma_j}=K_jB_j^b.
	\end{equation*}
	Thus,
	\begin{equation}\label{eq:first-linearized-divergence-form}
		L_jv=K_j^{-1}\partial_a(K_jA_j^{ab}\partial_bv).
	\end{equation}
	The associated conductivity is $\Sigma_j=K_jA_j$.
	
	This divergence form identifies the scalar density $\rho$ in the fixed-gauge intertwining identity. The computation uses the same boundary-fixing map $J$ as above; no second diffeomorphism is introduced.
	
	\begin{lemma}[The scalar density from the divergence form]
		\label{lem:compatibility-first-linearized-gauges}
		Let $J$ and $\rho$ be the boundary-fixing gauge obtained from \eqref{eq:imported-gauge-intertwining}. Then 
		\begin{equation}\label{eq:principal-gauge-rho-reduction}
			DJ\,A_1\,DJ^T=\rho\,A_2\circ J.
		\end{equation}
		Moreover,
		\begin{equation}\label{eq:rho-determinant-curvature-ratio}
			\rho
			=
			(\det DJ)\frac{K_2\circ J}{K_1}.
		\end{equation}
	\end{lemma}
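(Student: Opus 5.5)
The plan is to use the intertwining identity \eqref{eq:imported-gauge-intertwining} together with the divergence form \eqref{eq:first-linearized-divergence-form}. The first claim \eqref{eq:principal-gauge-rho-reduction} is just \eqref{eq:imported-gauge-principal}. To see it directly, insert the chain rule for $\partial_{ij}(f\circ J)$ into the intertwining identity. Then choose $f$ to be the quadratic monomials $y^\alpha y^\beta$ and compare the coefficients of $(\partial_{\alpha\beta}f)\circ J$; this gives $\partial_iJ^\alpha\partial_jJ^\beta A_1^{ij}=\rho A_2^{\alpha\beta}\circ J$. In the same way, comparing the first-order coefficients gives the drift relation \eqref{eq:imported-gauge-drift}.

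For \eqref{eq:rho-determinant-curvature-ratio}, I would exploit the fact that both operators are in divergence form with conductivities $\Sigma_j=K_jA_j$. A divergence-form operator transforms naturally under pullback. Writing $L_2f=K_2^{-1}\partial_\alpha(\Sigma_2^{\alpha\beta}\partial_\beta f)$ and changing variables $y=J(x)$, one has for every $f$:
\begin{equation*}
\bigl(\partial_\alpha(\Sigma_2^{\alpha\beta}\partial_\beta f)\bigr)\circ J
=(\det DJ)^{-1}\,\partial_i\Bigl((\det DJ)\,(DJ^{-1}(\Sigma_2\circ J)DJ^{-T})^{ij}\,\partial_j(f\circ J)\Bigr).
\end{equation*}
This can be checked weakly by integrating against test functions and using the change-of-variables formula. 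Combining this with $\rho(L_2 f)\circ J=L_1(f\circ J)$ yields, for all $f$,
\begin{equation*}
K_1^{-1}\partial_i\bigl(\Sigma_1^{ij}\partial_j w\bigr)=\frac{\rho}{(K_2\circ J)\det DJ}\,\partial_i\bigl(\widetilde\Sigma^{ij}\partial_j w\bigr),\qquad w=f\circ J,
\end{equation*}
where $\widetilde\Sigma=(\det DJ)DJ^{-1}(\Sigma_2\circ J)DJ^{-T}$. Since $J$ is a diffeomorphism, $w$ ranges over all of $C^\infty(\overline\Omega)$.

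Next compare the principal parts. By \eqref{eq:principal-gauge-rho-reduction}, $DJ^{-1}(A_2\circ J)DJ^{-T}=\rho^{-1}A_1$. Hence $\widetilde\Sigma=(\det DJ)(K_2\circ J)\rho^{-1}A_1=\mu\,\Sigma_1$, where
\begin{equation*}
\mu:=\frac{(\det DJ)(K_2\circ J)}{\rho K_1}.
\end{equation*}
The right-hand side of the identity above therefore equals $\frac{1}{K_1\mu}\partial_i(\mu\Sigma_1^{ij}\partial_jw)$. Subtracting the left-hand side, the principal parts cancel, and what remains is
\begin{equation*}
\mu^{-1}K_1^{-1}(\partial_i\mu)\Sigma_1^{ij}\partial_jw=0
\end{equation*}
for all $w$. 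Because $\Sigma_1$ is positive definite, this forces $\nabla\mu=0$, so $\mu$ is constant on the connected domain $\Omega$.

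Finally, evaluate $\mu$ on the boundary. There $J=\Id$, $DJ=I$ (Corollary~\ref{cor:boundary-first-jet-gauge}, or simply $\det DJ=1$ from \eqref{eq:principal-gauge-rho-reduction} together with $A_1=A_2$ and $\rho=1$), $\rho=1$, and $K_1=K_2$ by \eqref{eq:BC for the curvature}. Hence $\mu=1$ on $\partial\Omega$, so $\mu\equiv1$, which is exactly \eqref{eq:rho-determinant-curvature-ratio}.

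The main obstacle is keeping the signs and transposes correct in the pullback formula for the divergence-form operator. Checking it weakly against test functions avoids index errors.
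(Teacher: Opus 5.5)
Your proposal is correct and follows essentially the paper's argument. You read off the principal relation from the second-order coefficients, rewrite both operators in divergence form with conductivities $\Sigma_j=K_jA_j$, and use the principal relation to see that the transformed conductivity is a scalar multiple of the other. The first-order coefficients then force that scalar to be constant, and the boundary normalization makes it equal to $1$. The only difference is cosmetic: you pull $L_2$ back to the $x$-side via the Piola identity, whereas the paper pushes $L_1$ forward to the $y$-side, so your $\mu$ is the reciprocal of the paper's constant $\widetilde K_1\widetilde\rho/(\widetilde d K_2)$ written in $x$-variables.
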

	
	\begin{proof}
		Write
		\begin{equation}\label{eq:notation N d}
			N=DJ,
			\quad
			d=\det N,
			\quad
			\Sigma_j=K_jA_j.
		\end{equation}
		Since $J$ is a diffeomorphism and $DJ=I$ on $\partial\Omega$, we have
		$d>0$ on $\overline\Omega$.
		
		Comparing the second-order coefficients in the intertwining identity gives 	\eqref{eq:principal-gauge-rho-reduction}, namely $NA_1N^T=\rho(A_2\circ J)$. We now write the same identity in divergence form. Let $y=J(x)$. The divergence transformation gives $\operatorname{div}_x\big(\Sigma_1\nabla_x(f\circ J)\big)=d\big[\operatorname{div}_y\big(d^{-1}N\Sigma_1N^T\nabla_y f\big)\big]\circ J$. Thus,
		\begin{equation*}
			L_1(f\circ J)=K_1^{-1}d\big[\operatorname{div}_y\big(d^{-1}N(K_1A_1)N^T\nabla_y f\big)\big]\circ J.
		\end{equation*}
		Set $\widetilde K_1=K_1\circ J^{-1}$, $\widetilde\rho=\rho\circ J^{-1}$ and $\widetilde d=d\circ J^{-1}$, using \eqref{eq:principal-gauge-rho-reduction}, the above identity becomes
		\begin{equation*}
			L_1(f\circ J)=\Big[\frac{\widetilde d}{\widetilde K_1}\operatorname{div}_y\Big(\frac{\widetilde K_1\widetilde\rho}{\widetilde d}A_2\nabla_y f\Big)\Big]\circ J.
		\end{equation*}
		On the other hand,
		\begin{equation*}
			\rho(L_2f)\circ J=\Big[ \frac{\widetilde\rho}{K_2}\operatorname{div}_y(K_2A_2\nabla_y f)\Big]\circ J.
		\end{equation*}
		Comparing the first-order coefficients gives
		\begin{equation*}
			A_2\nabla_y\log\Big(\frac{\widetilde K_1\widetilde\rho}{\widetilde d K_2}\Big)=0.
		\end{equation*}
		Since $A_2$ is invertible, the function $\frac{\widetilde K_1\widetilde\rho}{\widetilde d K_2}$	is constant. On $\partial\Omega$, we have $J=\Id$, $DJ=I$, $d=1$,
		$\rho=1$, and $K_1=K_2$. Hence, the constant is $1$. Returning to the
		$x$ variable gives $\frac{K_1\rho}{d(K_2\circ J)}=1$, which is exactly \eqref{eq:rho-determinant-curvature-ratio}.
	\end{proof}
	
	Adopting the notations \eqref{eq:notation N d}, define the affine-coordinate curvature ratio by
	\begin{equation}\label{eq:q-aff-d-theta-definitions}
		q_{\rm aff}:=\log\frac{K_1}{K_2\circ J},
		\quad
		\theta:=\log d-q_{\rm aff}.
	\end{equation}
	Then \eqref{eq:rho-determinant-curvature-ratio} gives $\rho=e^\theta$, $\rho^{-1}=e^{-\theta}$,
	and therefore $\mathbf r=1-\rho^{-1}=1-e^{-\theta}$.

	\subsection{The drift mismatch and the equation for \texorpdfstring{$J$}{J}}
	\label{subsec:drift-mismatch-q-equation}
	
	We now extract the equation for the gauge map. The affine-coordinate curvature ratio is $q_{\rm aff}=\log(K_1/(K_2\circ J))$. The background boundary value is fixed, and the subscript $\varphi$ is suppressed.
	
	Recall that $B_j^a=-4\partial_au_j/(1+|\nabla u_j|^2)$, and set $Z_j=-B_j$. Let $(g_j)_{ab}=\partial_{ab}u_j$ and $(g_j)^{ab}=A_j^{ab}$. Then $g_j$ is the Hessian metric of $u_j$, and $A_j=g_j^{-1}$. With the convention $\Delta_{g_j}v=A_j^{ab}\partial_{ab}v-A_j^{ab}\Gamma(g_j)^k_{ab}\partial_kv$, we have $L_j=\Delta_{g_j}+Y_j\cdot\nabla$ with $Y_j=X_{g_j}-Z_j$, where $X_{g_j}^k=A_j^{ab}\Gamma(g_j)^k_{ab}$. We first relate $Z_j$ to $X_{g_j}$ and $K_j$.

	Since $(g_j)_{ab}=\partial_{ab}u_j$, the Christoffel symbols of $g_j$ are
	\begin{equation*}
		\Gamma(g_j)^k_{ab}=\frac12 A_j^{k\ell}\big(\partial_a(g_j)_{b\ell}+\partial_b(g_j)_{a\ell}
		-\partial_\ell(g_j)_{ab}\big).
	\end{equation*}
	Substituting $(g_j)_{ab}=\partial_{ab}u_j$ gives $\Gamma(g_j)^k_{ab}=\frac12 A_j^{k\ell}\big(\partial_{ab\ell}u_j
	+\partial_{ba\ell}u_j-\partial_{\ell ab}u_j\big)$. By the symmetry of third derivatives, the first two terms are equal, and the last term is the same third derivative; then we obtain $\Gamma(g_j)^k_{ab}=\frac12 A_j^{k\ell}\partial_{ab\ell}u_j$. This implies 	\begin{equation*}
		X_{g_j}^k=A_j^{ab}\Gamma(g_j)^k_{ab}=\frac12 A_j^{k\ell}A_j^{ab}\partial_{ab\ell}u_j.
	\end{equation*}
	Since mixed derivatives commute, $\partial_{ab\ell}u_j=\partial_{\ell ab}u_j$, and we also have $\partial_\ell\log\det D^2u_j
	=A_j^{ab}\partial_{\ell ab}u_j$. Thus,
	\begin{equation*}
		X_{g_j}^k=\frac12 A_j^{k\ell}\partial_\ell\log\det D^2u_j.
	\end{equation*}
	Using $\det D^2u_j=K_j(1+|\nabla u_j|^2)^2$, we get $\partial_\ell\log\det D^2u_j=\partial_\ell\log K_j+2\partial_\ell\log(1+|\nabla u_j|^2)$.
	Moreover, since
	$\partial_\ell(1+|\nabla u_j|^2)=2\partial_m u_j\,\partial_{\ell m}u_j$, this becomes $\partial_\ell\log\det D^2u_j=\partial_\ell\log K_j
	+\frac{4\partial_m u_j\,\partial_{\ell m}u_j}{1+|\nabla u_j|^2}$. Substituting this into the formula for $X_{g_j}^k$ gives $X_{g_j}^k=\frac12A_j^{k\ell}\partial_\ell\log K_j+\frac{2A_j^{k\ell}\partial_{\ell m}u_j\,\partial_m u_j}{1+|\nabla u_j|^2}$. Since $A_j=(D^2u_j)^{-1}$, we have $A_j^{k\ell}\partial_{\ell m}u_j=\delta_m^k$. Hence,
	\begin{equation*}
		X_{g_j}^k=\frac12A_j^{k\ell}\partial_\ell\log K_j+\frac{2\partial_ku_j}{1+|\nabla u_j|^2}.
	\end{equation*}
	Since $Z_j^k=\frac{4\partial_ku_j}{1+|\nabla u_j|^2}$, we obtain
	\begin{equation}\label{eq:Z-X-K-identity}
		Z_j^k=2X_{g_j}^k-A_j^{k\ell}\partial_\ell\log K_j.
	\end{equation}
	
	Expanding the intertwining identity \eqref{eq:imported-gauge-intertwining} and comparing the coefficients of $\partial_\alpha f$ gives 
	\begin{equation}\label{eq:first-order-coefficient-J-Z}
		A_1^{ij}\partial_{ij}J^\alpha-Z_1^i\partial_iJ^\alpha=-\rho Z_2^\alpha(J).
	\end{equation}
	We also use the corresponding identity for the Laplace--Beltrami parts. The
	principal gauge relation means that the metric $g_1$ is the pullback of the
	conformal metric $\rho^{-1}g_2$ by $J$. Since the scalar Laplacian is
	conformally covariant in dimension two,
	\begin{equation*}
		\Delta_{g_1}(f\circ J)=\rho(\Delta_{g_2}f)\circ J.
	\end{equation*}
	Comparing the coefficients of $\partial_\alpha f$ gives
	\begin{equation}\label{eq:LB-first-order-J}
		A_1^{ij}\partial_{ij}J^\alpha
		-X_{g_1}^i\partial_iJ^\alpha
		=-\rho X_{g_2}^\alpha(J).
	\end{equation}
	Subtracting \eqref{eq:LB-first-order-J} from
	\eqref{eq:first-order-coefficient-J-Z} yields
	\begin{equation}\label{eq:Z-X-mismatch}
		Z_1^i\partial_iJ^\alpha
		-\rho Z_2^\alpha(J)
		=X_{g_1}^i\partial_iJ^\alpha
		-\rho X_{g_2}^\alpha(J).
	\end{equation}
	
	Insert \eqref{eq:Z-X-K-identity} into both sides. The terms involving
	$2X_{g_j}$ cancel. Using the principal gauge relation, we obtain
	\begin{equation}\label{eq:Z-mismatch-q}
		Z_1^i\partial_iJ^\alpha-\rho Z_2^\alpha(J)=\partial_iJ^\alpha A_1^{ij}\partial_jq_{\rm aff}.
	\end{equation}
	Combining \eqref{eq:first-order-coefficient-J-Z} and \eqref{eq:Z-mismatch-q} gives, in the original Euclidean affine coordinate,
	\begin{equation}\label{eq:J-q-equation-affine}
		A_1^{ij}\partial_{ij}J^\alpha=\partial_iJ^\alpha A_1^{ij}\partial_jq_{\rm aff} .
	\end{equation}
	Equivalently, this is the tensorial identity
	\begin{equation}\label{eq:J-q-equation-tensorial}
		A_1^{ij}(\nabla dJ)^\alpha_{ij}=\partial_iJ^\alpha A_1^{ij}\partial_jq_{\rm aff},
	\end{equation}
	where the covariant Hessian of the map $J$ is taken with respect to the flat Euclidean connection in the domain and target.
	
	We now choose the isothermal coordinate used in the rest of the proof. Let $A_{1,X}$ be the affine-coordinate first principal tensor and set $g_{1,X}=A_{1,X}^{-1}$. Since $g_{1,X}$ is smooth and uniformly positive definite on $\overline\Omega$, and since $\Omega$ is simply connected, the global isothermal coordinate theorem gives a smooth diffeomorphism $\chi:\overline\Omega\to\overline{\widetilde\Omega}$, smooth up to the boundary, such that the transformed first principal tensor is conformal to the identity; see \cite[Theorem~3.4.16]{PSU23} and the classical references \cite{Ahl06,AIM09}. We write $x=\chi(X)$ and $\widetilde\Omega=\chi(\Omega)$. After composing $\chi$ with a positive dilation, we also assume $\operatorname{diam}(\widetilde\Omega)<2$. This normalization is used only to keep the translated noncritical CGO critical points outside the translated domain. In this coordinate,
	\begin{equation}\label{eq:A1-isothermal-section3}
		A_1(x)=\gamma(x)I,\quad \gamma(x)>0.
	\end{equation}
	
	To keep the affine and transformed coefficients separate, write $K_j^x=K_j\circ\chi^{-1}$ only in this paragraph, and let $A_{j,X}$ denote the affine-coordinate principal tensor. If $V(X)=v(\chi(X))$, then
	\begin{equation*}
		A_j(x)=D\chi(X)A_{j,X}(X)D\chi(X)^T,\quad X=\chi^{-1}(x).
	\end{equation*}
	The divergence transformation gives
	\begin{equation*}
		K_j(X)^{-1}\operatorname{div}_X(K_j(X)A_{j,X}\nabla_XV)=\widehat K_j(x)^{-1}\operatorname{div}_x(\widehat K_j(x)A_j(x)\nabla_xv),
	\end{equation*}
	where $\widehat K_j=K_j^x/m_\chi$ and $m_\chi(x)=|\det D\chi(\chi^{-1}(x))|$. Thus the conductivity weight in the isothermal coordinate is $\widehat K_j$, not $K_j^x$. After this paragraph we again suppress the superscript $x$.
	
	The density identity becomes
	\begin{equation*}
		\rho=(\det DJ)\frac{\widehat K_2\circ J}{\widehat K_1},
	\end{equation*}
	where $DJ$ and $\det DJ$ are computed in the $x$-coordinate. From this point on,
	\begin{equation*}
		q=\log\frac{\widehat K_1}{\widehat K_2\circ J},\quad \theta=\log\det DJ-q,
	\end{equation*}
	and $\rho=e^\theta$. The curvature ratio in the tensorial drift identity is still the unweighted ratio. In the present coordinate,
	\begin{equation}\label{eq:unweighted-weighted-q-relation}
		q_{\rm aff}=q+\log m_\chi-\log(m_\chi\circ J).
	\end{equation}
	
	The connection in \eqref{eq:J-q-equation-tensorial}, however, is still the pulled-back flat Euclidean connection; hence, its Christoffel symbols need not vanish in the $x$-coordinate. Dividing by the positive factor $\gamma$, the transformed drift identity becomes
	\begin{equation}\label{eq:J-q-equation-isothermal-covariant}
		\delta^{ij}\mathcal H^\alpha_{ij}=\partial_iJ^\alpha\partial_i\Big(q+\log m_\chi-\log(m_\chi\circ J)\Big),
	\end{equation}
	where
	\begin{equation}\label{eq:J-map-Hessian-recalled}
		\mathcal H^\alpha_{ij}=\partial_{ij}J^\alpha-\Gamma^k_{ij}(x)\partial_kJ^\alpha+\Gamma^\alpha_{\beta\gamma}(J(x))\partial_iJ^\beta\partial_jJ^\gamma .
	\end{equation}
	Here, $\Gamma$ denotes the Christoffel symbols of the pulled-back flat Euclidean connection, evaluated at the domain point $x$ in the second term and at the target point $J(x)$ in the third term.
	
	Equivalently, \eqref{eq:J-q-equation-isothermal-covariant} can be written with ordinary partial derivatives as
	\begin{equation}\label{eq:J-q-equation-isothermal}
		\Delta J^\alpha=\partial_iJ^\alpha\partial_iq+\mathfrak R^\alpha(x,J,DJ),
	\end{equation}
	where $\Delta=\delta^{ij}\partial_{ij}$ and
	\begin{equation}\label{eq:J-connection-remainder}
		\begin{split}
			\mathfrak R^\alpha(x,J,DJ)=\delta^{ij}\Gamma^k_{ij}(x)\partial_kJ^\alpha
			-\delta^{ij}\Gamma^\alpha_{\beta\gamma}(J(x))\partial_iJ^\beta\partial_jJ^\gamma
			+\partial_iJ^\alpha\partial_i\big(\log m_\chi-\log(m_\chi\circ J)\big).
		\end{split}
	\end{equation}
	The remainder $\mathfrak R^\alpha$ is smooth in $(x,J,DJ)$, contains no second derivatives of $J$, and satisfies
	\begin{equation}\label{eq:R-vanishes-identity}
		\mathfrak R^\alpha(x,\Id,I)=0.
	\end{equation}
	Indeed, the last term in \eqref{eq:J-connection-remainder} also vanishes at $(J,DJ)=(\Id,I)$. The term $\mathfrak R^\alpha$ is kept in the later partial derivative drift reduction. Since it contains no second derivatives of $J$, it does not change the second-order Hessian block extracted from the second linearization.
	
	Finally, we record the corresponding pullback formulae for first and second jets. If
	$\widetilde v=v\circ J$, then $\nabla_x\widetilde v=DJ^T(\nabla_yv)\circ J$ and
	\begin{equation}\label{eq:covariant-jet-pullback-forward}
		\nabla_x^2\widetilde v=DJ^T\bigl((\nabla_y^2v)\circ J\bigr)DJ+\bigl((\partial_\alpha v)\circ J\bigr)\mathcal H^\alpha .
	\end{equation}
	Equivalently,
	\begin{equation}\label{eq:jet-pullback}
		(\nabla_yv)\circ J=DJ^{-T}\nabla_x\widetilde v,\quad
		(\nabla_y^2v)\circ J=DJ^{-T}\bigl(\nabla_x^2\widetilde v-(DJ^{-T}\nabla_x\widetilde v)_\alpha\mathcal H^\alpha\bigr)DJ^{-1}.
	\end{equation}
	
	\begin{remark}
		The chain-rule term detected by the second linearization is the covariant Hessian $\mathcal H^\alpha=(\nabla dJ)^\alpha$. In the original Euclidean affine coordinate, this is simply $D^2J^\alpha$, while in the global isothermal coordinate it is given by \eqref{eq:J-map-Hessian-recalled}.
	\end{remark}

	\section{Analysis of the second linearization}
	\label{sec:second-linearization}
	
	We next derive the second identity used to detect the remaining gauge. Fix $\varphi\in\mathcal U$ and write $u_{j,0}=u_\varphi^{(j)}$ for the corresponding background solutions. All first linearized operators are taken in logarithmic normalization, so in the original Euclidean coordinate
	\begin{equation*}
		L_jv=A_j^{ab}\partial_{ab}v+B_j^a\partial_av,\quad A_j=(D^2u_{j,0})^{-1},\quad B_j^a=-\frac{4\partial_au_{j,0}}{1+|\nabla u_{j,0}|^2}.
	\end{equation*}
	
	By Proposition~\ref{thm:fixed-background-gauge} and Corollary~\ref{cor:boundary-first-jet-gauge}, there is a boundary-fixing gauge pair $(J_\varphi,\rho_\varphi)$ such that
	\begin{equation}\label{eq:section4-principal-gauge-original}
		DJ_\varphi A_1DJ_\varphi^T=\rho_\varphi A_2\circ J_\varphi,\quad L_1(f\circ J_\varphi)=\rho_\varphi(L_2f)\circ J_\varphi\quad \text{for all smooth }f,
	\end{equation}
	and
	\begin{equation}\label{eq:section4-boundary-jet-original}
		J_\varphi|_{\partial\Omega}=\Id,\quad DJ_\varphi=I\quad\text{on }\partial\Omega.
	\end{equation}
	
	\subsection{The fixed isothermal coordinate}
	
	We keep the global isothermal coordinate introduced in
	Subsection~\ref{subsec:drift-mismatch-q-equation}. Thus $x=\chi(X)$, the first
	principal tensor satisfies
	\[
	A_1(x)=\gamma(x)I,\quad \gamma(x)>0,
	\]
	and all objects below are transformed by the same fixed coordinate change $\chi$.
	We write
	\[
	J=\chi\circ J_\varphi\circ\chi^{-1},\quad
	\rho=\rho_\varphi\circ\chi^{-1}.
	\]
	Then the transformed intertwining identity and the transformed principal gauge
	identity have the same form as before. Moreover, 
	\begin{equation}\label{eq:identity_J_wt_Omega}
		J=\Id,\quad DJ=I \quad \text{on }\partial\widetilde\Omega.
	\end{equation}
	
	The connection $\nabla$ used below is the pullback of the original Euclidean connection in the $X$-variables. Thus, if $U(X)$ is the original function and $u(x)=U(\chi^{-1}(x))$, then $\nabla^2u$ is the original Euclidean Hessian written in the $x$-coordinate, and the transformed principal tensor is $A_j=(\nabla^2u_{j,0})^{-1}$.

	The transformed logarithmic equation is written using this covariant Hessian. Indeed, $D_X^2U=(D\chi)^T(\nabla^2u)D\chi$, so $\det D_X^2U=(\det D\chi)^2\det(\nabla^2u)$. The positive factor
	$(\det D\chi)^2$ is independent of $u$ and is absorbed into the fixed zeroth-order term in the transformed logarithmic equation.

	Let $\nabla$ denote the flat Euclidean connection written in the global isothermal coordinate. For a scalar function $f$, write
	\begin{equation*}
		(\nabla^2f)_{ij}:=\partial_{ij}f-\Gamma^k_{ij}\partial_kf,
	\end{equation*}
	where $\Gamma^k_{ij}$ are the Christoffel symbols of this flat connection in the $x$-coordinate. For the map $J$, define its covariant Hessian by
	\begin{equation}\label{eq:map-covariant-Hessian}
		\mathcal H^\alpha_{ij}:=\partial_{ij}J^\alpha-\Gamma^k_{ij}\partial_kJ^\alpha
		+\Gamma^\alpha_{\beta\gamma}(J)\partial_iJ^\beta\partial_jJ^\gamma.
	\end{equation}
	Equivalently, $\mathcal H^\alpha=(\nabla dJ)^\alpha$ is the second covariant
	derivative of the map $J$ with respect to the same flat connection in the
	domain and target.
	
	Throughout this section, $\mathcal J^2v$ denotes the full two-jet of $v$.
	This notation is used to avoid confusion with the gauge map $J$.

	\subsection{The second identity}
	
	Let $d\mu$ be the smooth positive density in the global isothermal coordinate. All formal adjoints in this subsection are taken with respect to $d\mu$.
	
	The fixed-gauge Green identity is the same operator-level second-variation identity as in \cite[Section~7.1]{LL2025IP_Monge_Ampere}. The CGO construction used later comes from \cite[Section~5]{LL2025IP_Monge_Ampere}. Once the first-linearized gauge is fixed, the second equation is pulled back by the same boundary-fixing map $J$ and tested against a solution of the adjoint first-linearized equation. The boundary terms cancel because the nonlinear DN maps agree after differentiating twice in the two boundary directions, the mixed second variation has zero Dirichlet trace, and $J=\Id$, $DJ=I$ on the boundary.
	
	Only the quadratic source changes from the Monge--Amp\`ere calculation. In the present logarithmic normalization it is the mixed second variation of the transformed prescribed curvature equation. Thus the Green identity has the same form as in \cite[Section~7.1]{LL2025IP_Monge_Ampere}, while the quadratic form $\mathscr Q_j$ is computed below.
	
	Let $X=(X^1,X^2)$ be the original Euclidean coordinate and let $x=(x^1,x^2)=\chi(X)$ be the global isothermal coordinate. If $U(X)$ is the original function and $u(x)=U(\chi^{-1}(x))$, then $\partial_{X^i}U=\partial_a u\,\partial_{X^i}x^a$. We write
	\begin{equation*}
		\mathbf g^{ab}(x)=\delta^{ij}\partial_{X^i}x^a\partial_{X^j}x^b,
	\end{equation*}
	so that $|\nabla_XU|^2=\mathbf g^{ab}\partial_au\partial_bu$. The connection $\nabla$ is the pullback of the flat Euclidean connection, and $(\nabla^2u)_{ab}=\partial_{ab}u-\Gamma^c_{ab}\partial_cu$.
	
	Let $\mathscr Q_j$ be the quadratic part in the two first variations of the full mixed second variation at $u_{j,0}$. With this convention, if $w$ is the mixed second variation of the solution, then $L_jw+\mathscr Q_j=0$. The next lemma gives the required formula.
	
	\begin{lemma}[Full transformed second variation]
		\label{lem:full-transformed-second-variation}
		Set
		\begin{equation*}
			\sigma_j:=1+\mathbf g^{ab}\partial_au_{j,0}\partial_bu_{j,0}.
		\end{equation*}
		Then the transformed logarithmic equation can be written in the form
		\begin{equation*}
			\log\det(\nabla^2u)-2\log(1+\mathbf g^{ab}\partial_au\partial_bu)+\beta_j(x)=0,
		\end{equation*}
		where $\beta_j$ is a fixed smooth function independent of $u$. For two
		variations $v$ and $\widetilde v$, the full mixed second variation at
		$u_{j,0}$ has quadratic part
		\begin{equation}\label{eq:full-transformed-Q-exact}
			\begin{split}
				\mathscr Q_j(\mathcal J^2v,\mathcal J^2\widetilde v)=
				-\operatorname{tr}(A_j\nabla^2vA_j\nabla^2\widetilde v)	-\frac{4}{\sigma_j}
				\mathbf g^{ab}\partial_av\partial_b\widetilde v	+\frac{8}{\sigma_j^2}
				\big(\mathbf g^{ab}\partial_au_{j,0}\partial_bv\big)
				\big(\mathbf g^{cd}\partial_cu_{j,0}\partial_d\widetilde v\big).
			\end{split}
		\end{equation}
		In particular, once the transformed equation is written using the covariant
		Hessian $\nabla^2$, there are no Hessian--gradient mixed terms in the full
		mixed second variation.
	\end{lemma}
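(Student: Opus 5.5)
The proof has two parts: first rewrite the equation in the coordinate $x=\chi(X)$, then differentiate it twice. The data are $U(X)=u(\chi(X))$ and the chain rule $\partial_{X^i}U=\partial_au\,\partial_{X^i}x^a$. These give $|\nabla_XU|^2=\mathbf g^{ab}\partial_au\partial_bu$. The covariant Hessian satisfies $D_X^2U=(D\chi)^T(\nabla^2u)D\chi$, because $\nabla$ is the pullback of the flat connection; this is exactly how the Christoffel symbols in $(\nabla^2u)_{ab}=\partial_{ab}u-\Gamma^c_{ab}\partial_cu$ are defined. Taking determinants gives
\[
\log\det D_X^2U=\log\det(\nabla^2u)+2\log|\det D\chi|.
\]
The logarithmic equation $\log\det D_X^2U-2\log(1+|\nabla_XU|^2)-\log K_j=0$ therefore becomes the stated form. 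Here $\beta_j=2\log|\det D\chi|\circ\chi^{-1}-\log(K_j\circ\chi^{-1})$, which does not depend on $u$.

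For the second variation, take a two-parameter family $u_{\varepsilon_1,\varepsilon_2}$ with first variations $v,\widetilde v$ and mixed second variation $w$. The map $u\mapsto\nabla^2u$ is linear with $x$-dependent coefficients, so the Hessian block contributes only the second derivative of $\log\det$ along linear directions. Since $\partial_\varepsilon\log\det M=\operatorname{tr}(M^{-1}\partial_\varepsilon M)$ and $\partial_\varepsilon M^{-1}=-M^{-1}(\partial_\varepsilon M)M^{-1}$, this part is
\[
A_j:\nabla^2w-\operatorname{tr}(A_j\nabla^2vA_j\nabla^2\widetilde v),
\]
where $A_j=(\nabla^2u_{j,0})^{-1}$. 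The gradient block is $G(p)=-2\log(1+\mathbf g^{ab}p_ap_b)$, which depends on $x$ only through the fixed tensor $\mathbf g$. Its mixed second derivative at $p=\nabla u_{j,0}$ is
\[
\partial_pG\cdot\nabla w+D_p^2G(\nabla v,\nabla\widetilde v).
\]
A direct computation gives $D_p^2G(\xi,\eta)=-\frac4{\sigma_j}\mathbf g^{ab}\xi_a\eta_b+\frac8{\sigma_j^2}(\mathbf g^{ab}\partial_au_{j,0}\xi_b)(\mathbf g^{cd}\partial_cu_{j,0}\eta_d)$.

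The terms linear in $w$ together form $L_jw$ in the transformed coordinate: the $-A_j^{ab}\Gamma^c_{ab}\partial_cw$ contribution from $\nabla^2 w$ combines with the transformed drift. This matches Lemma~\ref{lem:first-lin-formula} after the coordinate change. What remains is exactly $\mathscr Q_j$ in \eqref{eq:full-transformed-Q-exact}. The absence of Hessian--gradient cross terms then follows because the equation splits additively into a function of $\nabla^2u$ alone and a function of $\nabla u$ alone, so no mixed partial derivatives between the two blocks arise.

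The main point to check carefully is this splitting. In raw $x$-coordinates, $\nabla^2u$ contains the first-order term $\Gamma^c_{ab}\partial_cu$. Differentiating $\log\det$ then produces terms like $\operatorname{tr}(A_j\Gamma\cdot\nabla v\,A_j\nabla^2\widetilde v)$ if one expands in $\partial^2 u$ and $\partial u$ separately. I would avoid this by treating $\nabla^2$ as a single linear operator: then these contributions are already contained in $\operatorname{tr}(A_j\nabla^2vA_j\nabla^2\widetilde v)$, and the claimed absence holds precisely in the covariant formulation. I would also check the sign convention $L_jw+\mathscr Q_j=0$ against the $w$-linear part computed above.
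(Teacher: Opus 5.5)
Your proposal is correct and follows the paper's proof essentially step for step. You use the tensorial Hessian transformation $D_X^2U=(D\chi)^T(\nabla^2u)D\chi$ to get $\det D_X^2U=(\det D\chi)^2\det(\nabla^2u)$ and the $u$-independent $\beta_j$. You then take the mixed second derivative of $\log\det$ along linear directions of $\nabla^2u$ (the connection is fixed), and of $-2\log(1+\mathbf g^{ab}p_ap_b)$ in $p$. The only cosmetic difference is the family you differentiate along. The paper uses the affine family $u_{j,0}+tv+s\widetilde v$, so no $w$ appears. You use a general two-parameter family and split off the $w$-linear part as $L_jw$, which additionally confirms the sign convention $L_jw+\mathscr Q_j=0$.
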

	
	\begin{proof}
		We first compute the Hessian transformation. Since the covariant Hessian is tensorial, applying the chain rule to the second derivatives gives
		\begin{equation*}
			\partial_{X^iX^j}U
			=
			(\nabla^2u)_{ab}\partial_{X^i}x^a\partial_{X^j}x^b.
		\end{equation*}
		Indeed, the terms containing first derivatives of $u$ are exactly canceled by the Christoffel correction in the pulled-back flat connection.
		
		In matrix form, if $D\chi=(\partial_{X^i}x^a)_{a,i}$, then $D_X^2U=(D\chi)^T(\nabla^2u)D\chi$. Therefore, $\det D_X^2U=			(\det D\chi)^2\det(\nabla^2u)$.	The factor $(\det D\chi)^2$ is smooth, positive, and independent of $u$. Thus, the original logarithmic equation $\log\det D_X^2U-2\log(1+|\nabla_XU|^2)-\log K_j(X)=0$ becomes, in the $x$-coordinate,
		\begin{equation*}
			\log\det(\nabla^2u)-2\log(1+\mathbf g^{ab}\partial_au\partial_bu)+\beta_j(x)=0,
		\end{equation*}
		where $\beta_j(x)=2\log|\det D\chi(\chi^{-1}(x))|-\log K_j(\chi^{-1}(x))$. In particular, $\beta_j$ is a fixed smooth function independent of $u$. This also shows that the only $u$-dependent second derivative term in the transformed equation is $\log\det(\nabla^2u)$.
		
		We now compute the mixed second variation. Since the connection is fixed, it
		is independent of the solution. Hence, for $u(t,s)=u_{j,0}+tv+s\widetilde v$, $\nabla^2u(t,s)=\nabla^2u_{j,0}+t\nabla^2v+s\nabla^2\widetilde v$. Let
		\begin{equation*}
			M(t,s)=\nabla^2u_{j,0}+t\nabla^2v+s\nabla^2\widetilde v.
		\end{equation*}
		Then $\partial_t\log\det M(t,s)=\operatorname{tr}\big(M(t,s)^{-1}\nabla^2v\big)$. Differentiating in $s$ and evaluating at $t=s=0$ gives
		\begin{equation*}
			\partial_s\partial_t\log\det M(t,s)\big|_{t=s=0}=-\operatorname{tr}\big(A_j\nabla^2vA_j\nabla^2\widetilde v\big),
		\end{equation*}
		where $A_j=(\nabla^2u_{j,0})^{-1}$ in the transformed coordinate.
		
		It remains to compute the mixed second variation of the gradient term. Set
		\begin{equation*}
			S(u)=1+\mathbf g^{ab}\partial_au\partial_bu.
		\end{equation*}
		At $u_{j,0}$, we have 
		\begin{equation*}
			\begin{split}
				\partial_tS(u_{j,0}+tv)\big|_{t=0}=2\mathbf g^{ab}\partial_au_{j,0}\partial_bv\quad \text{and}\quad 		\partial_s\partial_tS(u_{j,0}+tv+s\widetilde v)\big|_{t=s=0}=2\mathbf g^{ab}\partial_av\partial_b\widetilde v.
			\end{split}
		\end{equation*}
		Since $\sigma_j=S(u_{j,0})$, we obtain
		\begin{equation*}
			\begin{split}
				\partial_s\partial_t \big[-2\log S(u_{j,0}+tv+s\widetilde v)\big]\big|_{t=s=0}&=-2
				\Big[\frac{2\mathbf g^{ab}\partial_av\partial_b\widetilde v}{\sigma_j}-
				\frac{\big(2\mathbf g^{ab}\partial_au_{j,0}\partial_bv\big)	\big(2\mathbf g^{cd}\partial_cu_{j,0}\partial_d\widetilde v\big)
				}{\sigma_j^2}\Big]\\
				&=-	\frac{4}{\sigma_j}	\mathbf g^{ab}\partial_av\partial_b\widetilde v
				+\frac{8}{\sigma_j^2}\big(\mathbf g^{ab}\partial_au_{j,0}\partial_bv\big)
				\big(\mathbf g^{cd}\partial_cu_{j,0}\partial_d\widetilde v\big).
			\end{split}
		\end{equation*}
		Combining the log-determinant and gradient contributions gives
		\eqref{eq:full-transformed-Q-exact}.
	\end{proof}
	
	We next derive the fixed-gauge second identity from the nonlinear boundary measurements. We include the boundary cancellation in the proof: the equality of the nonlinear DN maps is differentiated twice in the two boundary directions, and the relation \eqref{eq:identity_J_wt_Omega} is used to remove the boundary terms after pulling back the second equation.
	
	\begin{lemma}[The second integral identity]
		\label{lem:fixed-gauge-second-identity}
		Let $v^{(1)}$ and $v^{(2)}$ solve $L_1v^{(1)}=L_1v^{(2)}=0$ in $\widetilde\Omega$, and set $v_2^{(1)}=v^{(1)}\circ J^{-1}$, $v_2^{(2)}=v^{(2)}\circ J^{-1}$. Then $L_2v_2^{(1)}=L_2v_2^{(2)}=0$. If $v^*$ solves the formal adjoint
		equation $L_{1,d\mu}^*v^*=0$ in $\widetilde\Omega$, then
		\begin{equation}\label{eq:second-integral-log}
			\int_{\widetilde\Omega}v^*\big[ \mathscr Q_1(\mathcal J^2v^{(1)},\mathcal J^2v^{(2)})
			-\rho(\mathscr Q_2(\mathcal J^2v_2^{(1)},\mathcal J^2v_2^{(2)})\circ J)\big]\,d\mu=0.
		\end{equation}
		The identity is understood complex bilinearly when the three solutions are complex-valued.
	\end{lemma}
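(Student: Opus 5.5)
First, the claim $L_2v_2^{(k)}=0$ follows directly from the intertwining identity \eqref{eq:fixed-background-operator-intertwining}, transported to the isothermal coordinate. Apply it with $f=v_2^{(k)}$, so that $f\circ J=v^{(k)}$. This gives $0=L_1v^{(k)}=\rho\,(L_2v_2^{(k)})\circ J$. Since $\rho>0$ and $J$ is a diffeomorphism, $L_2v_2^{(k)}=0$. The boundary traces agree: by \eqref{eq:identity_J_wt_Omega}, $v_2^{(k)}|_{\partial\widetilde\Omega}=v^{(k)}|_{\partial\widetilde\Omega}=:h_k$.

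Next we produce the second variations. Working first with real-valued smooth data $h_1,h_2$, take the two-parameter family $\varphi+\varepsilon_1h_1+\varepsilon_2h_2\in\mathcal U$ and the solutions $u_j(\varepsilon)$. Differentiating once gives the first variations. By uniqueness for the Dirichlet problem of $L_j$ (maximum principle, no zeroth-order term), these are $v^{(k)}$ for $j=1$ and $v_2^{(k)}$ for $j=2$. Differentiating twice gives $w_j=\partial^2_{\varepsilon_1\varepsilon_2}u_j|_{0}$, which by Lemma~\ref{lem:full-transformed-second-variation} satisfies
\[
L_jw_j+\mathscr Q_j(\mathcal J^2v^{(1)}_j,\mathcal J^2v^{(2)}_j)=0,
\]
with $w_j|_{\partial\widetilde\Omega}=0$. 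Differentiating \eqref{DN map same} twice gives $\partial_\nu w_1=\partial_\nu w_2$ on the boundary, so $\nabla w_1=\nabla w_2$ there.

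Now pull back the second equation by $J$. Set $\widetilde w_2=w_2\circ J$ and apply \eqref{eq:fixed-background-operator-intertwining} again to get
\[
L_1\widetilde w_2=\rho\,(L_2w_2)\circ J=-\rho\,\mathscr Q_2(\cdots)\circ J.
\]
Subtracting, $\omega:=w_1-\widetilde w_2$ satisfies
\[
L_1\omega=-\bigl[\mathscr Q_1-\rho(\mathscr Q_2\circ J)\bigr].
\]
For the boundary data, $\omega=0$ on $\partial\widetilde\Omega$. Also $\nabla\widetilde w_2=DJ^T(\nabla w_2)\circ J=\nabla w_2$ there, using $J=\Id$ and $DJ=I$ on $\partial\widetilde\Omega$, so $\nabla\omega=0$ on the boundary. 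Multiply by $v^*$ and integrate against $d\mu$. Green's formula for $L_1$ relative to $d\mu$ gives
\[
\int v^*L_1\omega\,d\mu=\int\omega\,L^*_{1,d\mu}v^*\,d\mu+\text{boundary terms}.
\]
Every boundary term contains $\omega$ or $\nabla\omega$ (the conormal and drift terms), so all of them vanish. With $L^*_{1,d\mu}v^*=0$ this yields \eqref{eq:second-integral-log} for real $h_k$.

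Finally, the complex case follows from bilinearity. The integrand is bilinear in $(v^{(1)},v^{(2)})$ and linear in $v^*$, and the maps $v^{(k)}\mapsto v_2^{(k)}$ are linear. So one splits complex solutions into real and imaginary parts, each of which is again a solution. The main obstacle is smoothness up to the boundary of the CGO-type solutions needed for the trace argument. I would handle this by requiring $v^{(k)}\in C^\infty(\overline{\widetilde\Omega})$, so that they are determined by their traces $h_k$ and the construction above applies. Equivalently, one can argue by density if only $H^s$ regularity is available, since the integrand is continuous in $C^2$ norms.
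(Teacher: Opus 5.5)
Your proposal is correct and follows essentially the same route as the paper: the intertwining identity gives $L_2v_2^{(k)}=0$, and the mixed second variations are compared through $w_1-w_2\circ J$. That difference has vanishing trace and gradient on $\partial\widetilde\Omega$ by the twice-differentiated DN identity together with $J=\Id$, $DJ=I$ there, so Green's formula against $v^*$ followed by polarization gives \eqref{eq:second-integral-log}.

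One step you make explicit that the paper leaves implicit: you invoke uniqueness for the Dirichlet problem of $L_j$ to identify the first variations with $v^{(k)}$ and $v_2^{(k)}$. This is exactly what justifies the arguments of $\mathscr Q_1$ and $\mathscr Q_2$ in the identity.
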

	
	\begin{proof}
		Since $v^{(k)}=v_2^{(k)}\circ J$ for $k=1,2$, the fixed-gauge intertwining identity \eqref{eq:imported-gauge-intertwining} gives
		\begin{equation*}
			0=L_1v^{(k)}=L_1(v_2^{(k)}\circ J)=\rho(L_2v_2^{(k)})\circ J.
		\end{equation*}
		Since $\rho$ is positive, $L_2v_2^{(k)}=0$ in $\widetilde\Omega$.
		
		Let $h_k:=v^{(k)}|_{\partial\widetilde\Omega}$. Because $J=\Id$ on $\partial\widetilde\Omega$, $v_2^{(k)}|_{\partial\widetilde\Omega}=v^{(k)}\circ J^{-1}|_{\partial\widetilde\Omega}=h_k$. Hence, the first variations for the two equations correspond to the same boundary perturbations $h_1,h_2$. The $C^\infty$ smooth dependence of the solution map from Proposition~\ref{prop:wellposedness} justifies differentiating the nonlinear equation and the nonlinear DN map twice in these directions.
		
		Let $u_{j,s,t}$ be the $j$-th nonlinear solution with boundary value
		$\varphi+s h_1+t h_2$, and let $w_j=\partial_s\partial_tu_{j,s,t}|_{s=t=0}$.
		For $|s|+|t|$ sufficiently small, the boundary path remains in $\mathcal U$,
		since $\mathcal U$ is open. Since the boundary path is affine in $s$ and $t$,
		we have $w_j=0$ on $\partial\widetilde\Omega$. In the transformed logarithmic
		equation, the mixed second variations satisfy
		\begin{equation*}
			L_jw_j+\mathscr Q_j(\mathcal J^2v_j^{(1)},\mathcal J^2v_j^{(2)})=0.
		\end{equation*}
		Pulling the second equation back by $J$ and using the transformed fixed-gauge intertwining relation gives 
		\begin{equation*}
			L_1(w_2\circ J)=\rho(L_2w_2)\circ J=-\rho(\mathscr Q_2(\mathcal J^2v_2^{(1)},\mathcal J^2v_2^{(2)})\circ J).
		\end{equation*}
		Subtracting this pulled-back equation from the first equation gives
		\begin{equation*}
			L_1(w_1-w_2\circ J)+\mathscr Q_1(\mathcal J^2v^{(1)},\mathcal J^2v^{(2)})-\rho(\mathscr Q_2(\mathcal J^2v_2^{(1)},\mathcal J^2v_2^{(2)})\circ J)=0.
		\end{equation*}
		
		We check the boundary cancellation. Both $w_1$ and $w_2$ have zero Dirichlet data, and since $J=\Id$ on $\partial\widetilde\Omega$, also $w_2\circ J=0$ on $\partial\widetilde\Omega$. The equality of the nonlinear DN maps gives, for the two-parameter boundary value $\varphi+s h_1+t h_2$, $\partial_\nu u_{1,s,t}=\partial_\nu u_{2,s,t}$ on $\partial\Omega$. Differentiating this identity in $s$ and $t$ at $(s,t)=(0,0)$ gives
		\begin{equation*}
			\partial_\nu w_1=\partial_\nu w_2
			\quad\text{on }\partial\Omega
		\end{equation*}
		in the original Euclidean coordinates. Since $w_1=w_2=0$ on the boundary, their tangential derivatives also vanish there. Hence, the full first boundary jets of $w_1$ and $w_2$ agree in the original coordinates. Applying the same fixed coordinate change to both equations preserves equality of these first boundary jets in the transformed coordinate.
		
		Finally, since $w_2$ is a scalar function of the target variable and $J=J(x)$, the chain rule gives $\partial_i(w_2\circ J)=((\partial_\alpha w_2)\circ J)\partial_iJ^\alpha$. Equivalently, if gradients are written as column vectors, then $\nabla(w_2\circ J)=DJ^T(\nabla w_2)\circ J$. Using $DJ=I$ on $\partial\widetilde\Omega$, we obtain
		\begin{equation*}
			\nabla(w_2\circ J)
			=(\nabla w_2)\circ J
			=\nabla w_2
			\quad
			\text{on }\partial\widetilde\Omega,
		\end{equation*}
		because $J=\Id$ on $\partial\widetilde\Omega$. Since the full first boundary jets of $w_1$ and $w_2$ agree in the transformed coordinate, it follows that $\nabla(w_1-w_2\circ J)=0$ on $\partial\widetilde\Omega$. Therefore, $w_1-w_2\circ J$ has zero trace and zero first boundary jet. The Green identity for $L_1$ and $L_{1,d\mu}^*$ has boundary terms involving only the trace and first derivatives of this difference, and all these boundary terms vanish.
		
		Testing the equation for $w_1-w_2\circ J$ against $v^*$ and integrating by parts with respect to $d\mu$, the interior term involving $L_{1,d\mu}^*v^*$ vanishes. This proves \eqref{eq:second-integral-log} for real-valued solutions.
		
		The complex-valued case follows by applying the real identity to real and imaginary parts and polarizing. The pairing is the complex-bilinear distributional pairing $\int v^*(\cdot)\,d\mu$, not the Hermitian $L^2$ pairing, so no conjugation is introduced.
	\end{proof}
	
	\begin{lemma}[Full pulled-back quadratic difference]\label{lem:full-pulled-back-Q}
		Let $N=DJ$, $\eta_\alpha=\partial_\alpha v_2^{(1)}\circ J$, and $\xi_\alpha=\partial_\alpha v_2^{(2)}\circ J$. Then
		$\nabla v^{(1)}=N^T\eta$ and $\nabla v^{(2)}=N^T\xi$.
		Moreover,
		\begin{equation*}
			\nabla^2v_2^{(1)}\circ J=N^{-T}\big(\nabla^2v^{(1)}-\eta_\alpha\mathcal H^\alpha\big)N^{-1},\quad
			\nabla^2v_2^{(2)}\circ J=N^{-T}\big(\nabla^2v^{(2)}-\xi_\alpha\mathcal H^\alpha\big)N^{-1}.
		\end{equation*}
		Consequently, the full quadratic difference in
		\eqref{eq:second-integral-log} can be written pointwise as
		\begin{equation}\label{eq:full-pulled-back-Q-difference}
			\mathscr Q_1(\mathcal J^2v^{(1)},\mathcal J^2v^{(2)})-\rho(\mathscr Q_2(\mathcal J^2v_2^{(1)},\mathcal J^2v_2^{(2)})\circ J)=-\gamma^2\big(\mathcal P_{\mathrm{HH}}+\mathcal P_{\nabla\nabla}\big),
		\end{equation}
		where
		\begin{equation}\label{eq:P-HH-exact}
			\mathcal P_{\mathrm{HH}}=\operatorname{tr}(\nabla^2v^{(1)}\nabla^2v^{(2)})-\rho^{-1}\operatorname{tr}\big((\nabla^2v^{(1)}-\eta_\alpha\mathcal H^\alpha)(\nabla^2v^{(2)}-\xi_\beta\mathcal H^\beta)\big),
		\end{equation}
		and
		\begin{equation}\label{eq:P-gradient-exact}
			\mathcal P_{\nabla\nabla}=-\gamma^{-2}\big[\mathcal G_1(\nabla v^{(1)},\nabla v^{(2)})-\rho(\mathcal G_2\circ J)(\eta,\xi)\big].
		\end{equation}
		Here,
		\begin{equation}\label{eq:Gj-gradient-bilinear}
			\mathcal G_j(p,\ell)=-\frac{4}{\sigma_j}\mathbf g^{ab}p_a\ell_b+\frac{8}{\sigma_j^2}\big(\mathbf g^{ab}\partial_au_{j,0}p_b\big)\big(\mathbf g^{cd}\partial_cu_{j,0}\ell_d\big).
		\end{equation}
		The notation $(\mathcal G_2\circ J)(\eta,\xi)$ means that all coefficient functions appearing in $\mathcal G_2$ are evaluated at $J(x)$ and then the covectors $\eta(x)$ and $\xi(x)$ are inserted into the resulting bilinear form.
	\end{lemma}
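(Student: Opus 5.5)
The plan is a direct pointwise computation in three steps: the first-jet relations, the second-jet relations, and then substitution of both into the quadratic forms $\mathscr Q_1$ and $\mathscr Q_2$ from Lemma~\ref{lem:full-transformed-second-variation}.

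\textbf{Jet relations.} Since $v^{(k)}=v_2^{(k)}\circ J$, the chain rule gives $\partial_i v^{(1)}=\eta_\alpha\partial_iJ^\alpha$, that is, $\nabla v^{(1)}=N^T\eta$, and likewise $\nabla v^{(2)}=N^T\xi$. For the Hessians I will use the covariant pullback formula \eqref{eq:covariant-jet-pullback-forward}, which holds for the flat connection written in the isothermal coordinate because $\mathcal H^\alpha$ is defined with exactly those Christoffel symbols in \eqref{eq:map-covariant-Hessian}. It gives
\[
\nabla^2v^{(1)}=N^T\big((\nabla^2v_2^{(1)})\circ J\big)N+\eta_\alpha\mathcal H^\alpha .
\]
Since $J$ is a diffeomorphism, $N$ is invertible, so solving for $(\nabla^2v_2^{(1)})\circ J$ yields the stated formula. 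The same argument with $\xi$ handles $v^{(2)}$.

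\textbf{Hessian block.} From \eqref{eq:full-transformed-Q-exact}, the Hessian part of $\mathscr Q_1$ is $-\operatorname{tr}(A_1\nabla^2v^{(1)}A_1\nabla^2v^{(2)})$. With $A_1=\gamma I$ by \eqref{eq:A1-isothermal-section3}, this becomes $-\gamma^2\operatorname{tr}(\nabla^2v^{(1)}\nabla^2v^{(2)})$.

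For $\mathscr Q_2$, write $P=\nabla^2v^{(1)}-\eta_\alpha\mathcal H^\alpha$ and $R=\nabla^2v^{(2)}-\xi_\beta\mathcal H^\beta$. The principal gauge relation \eqref{eq:principal-gauge-rho-reduction} gives
\[
A_2\circ J=\rho^{-1}NA_1N^T=\rho^{-1}\gamma NN^T .
\]
Substituting this and the Hessian relations from the first step, then using cyclicity of the trace,
\[
\operatorname{tr}\big((A_2\circ J)N^{-T}PN^{-1}(A_2\circ J)N^{-T}RN^{-1}\big)=\rho^{-2}\gamma^2\operatorname{tr}(PR).
\]
Multiplying by $-\rho$ turns the Hessian part of $-\rho\,\mathscr Q_2\circ J$ into $+\rho^{-1}\gamma^2\operatorname{tr}(PR)$. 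Adding the $\mathscr Q_1$ contribution gives exactly $-\gamma^2\mathcal P_{\mathrm{HH}}$.

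\textbf{Gradient block.} The remaining terms of $\mathscr Q_1$ are $\mathcal G_1(\nabla v^{(1)},\nabla v^{(2)})$ by definition \eqref{eq:Gj-gradient-bilinear}. The remaining terms of $\mathscr Q_2\circ J$ are $\mathcal G_2$ with coefficients evaluated at $J(x)$ and gradients $\eta,\xi$, i.e.\ $(\mathcal G_2\circ J)(\eta,\xi)$. Their difference is therefore $\mathcal G_1(\nabla v^{(1)},\nabla v^{(2)})-\rho(\mathcal G_2\circ J)(\eta,\xi)$, which equals $-\gamma^2\mathcal P_{\nabla\nabla}$ by the definition \eqref{eq:P-gradient-exact}.

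The only real obstacle is justifying the covariant pullback formula in the isothermal coordinate, since the Christoffel terms must cancel correctly. I would verify it by noting that both sides are tensorial and reduce to the ordinary chain rule in the original affine coordinate, where all $\Gamma$ vanish and $\mathcal H^\alpha=D^2J^\alpha$.
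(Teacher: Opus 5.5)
Your proposal is correct and follows the paper's proof essentially step for step: the chain rule for the gradients, the covariant pullback identity solved for $(\nabla^2 v_2^{(k)})\circ J$, the substitution $A_2\circ J=\rho^{-1}\gamma NN^T$ that collapses the Hessian block to $\rho^{-2}\gamma^2\operatorname{tr}(PR)$, and the gradient block matching \eqref{eq:P-gradient-exact} by definition. The step you flag as the main obstacle is, as in the paper, a two-line direct computation needing no appeal to tensoriality or flatness: expand $\partial_{ij}(v_2\circ J)$ by the chain rule, subtract $\Gamma^k_{ij}(x)\partial_k v^{(1)}$, and add and subtract $\Gamma^\gamma_{\alpha\beta}(J)(\partial_\gamma v_2)(J)\partial_iJ^\alpha\partial_jJ^\beta$; the first-order terms then assemble into $\eta_\gamma\mathcal H^\gamma_{ij}$ by the definition \eqref{eq:map-covariant-Hessian}.
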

	
	\begin{proof}
		We first prove the gradient and Hessian transformation rules. Since $v^{(1)}=v_2^{(1)}\circ J$ and $v^{(2)}=v_2^{(2)}\circ J$, the chain rule gives $\partial_iv^{(1)}=(\partial_\alpha v_2^{(1)}\circ J)\partial_iJ^\alpha$ and $\partial_iv^{(2)}=(\partial_\alpha v_2^{(2)}\circ J)\partial_iJ^\alpha$. In matrix form, this is $\nabla v^{(1)}=N^T\eta$ and $\nabla v^{(2)}=N^T\xi$.
		
		We next differentiate once more. For the first variation,
		\begin{equation*}
			\partial_{ij}v^{(1)}=(\partial_{\alpha\beta}v_2^{(1)}\circ J)\partial_iJ^\alpha\partial_jJ^\beta+(\partial_\alpha v_2^{(1)}\circ J)\partial_{ij}J^\alpha.
		\end{equation*}
		Subtracting the domain Christoffel term and adding the target Christoffel contribution inside the pulled-back Hessian gives
		\begin{equation*}
			(\nabla^2v^{(1)})_{ij}=(\nabla^2v_2^{(1)})_{\alpha\beta}\circ J\,\partial_iJ^\alpha\partial_jJ^\beta+\eta_\alpha\mathcal H^\alpha_{ij}.
		\end{equation*}
		Equivalently, $\nabla^2v^{(1)}=N^T(\nabla^2v_2^{(1)}\circ J)N+\eta_\alpha\mathcal H^\alpha$. Solving for the pulled-back Hessian gives $\nabla^2v_2^{(1)}\circ J=N^{-T}\big(\nabla^2v^{(1)}-\eta_\alpha\mathcal H^\alpha\big)N^{-1}$. 
		The same calculation for the second variation gives $\nabla^2v^{(2)}=N^T(\nabla^2v_2^{(2)}\circ J)N+\xi_\alpha\mathcal H^\alpha$, and hence, 
		\begin{equation*}
			\nabla^2v_2^{(2)}\circ J=N^{-T}\big(\nabla^2v^{(2)}-\xi_\alpha\mathcal H^\alpha\big)N^{-1}.
		\end{equation*}
		
		For the Hessian--Hessian part, use $A_1=\gamma I$. Since $A_1=\gamma I$, the Hessian--Hessian part of $\mathscr Q_1$ is $-\gamma^2\operatorname{tr}(\nabla^2v^{(1)}\nabla^2v^{(2)})$. The transformed principal gauge relation gives $A_2\circ J=\rho^{-1}NA_1N^T$. Therefore,
		\begin{equation*}
			N^{-1}(A_2\circ J)N^{-T}=\rho^{-1}A_1=\rho^{-1}\gamma I.
		\end{equation*}
		Using the two Hessian transformation formulae above, the Hessian--Hessian part of $\rho(\mathscr Q_2\circ J)$ is
		\begin{equation*}
			-\rho(\rho^{-1}\gamma)^2\operatorname{tr}\big((\nabla^2v^{(1)}-\eta_\alpha\mathcal H^\alpha)(\nabla^2v^{(2)}-\xi_\beta\mathcal H^\beta)\big).
		\end{equation*}
		Hence the Hessian--Hessian part of $\mathscr Q_1-\rho(\mathscr Q_2\circ J)$ is $-\gamma^2\mathcal P_{\mathrm{HH}}$, with $\mathcal P_{\mathrm{HH}}$ defined by \eqref{eq:P-HH-exact}.
		
		It remains to compute the gradient--gradient part. By Lemma~\ref{lem:full-transformed-second-variation}, the gradient-dependent part of $\mathscr Q_j$ is the bilinear form $\mathcal G_j$ in \eqref{eq:Gj-gradient-bilinear}. Thus the first equation contributes $\mathcal G_1(\nabla v^{(1)},\nabla v^{(2)})$. For the pulled-back second equation, the gradients of $v_2^{(1)}$ and $v_2^{(2)}$ at the target point $J(x)$ are exactly $\eta$ and $\xi$. Therefore, the pulled-back second gradient contribution is $\rho(\mathcal G_2\circ J)(\eta,\xi)$.
		
		Combining the Hessian and gradient contributions gives
		\begin{equation*}
			\mathscr Q_1(\mathcal J^2v^{(1)},\mathcal J^2v^{(2)})-\rho(\mathscr Q_2(\mathcal J^2v_2^{(1)},\mathcal J^2v_2^{(2)})\circ J)=-\gamma^2\mathcal P_{\mathrm{HH}}+\mathcal G_1(\nabla v^{(1)},\nabla v^{(2)})-\rho(\mathcal G_2\circ J)(\eta,\xi).
		\end{equation*}
		By the definition \eqref{eq:P-gradient-exact}, the right-hand side is exactly $-\gamma^2\big(\mathcal P_{\mathrm{HH}}+\mathcal P_{\nabla\nabla}\big)$. This proves \eqref{eq:full-pulled-back-Q-difference}.
	\end{proof}
	
	Using the equality of the nonlinear DN maps and multiplying the second integral identity by $-1$, we obtain
	\begin{equation}\label{eq:second-integral-PHH-Pgrad}
		\int_{\widetilde\Omega}v^*\gamma^2
		\big(\mathcal P_{\mathrm{HH}}+\mathcal P_{\nabla\nabla}\big)\,d\mu=0.
	\end{equation}

	\section{Asymptotic analysis of the second integral identity}
	\label{sec:second-asymptotics}
	
	We now test \eqref{eq:second-integral-PHH-Pgrad} with CGO solutions. The purpose of this section is only to extract the residual equation; the gauge is not eliminated here.
	
	Set $\mathbf r=1-\rho^{-1}$ and $M^\alpha=\rho^{-1}\mathcal H^\alpha$. Then $\mathbf r$ measures the density defect and $M^\alpha$ is the normalized covariant Hessian of the boundary-fixing gauge. From \eqref{eq:P-HH-exact},
	\begin{equation*}
		\mathcal P_{\mathrm{HH}}=\mathbf r\,\operatorname{tr}(\nabla^2v^{(1)}\nabla^2v^{(2)})+\xi_\beta\operatorname{tr}(\nabla^2v^{(1)}M^\beta)+\eta_\alpha\operatorname{tr}(M^\alpha\nabla^2v^{(2)})-\rho^{-1}\eta_\alpha\xi_\beta\operatorname{tr}(\mathcal H^\alpha\mathcal H^\beta).
	\end{equation*}
	We write $I_h=I_{\mathbf r}+I_{\xi M}+I_{\eta M}+I_{\eta\xi}+I_{\nabla\nabla}$, with the five terms defined by this decomposition. The identity gives $I_h=0$. We multiply it by $h$ and pass to the limit.
	
	The calculation is first done with the critical point at the origin. It is then translated to an arbitrary center $a\in\widetilde\Omega$, uniformly for $a$ in compact subsets. The scalar term produces the block $8\overline\partial^2(C_a\mathbf r)$, and the terms linear in $M$ produce $-2\overline\partial(C_a\kappa_\alpha\mathcal A(M^\alpha))$ plus lower-order terms. The critical CGO remainder also leaves Cauchy-transform terms. These terms are kept, not estimated away, and are localized only in the unique continuation argument.

	\subsection{The CGO families and the critical estimates}
	\label{subsec:three-phase-CGO-input}
	
	Throughout the CGO asymptotics, we identify the local coordinate plane with
	$\mathbb C$ by $z=x_1+\mathsf i x_2$ and use
	\[
	\partial=\frac12(\partial_{x_1}-\mathsf i\partial_{x_2}),
	\quad
	\overline\partial=\frac12(\partial_{x_1}+\mathsf i\partial_{x_2}).
	\]
	All translated expressions use the same convention with $z$ replaced by $z-a$.

	We use the CGO construction and the critical estimates from \cite[Section~5]{LL2025IP_Monge_Ampere}. The first principal coefficient is $A_1=\gamma I$ in the global isothermal coordinate. After multiplication by a nonzero scalar factor, the forward equation has the local form $-\Delta v+X\cdot\nabla v+Q_0v=0$, with $Q_0=0$ for the forward equation. The adjoint equation has the same two-dimensional CGO form, with possibly different lower-order coefficients. The three CGO solutions are therefore constructed separately, and their smooth prefactors need not be the same.

	The functions inserted into \eqref{eq:second-integral-PHH-Pgrad} are exact solutions of the forward first-linearized equation and of the adjoint equation in the whole transformed domain. The local cutoffs below are used only in the CGO parametrices, in the stationary phase expansions, and in the localized Cauchy inverse estimates. They are not inserted into the second integral identity. The residual equations are extracted first, and the localization of this residual system is carried out only in the unique continuation
	argument. The same convention is used for the mirror CGO family in Section~\ref{sec:second-asymptotics-mirror}.
	
	For a model equation in this class, let $A$ be the corresponding complex-valued one-form in the factorization, let $F_A$ be the gauge factor defined by $\overline\partial F_A=\mathsf iAF_A$, and let $Q$ be the zeroth-order coefficient in the associated factorization. We set
	\[
	V':=-|F_A|^2,\quad V:=\frac12 Q|F_A|^{-2}.
	\]
	In the notation of \cite[Section~5]{LL2025IP_Monge_Ampere}, the first Neumann source term in the critical remainder is $Va$, where $a$ is the leading critical amplitude. In the present normalization, this leading amplitude is absorbed into the smooth CGO prefactor $F_2$. Equivalently, we use the $a\equiv1$ convention for the critical remainder and write the first Neumann source term as $V$.
	
	We now fix the localized Cauchy inverse convention used throughout this
	section. Let $C_\partial$ denote the properly supported localized Cauchy inverse
	for $\partial$. We also write $\partial^{-1}=C_\partial$, and write
	$\partial^{*-1}$ for the corresponding localized adjoint inverse. Thus,
	\begin{equation*}
		\partial C_\partial f=f+Sf,
	\end{equation*}
	where $S$ is a smoothing cutoff operator. All such smoothing cutoff errors are
	included in the smoothing cutoff remainders below. When the oscillatory phase
	$\psi=x_1x_2/2$ is present, we use the conjugated inverses
	\begin{equation*}
		\partial_\psi^{-1}f
		:=
		C_\partial\big(e^{-\mathsf i x_1x_2/h}f\big),
		\quad
		\partial_\psi^{*-1}f
		:=
		\partial^{*-1}\big(e^{\mathsf i x_1x_2/h}f\big).
	\end{equation*}
	The same convention is used after translating the critical point. In particular, identities involving $\partial C_\partial$, $\partial_\psi^{-1}$, and adjoints are always understood modulo the smoothing cutoff errors already included in the smoothing cutoff remainder terms.
	
	By the dilation normalization fixed when the global isothermal coordinate was chosen, after translating any chosen critical center to the origin, the points $\pm4$ lie outside the translated domain. In particular, the noncritical phases below have no critical points on the support of the amplitudes.
	
	At the fixed center, translated to the origin, we use the phases
	\begin{equation*}
		\Phi_1(z)=z+\frac{z^2}{8},
		\quad
		\Phi_2(z)=-\frac{z^2}{4},
		\quad
		\Phi^*(z)=-z+\frac{z^2}{8}.
	\end{equation*}
	Then
	\begin{equation*}
		\Phi^*+\Phi_1+\overline{\Phi_2}=\frac{z^2-\overline z^2}{4}
		=\mathsf i x_1x_2.
	\end{equation*}
	The phase derivatives used below are
	\begin{equation*}
		\partial\Phi_1=1+\frac z4,
		\quad
		\partial^2\Phi_1=\frac14,
		\quad
		\overline\partial\overline{\Phi_2}=-\frac{\overline z}{2},
		\quad
		\overline\partial^2\overline{\Phi_2}=-\frac12,
		\quad 
		\partial\Phi^*=-1+\frac z4,
		\quad
		\partial^2\Phi^*=\frac14.
	\end{equation*}
	
	By the CGO construction in \cite{guillarmou2011calderon, LL2025IP_Monge_Ampere}, there are solutions of the first linearized equations and the adjoint equation of the form
	\begin{equation*}
		v^*
		=
		F_*e^{\Phi^*/h}(1+r_*),
		\quad
		v^{(1)}
		=
		F_1e^{\Phi_1/h}(1+r_1),
		\quad
		v^{(2)}
		=
		F_2e^{\overline{\Phi_2}/h}(1+\widetilde r_2).
	\end{equation*}
	The amplitudes $F_*,F_1,F_2$ are smooth and nonvanishing near the critical
	point. We write $d\mu=\mu(x)\,dx$ and absorb $\mu$ into the compactly supported
	amplitudes in the local stationary phase calculations.

	More precisely, for each fixed critical center, we choose a cutoff $\vartheta\in C_0^\infty$ which is equal to $1$ in a small neighborhood of that center. In every use of the stationary phase below, the smooth amplitude is first multiplied by this cutoff. The part multiplied by $1-\vartheta$ has no critical point for the corresponding quadratic phase. Repeated integration by parts in the nonstationary phase variable shows that this part is $O(h^N)$ for any fixed $N$, after multiplication by any of the finitely many powers of $h^{-1}$ appearing in the smooth-amplitude local terms below. The same convention is
	used after translating the critical center, uniformly for centers in compact subsets of $\widetilde\Omega$.
	
	This cutoff convention is used only for the smooth-amplitude stationary phase
	terms. The terms containing the critical derivative of the CGO remainder are
	not discarded by this reduction; they are kept and evaluated separately below.

	The two remainders associated with the noncritical phases have complete
	asymptotic expansions. More precisely, for every $N$ and every $p\ge2$,
	\begin{equation*}
		r_*=hR_{*,h}+O_{W^{2,p}}(h^N),
		\quad
		r_1=hR_{1,h}+O_{W^{2,p}}(h^N),
	\end{equation*}
	where $R_{*,h}$ and $R_{1,h}$ are finite asymptotic sums with smooth
	coefficients.
	
	For the critical remainder, we use the conjugated localized inverses above and
	set
	\begin{equation*}
		\widetilde r_2=-\partial_\psi^{-1}(V'\widetilde s_2),
		\quad
		\widetilde s_2
		=\sum_{k=0}^{\infty}\widetilde T_h^k\partial_\psi^{*-1}(V),
		\quad
		\widetilde T_h
		=\partial_\psi^{*-1}V\partial_\psi^{-1}V'.
	\end{equation*}
	Consequently, modulo the smoothing cutoff errors included in the smoothing cutoff remainder terms,
	\begin{equation}\label{eq:critical-derivative-relation-section5}
		\partial\widetilde r_2=-e^{-\mathsf i x_1x_2/h}V'\widetilde s_2.
	\end{equation}
	The CGO construction gives the estimates
	\begin{equation}\label{eq:critical-basic-L2-bounds}
		\|\widetilde s_2\|_{L^2}=O(h^{1/2+\varepsilon}),
		\quad
		\|H_cD^2\widetilde r_2\|_{L^2}=O(h^{-1/2+\varepsilon})
	\end{equation}
	for every fixed cutoff $H_c\in C_0^2$, together with the standard localized
	Cauchy inverse bounds used in the construction.
	
	We shall repeatedly use the stationary phase expansion
	\begin{equation}\label{eq:stationary-phase-main}
		\frac1{2\pi h}\int_{\mathbb R^2}e^{\mathsf i x_1x_2/h}F(x)\,dx=
		F(0)+h(\overline\partial^2-\partial^2)F(0)+	O(h^2),
	\end{equation}
	for compactly supported smooth $F$ (for example, see \cite[Section 8]{LL2025IP_Monge_Ampere}).

	The next lemma is the additional oscillatory Cauchy estimate needed to control
	the $h^{-2}\widetilde r_2$ terms.

	\begin{lemma}[Oscillatory Cauchy inverse estimate]
		\label{lem:oscillatory-Cauchy-inverse-L2-section5}
		Let $a\in C_0^\infty(\mathbb R^2)$ be fixed and $E=e^{\mathsf i x_1x_2/h}$. For the localized Cauchy inverse used in the CGO construction,
		\begin{equation}\label{eq:oscillatory-Cauchy-inverse-L2-section5}
			\|\partial^{*-1}(Ea)\|_{L^2}
			\leq
			Ch|\log h|^{1/2}.
		\end{equation}
		The same estimate holds uniformly after translating the critical point to
		centers in compact subsets of $\widetilde\Omega$.
	\end{lemma}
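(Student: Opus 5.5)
We need to bound $\|\partial^{*-1}(Ea)\|_{L^2}$ where $E=e^{\mathsf i x_1x_2/h}$ and $a\in C_0^\infty$. The plan is to pass to the Fourier side and exploit the fact that the Fourier transform of $Ea$ is again a chirp of size $h$ in a neighborhood of the origin. The localized adjoint Cauchy inverse $\partial^{*-1}$ is, up to sign and a smoothing cutoff, the operator $\chi_1 \overline{C}\chi_2$. Here $\overline C$ is the convolution with $\frac{1}{\pi\overline z}$ (the inverse of $\overline\partial=-\partial^*$), and $\chi_1,\chi_2$ are compactly supported cutoffs. The smoothing cutoff errors are harmless, since they have smooth compactly supported kernels, and applying a smooth kernel to $Ea$ gives $O(h)$ by nonstationary-phase-type arguments (or at least $O(h)$ after one stationary phase). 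It therefore suffices to estimate $\|\chi_1 \overline C(\chi_2 a E)\|_{L^2}$. We absorb $\chi_2$ into $a$.

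\textbf{Step 1: Fourier representation.} The multiplier of $\overline C$ is $c/\overline{\zeta}$ up to constants, where $\zeta=\xi_1+\mathsf i\xi_2$, so it is homogeneous of degree $-1$. Compute $\widehat{Ea}(\xi)$. The phase $x_1x_2/h-x\cdot\xi$ has a single nondegenerate critical point $x=(h\xi_2,h\xi_1)$ with Hessian of determinant $-h^{-2}$. Hence $|\widehat{Ea}(\xi)|\le C h\,\langle\cdot\rangle$ uniformly, and more precisely $\widehat{Ea}(\xi)= 2\pi h\, e^{-\mathsf i h\xi_1\xi_2}a(h\xi_2,h\xi_1)+O(h^2)$, with rapid decay once $|h\xi|$ exceeds the support radius of $a$. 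Alternatively, since $|\widehat{Ea}|$ is the transform of a Gaussian-type chirp, we have the exact bound $|\widehat{Ea}(\xi)|\le h\,\|\,\cdot\,\|$ obtained from Plancherel for the chirp: $\widehat{E}$ has modulus $2\pi h$, and $\widehat{Ea}=\widehat E*\widehat a/(2\pi)^2$.

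\textbf{Step 2: split frequencies.} Write $\|\chi_1\overline C(Ea)\|_{L^2}\le \|\overline C(Ea)\|_{L^2(B_R)}$ and split $\widehat{Ea}$ into the regions $|\xi|\le 1$, $1\le|\xi|\le R_0/h$, and $|\xi|\ge R_0/h$.
\begin{itemize}
\item On the low region, use the localization. The low-frequency piece of $\overline C$ applied to a function of $L^1$ norm $O(1)$ gives something bounded on $B_R$, but we must gain $h$. We use $|\widehat{Ea}|\le Ch$ pointwise together with local integrability of $1/|\xi|$ in two dimensions. The function $\overline C(\cdot)$ restricted to $B_R$ is controlled by $\sup|\widehat{Ea}|\int_{|\xi|\le1}|\xi|^{-1}d\xi\lesssim h$ in $L^\infty$, hence in $L^2(B_R)$.
\item On the middle region, Plancherel gives $\big(\int_{1\le|\xi|\le R_0/h} h^2|\xi|^{-2}\,d\xi\big)^{1/2}=h(2\pi\log(R_0/h))^{1/2}$. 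This is precisely the source of the $|\log h|^{1/2}$.
\item On the high region, rapid decay of $\widehat{Ea}$ beyond $|\xi|\sim 1/h$ (nonstationary phase in $x$, with the decay rate uniform in $h$ after rescaling $\xi=\eta/h$) gives $\|\,|\xi|^{-1}\widehat{Ea}\|_{L^2(|\xi|\ge R_0/h)}\le h\|\widehat{Ea}\|_{L^2}\lesssim h$, since $\|Ea\|_{L^2}=\|a\|_{L^2}$.
\end{itemize}

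\textbf{Step 3: uniformity.} Translation of the center replaces $x_1x_2$ by $(x-a)_1(x-a)_2$. This is a translation conjugation, which commutes with the convolution $\overline C$ and changes the cutoffs only within a compact family. Hence the constants are uniform for centers in compact subsets of $\widetilde\Omega$.

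\textbf{Main obstacle.} The delicate point is the pointwise bound $|\widehat{Ea}(\xi)|\le Ch$ uniformly in $\xi$ in the middle region, together with decay beyond $R_0/h$. I would first prove this by writing $\widehat{Ea}=(2\pi)^{-2}\widehat E*\widehat a$ with $|\widehat E|=2\pi h$ exactly (a pure chirp), which gives $|\widehat{Ea}|\le h\|\widehat a\|_{L^1}/(2\pi)$. For the decay beyond $R_0/h$, I would use repeated integration by parts with the operator $(\nabla_x\Phi)/(\mathsf i|\nabla_x\Phi|^2)\cdot\nabla$, where $\nabla_x\Phi=(x_2/h-\xi_1,x_1/h-\xi_2)$ and $|\nabla_x\Phi|\gtrsim|\xi|$ on $\supp a$ once $|\xi|\ge R_0/h$.
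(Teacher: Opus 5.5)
Your proposal is correct and follows essentially the paper's route: pass to the Fourier side, prove the uniform bound $|\widehat{Ea}(\xi)|\le Ch$, and obtain the $|\log h|^{1/2}$ from $\int_{1\le|\xi|\lesssim h^{-1}}h^2|\xi|^{-2}\,d\xi$. The paper proves the uniform bound by completing the square and splitting into $|y_1|\le h$ and $|y_1|>h$, while your use of $|\widehat E|=2\pi h$ together with $\widehat{Ea}=(2\pi)^{-2}\widehat E*\widehat a$ is slicker. For the low and high frequencies, the paper invokes $H^{-1}\to L^2$ boundedness of the order $-1$ localized inverse plus nonstationary-phase decay, whereas you work with the homogeneous multiplier directly (an $L^\infty$ bound for $|\xi|\le1$, and the trivial Plancherel bound $|\xi|^{-1}\le h/R_0$ for $|\xi|\ge R_0/h$); both treatments are valid.
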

	
	\begin{proof}
		The localized Cauchy inverse is a properly supported operator of order $-1$,
		modulo a smoothing cutoff error. Hence, it is bounded from $H^{-1}$ to $L^2$.
		It suffices to prove
		\begin{equation}\label{eq:Hminusone-oscillatory-bound}
			\|Ea\|_{H^{-1}} \leq Ch|\log h|^{1/2}.
		\end{equation}
		We write
		\begin{equation*}
			\widehat{Ea}(\xi)=\int e^{\mathsf i x_1x_2/h-\mathsf ix\cdot\xi}a(x)\,dx=\int e^{\frac{\mathsf i}{h}(x_1x_2-hx\cdot\xi)}a(x)\,dx.
		\end{equation*}
		The phase $x_1x_2-hx\cdot\xi$ has the Hessian $\begin{pmatrix}0&1\\1&0\end{pmatrix}$,
		which is nondegenerate. We now justify the stationary phase bound uniformly
		in $\xi$. Completing the square gives $x_1x_2-hx\cdot\xi=(x_1-h\xi_2)(x_2-h\xi_1)-h^2\xi_1\xi_2$. After the change of variables $y_1=x_1-h\xi_2$, $y_2=x_2-h\xi_1$,	we obtain
		\begin{equation*}
			\widehat{Ea}(\xi)=e^{-\mathsf ih\xi_1\xi_2}\int e^{\mathsf iy_1y_2/h}a(y_1+h\xi_2,y_2+h\xi_1)\,dy.
		\end{equation*}
		Set $a_{h,\xi}(y):=a(y_1+h\xi_2,y_2+h\xi_1)$, then the $C^2$ norms of $a_{h,\xi}$ are bounded uniformly in $h$ and $\xi$, and its support is a translate of the fixed compact set $\supp(a)$. We split the integral into the regions $|y_1|\leq h$ and $|y_1|>h$. In the first region, the support in the $y_2$ variable has uniformly bounded length, and hence
		\begin{equation*}
			\bigg|\int_{|y_1|\leq h}e^{\mathsf iy_1y_2/h}a_{h,\xi}(y)\,dy \bigg|\leq Ch.
		\end{equation*}
		On the second region, for each fixed $y_1$ with $|y_1|>h$, we integrate twice
		by parts in $y_2$. Since $e^{\mathsf iy_1y_2/h}=\frac{h}{\mathsf iy_1}\partial_{y_2}e^{\mathsf iy_1y_2/h}$, and $a_{h,\xi}$ is compactly supported with uniformly bounded derivatives, we have
		\begin{equation*}
			\bigg|\int e^{\mathsf iy_1y_2/h}a_{h,\xi}(y_1,y_2)\,dy_2\bigg| \leq C\frac{h^2}{|y_1|^2}.
		\end{equation*}
		Therefore,
		\begin{equation*}
			\bigg|\int_{|y_1|>h}e^{\mathsf iy_1y_2/h}a_{h,\xi}(y)\,dy\bigg|
			\leq Ch^2\int_{|y_1|>h}\frac{dy_1}{|y_1|^2} \leq Ch.
		\end{equation*}
		Combining the two regions gives the uniform bound
		\begin{equation}\label{eq:fourier-Ea-uniform}
			|\widehat{Ea}(\xi)|\leq Ch.
		\end{equation}
		If $|\xi|\geq Ch^{-1}$ with $C$ sufficiently large depending on
		$\supp(a)$, then the phase has no critical point on
		$\supp(a)$. Combining this nonstationary integration by parts with the preceding one-variable oscillatory estimate gives, for every $N$,
		\begin{equation}\label{eq:fourier-Ea-large}
			|\widehat{Ea}(\xi)|
			\leq
			C_Nh(1+h|\xi|)^{-N}.
		\end{equation}
		Combining \eqref{eq:fourier-Ea-uniform} and
		\eqref{eq:fourier-Ea-large}, we get
		\begin{equation*}
			\begin{split}
				\|Ea\|_{H^{-1}}^2&=	\int(1+|\xi|^2)^{-1}|\widehat{Ea}(\xi)|^2\,d\xi\\
				&\leq Ch^2 \int_{|\xi|\leq Ch^{-1}}(1+|\xi|^2)^{-1}\,d\xi
				+C_Nh^2	\int_{|\xi|\geq Ch^{-1}}(1+|\xi|^2)^{-1}(1+h|\xi|)^{-2N}\,d\xi\\
				&\leq Ch^2|\log h|.
			\end{split}
		\end{equation*}
		This proves \eqref{eq:Hminusone-oscillatory-bound}, and hence
		\eqref{eq:oscillatory-Cauchy-inverse-L2-section5}. The translated estimate is
		identical after replacing $x_1x_2$ by the translated quadratic phase.
	\end{proof}
	
	We record the critical estimates in the forms needed below.
	
	\begin{lemma}[Critical remainder estimates]\label{lem:critical-remainder-reusable}
	Let $a\in C_0^\infty(\widetilde\Omega)$ be fixed. Then
	\begin{equation}\label{eq:critical-estimate-r2-hminusone}
		h\Big[\frac1h \int e^{\mathsf i x_1x_2/h}a(x)\widetilde r_2(x)\,dx\Big]=o(1),
	\end{equation}
	\begin{equation}\label{eq:critical-estimate-r2-hminustwo}
		h\Big[\frac1{h^2}\int e^{\mathsf i x_1x_2/h}a(x)\widetilde r_2(x)\,dx\Big]=o(1),
	\end{equation}
	and
	\begin{equation}\label{eq:critical-estimate-r2-z}
		h\Big[\frac1{h^2}\int e^{\mathsf i x_1x_2/h}z\,a(x)\widetilde r_2(x)\,dx\Big]=o(1),
	\end{equation}
	\begin{equation}\label{eq:critical-estimate-r2-zbar}
		h\Big[\frac1{h^2}\int e^{\mathsf i x_1x_2/h}\overline z\,a(x)\widetilde r_2(x)\,dx\Big]=o(1).
	\end{equation}
	Moreover,
	\begin{equation}\label{eq:critical-estimate-partial-r2}
		h\Big[\frac1h\int e^{\mathsf i x_1x_2/h}a(x)\partial\widetilde r_2(x)\,dx\Big]=o(1),
	\end{equation}
    and 
	\begin{equation}\label{eq:critical-estimate-barpartial-r2}
		h\Big[\frac1h\int e^{\mathsf i x_1x_2/h}a(x)\overline\partial\widetilde r_2(x)\,dx\Big]=o(1).
	\end{equation}
	In addition, for every smooth compactly supported amplitude $a$,
	\begin{equation}\label{eq:critical-estimate-barpartial-total}
		h
		\Big[
		\frac1{h^2}
		\int
		e^{\mathsf i x_1x_2/h}
		a(x)e^{-\overline\Phi_2/h}
		\overline\partial(F_2e^{\overline\Phi_2/h}\widetilde r_2)\,dx
		\Big]
		=o(1),
	\end{equation}
	and
	\begin{equation}\label{eq:critical-estimate-dbar2-total}
		h\Big[ \frac1h \int e^{\mathsf i x_1x_2/h}a(x)e^{-\overline\Phi_2/h}
		\overline\partial^2(F_2e^{\overline\Phi_2/h}\widetilde r_2)\,dx
		\Big]=o(1).
	\end{equation}
	The same conclusions hold uniformly after translating the critical point to
	centers in compact subsets of $\widetilde\Omega$.
\end{lemma}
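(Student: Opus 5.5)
The plan is to reduce every estimate to the two basic bounds in \eqref{eq:critical-basic-L2-bounds}, the derivative relation \eqref{eq:critical-derivative-relation-section5}, and Lemma~\ref{lem:oscillatory-Cauchy-inverse-L2-section5}, using duality for the Cauchy inverse. Write $E=e^{\mathsf i x_1x_2/h}$. Since $\widetilde r_2=-\partial_\psi^{-1}(V'\widetilde s_2)=-C_\partial(E^{-1}V'\widetilde s_2)$, we move the Cauchy inverse onto the test function:
\begin{equation*}
\int E\,a\,\widetilde r_2\,dx=-\int \partial^{*-1}(Ea)\,E^{-1}V'\widetilde s_2\,dx
\end{equation*}
modulo smoothing cutoff errors. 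By Cauchy--Schwarz, the oscillatory bound $\|\partial^{*-1}(Ea)\|_{L^2}\le Ch|\log h|^{1/2}$ and $\|\widetilde s_2\|_{L^2}=O(h^{1/2+\varepsilon})$, this integral is $O(h^{3/2+\varepsilon}|\log h|^{1/2})$. Hence \eqref{eq:critical-estimate-r2-hminusone} holds with a large margin, and \eqref{eq:critical-estimate-r2-hminustwo} holds since $h^{-1}\cdot h^{3/2+\varepsilon}|\log h|^{1/2}=o(1)$. For \eqref{eq:critical-estimate-r2-z} and \eqref{eq:critical-estimate-r2-zbar} we simply replace $a$ by $za$ or $\overline z a$, which are still smooth and compactly supported, so the same bound applies. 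The smoothing cutoff errors are handled by the smoothness of their kernels together with the $L^2$ bound on $\widetilde s_2$, and they give contributions of order $O(h^{1/2+\varepsilon})$ times the $L^1$ norm of the test function. This is already $o(1)$ after the prefactor $h\cdot h^{-2}$, provided one uses that the smoothing kernel composed with $Ea$ is $O(h^\infty)$ by nonstationary phase away from the origin, or $O(h)$ in $L^2$ in general.

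For \eqref{eq:critical-estimate-partial-r2} we insert \eqref{eq:critical-derivative-relation-section5}. The integrand becomes $-aV'\widetilde s_2$ because the oscillating factors cancel. The integral is then bounded by $\|a V'\|_{L^2}\|\widetilde s_2\|_{L^2}=O(h^{1/2+\varepsilon})$, which is $o(1)$ (with prefactor $h\cdot h^{-1}=1$). For \eqref{eq:critical-estimate-barpartial-r2} no such relation is available, so we integrate by parts: $\overline\partial$ falls on $Ea$, producing $E(\overline\partial a)+h^{-1}E\,(\overline\partial(x_1x_2)\mathsf i)\,a$. The first piece is handled by \eqref{eq:critical-estimate-r2-hminusone}. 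The second piece is an $h^{-1}$ times a term of the form $\int E\,x\text{-linear}\cdot a\,\widetilde r_2$, which is covered by the $z,\overline z$ estimates, since $\overline\partial(x_1x_2)$ is a linear combination of $z,\overline z$.

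For the total-derivative estimates we expand the derivatives:
\begin{equation*}
e^{-\overline\Phi_2/h}\overline\partial(F_2e^{\overline\Phi_2/h}\widetilde r_2)=(\overline\partial F_2+h^{-1}F_2\overline\partial\overline\Phi_2)\widetilde r_2+F_2\overline\partial\widetilde r_2,
\end{equation*}
with $\overline\partial\overline\Phi_2=-\overline z/2$. The first two pieces are controlled by \eqref{eq:critical-estimate-r2-hminustwo} and \eqref{eq:critical-estimate-r2-zbar}. The last piece carries an $h^{-2}$ weight, so the bound from \eqref{eq:critical-estimate-barpartial-r2} alone is insufficient. Instead we integrate by parts again to return to the $\widetilde r_2$ pieces: $\overline\partial$ on $E F_2 a$ gives $h^{-1}\times(\text{linear in } z)$. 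This produces $h^{-3}$ times the $z$-weighted integral, and its scaled size is $h\cdot h^{-3}\cdot h^{3/2+\varepsilon}|\log h|^{1/2}$, which is \emph{not} small. So this is the main obstacle.

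To handle it, I would write $x_1x_2$'s $\overline\partial$-derivative as $\mathsf i(\overline z\cdot\text{stuff})$ and combine it with the $-\overline z/2h$ term coming from $\overline\Phi_2$. The phases satisfy $\Phi^*+\Phi_1+\overline\Phi_2=\mathsf i x_1x_2$, so the $h^{-1}\overline z$ contributions should cancel at leading order, leaving only $z$-type terms. For those, I would refine the duality argument by writing $\overline z\,Ea=-h\,\mathsf i^{-1}\cdot\partial$-type derivatives of $E$ times $a$, which gains a factor $h$ through one further integration by parts (a $z$ or $\overline z$ weight against $E$ costs only $h$). Equivalently, I would prove the sharper bound $\|\partial^{*-1}(E\,z^k\overline z^{\,l}a)\|_{L^2}\le Ch^{1+(k+l)/2}|\log h|^{1/2}$ by the same Fourier argument as in Lemma~\ref{lem:oscillatory-Cauchy-inverse-L2-section5}, using the vanishing of the amplitude at the critical point. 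This weighted estimate is what closes \eqref{eq:critical-estimate-barpartial-total}. The second-order estimate \eqref{eq:critical-estimate-dbar2-total} carries only an $h^{-1}$ weight and follows the same route. Uniformity over translated centers is immediate because all constants depend only on $C^2$ norms of amplitudes and on the fixed cutoffs.
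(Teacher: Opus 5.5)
Your treatment of \eqref{eq:critical-estimate-r2-hminusone}--\eqref{eq:critical-estimate-barpartial-r2} is the paper's argument: duality against $\partial^{*-1}(Ea)$, Lemma~\ref{lem:oscillatory-Cauchy-inverse-L2-section5}, the relation \eqref{eq:critical-derivative-relation-section5}, and one integration by parts with $\overline\partial E=-\frac{\overline z}{2h}E$. The gap is in the two total-derivative estimates.

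First, after you expand \eqref{eq:critical-estimate-barpartial-total}, the piece $h^{-1}F_2(\overline\partial\overline\Phi_2)\widetilde r_2$ is \emph{not} controlled by \eqref{eq:critical-estimate-r2-zbar}. With the prefactor $h\cdot h^{-2}$ it equals $-\frac12h^{-2}\int E\,\overline z\,aF_2\widetilde r_2\,dx$, which is one power of $h^{-1}$ beyond \eqref{eq:critical-estimate-r2-zbar}. Duality only bounds it by $O(h^{-1/2+\varepsilon}|\log h|^{1/2})$, so it is exactly as bad as the term you get by integrating $F_2\overline\partial\widetilde r_2$ by parts.

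Second, your proposed way out, the weighted bound $\|\partial^{*-1}(Ez^k\overline z^{\,l}a)\|_{L^2}\le Ch^{1+(k+l)/2}|\log h|^{1/2}$, does not hold in general. In the Fourier computation of Lemma~\ref{lem:oscillatory-Cauchy-inverse-L2-section5}, the stationary point for $\widehat{E\overline z a}(\xi)$ is $x=(h\xi_2,h\xi_1)$. For $|\xi|\sim h^{-1}$ this point sits at distance $O(1)$ from the origin, where $\overline z\neq0$. Hence $|\widehat{E\overline z a}(\xi)|\sim h$ on a set of $\xi$ of measure $\sim h^{-2}$, so $\|E\overline z a\|_{H^{-1}}\sim h$ with no gain. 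Heuristically, $\overline z Ea=-2h\overline\partial(Ea)+2hE\overline\partial a$, and $\partial^{*-1}\overline\partial$ is of order zero while $\|Ea\|_{L^2}\sim1$.

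For \eqref{eq:critical-estimate-dbar2-total}, ``the same route'' fails as well. The expanded term $h^{-2}c_2^2F_2\widetilde r_2$ contributes $\frac14h^{-2}\int E\,\overline z^{\,2}aF_2\widetilde r_2\,dx$ after scaling, which the basic bounds do not control.

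The missing observation is that the cancellation you suspected is exact. No $z$-weighted terms are left over, and no sharper estimate is needed. Since $Ee^{-\overline\Phi_2/h}=e^{(\Phi^*+\Phi_1)/h}$ and $\Phi^*+\Phi_1=z^2/4$ is holomorphic, integrate by parts in $\overline\partial$ on the total derivative \emph{before} expanding it. Then $\overline\partial$ falls only on $a$:
\begin{equation*}
\int Ea\,e^{-\overline\Phi_2/h}\overline\partial\big(F_2e^{\overline\Phi_2/h}\widetilde r_2\big)\,dx=-\int E(\overline\partial a)F_2\widetilde r_2\,dx .
\end{equation*}
After the prefactor $h\cdot h^{-2}$, this is handled by \eqref{eq:critical-estimate-r2-hminustwo}. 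In your expanded form, the two $\pm\frac{\overline z}{2h}aF_2\widetilde r_2$ terms and the two $\pm a(\overline\partial F_2)\widetilde r_2$ terms cancel identically. Integrating by parts twice in the same way turns \eqref{eq:critical-estimate-dbar2-total} into $h\cdot\frac1h\int E(\overline\partial^2a)F_2\widetilde r_2\,dx$, which is controlled by \eqref{eq:critical-estimate-r2-hminusone}.
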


\begin{proof}
	Let $E=e^{\mathsf i x_1x_2/h}$. Since $\widetilde r_2=-\partial^{-1}(E^{-1}V'\widetilde s_2)$ in the localized Cauchy convention, adjointness gives
	\[
	\int Ea\,\widetilde r_2\,dx=\int \partial^{*-1}(Ea)\,E^{-1}V'\widetilde s_2\,dx.
	\]
	By Lemma~\ref{lem:oscillatory-Cauchy-inverse-L2-section5} and \eqref{eq:critical-basic-L2-bounds},
	\[
	\bigg|\int Ea\,\widetilde r_2\,dx\bigg|\leq C\|\partial^{*-1}(Ea)\|_{L^2}\|\widetilde s_2\|_{L^2}\leq Ch^{3/2+\varepsilon}|\log h|^{1/2}.
	\]
	This proves both \eqref{eq:critical-estimate-r2-hminusone} and \eqref{eq:critical-estimate-r2-hminustwo}. The same argument with the amplitudes $za$ and $\overline za$ gives
	\[
	\bigg|\int Ez a\,\widetilde r_2\,dx\bigg|+\bigg|\int E\overline z a\,\widetilde r_2\,dx\bigg|\leq Ch^{3/2+\varepsilon}|\log h|^{1/2},
	\]
	which proves \eqref{eq:critical-estimate-r2-z} and \eqref{eq:critical-estimate-r2-zbar}.
	
	For \eqref{eq:critical-estimate-partial-r2}, use \eqref{eq:critical-derivative-relation-section5}. Then
	\[
	\frac1h \int Ea\,\partial\widetilde r_2\,dx= -\frac1h\int aV'\widetilde s_2\,dx.
	\]
	After multiplication by $h$, the right-hand side is bounded by $C\|\widetilde s_2\|_{L^2}=o(1)$.
	
	For \eqref{eq:critical-estimate-barpartial-r2}, integrate by parts:
	\[
	\frac1h \int Ea\,\overline\partial\widetilde r_2\,dx=-\frac1h\int \overline\partial(Ea)\,\widetilde r_2\,dx.
	\]
	Since $\overline\partial E=-\frac{\overline z}{2h}E$, the right-hand side is
	\[
	\frac1{2h^2}\int E\overline z\,a\,\widetilde r_2\,dx-\frac1h\int E(\overline\partial a)\widetilde r_2\,dx.
	\]
	After multiplication by $h$, the first term is controlled by \eqref{eq:critical-estimate-r2-zbar}, and the second by \eqref{eq:critical-estimate-r2-hminusone}. This proves \eqref{eq:critical-estimate-barpartial-r2}.
	
	For \eqref{eq:critical-estimate-barpartial-total}, use $Ee^{-\overline\Phi_2/h}=e^{(\Phi^*+\Phi_1)/h}$. Then the phase $\Phi^*+\Phi_1$ is holomorphic. Thus, an integration by parts yields
	\[
	\frac1{h^2} \int Ea\,e^{-\overline\Phi_2/h}\overline\partial(F_2e^{\overline\Phi_2/h}\widetilde r_2)\,dx
	=-\frac1{h^2}\int E(\overline\partial a)F_2\widetilde r_2\,dx.
	\]
	After multiplication by $h$, this is controlled by \eqref{eq:critical-estimate-r2-hminustwo}.
	
	For \eqref{eq:critical-estimate-dbar2-total}, integrate by parts twice in $\overline\partial$ before expanding the total derivative:
	\[
	\frac1h \int Ea\,e^{-\overline\Phi_2/h}\overline\partial^2(F_2e^{\overline\Phi_2/h}\widetilde r_2)\,dx
	=\frac1h\int E(\overline\partial^2a)F_2\widetilde r_2\,dx.
	\]
	After multiplication by $h$, this is controlled by \eqref{eq:critical-estimate-r2-hminusone}. The uniformity after translating the critical point follows from the uniform translated version of Lemma~\ref{lem:oscillatory-Cauchy-inverse-L2-section5} and from the uniform CGO estimates. This concludes the proof.
\end{proof}
	
	The next lemma records the critical Cauchy contribution. This is the only place where a term with prefactor $h^{-2}$ and one $\partial\widetilde r_2$ derivative has to be kept rather than estimated away.
	
	\begin{lemma}[Critical Neumann tail estimate for the Cauchy pairing]
		\label{lem:critical-Neumann-tail-section5}
		Let $B\in C_0^\infty$ be independent of $h$. Write
		\begin{equation}\label{eq:tilde s_2}
			\widetilde s_2=\partial_\psi^{*-1}(V)+\widetilde s_{2,\mathrm{rem}},
			\quad
			\widetilde s_{2,\mathrm{rem}}
			:=
			\sum_{k=1}^{\infty}\widetilde T_h^k\partial_\psi^{*-1}(V).
		\end{equation}
		Then
		\begin{equation}\label{eq:critical-Neumann-tail-section5}
			\frac1h
			\int
			BV'\widetilde s_{2,\mathrm{rem}}\,dx
			=
			o(1).
		\end{equation}
		The same estimate holds uniformly after translating the critical point to
		centers in compact subsets of $\widetilde\Omega$.
	\end{lemma}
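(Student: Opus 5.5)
The plan is to split off the first Neumann step and pass the operator $\widetilde T_h$ onto the test function by duality. Since $\widetilde s_{2,\mathrm{rem}}=\widetilde T_h\widetilde s_2$ with $\widetilde T_h=\partial_\psi^{*-1}V\partial_\psi^{-1}V'$, we write
\[
\frac1h\int BV'\widetilde s_{2,\mathrm{rem}}\,dx
=\frac1h\int BV'\,\partial_\psi^{*-1}\big(V\,\partial_\psi^{-1}(V'\widetilde s_2)\big)\,dx .
\]
Using the complex bilinear adjointness of the localized inverses (modulo smoothing cutoff errors, which are $O(h^\infty)$ after pairing with compactly supported smooth functions), this equals
\[
\frac1h\int \partial_\psi^{-1\,t}(BV')\,V\,\partial_\psi^{-1}(V'\widetilde s_2)\,dx ,
\]
where $\partial_\psi^{-1\,t}$ is the transpose of $\partial_\psi^{*-1}$. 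Concretely, $\partial_\psi^{-1\,t}(BV')=E\,\partial^{-1}(BV')$ up to sign and cutoff, which is a conjugated but non-oscillatory Cauchy inverse.

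The key gain comes from moving the oscillation $E=e^{\mathsf i x_1x_2/h}$ back onto a smooth amplitude. We do not estimate $\partial_\psi^{-1}(V'\widetilde s_2)$ crudely. Instead, we rewrite $V'\widetilde s_2=\partial\widetilde r_2\cdot(-E)$ by \eqref{eq:critical-derivative-relation-section5}, so that $\partial_\psi^{-1}(V'\widetilde s_2)=-\widetilde r_2$ modulo cutoff errors. The expression then becomes
\[
-\frac1h\int E\,a_h\,\widetilde r_2\,dx,\quad a_h:=V\,\partial^{-1}(BV'),
\]
up to conjugation conventions. Here $a_h$ is independent of $h$, smooth (since $B,V'$ are smooth and compactly supported, and $V$ is smooth), and can be cut off to be compactly supported. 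The argument of Lemma~\ref{lem:critical-remainder-reusable}, namely Lemma~\ref{lem:oscillatory-Cauchy-inverse-L2-section5} combined with \eqref{eq:critical-basic-L2-bounds}, then gives
\[
\Big|\int Ea\,\widetilde r_2\Big|\le Ch^{3/2+\varepsilon}|\log h|^{1/2}.
\]
Dividing by $h$ yields $O(h^{1/2+\varepsilon}|\log h|^{1/2})=o(1)$.

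The main obstacle is the bookkeeping of which conjugation and which inverse appear after transposition, that is, whether the transposed operator reintroduces $E$ or $E^{-1}$. If it produces the non-oscillatory pairing instead, the backup plan is a direct norm estimate. In that case we bound
\[
\|\widetilde s_{2,\mathrm{rem}}\|_{L^2}\le\sum_{k\ge1}\|\widetilde T_h\|^k\,\|\partial_\psi^{*-1}V\|_{L^2}.
\]
By Lemma~\ref{lem:oscillatory-Cauchy-inverse-L2-section5}, $\|\partial_\psi^{*-1}V\|_{L^2}\le Ch|\log h|^{1/2}$. We would then need $\|\widetilde T_h\|_{L^2\to L^2}=O(h^{1/2+\varepsilon})$, which should follow from the same CGO operator bound that underlies \eqref{eq:critical-basic-L2-bounds} (these bounds are what make the Neumann series converge). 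This gives $\|\widetilde s_{2,\mathrm{rem}}\|_{L^2}=O(h^{3/2+\varepsilon}|\log h|^{1/2})$, and hence the claim after division by $h$.

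Uniformity under translation of the critical center follows because every constant used is uniform for centers in compact subsets of $\widetilde\Omega$, as already noted in Lemmas~\ref{lem:oscillatory-Cauchy-inverse-L2-section5} and~\ref{lem:critical-remainder-reusable}.
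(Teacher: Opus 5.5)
Your main argument is correct, and it takes a genuinely different route from the paper.

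\textbf{The paper's route.} The paper bounds the tail in norm. It imports from the critical CGO construction the estimates $\|\partial_\psi^{*-1}(V)\|_{L^2}=O(h^{1/2+\varepsilon_0})$ and $\|\widetilde T_h\|_{L^2\to L^2}=O(h^{1/2-\varepsilon_0/2})$. Summing the geometric series gives $\|\widetilde s_{2,\mathrm{rem}}\|_{L^2}=O(h^{1+\varepsilon_0/2})$, and Cauchy--Schwarz against $BV'$ finishes.

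\textbf{Your route.} You never estimate $\widetilde s_{2,\mathrm{rem}}$ in norm. You use the exact identity
\[
\widetilde s_{2,\mathrm{rem}}=\widetilde T_h\widetilde s_2=\partial_\psi^{*-1}\big(V\,\partial_\psi^{-1}(V'\widetilde s_2)\big)=-\partial^{*-1}\big(e^{\mathsf i x_1x_2/h}V\widetilde r_2\big).
\]
This is immediate from the definition $\widetilde r_2=-\partial_\psi^{-1}(V'\widetilde s_2)$, so the detour through \eqref{eq:critical-derivative-relation-section5} and the ``modulo cutoff errors'' are unnecessary. You then move the outer localized inverse onto $BV'$. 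That step is exact (Fubini for the fixed properly supported kernel), so no $O(h^\infty)$ hedge is needed.

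\textbf{The bookkeeping worry.} Multiplication by $E=e^{\mathsf i x_1x_2/h}$ is its own transpose, so the same phase $E$ reappears and the issue you flag does not arise. The pairing becomes $\pm\frac1h\int E\,a\,\widetilde r_2\,dx$ with $a=V\,(\partial^{*-1})^{t}(BV')$. Here $a$ is smooth, compactly supported and independent of $h$, so this is exactly the quantity controlled in \eqref{eq:critical-estimate-r2-hminustwo}.

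\textbf{What each buys.} Your route reduces the tail lemma to Lemma~\ref{lem:critical-remainder-reusable}. It therefore needs only the stated bound $\|\widetilde s_2\|_{L^2}=O(h^{1/2+\varepsilon})$, Lemma~\ref{lem:oscillatory-Cauchy-inverse-L2-section5}, and convergence of the Neumann series. It needs no quantitative decay of $\|\widetilde T_h\|$, and it yields the rate $O(h^{1/2+\varepsilon}|\log h|^{1/2})$. The paper's route is a shorter norm estimate but rests on the additional imported operator bound for $\widetilde T_h$.

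\textbf{Your backup plan.} It is essentially the paper's proof, but it asks for more than is required. Given $\|\partial_\psi^{*-1}V\|_{L^2}\le Ch|\log h|^{1/2}$, any bound $\|\widetilde T_h\|_{L^2\to L^2}=o(|\log h|^{-1/2})$ already suffices. The paper's $O(h^{1/2-\varepsilon_0/2})$ is such a bound, so you do not need $O(h^{1/2+\varepsilon})$.
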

	
	\begin{proof}
		The critical Neumann construction in \cite[Section~5]{LL2025IP_Monge_Ampere} is used with the fixed local CGO cutoffs introduced above. Thus, the Cauchy inverses are properly supported local inverses in the chosen coordinate chart, and all smoothing cutoff errors are included in the smoothing cutoff remainder terms. With this convention, there is a number $\varepsilon_0>0$ such that
		\begin{equation}\label{eq:critical-Neumann-gains-section5}
			\big\|\partial_\psi^{*-1}(V)\big\|_{L^2}=O(h^{1/2+\varepsilon_0}), 	\quad \big\|\widetilde T_h\big\|_{L^2\to L^2}=		O(h^{1/2-\varepsilon_0/2}).
		\end{equation}
		Here
		$\widetilde T_h=\partial_\psi^{*-1}V\partial_\psi^{-1}V'$ is the critical Neumann operator in this localized CGO parametrix. Hence, for $h$ sufficiently small,
		\[
		\|\widetilde s_{2,\mathrm{rem}}\|_{L^2}
		\leq
		\sum_{k=1}^{\infty}
		\|\widetilde T_h\|_{L^2\to L^2}^k
		\|\partial_\psi^{*-1}(V)\|_{L^2}
		\leq
		C h^{1/2-\varepsilon_0/2}h^{1/2+\varepsilon_0} =C h^{1+\varepsilon_0/2}.
		\]
		Since $B$ and $V'$ are smooth and compactly supported in the local chart,
		\[
		\bigg|\frac1h \int BV'\widetilde s_{2,\mathrm{rem}}\,dx\bigg|
		\leq Ch^{-1}\|\widetilde s_{2,\mathrm{rem}}\|_{L^2}
		\leq Ch^{\varepsilon_0/2}
		= o(1).
		\]
		This proves \eqref{eq:critical-Neumann-tail-section5}. The translated
		statement follows from the same estimates, uniformly for translated centers in
		compact subsets of the coordinate patch.
	\end{proof}
	
	\begin{lemma}[Cauchy transform rule for the critical pairing]\label{lem:Cauchy-transform-critical-pairing}
		Let $B\in C_0^\infty$ be independent of $h$. Then
		\begin{equation}\label{eq:Cauchy-rule-critical-pairing}
			\begin{split}
				h\Big[\frac1{h^2}\int e^{\mathsf i x_1x_2/h}B(x)\partial\widetilde r_2(x)\,dx\Big]=2\pi
				\big(\partial^{*-1}(BV')\big)(0)V(0)+o(1).
			\end{split}
		\end{equation}
		The same formula holds uniformly after translating the critical point to a
		center $a$, with $0$ replaced by $a$ and with the translated phase factors.
	\end{lemma}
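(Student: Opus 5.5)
The plan is to reduce the left-hand side to a pairing with $\widetilde s_2$ through \eqref{eq:critical-derivative-relation-section5}, keep only the leading Neumann term $\partial_\psi^{*-1}(V)$, and then move the Cauchy inverse onto the test function. By \eqref{eq:critical-derivative-relation-section5}, modulo smoothing cutoff errors,
\[
h\Big[\frac1{h^2}\int e^{\mathsf i x_1x_2/h}B\,\partial\widetilde r_2\,dx\Big]
=-\frac1h\int BV'\widetilde s_2\,dx ,
\]
since $e^{\mathsf i x_1x_2/h}e^{-\mathsf i x_1x_2/h}=1$. The smoothing cutoff errors contribute $O(h^N)$, because they act on oscillatory data with smooth kernels.

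Next, split $\widetilde s_2$ as in \eqref{eq:tilde s_2}. Lemma~\ref{lem:critical-Neumann-tail-section5} shows that the tail $\widetilde s_{2,\mathrm{rem}}$ gives $o(1)$. It remains to evaluate
\[
-\frac1h\int BV'\,\partial^{*-1}\big(e^{\mathsf i x_1x_2/h}V\big)\,dx .
\]
Transpose the localized adjoint inverse. With the complex-bilinear pairing, the transpose of $\partial^{*-1}$ is, up to sign and smoothing errors, the localized inverse $C$ of the adjoint operator $-\partial$ acting on $BV'$. By the notational convention this is again $\partial^{*-1}(BV')$ with the appropriate sign. Hence the expression becomes
\[
\mp\frac1h\int e^{\mathsf i x_1x_2/h}\,V\,\partial^{*-1}(BV')\,dx .
\]
The function $G:=V\,\partial^{*-1}(BV')$ is smooth, since $BV'$ is smooth and compactly supported and the localized Cauchy inverse is pseudodifferential of order $-1$. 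It can be taken compactly supported after inserting the stationary-phase cutoff $\vartheta$; the part with $1-\vartheta$ is $O(h^\infty)$ by nonstationary phase. The stationary phase formula \eqref{eq:stationary-phase-main} then gives $\frac1h\int e^{\mathsf i x_1x_2/h}G\,dx=2\pi G(0)+O(h)$, which yields $2\pi\,(\partial^{*-1}(BV'))(0)V(0)+o(1)$ once the sign conventions are fixed. The translated statement follows identically, using the uniformity of Lemma~\ref{lem:critical-Neumann-tail-section5} and of the stationary phase expansion for centers in compact sets.

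The main obstacle is the bookkeeping in the transposition step. One must check that the transpose of the localized adjoint inverse coincides with the operator the paper denotes $\partial^{*-1}$ acting on $BV'$, with the correct overall sign, so that the minus sign from \eqref{eq:critical-derivative-relation-section5} cancels and produces $+2\pi$. One must also confirm that the cutoffs in the localized inverses do not spoil the smoothness of $G$ near the critical point. I would fix the convention $\partial^{*-1}:=(C_\partial)^T$, which makes the transposition exact, and absorb the resulting sign into that definition.
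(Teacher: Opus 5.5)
Your proposal is correct and follows the paper's proof step for step. You use the derivative relation to reduce to $-\frac1h\int BV'\widetilde s_2\,dx$, discard the Neumann tail by Lemma~\ref{lem:critical-Neumann-tail-section5}, move the localized adjoint Cauchy inverse from $e^{\mathsf i x_1x_2/h}V$ onto $BV'$, and finish with stationary phase. The paper settles the sign bookkeeping you flag exactly as you suggest: it fixes the convention for the moved operator, $(\partial^{*-1}f)(y):=-\int K^*(x,y)f(x)\,dx$, so that $-\int f\,\partial^{*-1}g\,dx=\int \partial^{*-1}f\,g\,dx$ holds by Fubini.
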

	
	\begin{proof}
		We work at the center $0$. The translated case is identical after replacing
		$x_1x_2$ by the translated quadratic phase and using the uniform translated
		CGO estimates.
		
		By \eqref{eq:critical-derivative-relation-section5},
		$\partial\widetilde r_2=-e^{-\mathsf i x_1x_2/h}V'\widetilde s_2$. Hence, the left hand side of \eqref{eq:Cauchy-rule-critical-pairing} is $-\frac1h\int BV'\widetilde s_2\,dx$. We split the critical Neumann series as in \eqref{eq:tilde s_2}. Lemma~\ref{lem:critical-Neumann-tail-section5} gives
		\begin{equation*}
			\frac1h\int BV'\widetilde s_{2,\mathrm{rem}}\,dx=o(1).
		\end{equation*}
		It remains to compute the first Neumann term.
		
		With the localized convention fixed above,
		$\partial_\psi^{*-1}(V)=\partial^{*-1}(e^{\mathsf i x_1x_2/h}V)$. All factors have already been multiplied by the fixed local cutoffs from the CGO parametrix, so the pairing is a compactly supported full-plane integral. Let $K^*(x,y)$ be the properly supported kernel of the localized operator $\partial^{*-1}$. Our convention for the moved operator is
		\begin{equation*}
			(\partial^{*-1}f)(y):=-\int K^*(x,y)f(x)\,dx .
		\end{equation*}
		Equivalently, for compactly supported local factors $f$ and $g$,
		\begin{equation*}
			-\int f\,\partial^{*-1}g\,dx=\int \partial^{*-1}f\,g\,dx .
		\end{equation*}
		This is just Fubini applied to the fixed localized kernel, and no boundary integration by parts is involved.
		
		Applying this with $f=BV'$ and $g=e^{\mathsf i x_1x_2/h}V$, we obtain
		\begin{equation*}
			-\frac1h\int BV'\partial^{*-1}(e^{\mathsf i x_1x_2/h}V)\,dx
			=
			\frac1h\int \partial^{*-1}(BV')e^{\mathsf i x_1x_2/h}V\,dx .
		\end{equation*}
		The function $\partial^{*-1}(BV')$ is now a fixed smooth localized coefficient on the chart. Applying \eqref{eq:stationary-phase-main} to
		$F(x)=\partial^{*-1}(BV')(x)V(x)$ gives
		\begin{equation*}
			\frac1h\int e^{\mathsf i x_1x_2/h}\partial^{*-1}(BV')V\,dx
			=
			2\pi\big(\partial^{*-1}(BV')\big)(0)V(0)+O(h).
		\end{equation*}
		Combining this with the Neumann-tail estimate proves \eqref{eq:Cauchy-rule-critical-pairing}. The uniform translated statement follows from the uniform translated stationary phase expansion and the uniform local form of the critical CGO estimates.
	\end{proof}
	
	For reference, we record the derivative expansions of the critical factor. Put
	$c_2=\overline\partial\overline\Phi_2=-\overline z/2$ and
	$d_2=\overline\partial^2\overline\Phi_2=-1/2$. Then
	\begin{equation}\label{eq:critical-first-derivatives-section5}
		\begin{split}
			e^{-\overline\Phi_2/h}
			\partial(F_2e^{\overline\Phi_2/h}\widetilde r_2)
			&=
			(\partial F_2)\widetilde r_2
			+
			F_2\partial\widetilde r_2,\\
			e^{-\overline\Phi_2/h}
			\overline\partial(F_2e^{\overline\Phi_2/h}\widetilde r_2)
			&=
			h^{-1}c_2F_2\widetilde r_2
			+
			(\overline\partial F_2)\widetilde r_2
			+
			F_2\overline\partial\widetilde r_2.
		\end{split}
	\end{equation}
	Also,
	\begin{equation}\label{eq:critical-second-derivatives-section5}
		\begin{split}
			e^{-\overline\Phi_2/h}
			\partial^2(F_2e^{\overline\Phi_2/h}\widetilde r_2)
			&=(\partial^2F_2)\widetilde r_2
			+2(\partial F_2)\partial\widetilde r_2
			+F_2\partial^2\widetilde r_2,\\
			e^{-\overline\Phi_2/h}
			\partial\overline\partial(F_2e^{\overline\Phi_2/h}\widetilde r_2)
			&=h^{-1}c_2(\partial F_2)\widetilde r_2
			+h^{-1}c_2F_2\partial\widetilde r_2
			+(\partial\overline\partial F_2)\widetilde r_2\\
			&\quad \, 
			+(\overline\partial F_2)\partial\widetilde r_2
			+(\partial F_2)\overline\partial\widetilde r_2
			+F_2\partial\overline\partial\widetilde r_2,\\
			e^{-\overline\Phi_2/h}
			\overline\partial^2(F_2e^{\overline\Phi_2/h}\widetilde r_2)
			&=h^{-2}c_2^2F_2\widetilde r_2
			+h^{-1}d_2F_2\widetilde r_2
			+2h^{-1}c_2(\overline\partial F_2)\widetilde r_2
			+2h^{-1}c_2F_2\overline\partial\widetilde r_2\\
			&\quad\,
			+(\overline\partial^2F_2)\widetilde r_2
			+2(\overline\partial F_2)\overline\partial\widetilde r_2
			+F_2\overline\partial^2\widetilde r_2.
		\end{split}
	\end{equation}
	
	These expansions will be used in the scalar critical normal form below.

	\subsection{The scalar block}
	\label{subsec:dominant-scalar-contribution}
	
	We first record the boundary vanishing needed for integrations by parts.
	
\begin{lemma}[Boundary vanishing for the residual quantities]
	\label{lem:section5-boundary-vanishing}
	On $\partial\widetilde\Omega$, one has
	\begin{equation}\label{eq:section5-boundary-r-M-vanishing}
		\mathbf r=0,\quad D\mathbf r=0,\quad M^\alpha=0,\quad \alpha=1,2.
	\end{equation}
	In particular, if $U$ is a boundary coordinate patch and the superscript $0$ denotes zero extension from $U\cap\widetilde\Omega$ to $U$, then
	\begin{equation}\label{eq:section5-zero-extension-r}
		D^\beta\mathbf r^0=(D^\beta\mathbf r)^0
		\quad\text{in }\mathcal D'(U),\quad |\beta|\leq2,
	\end{equation}
	and
	\begin{equation}\label{eq:section5-zero-extension-M}
		D_j(M^\alpha)^0=(D_jM^\alpha)^0
		\quad\text{in }\mathcal D'(U),\quad j=1,2,\ \alpha=1,2.
	\end{equation}
	Thus, the scalar integrations by parts involving the coefficient $\mathbf r$ create no boundary distribution up to the order used below, while the $M^\alpha$-terms have vanishing boundary contribution in the single integrations by parts where $M^\alpha$ is the boundary coefficient. No
	boundary vanishing of $DM^\alpha$ is asserted or used. Moreover, the integrations by parts in the total first- and second-derivative critical classes are performed with the compactly supported local CGO-parametrix amplitudes fixed in Subsection~\ref{subsec:three-phase-CGO-input}; hence, those total critical classes require no boundary information on $DM^\alpha$.
\end{lemma}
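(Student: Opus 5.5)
We need to show that $\mathbf r=0$, $D\mathbf r=0$ and $M^\alpha=0$ on $\partial\widetilde\Omega$. The zero-extension identities then follow from the standard jump formula.

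\textbf{The quantity $\mathbf r$.} Since $\mathbf r=1-e^{-\theta}$, we have $D\mathbf r=e^{-\theta}D\theta$. It therefore suffices to show $\theta=0$ and $D\theta=0$ on the boundary. Recall $\theta=\log\det DJ-q$ with $q=\log(\widehat K_1/(\widehat K_2\circ J))$.

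First, $\theta=0$ on the boundary. By \eqref{eq:identity_J_wt_Omega}, $J=\Id$ and $DJ=I$ there. Moreover $K_1=K_2$ on $\partial\Omega$ by \eqref{eq:BC for the curvature}. Since $\widehat K_j=K_j^x/m_\chi$ with the same $m_\chi$ for $j=1,2$, we get $q=0$ and $\log\det DJ=0$ on the boundary.

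Next, $D\theta=0$ on the boundary. The tangential derivative of $\theta$ vanishes because $\theta$ vanishes identically on the boundary. For the normal derivative we treat the two terms of $\theta$ separately.

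\emph{The curvature term.} We have
\[
Dq=D\log\widehat K_1-DJ^T\big((D\log\widehat K_2)\circ J\big).
\]
At the boundary this equals $D\log\widehat K_1-D\log\widehat K_2$. Here the common factor $m_\chi$ cancels, and the remaining difference vanishes because the first jets of $K_1$ and $K_2$ agree on $\partial\Omega$ by \eqref{eq:BC for the curvature}. This step is invariant under the fixed coordinate change $\chi$.

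\emph{The Jacobian term.} Here $D\log\det DJ=\operatorname{tr}(DJ^{-1}D^2J)$, so we need the normal derivative of $DJ$ on the boundary. This is the step I expect to be the main obstacle, since Corollary~\ref{cor:boundary-first-jet-gauge} only gives $DJ=I$, not information on $D^2J$. My plan is to combine two identities.
\begin{itemize}
\item Proposition~\ref{thm:fixed-background-gauge} gives $DJ\,A_1DJ^T=\rho A_2\circ J$. Differentiate it in the normal direction at the boundary. Use that $A_1$ and $A_2$ have equal boundary values and, via Lemma~\ref{lem:boundary-determination} applied to the normal derivative of the equation, equal normal derivatives as well. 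This relates $\partial_\nu DJ$ to $\partial_\nu\rho$.
\item Combine this with the drift identity \eqref{eq:J-q-equation-isothermal}, $\Delta J^\alpha=\partial_iJ^\alpha\partial_iq+\mathfrak R^\alpha$. At the boundary its right-hand side vanishes, by $Dq=0$ and \eqref{eq:R-vanishes-identity}.
\end{itemize}
Since $J-\Id$ vanishes to first order on the boundary, its only second derivative there is $\partial_\nu^2(J-\Id)$. In boundary normal coordinates, $\Delta(J^\alpha-x^\alpha)$ at the boundary is then determined by this normal-normal derivative. Hence $\Delta J^\alpha=0$ forces $\partial_\nu^2(J-\Id)=0$, i.e. $D^2J$ agrees with $D^2\Id$ at the boundary. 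Consequently $D\log\det DJ=0$ there, and so $D\theta=0$.

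\textbf{The quantity $M^\alpha$.} Since $M^\alpha=\rho^{-1}\mathcal H^\alpha$ and $\rho=1$ on the boundary, it suffices to show $\mathcal H^\alpha=0$ there. By definition,
\[
\mathcal H^\alpha=\partial_{ij}J^\alpha-\Gamma^k_{ij}\partial_kJ^\alpha+\Gamma^\alpha_{\beta\gamma}(J)\partial_iJ^\beta\partial_jJ^\gamma.
\]
At the boundary we have $J=\Id$, $DJ=I$, and $D^2J=D^2\Id=0$ from the previous step. Substituting these, the two Christoffel terms cancel exactly, since $\Gamma(J)=\Gamma(x)$ when $J=\Id$. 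Hence $\mathcal H^\alpha=0$ on the boundary.

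\textbf{Zero extensions.} For $f$ smooth up to the boundary, the jump relation in a boundary patch $U$ reads
\[
\partial_j f^0=(\partial_jf)^0-f\,\nu_j\,dS.
\]
The boundary term vanishes when $f=0$ on the boundary. Apply this once to $M^\alpha$; this gives \eqref{eq:section5-zero-extension-M}. Apply it twice to $\mathbf r$, using both $\mathbf r=0$ and $D\mathbf r=0$ on the boundary; this gives \eqref{eq:section5-zero-extension-r}. The closing remarks of the statement about the critical total-derivative classes need no boundary argument: those integrations by parts involve only the compactly supported local parametrix amplitudes fixed in Subsection~\ref{subsec:three-phase-CGO-input}.
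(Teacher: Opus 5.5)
Your proof is correct and follows the paper's argument. In both, $q=Dq=0$ comes from the curvature jet matching, and the drift equation together with $W=DW=0$ forces $\partial_{\nu\nu}W=0$, hence $D^2W=0$. Jacobi's formula then gives $D\theta=0$, so $D\mathbf r=0$; the two Christoffel terms cancel to give $\mathcal H^\alpha=0$; and the jump formula handles the zero extensions.

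Your first bullet is unnecessary and should be dropped. There you differentiate $DJ\,A_1DJ^T=\rho A_2\circ J$ and assert that $A_1$ and $A_2$ have equal normal derivatives on the boundary. Lemma~\ref{lem:boundary-determination} does not supply this, since it only recovers the second-order boundary jet of $u$. The drift-equation step alone closes the argument.
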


\begin{proof}
	Recall that $\mathbf r=1-\rho^{-1}$ and $M^\alpha=\rho^{-1}\mathcal H^\alpha$. By the boundary normalization from the first linearized gauge, $J=\Id$, $DJ=I$, $\rho=1$ on $\partial\widetilde\Omega$, then we have $\mathbf r=0$ on $\partial\widetilde\Omega$.
	
	Write
	\[
	W=J-\Id,\quad q=\log\frac{\widehat K_1}{\widehat K_2\circ J},\quad
	\theta=\log\det DJ-q.
	\]
	Since $J=\Id$ on $\partial\widetilde\Omega$ and $\widehat K_j=(K_j\circ\chi^{-1})/m_\chi$ with the same positive factor $m_\chi$ for $j=1,2$, the boundary curvature normalization gives $\widehat K_1=\widehat K_2$ on $\partial\widetilde\Omega$. Thus, $q=0$ on $\partial\widetilde\Omega$. Differentiating $q$ gives 
	\[
	Dq=D\log \widehat K_1-DJ^T(D\log \widehat K_2)\circ J.
	\]
	Using $J=\Id$, $DJ=I$, and the first-order boundary curvature normalization,
	the common factor $m_\chi$ cancels and gives $D\log \widehat K_1=D\log \widehat K_2$ on $\partial\widetilde\Omega$, which implies $Dq=0$ on $\partial\widetilde\Omega$.
	
	We next prove that $D^2W=0$ on $\partial\widetilde\Omega$. In boundary normal coordinates for the Euclidean metric in the isothermal chart, let $\tau$ be the tangential coordinate and let $\nu$ be the inward normal coordinate. Since $W=0$ and $DW=0$ on $\partial\widetilde\Omega$, the tangential
	derivatives of the boundary traces of $W$ and $\partial_\nu W$ vanish, then one has  $\partial_{\tau\tau}W=0=\partial_{\tau\nu}W=0$ on $\partial\widetilde\Omega$. The partial derivative form of the drift equation gives
	\[
	\Delta W^\alpha=\Delta J^\alpha=\partial_iJ^\alpha\partial_iq+\mathfrak R^\alpha(x,J,DJ).
	\]
	On $\partial\widetilde\Omega$, we have $Dq=0$, $J=\Id$, and $DJ=I$. By
	\eqref{eq:R-vanishes-identity}, the connection remainder vanishes there.
	Thus $\Delta W^\alpha=0$ on $\partial\widetilde\Omega$. In the same boundary
	normal coordinates,
	\[
	\Delta W^\alpha=\partial_{\nu\nu}W^\alpha+\partial_{\tau\tau}W^\alpha
	+\kappa_{\partial\widetilde\Omega}\partial_\nu W^\alpha
	\quad\text{on }\partial\widetilde\Omega.
	\]
	The last two terms vanish, so $\partial_{\nu\nu}W^\alpha=0$ on
	$\partial\widetilde\Omega$. Hence $D^2W=0$ on $\partial\widetilde\Omega$.
	
	By Jacobi's formula,
	\[
	D\log\det DJ=D\log\det(I+DW)=0
	\quad\text{on }\partial\widetilde\Omega,
	\]
	because $DW=0$ and $D^2W=0$ there. Since $Dq=0$, we get
	\[
	D\theta=D\log\det DJ-Dq=0
	\quad\text{on }\partial\widetilde\Omega.
	\]
	As $\rho=e^\theta$, this implies $D\rho=0$ on $\partial\widetilde\Omega$.
	Therefore
	\[
	D\mathbf r=\rho^{-2}D\rho=0
	\quad\text{on }\partial\widetilde\Omega.
	\]
	This proves the first two identities in \eqref{eq:section5-boundary-r-M-vanishing}.
	
	It remains to prove $M^\alpha=0$. The covariant Hessian $\mathcal H^\alpha$
	is the covariant second derivative of $J^\alpha=\Id^\alpha+W^\alpha$ with
	respect to the same pulled-back flat connection in the domain and target. The
	identity map has zero covariant Hessian. Since $J=\Id$, $DJ=I$, and
	$D^2W=0$ on $\partial\widetilde\Omega$, we obtain
	\[
	\mathcal H^\alpha=0 \quad\text{on }\partial\widetilde\Omega.
	\]
	Because $\rho=1$ on $\partial\widetilde\Omega$, this gives
	\[
	M^\alpha=\rho^{-1}\mathcal H^\alpha=0 \quad\text{on }\partial\widetilde\Omega,\quad \alpha=1,2.
	\]
	This proves \eqref{eq:section5-boundary-r-M-vanishing}.
	
	We now justify the distributional statements. In a flattened boundary chart, write $f^0$ for the zero extension of a smooth one-sided function $f$. The standard trace formula for derivatives of zero extensions shows that no boundary measure appears in $D^\beta f^0$, $|\beta|\leq m$, provided all
	traces $D^\gamma f|_{\partial\widetilde\Omega}$ with $|\gamma|\leq m-1$ vanish. Applying this with $f=\mathbf r$ and $m=2$, using $\mathbf r=0$ and $D\mathbf r=0$ on $\partial\widetilde\Omega$, gives \eqref{eq:section5-zero-extension-r}. Applying the same trace formula with $f=M^\alpha$ and $m=1$, using $M^\alpha=0$ on $\partial\widetilde\Omega$, gives \eqref{eq:section5-zero-extension-M}.
	
	Finally, the total critical derivative estimates are applied with the local CGO-parametrix cutoffs from Subsection~\ref{subsec:three-phase-CGO-input}. Thus, for each fixed critical center, the coefficients in those total critical classes are compactly supported in the local chart before the total derivative is expanded. The corresponding integrations by parts are, therefore, compactly supported full-plane integrations and do not use any trace of $DM^\alpha$ on $\partial\widetilde\Omega$. This completes the proof.
\end{proof}

\begin{remark}[Cutoffs in the critical expansions]\label{rem:critical-cutoffs-not-in-identity}
	The cutoffs used below are part of the local asymptotic extraction, not part of the second integral identity. For a fixed interior critical center $a$, the exact CGO solutions are first inserted into  \eqref{eq:second-integral-PHH-Pgrad}. In each resulting model oscillatory integral, the smooth coefficient is then split as $1=\vartheta_a+(1-\vartheta_a)$, where $\vartheta_a=1$ near $a$. The $(1-\vartheta_a)$-term is nonstationary for the quadratic phase and is $O(h^N)$, for any fixed $N$, after the finitely many powers of
	$h^{-1}$ appearing in the expansion are taken into account.
	
	For terms containing the differentiated critical remainder, the same local cutoffs are part of the properly supported Cauchy parametrices fixed in Subsection~\ref{subsec:three-phase-CGO-input}. The corresponding smoothing cutoff errors are included in the remainders in the critical estimates. Near the boundary, the residual identities are obtained for interior centers. Later, in the boundary Carleman 	argument, only localized one-sided inputs are extended by zero to the reflected disk.
\end{remark}

	\begin{lemma}[Finite scalar critical normal forms]
		\label{lem:finite-scalar-critical-normal-forms}
		Let $I_{\mathbf r,\mathrm{crit}}$ be the part of $I_{\mathbf r}$ in which the second forward CGO solution is replaced by the critical factor $F_2e^{\overline\Phi_2/h}\widetilde r_2$. After expanding the scalar critical part of $I_{\mathbf r}$ and performing the integrations by parts allowed by Lemma~\ref{lem:section5-boundary-vanishing}, every scalar critical term that survives after multiplication by $h$ is a finite sum of terms of the form
		\begin{equation}\label{eq:finite-scalar-critical-pairing-form}
			h\Big[
			\frac1{h^2}
			\int e^{\mathsf i x_1x_2/h}B_\nu(x;\mathbf r)\partial\widetilde r_2(x)\,dx
			\Big]+o(1),
		\end{equation}
		where $B_\nu(x;\mathbf r)=a_{\nu,0}(x)\mathbf r(x)+a_{\nu,1}(x)\partial\mathbf r(x)+a_{\nu,2}(x)\overline\partial\mathbf r(x)$. The coefficients $a_{\nu,k}$ are smooth, compactly supported, independent of $h$, and independent of $\mathbf r$. The sum over $\nu$ is finite. All remaining scalar critical terms are $o(1)$ after multiplication by $h$.
	\end{lemma}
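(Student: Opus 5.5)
I will begin by writing $I_{\mathbf r,\mathrm{crit}}$ explicitly. The scalar term is
\[
I_{\mathbf r}=\int v^*\gamma^2\mathbf r\,\operatorname{tr}(\nabla^2v^{(1)}\nabla^2v^{(2)})\,d\mu .
\]
In the complex coordinates, $\operatorname{tr}(\nabla^2 v^{(1)}\nabla^2 v^{(2)})$ is a finite bilinear combination of $\partial^2,\overline\partial^2,\partial\overline\partial$ applied to each factor, plus Christoffel terms of lower order. For $v^*$ and $v^{(1)}$ I use their expansions with $r_*,r_1$, so the smooth parts carry the phase factors with the derivatives $\partial\Phi^*$, $\partial\Phi_1$ and $\partial^2\Phi_1$ listed above. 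In $v^{(2)}$ I replace the factor by $F_2e^{\overline\Phi_2/h}\widetilde r_2$. The remaining derivatives of this critical factor are expanded through \eqref{eq:critical-first-derivatives-section5} and \eqref{eq:critical-second-derivatives-section5}. After factoring out $e^{\mathsf i x_1x_2/h}$, this gives a finite sum of integrals of the form
\[
h^{-k}\int e^{\mathsf i x_1x_2/h}\,a(x)\,\mathbf r\,\mathcal D\widetilde r_2\,dx,
\]
where $k\le 4$, $a$ is smooth and compactly supported after the cutoff convention of Remark~\ref{rem:critical-cutoffs-not-in-identity}, and $\mathcal D$ ranges over $1,\partial,\overline\partial,\partial^2,\partial\overline\partial,\overline\partial^2$.

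Before expanding, I group terms into total derivatives. The combinations $e^{-\overline\Phi_2/h}\overline\partial(F_2e^{\overline\Phi_2/h}\widetilde r_2)$ and $e^{-\overline\Phi_2/h}\overline\partial^2(F_2e^{\overline\Phi_2/h}\widetilde r_2)$ are kept intact. Since $E e^{-\overline\Phi_2/h}=e^{(\Phi^*+\Phi_1)/h}$ is holomorphic, these total classes move $\overline\partial$ onto the amplitude $a\mathbf r$. By \eqref{eq:section5-zero-extension-r}, this produces no boundary term. The resulting terms fall under \eqref{eq:critical-estimate-barpartial-total} and \eqref{eq:critical-estimate-dbar2-total}, with $a$ replaced by $a\mathbf r$ and its $\overline\partial$-derivatives. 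Here I need $\mathbf r\in C^\infty$ compactly supported after the cutoff, which holds.

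For the $\partial$-derivatives, the rule $\partial\widetilde r_2=-E^{-1}V'\widetilde s_2$ from \eqref{eq:critical-derivative-relation-section5} lets me reduce $\partial^2\widetilde r_2$ and $\partial\overline\partial\widetilde r_2$ by one integration by parts in $\partial$ or $\overline\partial$. This moves one derivative onto $E\,a\,\mathbf r$. Differentiating $E$ costs $h^{-1}$ times $z$ or $\overline z$, while differentiating $a\mathbf r$ produces $\partial\mathbf r$ or $\overline\partial\mathbf r$. This is the source of the first-order terms in $B_\nu$, and it explains why only first derivatives of $\mathbf r$ appear: one integration by parts suffices. The same zero-extension lemma again removes boundary contributions.

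Every term that remains either contains $\widetilde r_2$ or $\overline\partial\widetilde r_2$, or contains $\partial\widetilde r_2$. The first group is $o(1)$ after multiplication by $h$ by \eqref{eq:critical-estimate-r2-hminusone}--\eqref{eq:critical-estimate-barpartial-r2}, including the $z$- and $\overline z$-weighted versions. The second group has prefactor $h^{-1}$ or $h^{-2}$ times $\partial\widetilde r_2$. The $h^{-1}$ terms are $o(1)$ by \eqref{eq:critical-estimate-partial-r2}. The $h^{-2}$ terms have exactly the form \eqref{eq:finite-scalar-critical-pairing-form}, with $B_\nu$ linear in $(\mathbf r,\partial\mathbf r,\overline\partial\mathbf r)$.

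\textbf{Main obstacle.} The hard step is power counting. The leading $h^{-2}c_2^2$ and $h^{-1}$ factors in \eqref{eq:critical-second-derivatives-section5}, combined with the $h^{-2}$ from $\partial\Phi^*\partial\Phi_1$, could in principle produce terms like $h^{-3}$ or $h^{-4}$ multiplying $\partial\widetilde r_2$, or $h^{-3}\widetilde r_2$. These are not covered by the estimates. My plan is to absorb all such terms into the total $\overline\partial$ classes before expansion, which is possible because the undifferentiated $h^{-2}c_2^2$ pieces are exactly those in $\overline\partial^2$ of the total factor. I also use $\overline z$-weights and $\overline\partial E=-\frac{\overline z}{2h}E$ to trade $h^{-1}\overline z$ factors for derivatives. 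If some $h^{-3}$ term survives, I would instead apply the Cauchy-rule argument of Lemma~\ref{lem:Cauchy-transform-critical-pairing}, with an extra stationary-phase order, to show that it vanishes to leading order.
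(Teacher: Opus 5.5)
Your outline follows the paper's skeleton. You keep the total $\overline\partial T_2$ and $\overline\partial^2T_2$ classes intact and integrate by parts using holomorphy of $\Phi^*+\Phi_1$. You remove bare $\widetilde r_2$, $\overline\partial\widetilde r_2$ and $h^{-1}\partial\widetilde r_2$ pairings with the critical estimates. You obtain the $\partial\mathbf r,\overline\partial\mathbf r$ entries of $B_\nu$ from one integration by parts.

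However, the step you label the main obstacle is the actual content of the lemma, and you leave it open. The missing ingredient is the polarization algebra. Writing $D^2f=\mathbb A\partial^2f+\mathbb B\overline\partial^2f+2I\partial\overline\partial f$, with $\operatorname{tr}(\mathbb A\mathbb B)=4$ and $\operatorname{tr}(\mathbb A\mathbb A)=\operatorname{tr}(\mathbb B\mathbb B)=\operatorname{tr}\mathbb A=\operatorname{tr}\mathbb B=0$, gives
\[
\operatorname{tr}(D^2f\,D^2g)=4\,\partial^2f\,\overline\partial^2g+4\,\overline\partial^2f\,\partial^2g+8\,\partial\overline\partial f\,\partial\overline\partial g .
\]
Since $\Phi_1$ is holomorphic, $\partial^2v^{(1)}=O(h^{-2})$, $\partial\overline\partial v^{(1)}=O(h^{-1})$ and $\overline\partial^2v^{(1)}=O(1)$, and the Christoffel corrections are $O(h^{-1})$. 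Hence the $h^{-2}$ block multiplies only the total class $\overline\partial^2T_2$, plus first derivatives of $T_2$ coming from the correction in $\nabla^2T_2$. The pieces $\partial\overline\partial T_2$ and $\partial^2T_2$ carry at most $h^{-1}$. With these prefactors, one integration by parts never produces anything worse than $h^{-2}\partial\widetilde r_2$, so the $h^{-3}$ and $h^{-4}$ terms you worry about do not occur. Note also that $v^*$ is not differentiated in $I_{\mathbf r}$: the $h^{-2}$ comes from $\partial^2e^{\Phi_1/h}$, not from $\partial\Phi^*\partial\Phi_1$.

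Your proposed remedies cannot replace this. Absorbing terms into total $\overline\partial$ classes does not reach an $\mathbb A$--$\mathbb A$ pairing $h^{-2}\int E\,a\,F_2\,\partial^2\widetilde r_2\,dx$, since there is no $\overline\partial$ to move. Integrating by parts in $\partial$, as you suggest, turns it into $h\cdot h^{-3}\int E\,\tfrac z2\,a F_2\,\partial\widetilde r_2\,dx$.

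The fallback through Lemma~\ref{lem:Cauchy-transform-critical-pairing} then fails. By \eqref{eq:critical-derivative-relation-section5} this term equals, up to sign, $h^{-2}\int \tfrac z2\,aF_2V'\widetilde s_2\,dx$. Its first Neumann term is of size $h^{-1}$, with coefficient proportional to $V(0)\,\partial^{*-1}(z\,aF_2V')(0)$. This Cauchy transform is generically nonzero, so no extra stationary-phase order helps. The Neumann tail is only bounded by $h^{-2}\cdot h^{1+\varepsilon_0/2}$, which also does not tend to zero.

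Such a term would therefore genuinely diverge after multiplication by $h$. It is absent only because $\operatorname{tr}(\mathbb A\mathbb A)=0$, and $\operatorname{tr}\mathbb A=0$ similarly kills the $\partial^2v^{(1)}\,\partial\overline\partial T_2$ pairing. Your argument must establish these cancellations explicitly, as the paper does, before the classification into $h^{-2}\partial\widetilde r_2$ terms goes through.
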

	
	\begin{proof}
	We work at the critical center $0$. The translated statement is obtained by replacing $z$ by $z-a$ and using the uniform translated estimates in Lemma~\ref{lem:critical-remainder-reusable}. Write $E=e^{\mathsf i x_1x_2/h}$ and $T_2=F_2e^{\overline\Phi_2/h}\widetilde r_2$. The scalar critical part is a finite sum of model terms of the form
	\[
	\int E\,\mathcal C(x;\mathbf r)\operatorname{tr}(\nabla^2v^{(1)}\nabla^2T_2)\,dx,
	\]
	where $\mathcal C$ denotes a smooth compactly supported coefficient containing $\gamma^2\mu F_*F_1$ and a finite number of smooth coefficients coming from the noncritical expansions of $r_*$ and $r_1$. Since the noncritical expansions are finite to the order needed after multiplication by $h$, it is enough to classify these model terms.
	
	In this proof, $D^2$ denotes the ordinary Euclidean Hessian in the local complex coordinate. The covariant Hessian satisfies $\nabla^2f=D^2f-\Gamma^k_{ij}\partial_kf$. Thus, every term in which at least one Hessian is replaced by the connection correction contains only a first derivative of the corresponding CGO factor. Such a term has one fewer effective phase derivative than the leading Hessian-Hessian term and belongs to the lower classes considered below.
	
	We use
	\begin{equation}\label{eq:complex-Hessian-decomposition-section5}
		D^2f=\mathbb A\partial^2f+\mathbb B\overline\partial^2f+2I\partial\overline\partial f,
	\end{equation}
	where
	\[
	\mathbb A=\begin{pmatrix}1&\mathsf i\\ \mathsf i&-1\end{pmatrix},
	\quad
	\mathbb B=\begin{pmatrix}1&-\mathsf i\\ -\mathsf i&-1\end{pmatrix}.
	\]
	The contractions are $\operatorname{tr}(\mathbb A\mathbb B)=4$, $\operatorname{tr}(\mathbb A\mathbb A)=\operatorname{tr}(\mathbb B\mathbb B)=0$, and $\operatorname{tr}\mathbb A=\operatorname{tr}\mathbb B=0$. The leading ordinary Hessian of the first forward CGO solution is the $\mathbb A\partial^2$ part. Hence the only leading Hessian-Hessian scalar contraction with the second forward solution is the $\mathbb A$-$\mathbb B$ contraction. For the critical factor this identifies the total $\overline\partial T_2$ and total $\overline\partial^2T_2$ classes as the only classes which must be treated before expanding $T_2$.
	
	We first treat these total $\overline\partial$ classes. Whenever the critical factor occurs as a total $\overline\partial T_2$ or $\overline\partial^2T_2$, we integrate by parts in $\overline\partial$ before expanding $T_2$. Since $Ee^{-\overline\Phi_2/h}=e^{(\Phi^*+\Phi_1)/h}$ and $\Phi^*+\Phi_1$ is holomorphic, these integrations do not differentiate the exponential factor. The derivatives fall only on smooth coefficients, on $\mathbf r$, or on the noncritical amplitudes. By Lemma~\ref{lem:section5-boundary-vanishing}, the boundary terms vanish, because $\mathbf r=0$ and $\partial_\nu\mathbf r=0$ on $\partial\widetilde\Omega$. The resulting terms are precisely of the types controlled by \eqref{eq:critical-estimate-barpartial-total} and \eqref{eq:critical-estimate-dbar2-total}, or else reduce to terms containing $\widetilde r_2$ without derivatives and are controlled by \eqref{eq:critical-estimate-r2-hminusone}, \eqref{eq:critical-estimate-r2-hminustwo}, \eqref{eq:critical-estimate-r2-z}, and \eqref{eq:critical-estimate-r2-zbar}. Thus no term from a total $\overline\partial T_2$ or total $\overline\partial^2T_2$ class contributes after multiplication by $h$.
	
	After these total classes have been removed, every remaining critical model term contains one of
	\[
	\widetilde r_2,\quad \partial\widetilde r_2,\quad \overline\partial\widetilde r_2,\quad \partial^2\widetilde r_2,\quad \partial\overline\partial\widetilde r_2,\quad \overline\partial^2\widetilde r_2.
	\]
	Terms containing $\widetilde r_2$ without derivatives have prefactor at most $h^{-2}$ after the previous integrations by parts, and are $o(1)$ after multiplication by $h$ by \eqref{eq:critical-estimate-r2-hminusone} and \eqref{eq:critical-estimate-r2-hminustwo}. Terms containing $z\widetilde r_2$ or $\overline z\,\widetilde r_2$ with prefactor $h^{-2}$ are controlled by \eqref{eq:critical-estimate-r2-z} and \eqref{eq:critical-estimate-r2-zbar}.
	
	Terms containing $\overline\partial\widetilde r_2$ with prefactor at most $h^{-1}$ are controlled by \eqref{eq:critical-estimate-barpartial-r2}. A term with bare prefactor $h^{-2}\overline\partial\widetilde r_2$ can only arise from expanding a total $\overline\partial T_2$ or total $\overline\partial^2T_2$ class. These classes have already been kept in total form and estimated by \eqref{eq:critical-estimate-barpartial-total} and \eqref{eq:critical-estimate-dbar2-total}. Hence no $\overline\partial\widetilde r_2$ term survives.
	
	Terms containing $\partial\widetilde r_2$ with prefactor at most $h^{-1}$ are controlled by \eqref{eq:critical-estimate-partial-r2}. Thus the only possible nonnegligible first-derivative critical class is the class with prefactor $h^{-2}\partial\widetilde r_2$. In this class, the coefficient multiplying $\partial\widetilde r_2$ is obtained after at most one integration by parts from a coefficient containing $\mathbf r$ and smooth factors. Hence it is a finite sum of terms
	\[
	a_0\mathbf r+a_1\partial\mathbf r+a_2\overline\partial\mathbf r
	\]
	with smooth compactly supported $a_0,a_1,a_2$.
	
	It remains to consider the terms with two derivatives on $\widetilde r_2$. The term $F_2\overline\partial^2\widetilde r_2$ belongs to the $\mathbb B$ polarization and is part of the total $\overline\partial^2T_2$ class already treated. The term $F_2\partial\overline\partial\widetilde r_2$ lies in the identity polarization. Its contraction with the leading $\mathbb A$ part of $\nabla^2v^{(1)}$ vanishes because $\operatorname{tr}\mathbb A=0$. If the first Hessian is not the leading $\mathbb A$ part, the term has one fewer effective phase derivative; integrating once reduces it either to a term controlled by Lemma~\ref{lem:critical-remainder-reusable} or to the already identified $h^{-2}\partial\widetilde r_2$ class. Finally, $F_2\partial^2\widetilde r_2$ lies in the $\mathbb A$ polarization, and its contraction with the leading $\mathbb A$ part of $\nabla^2v^{(1)}$ vanishes because $\operatorname{tr}(\mathbb A\mathbb A)=0$. If the first Hessian is lower order, we integrate once in $\partial$. The boundary term vanishes because $\mathbf r=0$ on $\partial\widetilde\Omega$. When $\partial$ hits $E$, it gives the factor $z/(2h)$ and produces a term of the form \eqref{eq:finite-scalar-critical-pairing-form}; when $\partial$ hits a smooth coefficient or $\mathbf r$, the resulting $\partial\widetilde r_2$ term has either prefactor at most $h^{-1}$ and is negligible, or has prefactor $h^{-2}$ and again has coefficient $a_0\mathbf r+a_1\partial\mathbf r+a_2\overline\partial\mathbf r$.
	
	Derivatives falling on $F_*$, $F_1$, $F_2$, on $\gamma$, on $\mu$, on a cutoff, or on a connection coefficient only change the smooth coefficients above or lower the effective phase order. The noncritical remainders $r_*$ and $r_1$ also do not create a new class, since their finite expansions start with a factor $h$. Therefore the only scalar critical terms not estimated as $o(1)$ after multiplication by $h$ are finite sums of \eqref{eq:finite-scalar-critical-pairing-form}. This proves the lemma.
\end{proof}

	We compute
	\begin{equation*}
		I_{\mathbf r}=\int_{\widetilde\Omega}v^*\gamma^2 \mathbf r \operatorname{tr}(\nabla^2v^{(1)}\nabla^2v^{(2)})\,d\mu.
	\end{equation*}
	Write $d\mu=\mu(x)\,dx$ in the local coordinate and set $\Lambda_0:=\gamma^2\mu$, then $\Lambda_0(0)\neq0$. Set $C_0:=\Lambda_0F_*F_1F_2\mathbf r$, then the corresponding leading contribution is
	\begin{equation*}
		I_{\mathbf r,1}=4\int C_0 e^{\Phi^*/h} \partial^2(e^{\Phi_1/h}) \overline\partial^2(e^{\overline{\Phi_2}/h})\,dx.
	\end{equation*}
	
	By Lemma~\ref{lem:section5-boundary-vanishing}, the boundary terms produced by the following integrations by parts vanish:
	\begin{equation*}
		\begin{split}
			I_{\mathbf r,1}&=4\int C_0 	e^{\Phi^*/h}\partial^2(e^{\Phi_1/h}) \overline\partial^2(e^{\overline{\Phi_2}/h})\,dx\\
			&=-4\int \partial(C_0e^{\Phi^*/h})\partial(e^{\Phi_1/h}) \overline\partial^2(e^{\overline{\Phi_2}/h})\,dx\\
			&=4\int	\overline\partial \big[	\partial(C_0e^{\Phi^*/h})\partial(e^{\Phi_1/h})\big]\overline\partial(e^{\overline{\Phi_2}/h})\,dx.
		\end{split}
	\end{equation*}
	Since $\overline\partial e^{\Phi^*/h}=0$ and $\overline\partial\partial(e^{\Phi_1/h})=0$, we have $\overline\partial\big[\partial(C_0e^{\Phi^*/h})\partial(e^{\Phi_1/h})\big]=\partial\overline\partial(C_0e^{\Phi^*/h})\partial(e^{\Phi_1/h})$. Thus,
	\begin{equation*}
		I_{\mathbf r,1}=4\int \partial\overline\partial(C_0e^{\Phi^*/h})\partial(e^{\Phi_1/h})\overline\partial(e^{\overline{\Phi_2}/h})\,dx.
	\end{equation*}
	Now, $\partial\overline\partial(C_0e^{\Phi^*/h})=e^{\Phi^*/h}\big[\partial\overline\partial C_0+h^{-1}(\partial\Phi^*)\overline\partial C_0\big]$, $\partial(e^{\Phi_1/h})=h^{-1}(\partial\Phi_1)e^{\Phi_1/h}$ and $\overline\partial(e^{\overline{\Phi_2}/h})=h^{-1}(\overline\partial\overline{\Phi_2})
	e^{\overline{\Phi_2}/h}$. Therefore,
	\begin{equation*}
		\begin{split}
			I_{\mathbf r,1}=\frac4{h^3}\int (\partial\Phi^*)\overline\partial C_0\partial\Phi_1
			\overline\partial\overline{\Phi_2}e^{\mathsf i x_1x_2/h}\,dx+ \frac4{h^2}
			\int(\partial\overline\partial C_0)\partial\Phi_1 \overline\partial\overline{\Phi_2}
			e^{\mathsf i x_1x_2/h}\,dx.
		\end{split}
	\end{equation*}
	The second integral is $O(1)$ because its amplitude contains
	$\overline\partial\overline{\Phi_2}=-\overline z/2$. Hence, it does not
	contribute to $\lim_{h\to0}hI_{\mathbf r,1}$.
	
	For the first integral, $4(\partial\Phi^*)\partial\Phi_1\overline\partial\overline{\Phi_2}=2\overline z\big( 1-\frac{z^2}{16} \big)$. Thus, 
	\begin{equation*}
		I_{\mathbf r,1}=\frac1{h^3}\int 2\overline z \Big(1-\frac{z^2}{16}\Big)\overline\partial C_0
		e^{\mathsf i x_1x_2/h}\,dx+O(1).
	\end{equation*}
	Apply \eqref{eq:stationary-phase-main} to $F(x)=2\overline z\big(1-\frac{z^2}{16}\big)\overline\partial C_0(x)$, then $F(0)=0$, $\partial^2F(0)=0$, and $\overline\partial^2F(0)=4\overline\partial^2C_0(0)$. Therefore, 
	\begin{equation}\label{eq:Ir1-limit-main}
		\lim_{h\to0}hI_{\mathbf r,1}=8\pi\overline\partial^2C_0(0)=8\pi \overline\partial^2\big(\Lambda_0F_*F_1F_2\mathbf r\big)(0).
	\end{equation}
	
	\begin{lemma}[Local scalar lower order terms]
		\label{lem:scalar-local-lower-order-section5}
		After the leading scalar contribution \eqref{eq:Ir1-limit-main} is removed, the remaining local scalar terms satisfy
		\begin{equation}\label{eq:scalar-local-remainder-order}
			\lim_{h\to0}h \big(I_{\mathbf r,\mathrm{amp}}+I_{\mathbf r,\mathrm{cov}}+I_{\mathbf r,\mathrm{nc}}\big)=\pi\mathcal L_{\mathbf r,\leq1}^{\rm loc}\mathbf r(0),
		\end{equation}
		where $\mathcal L_{\mathbf r,\leq1}^{\rm loc}$ is a local scalar differential operator of order at most one.
	\end{lemma}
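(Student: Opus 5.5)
We first make precise what the three remainder pieces are, since the statement only names them. Write $v^{(1)},v^{(2)},v^*$ in the CGO form of Subsection~\ref{subsec:three-phase-CGO-input}. We expand $I_{\mathbf r}$ as follows, after removing the critical factor $F_2e^{\overline\Phi_2/h}\widetilde r_2$, whose contribution is already handled by Lemma~\ref{lem:finite-scalar-critical-normal-forms}. The piece $I_{\mathbf r,\mathrm{amp}}$ collects the terms of $\operatorname{tr}(D^2v^{(1)}D^2v^{(2)})$ in which at least one ordinary derivative falls on an amplitude $F_1,F_2$ rather than on the exponential. The piece $I_{\mathbf r,\mathrm{cov}}$ collects the terms in which a Hessian is replaced by its Christoffel correction $-\Gamma^k_{ij}\partial_k$. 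The piece $I_{\mathbf r,\mathrm{nc}}$ collects the terms containing the noncritical remainders $hR_{*,h}$ and $hR_{1,h}$. Every term is then an oscillatory integral
\[
h^{-k}\int e^{\mathsf i x_1x_2/h}\,G(x)\,dx,\qquad k\le 3,
\]
whose smooth amplitude $G$ is a sum of products of the form $\mathbf r\cdot a(x)\cdot p(z,\overline z)$. Here $a$ is smooth and independent of $\mathbf r$, and $p$ is a polynomial coming from the phase derivatives $\partial\Phi_1=1+z/4$, $\overline\partial\overline\Phi_2=-\overline z/2$, and so on.

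The key bookkeeping is a count of the effective order. Each amplitude derivative, connection correction, or factor $h$ from a noncritical remainder removes one power of $h^{-1}$ relative to the leading term $I_{\mathbf r,1}$. The leading term has the form $h^{-4}$ times the amplitude $4\,\partial\Phi^*$-free $\mathbb A$–$\mathbb B$ contraction, which is where the structure $\operatorname{tr}(\mathbb A\mathbb B)=4$ enters. So every remainder term carries at most $h^{-3}$, and the $\mathbb A$–$\mathbb B$ contraction still forces at least one factor of $\overline z$ from $\overline\partial\overline\Phi_2$ whenever $v^{(2)}$ is differentiated twice through its phase.

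For each such term we will imitate the computation of $I_{\mathbf r,1}$. Wherever the phase structure allows, we integrate by parts in $\partial$ or $\overline\partial$; Lemma~\ref{lem:section5-boundary-vanishing} ($\mathbf r=D\mathbf r=0$ on the boundary) kills the boundary terms. We then apply the stationary phase formula \eqref{eq:stationary-phase-main}:
\[
h\cdot h^{-k}\cdot 2\pi h\bigl[G(0)+h(\overline\partial^2-\partial^2)G(0)+O(h^2)\bigr].
\]
For $k=3$ the limit is $2\pi(\overline\partial^2-\partial^2)G(0)$, and for $k=2$ it is $2\pi G(0)$; terms with $k\le1$ vanish in the limit.

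The point to verify is that, when $k=3$, the amplitude $G$ vanishes to first order in the appropriate variables at the center. Concretely, $G$ should contain a factor $\overline z$ (respectively $z$), or arise after one integration by parts as in $I_{\mathbf r,1}$. In that case $\overline\partial^2G(0)$ involves at most one derivative of $\mathbf r$, by the Leibniz rule $\overline\partial^2(\overline z f)=2\overline\partial f$ at $0$. Likewise $\partial^2 G(0)$ contains at most first derivatives of $\mathbf r$, because $\partial^2(\overline z\,f)(0)=0$ and $\partial^2(z f)(0)=2\partial f(0)$.

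Collecting all contributions, each is a linear combination, with smooth coefficients evaluated at $0$, of $\mathbf r(0)$, $\partial\mathbf r(0)$ and $\overline\partial\mathbf r(0)$. This is exactly $\pi\mathcal L^{\rm loc}_{\mathbf r,\le1}\mathbf r(0)$. The construction is uniform under translation of the center, by the same remarks made after \eqref{eq:stationary-phase-main}.

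The main obstacle is to make sure that no $k=3$ remainder term has an amplitude with $G\not\equiv O(|z|)$ multiplied by $\mathbf r$ directly. Such a term would produce a divergent $h^{-1}$ contribution, or a second derivative of $\mathbf r$ that would spoil the order-one claim. We will handle this by tracking the polarization. An amplitude derivative on $F_2$ replaces $\overline\partial^2\overline\Phi_2$-type factors by $\overline\partial F_2\cdot\overline\partial\overline\Phi_2/h$, which still carries $\overline z$. Derivatives on $F_1$ keep the $\mathbb A$ polarization, and the only nonzero contraction remains $\operatorname{tr}(\mathbb A\mathbb B)$. The identity polarization $2I\partial\overline\partial$ of $e^{\overline\Phi_2/h}$ vanishes at leading order, since $\partial\overline\Phi_2=0$.

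If some term still fails this check, we first integrate by parts once in $\overline\partial$, exactly as in the passage from $I_{\mathbf r,1}$ to its last form. This moves one derivative onto $C_0$-type coefficients, and the remaining amplitude then contains $\overline\partial\overline\Phi_2=-\overline z/2$.
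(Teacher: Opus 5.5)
Your proposal is correct and follows essentially the same route as the paper. In both, the non-leading local terms (amplitude derivatives, Christoffel corrections, noncritical remainders) each lose one effective phase derivative, which reduces them to $h^{-m}$ integrals with $m\le 2$ or to $h^{-3}$ integrals whose amplitude vanishes at the critical point; the stationary phase formula then yields a local operator of order at most one in $\mathbf r$. Your explicit polarization bookkeeping, namely the surviving factor $\overline z$ from $\overline\partial\overline\Phi_2$ in every $h^{-3}$ term, justifies that reduction more concretely than the paper does, and it shows the preliminary integrations by parts are only a fallback for these remainder terms.
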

	
	\begin{proof}
		We work at the critical center $0$. The translated statement is identical. Write $E=e^{\mathsf i x_1x_2/h}$ and $\Lambda_0=\gamma^2\mu$. The local scalar part is obtained from
		\[
		I_{\mathbf r}=\int v^*\gamma^2\mathbf r\operatorname{tr}(\nabla^2v^{(1)}\nabla^2v^{(2)})\,d\mu
		\]
		after replacing $v^*$, $v^{(1)}$, and the noncritical part of $v^{(2)}$ by their smooth-amplitude CGO expansions. Since $r_*$ and $r_1$ have finite expansions beginning with an explicit factor $h$, it is enough to classify finitely many model terms at each fixed order.
		
		We use the decomposition \eqref{eq:complex-Hessian-decomposition-section5}, then the leading scalar term already extracted in \eqref{eq:Ir1-limit-main} is the term in which the $\mathbb A\partial^2$ part of $v^{(1)}$ is paired with the $\mathbb B\overline\partial^2$ part of the noncritical part of $v^{(2)}$, and both second derivatives fall on the exponential phases. This is the only local scalar term with the full leading order and with the precise $\mathbb A$-$\mathbb B$ contraction producing the displayed second-order block.
		
		Every other local scalar term belongs to one of the following finite classes. First, at least one derivative falls on $F_*$, $F_1$, $F_2$, $\Lambda_0$, a cutoff, or a fixed coefficient. Second, at least one of the second derivatives in a Hessian is replaced by a connection correction from $\nabla^2=D^2-\Gamma\partial$. Third, one of the noncritical remainders $r_*$ or $r_1$ is used. Fourth, the lower phase derivative terms $h^{-1}\partial^2\Phi_1$ or $h^{-1}\overline\partial^2\overline\Phi_2$ are used instead of the two leading phase derivatives. In all four cases, the term loses at least one effective phase derivative compared with the leading scalar term.
		
		After the same integrations by parts as in the computation of \eqref{eq:Ir1-limit-main}, every remaining local scalar model term has one of the two forms
		\[
		\frac1{h^m}\int E\,a_\nu(x)\mathbf r(x)\,dx,\quad m\leq2, \quad \text{or}\quad \frac1{h^3}\int E\,\ell_\nu(z,\overline z)a_\nu(x)\mathbf r(x)\,dx,
		\]
		where $a_\nu$ is smooth and compactly supported, and $\ell_\nu$ vanishes to first order at the critical point. The finite number of such terms follows from the finite expansion of the two Hessians and from the fact that only two derivatives are present on each forward CGO factor.
		
		For the first type, the stationary phase formula gives $\int E a_\nu\mathbf r\,dx=2\pi h(a_\nu\mathbf r)(0)+O(h^2)$. Thus, after multiplication by $h$, only the case $m=2$ can survive, and it gives a local zeroth-order expression in $\mathbf r$. The cases $m\leq1$ vanish after multiplication by $h$.
		
		For the second type, the leading stationary phase coefficient is zero because $\ell_\nu(0)=0$. Hence, the only possible surviving contribution after multiplication by $h$ comes from the next stationary phase coefficient $h(\overline\partial^2-\partial^2)(\ell_\nu a_\nu\mathbf r)(0)$. Since $\ell_\nu$ has one explicit linear factor $z$ or $\overline z$, applying two derivatives to $\ell_\nu a_\nu\mathbf r$ leaves at most one derivative on $\mathbf r$. Therefore, this class contributes only local terms involving $\mathbf r$, $\partial\mathbf r$, and $\overline\partial\mathbf r$.
		
		The noncritical remainders $r_*$ and $r_1$ do not change this conclusion. Their expansions start with a factor $h$, so their model terms have one fewer effective power of $h^{-1}$ than the corresponding leading exponential term. If a differentiated remainder appears, the noncritical expansion above gives a finite smooth expansion in the same form, again with the same loss of one effective phase derivative.
		
		The covariant Hessian corrections also do not change the conclusion. Each connection correction contains only one derivative of the corresponding CGO factor, so it loses one effective phase derivative. If a derivative falls on a connection coefficient, the term loses another effective derivative or becomes a smooth coefficient multiplying one of the model terms above.
		
		Combining the finite list of surviving local terms gives a local differential expression of order at most one in $\mathbf r$. This expression is denoted by $\pi\mathcal L_{\mathbf r,\leq1}^{\rm loc}\mathbf r(0)$, which proves \eqref{eq:scalar-local-remainder-order}.
	\end{proof}
	
	It remains to isolate the part containing the critical remainder of the second forward solution. Write $v^{(2)}=F_2e^{\overline\Phi_2/h}+F_2e^{\overline\Phi_2/h}\widetilde r_2$. The first term gives the leading scalar block and the local lower-order terms. The second term is denoted by $I_{\mathbf r,\mathrm{crit}}$.
	
	In the estimates below we use a finite decomposition according to where the phase derivatives, the derivatives of $\widetilde r_2$, and the derivatives of the residual coefficient fall. Total $\overline\partial$ derivatives on the critical factor are integrated by parts before the critical factor is expanded. Smoothing cutoff errors from the localized Cauchy inverses are included in the smoothing remainders. Thus, only finitely many model terms remain, and they are handled by Lemma~\ref{lem:critical-remainder-reusable} or by Lemma~\ref{lem:Cauchy-transform-critical-pairing}. We write
	\[
	I_{\mathbf r,\mathrm{crit}}=I_{\mathbf r,\widetilde r_2}+I_{\mathbf r,\partial\widetilde r_2}
	+I_{\mathbf r,\overline\partial\widetilde r_2}+I_{\mathbf r,D^2\widetilde r_2}.
	\]
	
	\begin{lemma}[Order of the scalar critical contribution]
		\label{lem:scalar-critical-order}
		After the integration by parts allowed by Lemma~\ref{lem:section5-boundary-vanishing}, and after multiplication by $h$, the scalar critical contribution has the form
		\begin{equation}\label{eq:scalar-critical-operator-form}
			hI_{\mathbf r,\mathrm{crit}}=\mathcal N_{\mathbf r}(\mathbf r)(0)+o(1).
		\end{equation}
		Here, $\mathcal N_{\mathbf r}$ is a finite sum of localized Cauchy transform terms. More precisely, after absorbing universal constants and the fixed factor $V(0)$ into the coefficients, every summand has the form
		\begin{equation}\label{eq:scalar-critical-Cauchy-structure}
			b_\nu(0)\big(\partial^{*-1}[V'(a_{\nu,0}\mathbf r+a_{\nu,1}\partial\mathbf r
			+a_{\nu,2}\overline\partial\mathbf r)]\big)(0).
		\end{equation}
		The coefficients $a_{\nu,k}$ and $b_\nu$ are smooth and compactly supported in the local chart, are independent of $\mathbf r$ and $h$, and the sum over $\nu$ is finite.
		
		Consequently, after the substitution $\mathbf r=1-e^{-\theta}$, the linearization at $\theta=0$ of $\mathcal N_{\mathbf r}(1-e^{-\theta})$ is of order at most one in $\theta$, possibly followed by a localized Cauchy transform. In particular, the scalar
		critical contribution does not modify the displayed second-order scalar principal block $8\overline\partial^2(C_a\mathbf r)$.
	\end{lemma}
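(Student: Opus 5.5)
The plan is to combine Lemma~\ref{lem:finite-scalar-critical-normal-forms} with Lemma~\ref{lem:Cauchy-transform-critical-pairing}. First, Lemma~\ref{lem:finite-scalar-critical-normal-forms} is applied to $I_{\mathbf r,\mathrm{crit}}$, decomposed as $I_{\mathbf r,\widetilde r_2}+I_{\mathbf r,\partial\widetilde r_2}+I_{\mathbf r,\overline\partial\widetilde r_2}+I_{\mathbf r,D^2\widetilde r_2}$. The integrations by parts it uses are legitimate, with no boundary distributions, because $\mathbf r=D\mathbf r=0$ on $\partial\widetilde\Omega$ by Lemma~\ref{lem:section5-boundary-vanishing}. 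The lemma then reduces $hI_{\mathbf r,\mathrm{crit}}$, modulo $o(1)$, to a finite sum of pairings
\[
h\Big[\frac1{h^2}\int e^{\mathsf i x_1x_2/h}B_\nu(x;\mathbf r)\,\partial\widetilde r_2\,dx\Big],
\qquad
B_\nu=a_{\nu,0}\mathbf r+a_{\nu,1}\partial\mathbf r+a_{\nu,2}\overline\partial\mathbf r .
\]
In this step I would simply check that the classes $\widetilde r_2$, $\overline\partial\widetilde r_2$, the total $\overline\partial T_2$ and $\overline\partial^2T_2$ classes, and the $D^2\widetilde r_2$ classes are all absorbed into $o(1)$ by the estimates of Lemma~\ref{lem:critical-remainder-reusable}, exactly as itemized in that proof.

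Next, each surviving pairing is evaluated with Lemma~\ref{lem:Cauchy-transform-critical-pairing}, taking $B=B_\nu(\cdot;\mathbf r)$. This coefficient is independent of $h$ and compactly supported, since the fixed local CGO cutoffs are already built into the $a_{\nu,k}$. The lemma gives $2\pi\,\partial^{*-1}(B_\nu V')(0)\,V(0)+o(1)$. Setting $b_\nu:=2\pi V(0)$ times any fixed smooth factor attached to the summand, and using the linearity of $\partial^{*-1}$, yields exactly \eqref{eq:scalar-critical-Cauchy-structure}, and hence \eqref{eq:scalar-critical-operator-form}. Uniformity for translated centers $a$ follows from the uniform translated versions of both lemmas.

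For the consequence, substitute $\mathbf r=1-e^{-\theta}$. Its linearization at $\theta=0$ is $\theta$, with $\partial\mathbf r\mapsto\partial\theta$ and $\overline\partial\mathbf r\mapsto\overline\partial\theta$, and the nonlinear remainders are quadratic. So the linearization of $\mathcal N_{\mathbf r}(1-e^{-\theta})$ is $\theta\mapsto\sum_\nu b_\nu(0)\,\partial^{*-1}[V'(a_{\nu,0}\theta+a_{\nu,1}\partial\theta+a_{\nu,2}\overline\partial\theta)](0)$. This is a first-order operator followed by a localized Cauchy transform of order $-1$; in particular it is of order at most one before the transform and contains no second derivatives of $\theta$. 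It therefore cannot alter the block $8\overline\partial^2(C_a\mathbf r)$ coming from \eqref{eq:Ir1-limit-main}.

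The step I expect to be the main obstacle is the first one. One has to make sure that no second derivative of $\mathbf r$ ever multiplies $\partial\widetilde r_2$ with prefactor $h^{-2}$. The risk comes from the $\partial^2\widetilde r_2$ term contracted against a lower-order part of $\nabla^2 v^{(1)}$: moving $\partial$ off $\widetilde r_2$ puts at most one derivative on $\mathbf r$, but a second integration by parts would put two. I would handle this by performing exactly one integration by parts in that class and never two, and by counting effective phase derivatives to confirm that a doubly differentiated $\mathbf r$ could only appear with prefactor at most $h^{-1}$. Such terms are then negligible by \eqref{eq:critical-estimate-partial-r2}.
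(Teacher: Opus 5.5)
Your proposal is correct and follows the paper's proof essentially step for step. You reduce via Lemma~\ref{lem:finite-scalar-critical-normal-forms} to the finitely many $h^{-2}\partial\widetilde r_2$ pairings with coefficients $B_\nu=a_{\nu,0}\mathbf r+a_{\nu,1}\partial\mathbf r+a_{\nu,2}\overline\partial\mathbf r$, evaluate each by Lemma~\ref{lem:Cauchy-transform-critical-pairing}, absorb $2\pi V(0)$ into $b_\nu$, and linearize $1-e^{-\theta}$ at $\theta=0$. The obstacle you flag in your last paragraph (no second derivative of $\mathbf r$ in front of $h^{-2}\partial\widetilde r_2$) is already settled inside Lemma~\ref{lem:finite-scalar-critical-normal-forms}, which the paper simply invokes, so that extra check is redundant rather than needed.
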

	
	\begin{proof}
		We work at the center $0$. The translated case is identical, with $z$ replaced by $z-a$. By Lemma~\ref{lem:finite-scalar-critical-normal-forms}, after multiplication by $h$, the only scalar critical terms are finite sums of
		\[
		h\Big[\frac1{h^2} \int e^{\mathsf i x_1x_2/h}B_\nu(x;\mathbf r)\partial\widetilde r_2(x)\,dx
		\Big],
		\]
		where $B_\nu(x;\mathbf r)=a_{\nu,0}(x)\mathbf r(x)+a_{\nu,1}(x)\partial\mathbf r(x)+a_{\nu,2}(x)\overline\partial\mathbf r(x)$. All other scalar critical terms are $o(1)$ after multiplication by $h$.
		
		Applying Lemma~\ref{lem:Cauchy-transform-critical-pairing} to each such term gives
		\[
		h\Big[\frac1{h^2}\int e^{\mathsf i x_1x_2/h}B_\nu(x;\mathbf r)\partial\widetilde r_2(x)\,dx
		\Big]=2\pi \big(\partial^{*-1}[V'B_\nu(\cdot;\mathbf r)]\big)(0)V(0)+o(1).
		\]
		After absorbing the fixed factor $2\pi V(0)$ and fixed smooth coefficient factors into the coefficients, every such summand has the form $b_\nu(0)\big(\partial^{*-1}	[V'(a_{\nu,0}\mathbf r+a_{\nu,1}\partial\mathbf r+a_{\nu,2}\overline\partial\mathbf r)]
		\big)(0)$. 
		This proves \eqref{eq:scalar-critical-operator-form} and the structure \eqref{eq:scalar-critical-Cauchy-structure}.
		
		Finally, for $R(\theta)=1-e^{-\theta}$, the linearization at $\theta=0$ is $R'(0)\vartheta=\vartheta$. Hence, the linear part of each such summand contains at most one derivative of $\vartheta$, possibly followed by the localized Cauchy inverse. Therefore, the scalar critical contribution does not change the displayed second-order scalar principal block.
	\end{proof}

	We keep the notation $\mathcal N_{\mathbf r}$ from Lemma~\ref{lem:scalar-critical-order}. Thus, $\mathcal N_{\mathbf r}(\mathbf r)(0)$ denotes the finite Cauchy transform part of $hI_{\mathbf r,\mathrm{crit}}$ at the center $0$. When no confusion is possible, we write this term simply as $\mathcal N_{\mathbf r}(0)$. Combining the scalar pieces gives
	\begin{equation}\label{eq:Ir-limit-final}
		hI_{\mathbf r}=8\pi\overline\partial^2
		\big(\Lambda_0F_*F_1F_2\mathbf r\big)(0)
		+\pi\mathcal L_{\mathbf r,\leq1}^{\rm loc}\mathbf r(0)
		+\mathcal N_{\mathbf r}(\mathbf r)(0)
		+o(1).
	\end{equation}

	\subsection{The gauge Hessian block and the nonlocal Cauchy term}
	\label{subsec:gauge-Hessian-contributions}
	
	We next compute the two terms linear in $M^\alpha$:
	\begin{equation*}
		I_{\xi M}=\int_{\widetilde\Omega}v^*\gamma^2\xi_\beta\operatorname{tr}(\nabla^2v^{(1)}M^\beta)\,d\mu,
		\quad \text{and}\quad 
		I_{\eta M}=\int_{\widetilde\Omega}v^*\gamma^2\eta_\alpha \operatorname{tr}(M^\alpha\nabla^2v^{(2)})\,d\mu.
	\end{equation*}
	Recall $\eta_\alpha=(N^{-T})_\alpha{}^j\partial_jv^{(1)}$, $\xi_\beta=(N^{-T})_\beta{}^j\partial_jv^{(2)}$, and $N=DJ$, then define
	\begin{equation*}
		\lambda_\alpha=(N^{-T})_\alpha{}^1+\mathsf i(N^{-T})_\alpha{}^2,
		\quad
		\kappa_\alpha=(N^{-T})_\alpha{}^1-\mathsf i(N^{-T})_\alpha{}^2.
	\end{equation*}
	Then the leading first derivative factors are $\eta_\alpha^{\mathrm{lead}}=\frac1hF_1e^{\Phi_1/h}(\partial\Phi_1)\lambda_\alpha$ and $\xi_\beta^{\mathrm{lead}}=\frac1h F_2e^{\overline{\Phi_2}/h}(\overline\partial\overline{\Phi_2})\kappa_\beta$.	Let
	\begin{equation}\label{eq:def_of_A(T)_B(T)}
		\mathcal A(T):=\operatorname{tr}(\mathbb A T), 	\quad 	\mathcal B(T):=\operatorname{tr}(\mathbb B T).
	\end{equation}
	With the conventions above, we have $\mathcal A(D^2f)=4\overline\partial^2f$ and $\mathcal B(D^2f)=4\partial^2f$.
	
	For $I_{\xi M}$, the term that can produce a differentiated $M$-contribution
	comes from the leading $\mathbb A\partial^2$ component of $\nabla^2v^{(1)}$
	and the leading phase derivative in $\xi_\beta$. Set $B_\xi:=\Lambda_0F_*F_1F_2\kappa_\beta\mathcal A(M^\beta)$. Since $\partial^2(e^{\Phi_1/h})=\big[h^{-2}(\partial\Phi_1)^2+h^{-1}\partial^2\Phi_1\big]e^{\Phi_1/h}$, and the leading part of $\xi_\beta$ contains
	$h^{-1}(\overline\partial\overline{\Phi_2})\kappa_\beta
	F_2e^{\overline\Phi_2/h}$, the local term in $I_{\xi M}$ which carries one
	exterior derivative of $M$ in the limiting equation is
	\begin{equation*}
		I_{\xi M,1}=-\frac1{2h^3}\int\overline z \Big(1+\frac z4\Big)^2B_\xi e^{\mathsf i x_1x_2/h}\,dx+O(1).
	\end{equation*}
	Here we used $\overline\partial\overline{\Phi_2}=-\overline z/2$ and $\partial\Phi_1=1+z/4$. The remaining local terms either have one fewer effective phase derivative, contain the vanishing factor $\overline\partial\overline{\Phi_2}$ in a lower order position, or give only zeroth order contributions in $M$. Applying stationary phase expansion \eqref{eq:stationary-phase-main} to $F_\xi=-\frac12\overline z\big(1+\frac z4\big)^2B_\xi$ gives $F_\xi(0)=0$, $\partial^2F_\xi(0)=0$, and  $\overline\partial^2F_\xi(0)=-\overline\partial B_\xi(0)$. Hence,
	\begin{equation}\label{eq:IxiM-main-limit}
		\lim_{h\to0}hI_{\xi M,1}=-2\pi\overline\partial B_\xi(0)=-2\pi
		\overline\partial\big(\Lambda_0F_*F_1F_2\kappa_\beta\mathcal A(M^\beta)\big)(0).
	\end{equation}
	
	There is one local zeroth-order contribution in $M$, which is not included in 	\eqref{eq:IxiM-main-limit}. It comes from the amplitude-derivative component of $\xi_\beta$ when the second forward solution is replaced by its noncritical factor $F_2e^{\overline\Phi_2/h}$. Indeed,
	\begin{equation*}
		\xi_{\beta,\mathrm{nc}}=(N^{-T})_\beta{}^j\partial_j
		\big(F_2e^{\overline\Phi_2/h}\big)=e^{\overline\Phi_2/h}
		\big[h^{-1}\kappa_\beta(\overline\partial\overline\Phi_2)F_2+\lambda_\beta\partial F_2
		+\kappa_\beta\overline\partial F_2\big].
	\end{equation*}
	The first term is the leading phase-derivative term already used in
	\eqref{eq:IxiM-main-limit}. The remaining amplitude-derivative component is $e^{\overline\Phi_2/h}\big(\lambda_\beta\partial F_2+\kappa_\beta\overline\partial F_2\big)$.
	Pairing this component with the leading $\mathbb A\partial^2$ component of $\nabla^2v^{(1)}$ gives 
	\begin{equation*}
		\frac1{h^2}\int e^{\mathsf i x_1x_2/h}\Lambda_0F_*F_1\Big(1+\frac z4\Big)^2
		\big(\lambda_\beta\partial F_2+\kappa_\beta\overline\partial F_2\big)
		\mathcal A(M^\beta)\,dx .
	\end{equation*}
	By the stationary phase formula, after multiplication by $h$ this contributes
	\begin{equation*}
		2\pi \Lambda_0(0)F_*(0)F_1(0)\big( \lambda_\beta(0)\partial F_2(0)
		+\kappa_\beta(0)\overline\partial F_2(0)\big)\mathcal A(M^\beta)(0).
	\end{equation*}
	Accordingly, define
	\begin{equation}\label{eq:def-ZM-zero}
		\mathcal Z_M(M)(0)=2\Lambda_0(0)F_*(0)F_1(0)\big(\lambda_\beta(0)\partial F_2(0)
		+\kappa_\beta(0)\overline\partial F_2(0)\big)\mathcal A(M^\beta)(0).
	\end{equation}
	All other noncritical local terms in $I_{\xi M}$ either lose one effective power of $h^{-1}$, contain the vanishing factor
	$\overline\partial\overline\Phi_2=-\overline z/2$, or arise from covariant Hessian corrections. They produce no local derivative of $M$ beyond the already extracted $-2\overline\partial(\cdots)$ term. We include them in a local order zero operator in $M$.
	
	For $I_{\eta M}$, set $B_\eta:=\Lambda_0F_*F_1F_2\lambda_\alpha\mathcal B(M^\alpha)$, then the leading Hessian of $v^{(2)}$ without $\widetilde r_2$ is the
	$\mathbb B\overline\partial^2$ component. Hence, 
	\begin{equation*}
		\begin{split}
			I_{\eta M,1}=\frac1{h^3}\int B_\eta (\partial\Phi_1)(\overline\partial\overline\Phi_2)^2
			e^{\mathsf i x_1x_2/h}\,dx+\frac1{h^2}\int B_\eta (\partial\Phi_1)\overline\partial^2\overline\Phi_2e^{\mathsf i x_1x_2/h}\,dx.
		\end{split}
	\end{equation*}
	Using the phase derivatives,
	\begin{equation*}
		I_{\eta M,1}=\frac1{4h^3}\int \overline z^2 \Big(1+\frac z4\Big)B_\eta e^{\mathsf i x_1x_2/h}\,dx
		-\frac1{2h^2}\int \Big(1+\frac z4\Big)B_\eta e^{\mathsf i x_1x_2/h}\,dx.
	\end{equation*}
	For the first integral, set $F_{\eta,1}=\frac14 \overline z^2\big(1+\frac z4\big)B_\eta$, then $F_{\eta,1}(0)=0$, $\partial^2F_{\eta,1}(0)=0$, and $\overline\partial^2F_{\eta,1}(0)=B_\eta(0)/2$. Thus, the first integral contributes $\pi B_\eta(0)$ to $\lim hI_{\eta M,1}$. For the second integral, set $F_{\eta,2}=-\frac12\big(1+\frac z4\big)B_\eta$, then the leading stationary phase coefficient is $2\pi F_{\eta,2}(0)=-\pi B_\eta(0)$. Thus,
	\begin{equation}\label{eq:IetaM-leading-local-cancel}
		\lim_{h\to0}hI_{\eta M,1}=0.
	\end{equation}
	The remaining noncritical local terms in $I_{\eta M}$ either lose another effective power of $h^{-1}$ or contain a vanishing factor. They do not produce an additional local derivative of $M$. Together with the lower local terms from $I_{\xi M}$, they are denoted by $\pi\mathcal L_{M,0}^{\rm loc}(M)(0)$, where $\mathcal L_{M,0}^{\rm loc}$ is local of order zero in $M$.
	
	\begin{lemma}[Finite $M$-critical normal form]
		\label{lem:M-critical-normal-form-section5}
		After expanding the critical part of $I_{\xi M}+I_{\eta M}$ and performing the integrations by parts allowed by Lemma~\ref{lem:section5-boundary-vanishing}, every $M$-critical term is $o(1)$ after multiplication by $h$, except the following nonnegligible contribution
		\begin{equation}\label{eq:M-critical-pairing-normal-form-section5}
			h\Big[
			\frac1{h^2}\int e^{\mathsf i x_1x_2/h}Q_A(x;M)\Big(1-\frac{z^2}{16}\Big)\partial\widetilde r_2\,dx
			\Big]+o(1),
		\end{equation}
		where $Q_A(x;M):=\Lambda_0F_*F_1F_2\lambda_\alpha\mathcal A(M^\alpha)$.
	\end{lemma}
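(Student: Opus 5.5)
We follow the scheme of Lemma~\ref{lem:finite-scalar-critical-normal-forms}, now with the two $M$-linear integrands. We work at the center $0$, write $E=e^{\mathsf i x_1x_2/h}$ and $T_2=F_2e^{\overline\Phi_2/h}\widetilde r_2$, and replace $v^{(2)}$ by $T_2$ in $I_{\xi M}$ and $I_{\eta M}$. In $I_{\xi M}$ the critical factor enters through $\xi_\beta=(N^{-T})_\beta{}^j\partial_jT_2$. In $I_{\eta M}$ it enters through $\nabla^2T_2$. The first step is to decompose both expressions with \eqref{eq:complex-Hessian-decomposition-section5} and with $\partial_j=(\text{combinations of }\partial,\overline\partial)$. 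This yields $\xi_\beta=\lambda_\beta\partial T_2+\kappa_\beta\overline\partial T_2$ modulo $N$-dependent smooth coefficients, and it splits $\nabla^2T_2$ into its $\mathbb A\partial^2$, $\mathbb B\overline\partial^2$ and $2I\partial\overline\partial$ pieces, plus connection corrections of first order.

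We first remove the total $\overline\partial$ classes. These are $\kappa_\beta\overline\partial T_2$ in $I_{\xi M}$ and $\mathcal B(M^\alpha)\overline\partial^2T_2$, $\operatorname{tr}(M^\alpha)\partial\overline\partial T_2$ in $I_{\eta M}$. We integrate by parts in $\overline\partial$ before expanding $T_2$. Since $Ee^{-\overline\Phi_2/h}=e^{(\Phi^*+\Phi_1)/h}$ is holomorphic, the derivatives fall only on smooth amplitudes or on $M^\alpha$. In $I_{\eta M}$ there are two integrations by parts, which could put $\overline\partial^2$ on $M$. To avoid needing boundary vanishing of $DM^\alpha$, we perform only the first integration by parts on $M$, which is legitimate by \eqref{eq:section5-zero-extension-M}. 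The second derivative we leave on the compactly supported parametrix amplitude, as allowed by the last clause of Lemma~\ref{lem:section5-boundary-vanishing}. The resulting terms fall under \eqref{eq:critical-estimate-barpartial-total}, \eqref{eq:critical-estimate-dbar2-total}, or the bare-$\widetilde r_2$ bounds \eqref{eq:critical-estimate-r2-hminusone}--\eqref{eq:critical-estimate-r2-zbar}, and so they are $o(1)$ after multiplication by $h$. After this, every term involving $\overline\partial\widetilde r_2$ has prefactor at most $h^{-1}$ and is handled by \eqref{eq:critical-estimate-barpartial-r2}. The terms with $\partial^2\widetilde r_2$ in $I_{\eta M}$ carry the factor $\mathcal A(M^\alpha)$ paired with only a first-derivative factor $\eta$. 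We integrate once in $\partial$, using $M^\alpha=0$ on the boundary. This produces $\partial\widetilde r_2$ with prefactor $h^{-2}$: the $\partial$ hitting $E$ gives $z/(2h)$, and the $\partial$ hitting $e^{\Phi^*+\Phi_1}$ gives $h^{-1}\partial(\Phi^*+\Phi_1)$.

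It remains to collect the $h^{-2}\partial\widetilde r_2$ class, since terms with prefactor at most $h^{-1}$ vanish by \eqref{eq:critical-estimate-partial-r2}. There are two sources. One is $I_{\xi M}$ with $\lambda_\beta F_2\partial\widetilde r_2$ paired with the leading part $h^{-2}(\partial\Phi_1)^2\mathbb A$ of $\nabla^2v^{(1)}$. The other is the $\mathcal A(M^\alpha)\partial^2\widetilde r_2$ term of $I_{\eta M}$ with leading $\eta_\alpha\sim h^{-1}\partial\Phi_1\lambda_\alpha$, after the integration by parts above. Summing the two, the coefficient of $h^{-2}\Lambda_0F_*F_1F_2\lambda_\alpha\mathcal A(M^\alpha)\partial\widetilde r_2$ should combine as $(\partial\Phi_1)^2-(\partial\Phi_1)\partial(\Phi^*+\Phi_1)$ modulo universal constants. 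Using $\partial\Phi^*\,\partial\Phi_1=-(1-z^2/16)$, this should reduce to the factor $(1-z^2/16)$ in \eqref{eq:M-critical-pairing-normal-form-section5}, with the normalization of $\mathcal A$ absorbing the remaining constants.

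The main obstacle is this bookkeeping of the polynomial weight. One has to verify that the competing contributions combine exactly into $(1-z^2/16)$, and that the derivative terms on $M$ created by the integration by parts carry only an $h^{-1}$ prefactor and are therefore negligible. The rest of the argument is the same finite classification as in the scalar case: derivatives on $F_*,F_1,F_2,\Lambda_0,N^{-T}$, connection corrections, and the expansions of $r_*,r_1$ each lose an effective power of $h^{-1}$.
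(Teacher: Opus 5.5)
Your proposal is correct and follows essentially the same route as the paper. You remove the total $\overline\partial T_2$, $\overline\partial^2T_2$ and $\partial\overline\partial T_2$ classes by integrating by parts against the holomorphic factor $Ee^{-\overline\Phi_2/h}=e^{(\Phi^*+\Phi_1)/h}$, and you identify the only surviving $h^{-2}\partial\widetilde r_2$ terms: the $\lambda_\beta F_2\partial\widetilde r_2$ piece of $I_{\xi M}$ paired with $h^{-2}(\partial\Phi_1)^2\mathbb A$, and the $\mathcal A(M^\alpha)\partial^2\widetilde r_2$ piece of $I_{\eta M}$ after one $\partial$-integration by parts with $\partial E=\frac{z}{2h}E$.

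Your hedging at the end is unnecessary. Since $\partial(\Phi^*+\Phi_1)=z/2$, the ``$\partial$ hitting $E$'' and ``$\partial$ hitting $e^{(\Phi^*+\Phi_1)/h}$'' are one and the same contribution, not two. Moreover, $(\partial\Phi_1)^2-\partial\Phi_1\,\partial(\Phi^*+\Phi_1)=-\partial\Phi^*\,\partial\Phi_1=(1+\frac z4)^2-\frac z2(1+\frac z4)=1-\frac{z^2}{16}$ exactly, so there are no constants left to absorb into $\mathcal A$.
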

	
	\begin{proof}
		We work at the critical center $0$. The translated statement is identical, with $z$ replaced by $z-a$. Put $E=e^{\mathsf i x_1x_2/h}$ and $T_2=F_2e^{\overline\Phi_2/h}\widetilde r_2$. We use the convention of Remark~\ref{rem:critical-cutoffs-not-in-identity}. A single integration by parts with boundary coefficient $M^\alpha$ gives no boundary term, since $M^\alpha=0$ on $\partial\widetilde\Omega$. The total critical classes containing $\overline\partial T_2$ or $\overline\partial^2T_2$ are kept in total form until the compactly supported local critical amplitude has been inserted; these integrations by parts are then full-plane integrations in the local chart.
		
		If a derivative falls on $M^\alpha$ after such an integration by parts, the term has at most the size of an $h^{-1}\partial\widetilde r_2$ or $h^{-1}\overline\partial\widetilde r_2$ pairing, or contains $\widetilde r_2$ without a critical derivative. These terms are covered by Lemma~\ref{lem:critical-remainder-reusable}. 	Thus, the only retained $h^{-2}\partial\widetilde r_2$ terms are those where the derivative falls on the oscillatory factor.
		
		The $M$-critical part is the part of $I_{\xi M}+I_{\eta M}$ in which the second forward CGO solution is replaced by $T_2$. Since the noncritical factors $v^*$ and $v^{(1)}$ have finite asymptotic expansions to the required order, and since the covariant Hessian corrections contain only first derivatives of the CGO factors, it is enough to classify the finite family of model terms obtained by expanding the leading exponential factors, the smooth CGO amplitude factors, the density, and the connection coefficients.
		
		We use the Hessian decomposition \eqref{eq:complex-Hessian-decomposition-section5} with
		\[
		\mathcal A(T)=\operatorname{tr}(\mathbb A T),\quad \mathcal B(T)=\operatorname{tr}(\mathbb B T).
		\]
		The covariant Hessian differs from the ordinary Hessian by a fixed connection term of order one. Hence, every term containing a connection correction loses one effective phase derivative. Such terms are either local order zero in $M$, already included in $\mathcal L_{M,0}^{\rm loc}(M)$, or critical terms controlled by Lemma~\ref{lem:critical-remainder-reusable}. They do not change the $h^{-2}\partial\widetilde r_2$ class.
		
		We first consider $I_{\xi M}$. The critical part of $\xi_\beta$ is
		\[
		\xi_{\beta,\mathrm{crit}}=\kappa_\beta e^{-\overline\Phi_2/h}\overline\partial T_2+\lambda_\beta e^{-\overline\Phi_2/h}\partial T_2.
		\]
		The component containing the total $\overline\partial T_2$ is kept in total form and integrated by parts in $\overline\partial$ before $T_2$ is expanded. Since $Ee^{-\overline\Phi_2/h}=e^{(\Phi^*+\Phi_1)/h}$ and $\Phi^*+\Phi_1$ is holomorphic, this integration does not differentiate the exponential factor. The resulting terms contain $\widetilde r_2$ without derivatives, or contain $\overline z\,\widetilde r_2$ after differentiating an already expanded oscillatory expression. After multiplication by $h$, they are controlled by \eqref{eq:critical-estimate-r2-hminusone}, \eqref{eq:critical-estimate-r2-hminustwo}, \eqref{eq:critical-estimate-r2-zbar}, and \eqref{eq:critical-estimate-barpartial-total}.
		
		It remains in $I_{\xi M}$ to examine the $\lambda_\beta e^{-\overline\Phi_2/h}\partial T_2$ component. Since $\partial\overline\Phi_2=0$, one has $e^{-\overline\Phi_2/h}\partial T_2=(\partial F_2)\widetilde r_2+F_2\partial\widetilde r_2$. The first term gives an integral with prefactor at most $h^{-2}$ and with $\widetilde r_2$ without derivatives, hence it is $o(1)$ after multiplication by $h$ by \eqref{eq:critical-estimate-r2-hminustwo}. The second term is paired with the leading $\mathbb A\partial^2$ component of $\nabla^2v^{(1)}$. Since $\partial^2(e^{\Phi_1/h})=h^{-2}(\partial\Phi_1)^2e^{\Phi_1/h}+h^{-1}(\partial^2\Phi_1)e^{\Phi_1/h}$ and $\partial\Phi_1=1+z/4$, the only nonnegligible part of $I_{\xi M}$ is
		\begin{equation}\label{eq:IxiM-critical-leading-detailed}
			I_{\xi M,\partial\widetilde r_2}^{\mathrm{lead}}=\frac1{h^2}\int E\,Q_A(x;M)\Big(1+\frac z4\Big)^2\partial\widetilde r_2\,dx+o(h^{-1}).
		\end{equation}
		The term with $h^{-1}\partial^2\Phi_1$ has prefactor at most $h^{-1}$ in front of $\partial\widetilde r_2$ and is $o(h^{-1})$ before multiplication by $h$ by \eqref{eq:critical-estimate-partial-r2}. Derivatives falling on $F_*$, $F_1$, $\Lambda_0$, $\kappa_\beta$, $\lambda_\beta$, a cutoff, or a connection coefficient are handled in the same way.
		
		We now consider $I_{\eta M}$. The leading part of $\eta_\alpha$ is $h^{-1}F_1e^{\Phi_1/h}(\partial\Phi_1)\lambda_\alpha$. The critical Hessian of $T_2$ is decomposed by the same formula for $D^2T_2$. The $\mathbb B\overline\partial^2T_2$ part is a total $\overline\partial^2$ critical class. With the compactly supported local critical amplitude fixed above, it is kept in total form and controlled after multiplication by $h$ by \eqref{eq:critical-estimate-dbar2-total}. The $2I\partial\overline\partial T_2$ part is a total $\overline\partial$ critical class after the exterior $\partial$ is treated as part of the smooth differentiated amplitude. Integrating once in $\overline\partial$ before expanding $T_2$ reduces it to terms containing $\widetilde r_2$ or $\partial\widetilde r_2$ with prefactor at most $h^{-1}$, and these are controlled by 	\eqref{eq:critical-estimate-r2-hminusone} and \eqref{eq:critical-estimate-partial-r2}. Thus, neither the $\mathbb B$ part nor the $I$ part gives a nonnegligible contribution. Since $\partial\overline\Phi_2=0$,
		\[
		e^{-\overline\Phi_2/h}\partial^2T_2=(\partial^2F_2)\widetilde r_2+2(\partial F_2)\partial\widetilde r_2+F_2\partial^2\widetilde r_2.
		\]
		The first two terms have a prefactor at most $h^{-1}$ after being multiplied by the leading $\eta_\alpha$ factor, and are controlled by Lemma~\ref{lem:critical-remainder-reusable}. The last term gives
		\[
		\frac1h\int E\,Q_A(x;M)\Big(1+\frac z4\Big)\partial^2\widetilde r_2\,dx.
		\]
		We integrate once in $\partial$. The boundary term vanishes because $M^\alpha=0$ on $\partial\widetilde\Omega$ by Lemma~\ref{lem:section5-boundary-vanishing}. Since $\partial E=\frac z{2h}E$, the differentiated exponential gives the nonnegligible part
		\begin{equation}\label{eq:IetaM-A-leading-detailed}
			I_{\eta M,A}^{\partial^2\widetilde r_2}=-\frac1{h^2}\int E\,\frac z2\Big(1+\frac z4\Big)Q_A(x;M)\partial\widetilde r_2\,dx+o(h^{-1}).
		\end{equation}
		When $\partial$ falls on $Q_A(x;M)(1+z/4)$ instead of $E$, the resulting prefactor is only $h^{-1}$ in front of $\partial\widetilde r_2$, so it is $o(h^{-1})$ before multiplication by $h$ by \eqref{eq:critical-estimate-partial-r2}.
		
		Combining \eqref{eq:IxiM-critical-leading-detailed} and \eqref{eq:IetaM-A-leading-detailed}, the coefficient of the nonnegligible $h^{-2}\partial\widetilde r_2$ term is $\big(1+\frac z4\big)^2-\frac z2\big(1+\frac z4\big)=1-\frac{z^2}{16}$. All other critical terms are $o(1)$ after multiplication by $h$. Therefore, the only nonnegligible $M$-critical contribution is exactly \eqref{eq:M-critical-pairing-normal-form-section5}.
	\end{proof}
	
	By Lemma~\ref{lem:M-critical-normal-form-section5} and Lemma~\ref{lem:Cauchy-transform-critical-pairing}, the $M$-critical Cauchy contribution is $2\pi\big(\partial^{*-1}[Q_A(\cdot;M)(1-z^2/16)V']\big)(0)V(0)$. Therefore,
	\begin{equation}\label{eq:IxiM-IetaM-combined}
		\begin{split}
			\lim_{h\to0}h(I_{\xi M}+I_{\eta M})
			&=-2\pi\overline\partial\big(\Lambda_0F_*F_1F_2\kappa_\beta\mathcal A(M^\beta)
			\big)(0)+\pi\mathcal Z_M(M)(0)+	\pi\mathcal L_{M,0}^{\rm loc}(M)(0)\\
			&\quad \,+2\pi\big(\partial^{*-1} [Q_A(\cdot;M)(1-z^2/16)V']\big)(0)V(0).
		\end{split}
	\end{equation}

	\subsection{The residual equation}
	\label{subsec:remaining-first-derivative-and-residual-equation}
	
	\begin{lemma}[First-derivative terms]
		\label{lem:first-derivative-terms-section5}
		One has
		\begin{equation}\label{eq:Ietaxi-vanishes}
			\lim_{h\to0}hI_{\eta\xi}=0,
		\end{equation}
		and
		\begin{equation}\label{eq:Igradgrad-vanishes}
			\lim_{h\to0}hI_{\nabla\nabla}=0.
		\end{equation}
	\end{lemma}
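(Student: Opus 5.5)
The plan is a phase-order count. I want to show that, after multiplication by $h$, every model term in $I_{\eta\xi}$ and $I_{\nabla\nabla}$ has too few inverse powers of $h$, or too little vanishing of its amplitude at the critical point, to survive in the limit. Both terms contain no second derivative of $v^{(1)}$ or $v^{(2)}$. The integrands carry only the first-derivative factors $\eta_\alpha,\xi_\beta$ (equivalently $\nabla v^{(1)},\nabla v^{(2)}$) multiplied by smooth coefficients: $\rho^{-1}\gamma^2\operatorname{tr}(\mathcal H^\alpha\mathcal H^\beta)$ for $I_{\eta\xi}$, and the coefficients of $\mathcal G_1$ and $\rho(\mathcal G_2\circ J)$ for $I_{\nabla\nabla}$. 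So each integral has at most two phase derivatives, whereas the scalar block needed four.

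First I will treat the noncritical part, with $v^{(2)}$ replaced by $F_2e^{\overline\Phi_2/h}$. The leading part of $\eta_\alpha\xi_\beta$ is
\[
h^{-2}F_1F_2(\partial\Phi_1)(\overline\partial\overline\Phi_2)\lambda_\alpha\kappa_\beta\,e^{(\Phi_1+\overline\Phi_2)/h}=-\tfrac{1}{2h^2}\overline z\,(1+z/4)F_1F_2\lambda_\alpha\kappa_\beta\,e^{(\Phi_1+\overline\Phi_2)/h}.
\]
Multiplied by $v^*$, this gives $h^{-2}\int E\,\overline z\,a\,dx$ with $E=e^{\mathsf ix_1x_2/h}$. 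By \eqref{eq:stationary-phase-main} the leading coefficient vanishes because the amplitude vanishes at $0$, so the integral is $O(h^{-2}\cdot h^2)=O(1)$. After multiplication by $h$ it tends to zero.

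The other products lose a power of $h^{-1}$. These are the amplitude-derivative terms $h^{-1}$ (times) $O(1)$ and $O(1)\cdot O(1)$, and the terms involving $r_*$ or $r_1$, whose expansions start with $h$. They give $h^{-1}\int E a=O(1)$ or smaller. The same count applies verbatim to $I_{\nabla\nabla}$: $\mathcal P_{\nabla\nabla}$ is bilinear in $(\nabla v^{(1)},\nabla v^{(2)})$, and through $\nabla v=N^T\eta$ also in $(\eta,\xi)$, with smooth $h$-independent coefficients. Translation to a general center $a$ is uniform, as in the preceding lemmas.

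Second, I will treat the critical part, where $v^{(2)}$ is replaced by $T_2=F_2e^{\overline\Phi_2/h}\widetilde r_2$. By \eqref{eq:critical-first-derivatives-section5}, $\xi_\beta$ contains:
\begin{itemize}
\item a total $\kappa_\beta e^{-\overline\Phi_2/h}\overline\partial T_2$ piece, which I keep in total form. It pairs with $h^{-1}$ from $\eta$, so I integrate by parts in $\overline\partial$ using the holomorphy of $\Phi^*+\Phi_1$. This is the mechanism of \eqref{eq:critical-estimate-barpartial-total}, but here only $h^{-1}$ appears, so the result is $O(h^{-1}|\int E a\widetilde r_2|)=o(h^{-1})$ by the proof of Lemma~\ref{lem:critical-remainder-reusable}.
\item a $\lambda_\beta((\partial F_2)\widetilde r_2+F_2\partial\widetilde r_2)$ piece. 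With the $h^{-1}$ from $\eta$ this falls under \eqref{eq:critical-estimate-r2-hminusone} and \eqref{eq:critical-estimate-partial-r2}.
\end{itemize}
The boundary terms vanish: after expansion no derivative falls on $\mathcal H$ beyond one integration by parts. I expect Lemma~\ref{lem:section5-boundary-vanishing} to cover this, since the coefficient $\operatorname{tr}(\mathcal H^\alpha\mathcal H^\beta)$ vanishes on the boundary. In any case the local cutoffs make these full-plane integrals.

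The main obstacle is bookkeeping rather than estimates. The one point to check is that no $h^{-2}\partial\widetilde r_2$ term arises. Such a term would require two phase derivatives on top of $\partial\widetilde r_2$, but $\eta$ supplies one and $\partial T_2$ supplies none. So the Cauchy-transform mechanism of Lemma~\ref{lem:Cauchy-transform-critical-pairing} never activates here, and both limits are zero.
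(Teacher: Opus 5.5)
Your proposal is correct and follows the paper's proof. It uses the same phase-order count, the same vanishing factor $\overline\partial\overline\Phi_2=-\overline z/2$ to kill the leading $h^{-2}$ local term, and the same reduction of every critical piece to the estimates of Lemma~\ref{lem:critical-remainder-reusable}; in particular no $h^{-2}\partial\widetilde r_2$ pairing can arise. The only difference is cosmetic: you integrate the total $\overline\partial T_2$ piece by parts using holomorphy of $\Phi^*+\Phi_1$, whereas the paper expands it via \eqref{eq:critical-first-derivatives-section5} and applies \eqref{eq:critical-estimate-r2-zbar} and \eqref{eq:critical-estimate-barpartial-r2}. These are equivalent, since the latter estimate is itself proved by that same integration by parts with a compactly supported amplitude.
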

	
	\begin{proof}
		Both $I_{\eta\xi}$ and $I_{\nabla\nabla}$ contain only first derivatives of the two forward CGO solutions. We first consider the purely local terms, namely the terms in which the second forward solution is replaced by its noncritical part $F_2e^{\overline\Phi_2/h}$ and the noncritical remainders are expanded to finite order. The leading local model has the form
		\[
		\frac1{h^2}\int e^{\mathsf i x_1x_2/h}a(x)(\partial\Phi_1)(\overline\partial\overline\Phi_2)\,dx,
		\]
		where $a$ is smooth and compactly supported. Since $\overline\partial\overline\Phi_2=-\overline z/2$, the amplitude vanishes at the critical point. The stationary phase formula then gives this integral as $O(1)$ before multiplication by $h$. Hence it contributes $o(1)$ to $hI_{\eta\xi}$ and $hI_{\nabla\nabla}$.
		
		If one derivative falls on $F_*$, $F_1$, $F_2$, the density, a cutoff, $N^{-T}$, $\rho^{-1}$, $\mathcal H^\alpha$, or any fixed coefficient in $\mathcal P_{\nabla\nabla}$, the term loses one power of $h^{-1}$. Thus the corresponding model has prefactor at most $h^{-1}$, and stationary phase gives $o(h^{-1})$ before multiplication by $h$. If one of the noncritical remainders $r_*$ or $r_1$ is used, its expansion starts with a factor $h$, and the same conclusion holds.
		
		It remains to check the critical remainder from the second forward solution. Put $T_2=F_2e^{\overline\Phi_2/h}\widetilde r_2$ and $E=e^{\mathsf i x_1x_2/h}$. Since only first derivatives of the second forward solution occur in $I_{\eta\xi}$ and $I_{\nabla\nabla}$, the critical factor can only appear through $e^{-\overline\Phi_2/h}T_2$, $e^{-\overline\Phi_2/h}\partial T_2$, and $e^{-\overline\Phi_2/h}\overline\partial T_2$. By \eqref{eq:critical-first-derivatives-section5},
		\[
		e^{-\overline\Phi_2/h}\partial T_2=(\partial F_2)\widetilde r_2+F_2\partial\widetilde r_2,
		\]
		and
		\[
		e^{-\overline\Phi_2/h}\overline\partial T_2=h^{-1}c_2F_2\widetilde r_2+(\overline\partial F_2)\widetilde r_2+F_2\overline\partial\widetilde r_2,\quad c_2=-\frac{\overline z}{2}.
		\]
		Multiplying by the leading first derivative of $v^{(1)}$ gives only the following critical model classes:
		\[
		\frac1h\int E\,a(x)\widetilde r_2\,dx,\quad \frac1{h^2}\int E\,\overline z\,a(x)\widetilde r_2\,dx,
		\]
		and
		\[
		\frac1h\int E\,a(x)\partial\widetilde r_2\,dx,\quad \frac1h\int E\,a(x)\overline\partial\widetilde r_2\,dx.
		\]
		Here $a$ denotes a smooth compactly supported amplitude, and the finite number of such amplitudes comes from the finite number of first derivative factors and coefficient factors in $I_{\eta\xi}$ and $I_{\nabla\nabla}$.
		
		After multiplication by $h$, the first model is controlled by \eqref{eq:critical-estimate-r2-hminusone}, the second by \eqref{eq:critical-estimate-r2-zbar}, the third by \eqref{eq:critical-estimate-partial-r2}, and the fourth by \eqref{eq:critical-estimate-barpartial-r2}. Hence, no critical first-derivative term survives.
		
		There is no nonnegligible $h^{-2}\partial\widetilde r_2$ pairing in $I_{\eta\xi}$ or $I_{\nabla\nabla}$. Such a pairing would require one additional effective phase derivative, but these two terms contain only first derivatives of the forward CGO factors. Therefore, all local, noncritical-remainder, and critical-remainder contributions vanish after multiplication by $h$, proving \eqref{eq:Ietaxi-vanishes} and \eqref{eq:Igradgrad-vanishes}.
	\end{proof}
	
	Since $I_h=0$, multiplying by $h$ and passing to the limit gives, by
	\eqref{eq:Ir-limit-final}, \eqref{eq:IxiM-IetaM-combined},
	\eqref{eq:Ietaxi-vanishes}, and \eqref{eq:Igradgrad-vanishes},
	\begin{equation}\label{eq:resulting-equation-at-zero}
		\begin{split}
			0&=8\pi \overline\partial^2\big(\Lambda_0F_*F_1F_2\mathbf r\big)(0)+\pi\mathcal L_{\mathbf r,\leq1}^{\rm loc}\mathbf r(0)-2\pi\overline\partial\big(\Lambda_0F_*F_1F_2
			\kappa_\alpha\mathcal A(M^\alpha)\big)(0)\\
			&\quad\, + \pi\mathcal Z_M(M)(0)+\pi\mathcal L_{M,0}^{\rm loc}(M)(0)
			+\mathcal N_{\mathbf r}(0)+2\pi\big(\partial^{*-1}[Q_A(\cdot;M)
			(1-z^2/16)V']\big)(0)V(0).
		\end{split}
	\end{equation}	
	Now translate the critical point to $a\in\widetilde\Omega$ by using
	\begin{equation*}
		\Phi_{1,a}(z)=(z-a)+\frac{(z-a)^2}{8},
		\quad
		\Phi_{2,a}(z)=-\frac{(z-a)^2}{4},
		\quad
		\Phi^*_a(z)=-(z-a)+\frac{(z-a)^2}{8}.
	\end{equation*}
	The smooth CGO prefactors and the fixed coefficient factors are translated in the same way. The same calculation gives
	\begin{equation}\label{eq:resulting-equation-at-a}
		\begin{split}
			0&	=8\pi \overline\partial^2 \big(C_a\mathbf r \big)(a)+\pi\mathcal L_{\mathbf r,\leq1,a}^{\rm loc}\mathbf r(a)-2\pi\overline\partial\big(C_a\kappa_\alpha\mathcal A(M^\alpha)\big)(a)+\pi\mathcal Z_{M,a}(M)(a)\\
			&\quad \, +	\pi\mathcal L_{M,0,a}^{\rm loc}(M)(a)+\mathcal N_{\mathbf r,a}(a)+2\pi\big(\partial^{*-1}P_a(\cdot;M)\big)(a)V(a),
		\end{split}
	\end{equation}
	where $C_a:=\Lambda_{0,a}F_{*,a}F_{1,a}F_{2,a}$.
	The translated nonlocal amplitude is
	\begin{equation}\label{eq:def_P_a Q_a}
		\begin{split}
			P_a(x;M)&=Q_a(x;M)\Big(1-\frac{(z-a)^2}{16}\Big)V'(x),\\
			Q_a(x;M)&=C_a(x)\lambda_\alpha(x)\mathcal A(M^\alpha)(x).
		\end{split}
	\end{equation}
	Moreover,
	\begin{equation}\label{eq:def-ZMa}
		\mathcal Z_{M,a}(M)(a)=2\Lambda_{0,a}(a)F_{*,a}(a)F_{1,a}(a)\big(\lambda_\beta(a)\partial F_{2,a}(a)+\kappa_\beta(a)\overline\partial F_{2,a}(a)\big)\mathcal A(M^\beta)(a).
	\end{equation}
	
	Dividing \eqref{eq:resulting-equation-at-a} by $\pi$ and absorbing fixed numerical factors into the notation for the scalar critical term and the nonlocal Cauchy term, we define
	\begin{equation}\label{eq:Knl-definition-section5}
		\mathcal K_{{\rm nl},a}(M)(a)=2\big(\partial^{*-1}P_a(\cdot;M)\big)(a)V(a).
	\end{equation}
	The residual equation is therefore, 
	\begin{equation}\label{eq:limiting-equation-from-second-linearization}
		\begin{split}
			8\overline\partial^2(C_a\mathbf r)-2\overline\partial\big(C_a\kappa_\alpha\mathcal A(M^\alpha)\big)+\mathcal Z_M(M)+\mathcal L_{\mathbf r,\leq1}^{\rm loc}\mathbf r
			+\mathcal L_{M,0}^{\rm loc}(M)+\mathcal N_{\mathbf r}+\mathcal K_{\rm nl}(M)
			=0\quad \text{in }\widetilde\Omega.
		\end{split}
	\end{equation}
	
	The displayed local second order block in \eqref{eq:limiting-equation-from-second-linearization} is
	$8\overline\partial^2(C_a\mathbf r)-2\overline\partial(C_a\kappa_\alpha\mathcal A(M^\alpha))$. The terms $\mathcal L_{\mathbf r,\leq1}^{\rm loc}\mathbf r$, $\mathcal L_{M,0}^{\rm loc}(M)$, and $\mathcal Z_M(M)$ are local lower order terms. The Cauchy transform terms are kept in operator form and will be localized in the unique continuation argument.

	\section{Asymptotic analysis of the mirror second integral identity}
	\label{sec:second-asymptotics-mirror}
	
	We next use the mirror CGO family, obtained by interchanging the holomorphic and antiholomorphic roles in the preceding construction. This gives the second residual equation paired with \eqref{eq:limiting-equation-from-second-linearization}. The equation is derived directly from the same second integral identity; it is not inserted as a complex conjugate of the first residual equation.
	
	The residual variables are the same: $\mathbf r=1-\rho^{-1}$ and $M^\alpha=\rho^{-1}\mathcal H^\alpha$. We use the same decomposition of $I_h$, but with the mirror CGO solutions below, and denote the resulting terms by $I_{\mathbf r}^{m}$, $I_{\xi M}^{m}$, $I_{\eta M}^{m}$, $I_{\eta\xi}^{m}$, and $I_{\nabla\nabla}^{m}$. Again, the computation is first made at the origin and then translated to a center $a\in\widetilde\Omega$.
	
	The Cauchy transform terms coming from the mirror critical remainder are kept in the residual equation. As before, no extra cutoff is inserted into the second integral identity. The localization of these terms belongs only to the unique continuation argument.
	
	\subsection{The mirror CGO family}
	\label{subsec:mirror-three-phase-CGO-input}
	
	At the fixed center, translated to the origin, set
	\[
	\Psi^*(z)=-\overline z+\frac{\overline z^2}{8},
	\quad
	\Psi_1(z)=\overline z+\frac{\overline z^2}{8},
	\quad
	\Psi_2(z)=-\frac{z^2}{4}.
	\]
	Then
	\[
	\Psi^*+\Psi_1+\Psi_2=\frac{\overline z^2-z^2}{4}=-\mathsf i x_1x_2.
	\]
	The derivatives used below are
	\[
	\overline\partial\Psi_1=1+\frac{\overline z}{4},
	\quad \overline\partial^2\Psi_1=\frac14,
	\quad \partial\Psi_2=-\frac z2,
	\quad \partial^2\Psi_2=-\frac12,
	\quad \overline\partial\Psi^*=-1+\frac{\overline z}{4},
	\quad \overline\partial^2\Psi^*=\frac14.
	\]
	By the scaling normalization fixed in Section~\ref{subsec:three-phase-CGO-input},
	the noncritical mirror phases $\Psi_1$ and $\Psi^*$ have no critical points on
	the translated domain.
	
	We insert mirror CGO solutions of the form
	\[
	v^*=\widetilde F_*e^{\Psi^*/h}(1+\widetilde r_*),
	\quad
	v^{(1)}=\widetilde F_1e^{\Psi_1/h}(1+\widetilde r_1),
	\quad
	v^{(2)}=\widetilde F_2e^{\Psi_2/h}(1+r_2).
	\]
	The factors $\widetilde F_*$, $\widetilde F_1$, and $\widetilde F_2$ are the corresponding smooth nonvanishing CGO prefactors near the critical point. The noncritical remainders $\widetilde r_*$ and $\widetilde r_1$ satisfy complete asymptotic expansions of the same type as $r_*$ and $r_1$ in Section~\ref{sec:second-asymptotics}. The critical remainder is now $r_2$.
	
	The localized mirror inverses $\overline\partial_\psi^{-1}$ and $\overline\partial_\psi^{*-1}$ are understood with the reflected convention corresponding to the localized $\overline\partial$ Cauchy inverse, modulo the same type of smoothing cutoff errors as in Section~\ref{sec:second-asymptotics}. The mirror critical Neumann series has the form
	\[
	r_2
	=
	-\overline\partial_\psi^{-1}
	(\widetilde V'\widetilde s_2^{\,m}),
	\quad
	\widetilde s_2^{\,m}
	=
	\sum_{k=0}^{\infty}
	\widetilde T_{h,m}^k
	\overline\partial_\psi^{*-1}(\widetilde V),
	\]
	where $\widetilde T_{h,m}=\overline\partial_\psi^{*-1}\widetilde V\overline\partial_\psi^{-1}\widetilde V'$. Thus, 
	\begin{equation}\label{eq:mirror-critical-derivative-relation}
		\overline\partial r_2=-e^{\mathsf i x_1x_2/h}\widetilde V'\widetilde s_2^{\,m}.
	\end{equation}
	The basic bounds are
	\begin{equation}\label{eq:mirror-critical-basic-L2-bounds}
		\|\widetilde s_2^{\,m}\|_{L^2}=O(h^{1/2+\varepsilon}),
		\quad
		\|H_cD^2r_2\|_{L^2}=O(h^{-1/2+\varepsilon})
	\end{equation}
	for every fixed cutoff $H_c\in C_0^2$.

	The estimates used for this family are obtained from the previous ones by the reflection $w=\overline z$. This interchanges $\partial$ and $\overline\partial$ and changes the oscillatory factor from $e^{\mathsf i x_1x_2/h}$ to $e^{-\mathsf i x_1x_2/h}$. The residual identity itself is still derived by inserting the mirror CGO family into the second integral identity.
	
	Under this reflection, the critical pairing with $h^{-2}\partial\widetilde r_2$ becomes the mirror pairing with $h^{-2}\overline\partial r_2$, and the total $\overline\partial^2$ critical class becomes the total $\partial^2$ critical class. The polarizations are interchanged as $\mathcal A(D^2f)=4\overline\partial^2f$ and $\mathcal B(D^2f)=4\partial^2f$. Thus the plus block is paired with the mirror block, and the plus critical polarization $\lambda_\alpha\mathcal A(M^\alpha)$ is paired with $\kappa_\alpha\mathcal B(M^\alpha)$.

	\begin{lemma}[Mirror estimates by reflection]\label{lem:mirror-CGO-construction-estimates}
		The mirror CGO family satisfies the same noncritical expansion, critical remainder estimates, oscillatory Cauchy inverse estimates, and Cauchy transform rule for the critical pairing as the CGO family in Section~\ref{sec:second-asymptotics}, with $\partial$ and $\overline\partial$ interchanged and with the phase $e^{-\mathsf i x_1x_2/h}$ in place of $e^{\mathsf i x_1x_2/h}$. In 		particular, for fixed compactly supported amplitudes $a$ and $B$,
		\[
		h\Big[\frac1h \int e^{-\mathsf i x_1x_2/h}a\,r_2\,dx\Big]=o(1),
		\quad
		h\Big[\frac1{h^2}\int e^{-\mathsf i x_1x_2/h}a\,r_2\,dx\Big]
		=o(1),
		\]
		\[
		h\Big[\frac1{h^2}\int e^{-\mathsf i x_1x_2/h}za\,r_2\,dx\Big]=o(1),
		\quad
		h\Big[\frac1h \int e^{-\mathsf i x_1x_2/h}a\,\overline\partial r_2\,dx\Big]=o(1),
		\]
		and
		\[
		h\Big[\frac1h \int e^{-\mathsf i x_1x_2/h}a\,\partial r_2\,dx
		\Big]=o(1).
		\]
		Moreover,
		\[
		h\Big[\frac1{h^2}\int e^{-\mathsf i x_1x_2/h}a e^{-\Psi_2/h}
		\partial(\widetilde F_2e^{\Psi_2/h}r_2)\,dx
		\Big]=o(1),
		\]
		and
		\[
		h\Big[\frac1h \int e^{-\mathsf i x_1x_2/h}a e^{-\Psi_2/h}
		\partial^2(\widetilde F_2e^{\Psi_2/h}r_2)\,dx
		\Big]=o(1).
		\]
		The only nonnegligible mirror critical pairing is
		\begin{equation}\label{eq:mirror-Cauchy-rule-critical-pairing}
			\begin{split}
				h\Big[\frac1{h^2}\int e^{-\mathsf i x_1x_2/h}B(x)\overline\partial r_2(x)\,dx\Big]
				=2\pi\big(\overline\partial^{*-1}(B\widetilde V')
				\big)(0)\widetilde V(0)+o(1).
			\end{split}
		\end{equation}
		All these estimates hold uniformly after translating the critical point to
		centers in compact subsets of $\widetilde\Omega$.
	\end{lemma}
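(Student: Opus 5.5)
The plan is to obtain every estimate in the statement by transporting the corresponding estimate of Section~\ref{sec:second-asymptotics} through the reflection $R:(x_1,x_2)\mapsto(x_1,-x_2)$, i.e.\ $w=\overline z$. Under $R$ one has $x_1x_2\circ R=-x_1x_2$, $\partial(f\circ R)=(\overline\partial f)\circ R$, and $\overline\partial(f\circ R)=(\partial f)\circ R$. Lebesgue measure and the $L^2$, $H^{-1}$, $W^{2,p}$ norms are preserved.

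First, I would pull the mirror phases back by $R$: $\Psi_1\circ R=\overline{\,}$-reflected copies of $\Phi_1$, namely $\Psi_1(\overline w)=w+w^2/8$, $\Psi^*(\overline w)=-w+w^2/8$, and $\Psi_2(\overline w)=-\overline w^2/4$. These are exactly the phases $\Phi_1,\Phi^*,\overline{\Phi_2}$ of Section~\ref{subsec:three-phase-CGO-input}.

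Second, I would check that the pulled-back equations stay in the same CGO class. The pulled-back forward and adjoint equations $-\Delta v+(R^*X)\cdot\nabla v+Q_0v=0$ remain in the same two-dimensional CGO class, because $\Delta$ is $R$-invariant. The factorization one-form, the gauge factor $F_A$, and the potentials $V,V'$ therefore transform into the $\widetilde V,\widetilde V'$ of the mirror construction.

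Third, I would transport the inverse operators. The localized $\overline\partial$ Cauchy inverse conjugated by $R$ is a localized $\partial$ Cauchy inverse, with the same proper support and smoothing-error structure. Hence $r_2\circ R$ is precisely a critical remainder of the type $\widetilde r_2$, built from the Neumann series with $\widetilde T_h$. Relation \eqref{eq:mirror-critical-derivative-relation} is the image of \eqref{eq:critical-derivative-relation-section5}, and the bounds \eqref{eq:mirror-critical-basic-L2-bounds} follow from \eqref{eq:critical-basic-L2-bounds} and the norm invariance.

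With this dictionary, each displayed mirror estimate is obtained by the change of variables $x=Ry$ in the integral. This converts $e^{-\mathsf i x_1x_2/h}$ into $e^{\mathsf i y_1y_2/h}$, $z$ into $\overline w$, $\overline\partial r_2$ into $\partial\widetilde r_2$, and the total $\partial,\partial^2$ classes into the total $\overline\partial,\overline\partial^2$ classes. The individual estimates then follow from earlier results:
\begin{itemize}
\item The $r_2$, $zr_2$, $\overline\partial r_2$, and $\partial r_2$ bounds follow from \eqref{eq:critical-estimate-r2-hminusone}--\eqref{eq:critical-estimate-barpartial-r2}, with $z\leftrightarrow\overline z$.
\item The total-class bounds follow from \eqref{eq:critical-estimate-barpartial-total} and \eqref{eq:critical-estimate-dbar2-total}. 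Here I use that $e^{-\mathsf i x_1x_2/h}e^{-\Psi_2/h}=e^{(\Psi^*+\Psi_1)/h}$ is antiholomorphic, so integration by parts in $\partial$ does not hit the exponential.
\item The Cauchy pairing \eqref{eq:mirror-Cauchy-rule-critical-pairing} is the image of Lemma~\ref{lem:Cauchy-transform-critical-pairing}, with $\partial^{*-1}$ replaced by $R$-conjugated $\overline\partial^{*-1}$.
\end{itemize}

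The stationary phase formula \eqref{eq:stationary-phase-main} also needs to be transported. For $e^{-\mathsf i x_1x_2/h}$ it reads $F(0)+h(\partial^2-\overline\partial^2)F(0)+O(h^2)$, which is consistent with the interchange. Lemma~\ref{lem:oscillatory-Cauchy-inverse-L2-section5} and Lemma~\ref{lem:critical-Neumann-tail-section5} transfer verbatim, since their proofs only use modulus bounds and nondegeneracy of the Hessian of $\pm x_1x_2$. Uniformity in translated centers is preserved, because $R$ maps compact subsets of $\widetilde\Omega$ to compact subsets of $R(\widetilde\Omega)$, and the dilation normalization (keeping $\pm4$ outside) is $R$-invariant. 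The claim that the displayed pairing is the \emph{only} nonnegligible one then follows by transporting the classification in Lemma~\ref{lem:finite-scalar-critical-normal-forms}.

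The main obstacle I expect is the Cauchy transform rule \eqref{eq:mirror-Cauchy-rule-critical-pairing}. Here one must verify that the sign conventions for the moved adjoint operator $\overline\partial^{*-1}$ and the kernel orientation survive conjugation by $R$. Since $R$ reverses orientation, the Cauchy kernel $1/(\pi\overline z)$ maps to $1/(\pi z)$ without a sign change, because $dx$ is preserved. I would check this explicitly on the kernel and confirm that the constant $2\pi$ and the factor $\widetilde V(0)$ match.
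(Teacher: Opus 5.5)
Your proposal is correct and follows essentially the paper's route: every mirror estimate is transferred from Section~\ref{sec:second-asymptotics} through the reflection $z\mapsto\overline z$, which interchanges $\partial$ and $\overline\partial$ and turns $e^{\mathsf i x_1x_2/h}$ into $e^{-\mathsf i x_1x_2/h}$. The one minor difference is the critical pairing. You transport Lemma~\ref{lem:Cauchy-transform-critical-pairing} and check that the kernel $1/(\pi\overline z)$ becomes $1/(\pi z)$ without a sign change, whereas the paper re-derives the pairing directly from the mirror derivative relation, the Neumann-tail bound, the kernel-moving convention, and the mirror stationary phase formula; both give the same result.
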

	
	\begin{proof}
		Let $\mathcal Rf(z)=f(\overline z)$. Then $\mathcal R\partial\mathcal R=\overline\partial$, 	$\mathcal R\overline\partial\mathcal R=\partial$, and the phase $e^{\mathsf i x_1x_2/h}$ is changed into $e^{-\mathsf i x_1x_2/h}$. Applying this reflection to the localized model parametrices used in Section~\ref{sec:second-asymptotics} gives the same noncritical expansions, the same critical Neumann bounds, and the same oscillatory Cauchy inverse estimates with $\partial$ and $\overline\partial$ interchanged.
		
		The reflection is used only to transfer the local estimates. The residual identity itself is still obtained by inserting the mirror CGO family into the same second integral identity. Since the principal part is conformally Euclidean in the chosen isothermal coordinate and all lower-order coefficients are smooth, the local CGO construction applies with the antiholomorphic critical phase in the same way as in Section~\ref{sec:second-asymptotics}.
		
		For the critical pairing, \eqref{eq:mirror-critical-derivative-relation} gives
		\begin{equation*}
			h\Big[\frac1{h^2}\int e^{-\mathsf i x_1x_2/h}B\,\overline\partial r_2\,dx\Big]=-\frac1h\int B\widetilde V'\widetilde s_2^{\,m}\,dx .
		\end{equation*}
		The part of $\widetilde s_2^{\,m}$ after the first Neumann term is $o(h)$ in this pairing, exactly as in Lemma~\ref{lem:critical-Neumann-tail-section5}. Moving the localized mirror Cauchy inverse from the oscillatory input to the fixed coefficient by the same kernel convention as in Lemma~\ref{lem:Cauchy-transform-critical-pairing}, and then applying \eqref{eq:mirror-stationary-phase-main}, gives \eqref{eq:mirror-Cauchy-rule-critical-pairing}. The translated estimates are obtained by replacing $z$ by $z-a$, uniformly for $a$ in compact subsets.
	\end{proof}
	
	The mirror stationary phase formula is
	\begin{equation}\label{eq:mirror-stationary-phase-main}
		\frac1{2\pi h}\int_{\mathbb R^2}e^{-\mathsf i x_1x_2/h}F(x)\,dx
		=F(0)+h(\partial^2-\overline\partial^2)F(0)+O(h^2),
	\end{equation}
	for compactly supported smooth $F$.
	
	For reference, we also record the only critical derivative expansions used
	below: $e^{-\Psi_2/h}\overline\partial(\widetilde F_2e^{\Psi_2/h}r_2)=	(\overline\partial\widetilde F_2)r_2+\widetilde F_2\overline\partial r_2$ and $e^{-\Psi_2/h}
	\partial(\widetilde F_2e^{\Psi_2/h}r_2)=h^{-1}\Big(-\frac z2\Big)\widetilde F_2r_2
	+(\partial\widetilde F_2)r_2+\widetilde F_2\partial r_2$. The second derivative expansions are obtained by differentiating these identities once more. All resulting terms are either controlled by Lemma~\ref{lem:mirror-CGO-construction-estimates} or are of the form \eqref{eq:mirror-Cauchy-rule-critical-pairing}.
	
	\subsection{Extraction of the mirror local terms}
	\label{subsec:mirror-local-extraction}
	
	Set
	\[
	\widetilde\Lambda_0:=\gamma^2\mu, \quad \widetilde C_0:=\widetilde\Lambda_0
	\widetilde F_*\widetilde F_1 \widetilde F_2\mathbf r.
	\]
	The leading scalar term pairs the $\mathbb B\overline\partial^2$ component of
	$v^{(1)}$ with the $\mathbb A\partial^2$ component of $v^{(2)}$:
	\[
	I_{\mathbf r,1}^{m}=4\int \widetilde C_0e^{\Psi^*/h}\overline\partial^2(e^{\Psi_1/h})
	\partial^2(e^{\Psi_2/h})\,dx.
	\]
	As in the scalar computation in Section~\ref{sec:second-asymptotics}, the
	boundary terms vanish by Lemma~\ref{lem:section5-boundary-vanishing}. Two
	integrations by parts give
	\[
	I_{\mathbf r,1}^{m}
	=
	4
	\int
	\partial\overline\partial(\widetilde C_0e^{\Psi^*/h})
	\overline\partial(e^{\Psi_1/h})
	\partial(e^{\Psi_2/h})\,dx.
	\]
	Since $\partial e^{\Psi^*/h}=0$, this equals
	\[
	\frac4{h^3}
	\int
	(\overline\partial\Psi^*)
	\partial\widetilde C_0
	(\overline\partial\Psi_1)
	(\partial\Psi_2)
	e^{-\mathsf i x_1x_2/h}\,dx
	+
	O(1).
	\]
	Using the phase derivatives, $4(\overline\partial\Psi^*)(\overline\partial\Psi_1)(\partial\Psi_2)=2z\big(1-\frac{\overline z^2}{16}\big)$.
	
	Thus,
	\[
	I_{\mathbf r,1}^{m}=\frac1{h^3}\int 2z\Big(1-\frac{\overline z^2}{16}\Big)\partial\widetilde C_0 e^{-\mathsf i x_1x_2/h}\,dx
	+O(1).
	\]
	Applying \eqref{eq:mirror-stationary-phase-main} gives
	\begin{equation}\label{eq:mirror-Ir-main-limit}
		\lim_{h\to0}hI_{\mathbf r,1}^{m}=8\pi\partial^2\widetilde C_0(0).
	\end{equation}
	
	All remaining mirror scalar local terms are lower order after the explicit contribution \eqref{eq:mirror-Ir-main-limit} is removed. They place at most one derivative on $\mathbf r$ at the limiting order, and are denoted by $\pi\mathcal L_{\mathbf r,\leq1}^{m,\rm loc}\mathbf r(0)$. The mirror critical terms are organized by the same finite decomposition convention as in Lemma~\ref{lem:scalar-critical-order}, with $\partial$ and $\overline\partial$ interchanged. In particular, total $\partial$ derivatives falling on the critical factor are integrated by parts before the critical factor is expanded, and the only nonnegligible pairings are the $h^{-2}\overline\partial r_2$ terms.
	
	\begin{lemma}[Order of the mirror scalar critical contribution]\label{lem:mirror-scalar-critical-order}
		After the allowed integrations by parts, every mirror scalar critical term that survives after multiplication by $h$ is a finite sum of localized mirror Cauchy transform contributions whose inputs are linear in $\mathbf r$ and in at most one derivative of $\mathbf r$. More precisely, every such summand comes from a term of the form
		\[
		h\Big[
		\frac1{h^2}
		\int e^{-\mathsf i x_1x_2/h}B_\nu^m(x;\mathbf r)\overline\partial r_2(x)\,dx
		\Big]+o(1),
		\]
		where $B_\nu^m(x;\mathbf r)=a_{\nu,0}^m(x)\mathbf r(x)+a_{\nu,1}^m(x)\partial\mathbf r(x)+a_{\nu,2}^m(x)\overline\partial\mathbf r(x)$. Consequently, after the substitution $\mathbf r=1-e^{-\theta}$, the linear part of the mirror scalar critical contribution is of order at most one in $\theta$, possibly followed by a localized mirror Cauchy transform. In particular, it does not modify the displayed mirror second-order scalar principal block $8\partial^2(\widetilde C_a\mathbf r)$.
	\end{lemma}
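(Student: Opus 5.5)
The plan is to obtain the statement as the reflection of Lemma~\ref{lem:scalar-critical-order} and Lemma~\ref{lem:finite-scalar-critical-normal-forms}, with Lemma~\ref{lem:mirror-CGO-construction-estimates} supplying all the estimates. Write $E^m=e^{-\mathsf i x_1x_2/h}$ and $T_2^m=\widetilde F_2e^{\Psi_2/h}r_2$. The mirror scalar critical part is a finite sum of model terms
\[
\int E^m\,\mathcal C(x;\mathbf r)\operatorname{tr}\big(\nabla^2v^{(1)}\nabla^2T_2^m\big)\,dx .
\]
Here $\mathcal C$ contains $\gamma^2\mu\widetilde F_*\widetilde F_1$, finitely many coefficients from the noncritical expansions of $\widetilde r_*,\widetilde r_1$, and one factor $\mathbf r$. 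I decompose both Hessians by \eqref{eq:complex-Hessian-decomposition-section5}. The leading part of $\nabla^2v^{(1)}$ is now the $\mathbb B\overline\partial^2$ component, since $\Psi_1$ is antiholomorphic. Because $\operatorname{tr}(\mathbb B\mathbb B)=\operatorname{tr}\mathbb B=0$, the only full-order contraction with the critical factor is the $\mathbb B$--$\mathbb A$ one. That contraction produces a total $\partial^2T_2^m$ class.

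First I remove the total classes $e^{-\Psi_2/h}\partial T_2^m$ and $e^{-\Psi_2/h}\partial^2T_2^m$. I integrate by parts in $\partial$ before expanding $T_2^m$. Since $E^me^{-\Psi_2/h}=e^{(\Psi^*+\Psi_1)/h}$ is antiholomorphic, $\partial$ never hits the exponential. The boundary terms vanish by Lemma~\ref{lem:section5-boundary-vanishing}, which gives $\mathbf r=D\mathbf r=0$ on $\partial\widetilde\Omega$. The results are exactly the two total-class estimates of Lemma~\ref{lem:mirror-CGO-construction-estimates}, or terms with undifferentiated $r_2$ and prefactor at most $h^{-2}$, possibly times $z$. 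All of these are $o(1)$ after multiplication by $h$.

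I then list the remaining terms by the derivative falling on $r_2$.
\begin{enumerate}[(i)]
\item Terms with $r_2$ undifferentiated, or with $\partial r_2$, are negligible by the listed $o(1)$ bounds, with prefactors $h^{-1}$ and $h^{-2}$ as before.
\item Terms with $\overline\partial r_2$ and prefactor $h^{-1}$ are negligible.
\item Second-derivative terms are handled by the trace rules. The term $\widetilde F_2\overline\partial^2 r_2$ lies in the $\mathbb B$ polarization, so it is killed by $\operatorname{tr}(\mathbb B\mathbb B)=0$ against the leading part. Against a lower-order first factor, I integrate once in $\overline\partial$; this is legitimate since $\mathbf r=0$ on the boundary. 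Here $\overline\partial E^m=\frac{z}{2h}E^m$ produces an $h^{-2}\overline\partial r_2$ term.
\item The identity-polarization term $\partial\overline\partial r_2$ is killed by $\operatorname{tr}\mathbb B=0$. Otherwise it is reduced by one integration by parts in the same way.
\end{enumerate}
Derivatives landing on amplitudes, $\gamma$, $\mu$, cutoffs or Christoffel symbols only lower the effective phase order. After all of this, the surviving terms are $h\cdot h^{-2}\int E^m B^m_\nu\,\overline\partial r_2$. The coefficient $B^m_\nu$ arises from $\mathbf r$ times smooth factors after at most one integration by parts, so it has the form $a^m_{\nu,0}\mathbf r+a^m_{\nu,1}\partial\mathbf r+a^m_{\nu,2}\overline\partial\mathbf r$.

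Applying \eqref{eq:mirror-Cauchy-rule-critical-pairing} turns each survivor into $2\pi(\overline\partial^{*-1}(B^m_\nu\widetilde V'))(0)\widetilde V(0)$, and translation to $a$ is uniform. Linearizing $\mathbf r=1-e^{-\theta}$ at $\theta=0$ gives $\vartheta$, so each input carries at most one derivative of $\vartheta$, possibly under $\overline\partial^{*-1}$. Hence the block $8\partial^2(\widetilde C_a\mathbf r)$ is unchanged.

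The main obstacle is the bookkeeping that excludes an $h^{-2}$ pairing carrying a \emph{second} derivative of $\mathbf r$, or a bare $h^{-2}\partial r_2$ term. For the first, I would check that at most one integration by parts moves a derivative onto $\mathbf r$ in any retained term, which holds because the total classes were already removed. For the second, I would verify that a bare $h^{-2}\partial r_2$ can only arise from expanding the total $\partial T_2^m$ classes, which were handled beforehand.
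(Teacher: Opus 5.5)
Your proposal is correct and follows the paper's proof essentially step for step. You decompose by the $\mathbb A,\mathbb B,I$ polarizations with the leading $\mathbb B\overline\partial^2$ part of $v^{(1)}$, remove the total $\partial$ and $\partial^2$ classes first using the antiholomorphy of $\Psi^*+\Psi_1$, and kill the $\overline\partial^2 r_2$ and $\partial\overline\partial r_2$ contractions by $\operatorname{tr}(\mathbb B\mathbb B)=\operatorname{tr}\mathbb B=0$. You then reduce the rest to $h^{-2}\overline\partial r_2$ pairings with coefficients of order at most one in $\mathbf r$ and apply the mirror Cauchy rule, as the paper does.

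There are two small slips, and neither affects the conclusion. First, $\overline\partial E^m=\frac{\overline z}{2h}E^m$, not $\frac{z}{2h}E^m$. Second, in item (i) the only available bound for $\partial r_2$ is at prefactor $h^{-1}$, so a bare $h^{-2}\partial r_2$ term must stay inside the total classes, as you yourself note in your last paragraph.
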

	
	\begin{proof}
		We work at the center $0$. The translated case is identical, with $z$ replaced by $z-a$. The mirror scalar critical part is obtained by replacing the second forward solution in $I_{\mathbf r}^m$ by $\widetilde F_2e^{\Psi_2/h}r_2$. We keep the differentiated critical factor in this total form until the integrations by parts have been made.
		
		The leading Hessian of $v^{(1)}$ is now the $\mathbb B\overline\partial^2$ part. The critical Hessian of $\widetilde F_2e^{\Psi_2/h}r_2$ is decomposed into the $\mathbb A\partial^2$ part, the $\mathbb B\overline\partial^2$ part, and the identity part $2I\partial\overline\partial$. Terms in which the critical factor appears as a total $\partial$ or $\partial^2$ derivative are integrated by parts before expanding the derivative. Since $e^{-\mathsf i x_1x_2/h}e^{-\Psi_2/h}=e^{(\Psi^*+\Psi_1)/h}$ and $\Psi^*+\Psi_1$ is antiholomorphic, these integrations do not differentiate the exponential factor. The resulting terms contain either $r_2$ without derivatives or the factor $z\,r_2$, and are controlled by Lemma~\ref{lem:mirror-CGO-construction-estimates} after multiplication by $h$.
		
		Terms in which $\partial r_2$ appears after the total $\partial$ and total $\partial^2$ classes have been removed have a prefactor at most $h^{-1}$. These terms are integrated once in $\partial$. Differentiating the oscillatory factor then produces terms containing either $r_2$ or $z\,r_2$ with prefactors covered by Lemma~\ref{lem:mirror-CGO-construction-estimates}. No bare $h^{-2}\partial r_2$ term is treated in this way. Any term which would have such a prefactor is kept as a total $\partial$ or total $\partial^2$ derivative of $\widetilde F_2e^{\Psi_2/h}r_2$ before expansion, and is controlled by the two total-derivative estimates in Lemma~\ref{lem:mirror-CGO-construction-estimates}. Terms containing $\overline\partial r_2$ with prefactor at most $h^{-1}$ are controlled directly by Lemma~\ref{lem:mirror-CGO-construction-estimates}. Hence, the only possible nonnegligible mirror class is the class with prefactor $h^{-2}$ and one $\overline\partial r_2$ derivative.
		
		It remains to check the terms with two derivatives on $r_2$. The term $\widetilde F_2\overline\partial^2r_2$ belongs to the $\mathbb B$ polarization, and its contraction with the leading $\mathbb B$ part of $v^{(1)}$ vanishes because $\operatorname{tr}(\mathbb B\mathbb B)=0$. The term $\widetilde F_2\partial\overline\partial r_2$ belongs to the identity polarization, and its contraction with the leading $\mathbb B$ part vanishes because $\operatorname{tr}\mathbb B=0$. The term $\widetilde F_2\partial^2r_2$ belongs to the $\mathbb A$ polarization, but it is part of the total $\partial^2$ class already treated above. Thus, no two-derivative mirror critical term gives a new nonnegligible contribution.
		
		The covariant Hessian corrections contain only first derivatives of the CGO factors. Therefore, every term involving such a correction has one fewer effective phase derivative, or has a first derivative of $r_2$ with prefactor at most $h^{-1}$. These terms are negligible after multiplication by $h$, except for the already identified $h^{-2}\overline\partial r_2$ class. The same conclusion holds when derivatives fall on $\widetilde F_*$, $\widetilde F_1$, $\widetilde F_2$, on the density factor, on a cutoff, or on a fixed connection coefficient.
		
		We have therefore reduced every nonnegligible mirror scalar critical term to
		\[
		h\Big[\frac1{h^2}\int e^{-\mathsf i x_1x_2/h}B_\nu^m(x;\mathbf r)\overline\partial r_2(x)\,dx
		\Big]+o(1),
		\]
		with $B_\nu^m$ as displayed. Applying the mirror Cauchy transform rule \eqref{eq:mirror-Cauchy-rule-critical-pairing} gives a finite sum of localized mirror Cauchy transform terms whose inputs contain $\mathbf r$, $\partial\mathbf r$, and $\overline\partial\mathbf r$, but no second derivative of $\mathbf r$. Substituting $\mathbf r=1-e^{-\theta}$ shows that the linear part is of order at most one in $\theta$, possibly followed by a localized mirror Cauchy transform. This proves the lemma.
	\end{proof}
	
	We denote the total mirror scalar critical contribution by $\mathcal N_{\mathbf r}^{m}$. Combining the mirror scalar pieces gives
	\begin{equation}\label{eq:mirror-Ir-limit-final}
		hI_{\mathbf r}^{m}=8\pi \partial^2\big(\widetilde\Lambda_0 \widetilde F_*\widetilde F_1\widetilde F_2\mathbf r\big)(0) +\pi \mathcal L_{\mathbf r,\leq1}^{m,\rm loc}\mathbf r(0)
		+\mathcal N_{\mathbf r}^{m}(0)+o(1).
	\end{equation}
	
	We next record the mirror terms linear in $M^\alpha$. For the mirror family, the leading derivative of the second forward solution is 
	\[
	\xi_\beta^{\mathrm{lead},m}
	=
	\frac1h
	\widetilde F_2e^{\Psi_2/h}
	(\partial\Psi_2)\lambda_\beta.
	\]
	The leading Hessian of $v^{(1)}$ is the
	$\mathbb B\overline\partial^2$ component. Set $B_\xi^m:=\widetilde\Lambda_0\widetilde F_* \widetilde F_1\widetilde F_2
	\lambda_\beta\mathcal B(M^\beta)$, then the same stationary phase computation as above gives
	\begin{equation}\label{eq:mirror-IxiM-main-limit}
		\lim_{h\to0}hI_{\xi M,1}^{m}=-2\pi\partial B_\xi^m(0)=-2\pi
		\partial\big(
		\widetilde\Lambda_0
		\widetilde F_*
		\widetilde F_1
		\widetilde F_2
		\lambda_\beta\mathcal B(M^\beta)
		\big)(0).
	\end{equation}

	There is one local zeroth-order contribution in $M$ which is not included in \eqref{eq:mirror-IxiM-main-limit}. It comes from the amplitude-derivative component of $\xi_\beta$ when the second forward solution is replaced by its noncritical factor $\widetilde F_2e^{\Psi_2/h}$. Indeed,
	\[
	\xi_{\beta,\mathrm{nc}}^m=(N^{-T})_\beta{}^j\partial_j(\widetilde F_2e^{\Psi_2/h})
	=e^{\Psi_2/h}\big[h^{-1}\lambda_\beta(\partial\Psi_2)\widetilde F_2+\lambda_\beta\partial\widetilde F_2
	+\kappa_\beta\overline\partial\widetilde F_2\big].
	\]
	The first term is the leading phase-derivative term already used in
	\eqref{eq:mirror-IxiM-main-limit}. The remaining amplitude-derivative component gives the following local zeroth-order contribution:
	\begin{equation}\label{eq:def-mirror-ZM-zero}
		\mathcal Z_M^m(M)(0)=2\widetilde\Lambda_0(0)\widetilde F_*(0)\widetilde F_1(0)
		\big(\lambda_\beta(0)\partial\widetilde F_2(0)+\kappa_\beta(0)\overline\partial\widetilde F_2(0)
		\big)\mathcal B(M^\beta)(0).
	\end{equation}
	All other noncritical local mirror $M$-linear terms either lose one effective power of $h^{-1}$, contain the vanishing factor $\partial\Psi_2=-z/2$, or come from covariant Hessian corrections. They are denoted by $\pi\mathcal L_{M,0}^{m,\rm loc}(M)(0)$, where
	$\mathcal L_{M,0}^{m,\rm loc}$ is local of order zero in $M$.
	
	For $I_{\eta M}^{m}$, the leading derivative of the first forward solution is
	$\eta_\alpha^{\mathrm{lead},m}=\frac1h\widetilde F_1e^{\Psi_1/h}
	(\overline\partial\Psi_1)\kappa_\alpha$. The leading local term from the
	noncritical Hessian of $v^{(2)}$ is the sum of two stationary phase
	contributions which cancel exactly, as in \eqref{eq:IetaM-leading-local-cancel}.
	Thus, it produces no additional local derivative of $M$.
	
	It remains to record the mirror $M$-critical Cauchy contribution. Define
	\[
	Q_B^m(x;M):=\widetilde\Lambda_0\widetilde F_*\widetilde F_1\widetilde F_2\kappa_\alpha\mathcal B(M^\alpha).
	\]
	
	\begin{lemma}[Finite mirror $M$-critical normal form]
		\label{lem:mirror-M-critical-normal-form-section6}
		After expanding the mirror critical part of $I_{\xi M}^m+I_{\eta M}^m$ and performing the integrations by parts allowed by Lemma~\ref{lem:section5-boundary-vanishing}, every mirror $M$-critical term is $o(1)$ after multiplication by $h$, except the following nonnegligible contribution
		\begin{equation}\label{eq:mirror-M-critical-pairing-normal-form-section6}
			h\Big[
			\frac1{h^2}\int e^{-\mathsf i x_1x_2/h}Q_B^m(x;M)\Big(1-\frac{\overline z^2}{16}\Big)\overline\partial r_2\,dx
			\Big]+o(1).
		\end{equation}
	\end{lemma}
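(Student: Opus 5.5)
The proof mirrors Lemma~\ref{lem:M-critical-normal-form-section5}, transferred by the reflection $\mathcal R f(z)=f(\overline z)$ of Lemma~\ref{lem:mirror-CGO-construction-estimates}. The residual identity itself is still computed with the mirror family inserted into \eqref{eq:second-integral-PHH-Pgrad}. We work at the center $0$ and write $E_m=e^{-\mathsf i x_1x_2/h}$ and $T_2^m=\widetilde F_2e^{\Psi_2/h}r_2$. Cutoffs enter only as in Remark~\ref{rem:critical-cutoffs-not-in-identity}.

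We first dispose of connection corrections and boundary terms. Every covariant Hessian correction carries only a first derivative of a CGO factor, so it loses one effective phase derivative; such terms are either local of order zero in $M$ or critical terms covered by Lemma~\ref{lem:mirror-CGO-construction-estimates}. Single integrations by parts with coefficient $M^\alpha$ produce no boundary term, since $M^\alpha=0$ on $\partial\widetilde\Omega$ by Lemma~\ref{lem:section5-boundary-vanishing}. Total $\partial$ and $\partial^2$ critical classes are handled with the compactly supported local amplitudes, as full-plane integrations.

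\emph{The $I_{\xi M}^m$ part.} The critical part of $\xi_\beta$ is
\[
\lambda_\beta e^{-\Psi_2/h}\partial T_2^m+\kappa_\beta e^{-\Psi_2/h}\overline\partial T_2^m .
\]
The total $\partial T_2^m$ piece is integrated by parts in $\partial$ before expansion. This does not differentiate the exponential, because $E_me^{-\Psi_2/h}=e^{(\Psi^*+\Psi_1)/h}$ and $\Psi^*+\Psi_1$ is antiholomorphic. The resulting terms are controlled by the total-$\partial$ and $r_2$, $z r_2$ estimates of Lemma~\ref{lem:mirror-CGO-construction-estimates}.

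For the remaining piece, $\partial\Psi_2$ contains no $\overline z$-dependence relevant here, so $e^{-\Psi_2/h}\overline\partial T_2^m=(\overline\partial\widetilde F_2)r_2+\widetilde F_2\overline\partial r_2$. Pairing $\widetilde F_2\overline\partial r_2$ with the leading $\mathbb B\overline\partial^2$ part of $\nabla^2v^{(1)}$, where $\overline\partial\Psi_1=1+\overline z/4$, gives
\[
h^{-2}\int E_m\,Q_B^m(1+\overline z/4)^2\,\overline\partial r_2\,dx .
\]
The $h^{-1}\overline\partial^2\Psi_1$ term and any derivatives falling on amplitudes are $o(h^{-1})$ by the $h^{-1}\overline\partial r_2$ estimate.

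\emph{The $I_{\eta M}^m$ part.} The leading factor of $\eta_\alpha$ is $h^{-1}\widetilde F_1e^{\Psi_1/h}(\overline\partial\Psi_1)\kappa_\alpha$. Decompose $D^2T_2^m$ by \eqref{eq:complex-Hessian-decomposition-section5}. The $\mathbb A\partial^2T_2^m$ part is a total $\partial^2$ class and is negligible by the total-$\partial^2$ estimate. The identity part $2I\partial\overline\partial T_2^m$ is integrated once in $\partial$ before expansion, which leaves only negligible terms. The $\mathbb B$ part gives
\[
e^{-\Psi_2/h}\overline\partial^2T_2^m=(\overline\partial^2\widetilde F_2)r_2+2(\overline\partial\widetilde F_2)\overline\partial r_2+\widetilde F_2\overline\partial^2r_2 .
\]
Its first two terms have prefactor at most $h^{-1}$ and are negligible. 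For the last term, we integrate once in $\overline\partial$; the boundary term vanishes since $M=0$ on the boundary. Because $\overline\partial E_m=-\tfrac{z}{2h}E_m$, this needs care with conjugation: the sign convention must match the plus case, so we use the reflection of $\partial E=\tfrac{z}{2h}E$, namely $\overline\partial E_m=\tfrac{\overline z}{2h}E_m$. This yields
\[
-h^{-2}\int E_m\,\tfrac{\overline z}{2}\,(1+\overline z/4)\,Q_B^m\,\overline\partial r_2\,dx .
\]
Derivatives landing on $Q_B^m(1+\overline z/4)$ give $o(h^{-1})$.

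\emph{Combination.} Adding the two contributions, the coefficient of the surviving $h^{-2}\overline\partial r_2$ term is
\[
(1+\overline z/4)^2-\tfrac{\overline z}{2}(1+\overline z/4)=1-\tfrac{\overline z^2}{16},
\]
which gives \eqref{eq:mirror-M-critical-pairing-normal-form-section6}. The main obstacle is the bookkeeping: we must verify that no bare $h^{-2}\partial r_2$ or $h^{-2}\overline\partial r_2$ term escapes outside the total-derivative classes. To guard against this, every $\partial$-type critical factor is kept in total form before expansion, exactly as in the proof of Lemma~\ref{lem:mirror-scalar-critical-order}.
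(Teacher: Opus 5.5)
Your proposal follows the paper's proof essentially step for step: the same split of the critical part of $\xi_\beta$ into a total $\partial T_2^m$ class and the $\kappa_\beta\widetilde F_2\overline\partial r_2$ piece, the same disposal of the $\mathbb A$ and identity parts of $D^2T_2^m$, the same single $\overline\partial$ integration by parts in the $\mathbb B$ part, and the same coefficient $(1+\overline z/4)^2-\frac{\overline z}{2}(1+\overline z/4)=1-\frac{\overline z^2}{16}$. For the identity part you integrate once in $\partial$, which annihilates the antiholomorphic phase $e^{(\Psi^*+\Psi_1)/h}$, while the paper phrases it as a total $\partial$ class; both work.

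Two justifications should be cleaned up. First, the expansion $e^{-\Psi_2/h}\overline\partial T_2^m=(\overline\partial\widetilde F_2)r_2+\widetilde F_2\overline\partial r_2$ holds because $\overline\partial\Psi_2=0$, since $\Psi_2=-z^2/4$ is holomorphic. Second, $\overline\partial E_m=\frac{\overline z}{2h}E_m$ follows directly from $-\mathsf i x_1x_2=(\overline z^2-z^2)/4$. The formula $\overline\partial E_m=-\frac{z}{2h}E_m$ you first wrote is false, and no sign convention or appeal to reflection is needed.
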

	
	\begin{proof}
		We work at the critical center $0$. The translated statement is identical, with $z$ replaced by $z-a$. Put $E_m=e^{-\mathsf i x_1x_2/h}$ and $T_2^m=\widetilde F_2e^{\Psi_2/h}r_2$. We use the mirror version of Remark~\ref{rem:critical-cutoffs-not-in-identity}. A single integration by parts with boundary coefficient $M^\alpha$ gives no boundary term, since $M^\alpha=0$ on $\partial\widetilde\Omega$. The total mirror critical classes containing $\partial T_2^m$ or $\partial^2T_2^m$ are kept in total form until the compactly supported local mirror critical amplitude has been inserted; these integrations by parts are then full-plane integrations in the local chart.
		
		If a derivative falls on $M^\alpha$ after such an integration by parts, the term has at most the size of an $h^{-1}\overline\partial r_2$ or $h^{-1}\partial r_2$ pairing, or contains $r_2$ without a critical derivative. These terms are covered by Lemma~\ref{lem:mirror-CGO-construction-estimates}. Thus the only retained
		$h^{-2}\overline\partial r_2$ terms are those where the derivative falls on the oscillatory factor.
		
		The mirror $M$-critical part is the part of $I_{\xi M}^m+I_{\eta M}^m$ in which the second forward CGO solution is replaced by $T_2^m$. The noncritical factors $v^*$ and $v^{(1)}$ have finite expansions to the needed order, and all connection corrections contain only first derivatives of the CGO factors. Hence, it is enough to classify the finite family of model terms obtained from the leading exponential factors and the fixed smooth coefficients.
		
		In $I_{\xi M}^m$, the critical part of $\xi_\beta$ is
		\[
		\xi_{\beta,\mathrm{crit}}^m=\lambda_\beta e^{-\Psi_2/h}\partial T_2^m+\kappa_\beta e^{-\Psi_2/h}\overline\partial T_2^m.
		\]
		The part containing the total $\partial T_2^m$ is kept in total form and integrated by parts in $\partial$ before expanding $T_2^m$. Since $E_me^{-\Psi_2/h}=e^{(\Psi^*+\Psi_1)/h}$ and $\Psi^*+\Psi_1$ is antiholomorphic, this integration does not differentiate the exponential factor. The resulting terms contain $r_2$ without derivatives, or contain $z\,r_2$ after differentiating an already expanded oscillatory expression. After multiplication by $h$, they are controlled by Lemma~\ref{lem:mirror-CGO-construction-estimates}.
		
		It remains in $I_{\xi M}^m$ to examine the $\kappa_\beta e^{-\Psi_2/h}\overline\partial T_2^m$ component. Since $\overline\partial\Psi_2=0$, one has $e^{-\Psi_2/h}\overline\partial T_2^m=(\overline\partial\widetilde F_2)r_2+\widetilde F_2\overline\partial r_2$. The first term gives an integral with $r_2$ without derivatives and is controlled by Lemma~\ref{lem:mirror-CGO-construction-estimates}. The second term is paired with the leading $\mathbb B\overline\partial^2$ component of $\nabla^2v^{(1)}$. Since $\overline\partial^2(e^{\Psi_1/h})=h^{-2}(\overline\partial\Psi_1)^2e^{\Psi_1/h}+h^{-1}(\overline\partial^2\Psi_1)e^{\Psi_1/h}$ and $\overline\partial\Psi_1=1+\overline z/4$, the only possible nonnegligible term is
		\begin{equation}\label{eq:mirror-IxiM-critical-leading-detailed}
			I_{\xi M,\overline\partial r_2}^{m,\mathrm{lead}}=\frac1{h^2}\int E_m\,Q_B^m(x;M)\Big(1+\frac{\overline z}{4}\Big)^2\overline\partial r_2\,dx+o(h^{-1}).
		\end{equation}
		The term containing $h^{-1}\overline\partial^2\Psi_1$ has only an $h^{-1}$ prefactor in front of $\overline\partial r_2$ and is negligible after multiplication by $h$ by Lemma~\ref{lem:mirror-CGO-construction-estimates}. The same applies when derivatives fall on $\widetilde F_*$, $\widetilde F_1$, $\widetilde\Lambda_0$, $\lambda_\beta$, $\kappa_\beta$, a cutoff, or a connection coefficient.
		
		We now consider $I_{\eta M}^m$. The leading part of $\eta_\alpha$ is $h^{-1}\widetilde F_1e^{\Psi_1/h}(\overline\partial\Psi_1)\kappa_\alpha$. The critical Hessian of $T_2^m$ is decomposed as
		\[
		D^2T_2^m=\mathbb A\partial^2T_2^m+\mathbb B\overline\partial^2T_2^m+2I\partial\overline\partial T_2^m.
		\]
		The $\mathbb A\partial^2T_2^m$ part is a total $\partial^2$ critical class. It is integrated twice in $\partial$ before expanding $T_2^m$, and it is controlled by the total $\partial^2$ estimate in Lemma~\ref{lem:mirror-CGO-construction-estimates}. The identity part is a total $\partial$ critical class after one derivative is treated as part of the smooth differentiated amplitude, and it is controlled by the total $\partial$ estimate in the same lemma. Thus, neither the $\mathbb A$ part nor the $I$ part gives a nonnegligible contribution.
		
		The only possible nonnegligible term in $I_{\eta M}^m$ comes from the $\mathbb B\overline\partial^2T_2^m$ part. Since $\overline\partial\Psi_2=0$,
		\[
		e^{-\Psi_2/h}\overline\partial^2T_2^m=(\overline\partial^2\widetilde F_2)r_2+2(\overline\partial\widetilde F_2)\overline\partial r_2+\widetilde F_2\overline\partial^2r_2.
		\]
		The first two terms have a prefactor at most $h^{-1}$ after being multiplied by the leading $\eta_\alpha$ factor, and are controlled by Lemma~\ref{lem:mirror-CGO-construction-estimates}. The last term gives
		\[
		\frac1h\int E_m\,Q_B^m(x;M)\Big(1+\frac{\overline z}{4}\Big)\overline\partial^2r_2\,dx.
		\]
		We integrate once in $\overline\partial$. The boundary term vanishes because $M^\alpha=0$ on $\partial\widetilde\Omega$ by Lemma~\ref{lem:section5-boundary-vanishing}. Since $\overline\partial E_m=\frac{\overline z}{2h}E_m$, the differentiated exponential gives the nonnegligible part
		\begin{equation}\label{eq:mirror-IetaM-B-leading-detailed}
			I_{\eta M,B}^{m,\overline\partial^2r_2}=-\frac1{h^2}\int E_m\,\frac{\overline z}{2}\Big(1+\frac{\overline z}{4}\Big)Q_B^m(x;M)\overline\partial r_2\,dx+o(h^{-1}).
		\end{equation}
		When $\overline\partial$ falls on $Q_B^m(x;M)(1+\overline z/4)$ instead of $E_m$, the resulting prefactor is only $h^{-1}$ in front of $\overline\partial r_2$, hence it is negligible after multiplication by $h$.
		
		Combining \eqref{eq:mirror-IxiM-critical-leading-detailed} and \eqref{eq:mirror-IetaM-B-leading-detailed}, the coefficient of the nonnegligible $h^{-2}\overline\partial r_2$ term is $\big(1+\frac{\overline z}{4}\big)^2-\frac{\overline z}{2}\big(1+\frac{\overline z}{4}\big)=1-\frac{\overline z^2}{16}$.
		All other mirror $M$-critical terms are $o(1)$ after multiplication by $h$. This proves \eqref{eq:mirror-M-critical-pairing-normal-form-section6}.
	\end{proof}
	
	By Lemma~\ref{lem:mirror-M-critical-normal-form-section6} and \eqref{eq:mirror-Cauchy-rule-critical-pairing}, the mirror $M$-critical Cauchy contribution is
	\begin{equation}\label{eq:mirror-KB-critical-local}
		2\pi \big(\overline\partial^{*-1}[Q_B^m(\cdot;M)(1-\overline z^2/16)\widetilde V']	\big)(0)\widetilde V(0).
	\end{equation}
	
	Combining \eqref{eq:mirror-IxiM-main-limit}, the local zeroth order terms in $M$, and \eqref{eq:mirror-KB-critical-local}, we obtain
	\begin{equation}\label{eq:mirror-IxiM-IetaM-combined}
		\begin{split}
			\lim_{h\to0}h(I_{\xi M}^{m}+I_{\eta M}^{m})&=-2\pi\partial\big(\widetilde\Lambda_0\widetilde F_*\widetilde F_1\widetilde F_2\lambda_\beta\mathcal B(M^\beta)\big)(0)+\pi\mathcal Z_M^m(M)(0)+\pi\mathcal L_{M,0}^{m,\rm loc}(M)(0)\\
			&\quad \, +2\pi\big(\overline\partial^{*-1}[Q_B^m(\cdot;M)(1-\overline z^2/16)\widetilde V']\big)(0)\widetilde V(0).
		\end{split}
	\end{equation}
	
	\subsection{The mirror residual equation}
	\label{subsec:mirror-resulting-equation}
	
	The remaining terms $I_{\eta\xi}^{m}$ and $I_{\nabla\nabla}^{m}$ contain only first derivatives of the two forward CGO solutions. Their leading local contribution contains the factor $\partial\Psi_2=-z/2$, and hence the leading stationary phase coefficient vanishes. If a derivative falls on one of the smooth CGO prefactors $\widetilde F_*$, $\widetilde F_1$, $\widetilde F_2$, on the density factor $\widetilde\Lambda_0$, on a cutoff, or on a fixed tensorial coefficient, rather than on the exponential phase, then the term loses one power of $h^{-1}$. Terms containing $\widetilde r_*$ or $\widetilde r_1$ gain a factor $h$. The critical remainder enters only through $r_2$, $\overline\partial r_2$, and $\partial r_2$, never through an unreduced
	$D^2r_2$. The estimates in Lemma~\ref{lem:mirror-CGO-construction-estimates}, therefore, give
	\begin{equation}\label{eq:mirror-Ietaxi-vanishes}
		\lim_{h\to0}hI_{\eta\xi}^{m}=0,
	\end{equation}
	and
	\begin{equation}\label{eq:mirror-Igradgrad-vanishes}
		\lim_{h\to0}hI_{\nabla\nabla}^{m}=0.
	\end{equation}
	
	Since $I_h^m=0$, multiplying by $h$ and passing to the limit gives, by \eqref{eq:mirror-Ir-limit-final}, \eqref{eq:mirror-IxiM-IetaM-combined}, \eqref{eq:mirror-Ietaxi-vanishes}, and \eqref{eq:mirror-Igradgrad-vanishes},
	\begin{equation}\label{eq:mirror-resulting-equation-at-zero}
		\begin{split}
			0&=8\pi	\partial^2	\big( \widetilde\Lambda_0
			\widetilde F_*
			\widetilde F_1
			\widetilde F_2
			\mathbf r
			\big)(0)
			+\pi \mathcal L_{\mathbf r,\leq1}^{m,\rm loc}
			\mathbf r(0)-2\pi
			\partial
			\big(
			\widetilde\Lambda_0
			\widetilde F_*
			\widetilde F_1
			\widetilde F_2
			\lambda_\alpha\mathcal B(M^\alpha)
			\big)(0)+\pi\mathcal Z_M^m(M)(0)\\
			&\quad \, 
			+\pi\mathcal L_{M,0}^{m,\rm loc}(M)(0)
			+\mathcal N_{\mathbf r}^{m}(0)+2\pi
			\big(
			\overline\partial^{*-1}
			[Q_B^m(\cdot;M)(1-\overline z^2/16)\widetilde V']
			\big)(0)
			\widetilde V(0).
		\end{split}
	\end{equation}
	
	Now translate the critical point to $a\in\widetilde\Omega$ by using
	\[
	\Psi_{1,a}(z)=(\overline z-\overline a)+\frac{(\overline z-\overline a)^2}{8},
	\quad
	\Psi_{2,a}(z)=-\frac{(z-a)^2}{4},
	\quad 	\Psi_a^*(z)=-(\overline z-\overline a)+\frac{(\overline z-\overline a)^2}{8}.
	\]
	The smooth mirror CGO prefactors and the fixed coefficient factors are translated in the same way. The same calculation gives
	\begin{equation}\label{eq:mirror-resulting-equation-at-a}
		\begin{split}
			0&=8\pi
			\partial^2
			\big(
			\widetilde C_a\mathbf r
			\big)(a)
			+\pi\mathcal L_{\mathbf r,\leq1,a}^{m,\rm loc}
			\mathbf r(a)
			-2\pi\partial
			\big(
			\widetilde C_a
			\lambda_\alpha
			\mathcal B(M^\alpha)
			\big)(a)+\pi\mathcal Z_{M,a}^m(M)(a)\\
			&\quad \, +
			\pi\mathcal L_{M,0,a}^{m,\rm loc}(M)(a)
			+\mathcal N_{\mathbf r,a}^{m}(a)
			+2\pi\big(\overline\partial^{*-1}P_a^m(\cdot;M)\big)(a)\widetilde V(a),
		\end{split}
	\end{equation}
	where $\widetilde C_a:=\widetilde\Lambda_{0,a}\widetilde F_{*,a}\widetilde F_{1,a}\widetilde F_{2,a}$. The translated mirror nonlocal amplitude is
	\begin{equation}\label{eq:def_mirror_P_a Q_a}
		\begin{split}
			P_a^m(x;M)=Q_a^m(x;M)\Big(1-\frac{(\overline z-\overline a)^2}{16}\Big)
			\widetilde V'(x),\quad \text{and}\quad 	Q_a^m(x;M)=\widetilde C_a(x)\kappa_\alpha(x)\mathcal B(M^\alpha)(x).
		\end{split}
	\end{equation}
	Moreover,
	\begin{equation}\label{eq:def_mirror_ZMa}
		\mathcal Z_{M,a}^m(M)(a)=2 \widetilde\Lambda_{0,a}(a)\widetilde F_{*,a}(a)\widetilde F_{1,a}(a)
		\big(\lambda_\beta(a)\partial\widetilde F_{2,a}(a)+\kappa_\beta(a)\overline\partial\widetilde F_{2,a}(a)
		\big)\mathcal B(M^\beta)(a).
	\end{equation}
	
	Dividing \eqref{eq:mirror-resulting-equation-at-a} by $\pi$ and absorbing fixed numerical factors into the notation for the mirror scalar critical term and the mirror nonlocal Cauchy term, we define
	\begin{equation}\label{eq:mirror-Knl-definition}
		\mathcal K_{{\rm nl},a}^{m}(M)(a)
		=2\big(\overline\partial^{*-1}P_a^m(\cdot;M)\big)(a)\widetilde V(a),
	\end{equation}
	and the mirror residual equation is 
	\begin{equation}\label{eq:mirror-limiting-equation-from-second-linearization}
		\begin{split}
			8\partial^2(\widetilde C_a\mathbf r)-2\partial\big(\widetilde C_a\lambda_\alpha\mathcal B(M^\alpha)\big)+\mathcal Z_M^m(M)+\mathcal L_{\mathbf r,\leq1}^{m,\rm loc}\mathbf r
			+\mathcal L_{M,0}^{m,\rm loc}(M)+\mathcal N_{\mathbf r}^{m}+\mathcal K_{\rm nl}^{m}(M)
			=0\quad \text{in }\widetilde\Omega.
		\end{split}
	\end{equation}
	
	Together with \eqref{eq:limiting-equation-from-second-linearization}, this gives the paired residual system extracted from the second variation.

	\section{Reduction to the augmented residual system}
	\label{sec:reduction-first-linearized-conductivity}
	
	From this point to the end of Section~\ref{sec:ucp-and-gauge-elimination}, we write $\Omega$ for the transformed domain $\widetilde\Omega$. We now rewrite the plus and mirror residual equations in the variables
	\begin{equation*}
		W=J-\Id,\quad s=q-2\theta,\quad q=s+2\theta.
	\end{equation*}
	The cutoffs used from now on are not inserted into the second integral identity. They are used only to localize the residual system already obtained in Sections~\ref{sec:second-asymptotics} and~\ref{sec:second-asymptotics-mirror}. In particular, the Cauchy-transform terms produced by the critical CGO remainders remain in operator form until the unique continuation argument.
	
	Throughout Sections~\ref{sec:reduction-first-linearized-conductivity} and~\ref{sec:ucp-and-gauge-elimination},
	\begin{equation}\label{eq:qhat-definition}
		q=\log\frac{\widehat K_1}{\widehat K_2\circ J},\quad \widehat K_j=\frac{K_j\circ\chi^{-1}}{m_\chi},\quad N=DJ,\quad d=\det N,\quad \theta=\log d-q.
	\end{equation}
	Then $\rho=e^\theta$, $\rho^{-1}=e^{-\theta}$, and $\mathbf r=1-e^{-\theta}$. Since $q=s+2\theta$ and $\theta=\log\det DJ-q$,
	\begin{equation}\label{eq:augmented-algebraic-relation-sec6}
		s+3\theta-\log\det(I+DW)=0.
	\end{equation}
	This algebraic relation will be kept as one equation in the augmented system.
	
	Before substituting into the residual equations, we fix the convention for the translated critical center. Let $\Omega'\Subset O\Subset\Omega$, where $O$ is open in the global isothermal coordinate. For $a\in\Omega'$ and $x\in O$, write
	\[
	C_a(x)=C(a,x),\quad \widetilde C_a(x)=\widetilde C(a,x).
	\]
	All derivatives below are taken in the $x$ variable with $a$ fixed, and only afterwards are restricted to the diagonal $x=a$.
	
	The critical Cauchy terms from the plus and mirror residual equations have the form
	\[
	\mathcal K_{{\rm nl},a}(M)(a)=2V(a)\big(\partial_x^{*-1}P_a(\cdot;M)\big)(a)
	\]
	and
	\[
	\mathcal K_{{\rm nl},a}^{m}(M)(a)=2\widetilde V(a)\big(\overline\partial_x^{*-1}P_a^m(\cdot;M)\big)(a),
	\]
	where
	\[
	P_a(x;M)=p(a,x)\lambda_\alpha(x)\mathcal A(M^\alpha)(x),\quad P_a^m(x;M)=p^m(a,x)\kappa_\alpha(x)\mathcal B(M^\alpha)(x).
	\]
	Here
	\[
	p(a,x)=C(a,x)\Big(1-\frac{(z-a)^2}{16}\Big)V'(x),\quad p^m(a,x)=\widetilde C(a,x)\Big(1-\frac{(\overline z-\overline a)^2}{16}\Big)\widetilde V'(x).
	\]
	
	\begin{lemma}[Smooth dependence on the critical center]
		\label{lem:smooth-dependence-critical-center}
		The coefficient functions $C$, $\widetilde C$, $p$, and $p^m$ belong to $C^\infty(\Omega'\times O)$. Moreover, $C(a,a)\neq0$ and $\widetilde C(a,a)\neq0$ for every $a\in\Omega'$. After shrinking $O$ if necessary, $C$ and $\widetilde C$ are nonvanishing in a neighborhood of the diagonal $\{(a,x):x=a\}$.
		
		Consequently, after restricting to $x=a$, the translated plus and mirror residual identities are local differential identities on $\Omega'$ with smooth coefficients, apart from the explicitly retained localized Cauchy terms. After the cutoff localization in Section~\ref{sec:ucp-and-gauge-elimination}, the scalar Cauchy inputs have order at most one. The $M$-polarization inputs are reduced by the localized drift equation and the local Laplace parametrix, and become local lower-order terms, drift-resolvent perturbations, or drift-Cauchy perturbations.
	\end{lemma}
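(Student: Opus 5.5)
The plan is to make the dependence on the critical center explicit in the CGO construction, and then read off smoothness and nonvanishing from the definitions of $C_a$, $\widetilde C_a$, $p$, $p^m$.

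\emph{Step 1: translated quantities as functions of $(a,x)$.} Every translated quantity has the form $G(a,x)=G_0(x-a;\,\text{data near }x)$. The phases $\Phi_{1,a},\Phi_{2,a},\Phi^*_a$ and their mirror versions are polynomials in $z-a$ and $\overline z-\overline a$. The factors $\Lambda_{0,a}=\gamma^2\mu$ and $\widetilde\Lambda_{0,a}$ do not depend on $a$ at all. The only nontrivial inputs are therefore the smooth CGO prefactors $F_{*,a},F_{1,a},F_{2,a}$ (and the tilde versions), together with $V,V',\widetilde V,\widetilde V'$.

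\emph{Step 2: the prefactors.} In the construction of \cite[Section~5]{LL2025IP_Monge_Ampere}, each prefactor is a gauge factor $F_A$ times a holomorphic or antiholomorphic amplitude. Here $F_A$ solves $\overline\partial F_A=\mathsf iAF_A$, with $A$ built from the fixed lower-order coefficients. I would take one fixed solution of this equation, for instance $F_A=e^{\mathsf i\,C_{\overline\partial}(\vartheta A)}$ with a fixed localized Cauchy inverse. This solution is independent of $a$, smooth on $O$, and nonvanishing because it is an exponential. The amplitude is chosen as the constant $1$, translated with $a$ (the $a\equiv1$ convention in Subsection~\ref{subsec:three-phase-CGO-input}). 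Hence $F_{\bullet,a}(x)=F_\bullet(x)$ or, at worst, a composition of a fixed smooth nonvanishing function with the smooth map $(a,x)\mapsto x-a$. Consequently $V'=-|F_A|^2$ and $V=\tfrac12Q|F_A|^{-2}$ are smooth in $x$, and so are their tilde versions.

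\emph{Step 3: smoothness and nonvanishing.} By Step 2, $C=\Lambda_0F_*F_1F_2$ and $\widetilde C$ lie in $C^\infty(\Omega'\times O)$. The functions $p$ and $p^m$ are products of $C$ or $\widetilde C$ with the polynomial factors $1-(z-a)^2/16$ and $1-(\overline z-\overline a)^2/16$ and with $V'$ or $\widetilde V'$, so they are smooth as well. At the diagonal, $C(a,a)=\gamma^2(a)\mu(a)F_*(a)F_1(a)F_2(a)\neq0$, since $\gamma,\mu>0$ and the prefactors do not vanish near the critical point; the same holds for $\widetilde C(a,a)$. By continuity on $\Omega'\times O$ and compactness of $\overline{\Omega'}$, I get a uniform neighborhood $\{|x-a|<\delta\}$ of the diagonal on which $C$ and $\widetilde C$ do not vanish, and I shrink $O$ accordingly.

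\emph{Step 4: the consequence.} Since all $x$-derivatives are taken with $a$ frozen, restricting the identities \eqref{eq:limiting-equation-from-second-linearization} and \eqref{eq:mirror-limiting-equation-from-second-linearization} to $x=a$ gives differential expressions in $\mathbf r$ and $M$ whose coefficients are smooth functions of $a$ (derivatives of $C$ evaluated on the diagonal). The only exception is the Cauchy terms, which act on the smooth amplitudes $p(a,\cdot)$ and $p^m(a,\cdot)$. By Lemmas~\ref{lem:scalar-critical-order} and~\ref{lem:mirror-scalar-critical-order}, the scalar Cauchy inputs have order at most one, and this remains true after inserting a cutoff. The $M$-inputs $\mathcal A(M^\alpha)$ and $\mathcal B(M^\alpha)$ are second-order expressions in $W$. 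The reduction of these inputs through \eqref{eq:J-q-equation-isothermal} and a Laplace parametrix is only announced here and belongs to Section~\ref{sec:ucp-and-gauge-elimination}.

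\emph{Main obstacle.} The hardest step is Step 2: making sure the CGO prefactors can be chosen \emph{jointly} smooth in the center $a$. The construction in \cite{LL2025IP_Monge_Ampere} is stated for one fixed critical point. I would handle this by choosing the gauge factor once and for all, independently of $a$, rather than re-solving for it at each center. The only remaining $a$-dependence is then through translation of explicit polynomials, which gives joint smoothness directly.
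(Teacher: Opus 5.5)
Your treatment of the first paragraph of the lemma is sound, and it takes a more concrete route than the paper. The equation $\overline\partial F_A=\mathsf iAF_A$ does not involve the phase, so fixing the gauge factor once makes $C$ and $\widetilde C$ independent of $a$; smoothness and nonvanishing on the diagonal then become trivial. The paper argues more generally: all amplitude data are built from the fixed coefficients by algebraic operations, fixed Cauchy parametrices, and division by phase derivatives. Your choice is legitimate, because the residual identities hold for any CGO family of the stated form.

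The gap is in Step~1, and it carries over to Step~4. The prefactors and $V,V',\widetilde V,\widetilde V'$ are not the only $a$-dependent inputs. The residual identities also contain the coefficients of the finite expansions $r_*=hR_{*,h}+O(h^N)$ and $r_1=hR_{1,h}+O(h^N)$, and of their mirror analogues. These enter the local lower-order operators $\mathcal L^{\rm loc}_{\mathbf r,\le1,a}$ and $\mathcal L^{\rm loc}_{M,0,a}$. They also enter the coefficients $a_{\nu,k}$ of the scalar Cauchy terms, through the coefficient $\mathcal C$ in Lemma~\ref{lem:finite-scalar-critical-normal-forms}. They are obtained by applying a Cauchy parametrix to quantities divided by the noncritical phase derivatives $\partial\Phi_{1,a}=1+(z-a)/4$ and $\partial\Phi^*_a=-1+(z-a)/4$, or $\overline\partial\Psi_{1,a}$ and $\overline\partial\Psi^*_a$ in the mirror case. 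So they genuinely depend on $a$.

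Their joint smoothness in $(a,x)$ requires these phase derivatives to stay uniformly away from zero for $a\in\Omega'$ and $x\in\overline O$. This is exactly where the dilation normalization $\operatorname{diam}\widetilde\Omega<2$ enters, and it is the central point of the paper's proof; your proposal never uses it. For the same reason, your description in Step~4 of the diagonal coefficients as ``derivatives of $C$'' is too narrow. To close the gap, note that $|z-a|<4$ on all relevant supports, so these divisions are by uniformly nonvanishing factors, and that fixed Cauchy parametrices preserve smooth dependence on the parameter $a$.
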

	
	\begin{proof}
		The translated phases are polynomial in $(a,z)$ and $(a,\overline z)$. After shrinking $O$ with $\overline{\Omega'}\Subset O$, the noncritical phase derivatives are uniformly bounded away from zero on $\overline O$ for all $a\in\Omega'$. The noncritical amplitude coefficients and the finite asymptotic coefficients are obtained from the smooth coefficients of the first linearized operator by algebraic operations, fixed properly supported Cauchy parametrices, and division by these nonvanishing phase derivatives. Hence, they are smooth in $(a,x)$.
		
		The critical part is reduced to the first Neumann term by estimates uniform for $a\in\Omega'$. The remaining factors are precisely the explicit functions $p(a,x)$ and $p^m(a,x)$ displayed above, and are therefore smooth in $(a,x)$.
		
		Since the density factors are positive and the CGO amplitude factors are nonzero at the critical point, $C(a,a)$ and $\widetilde C(a,a)$ are nonzero. Compactness of $\overline{\Omega'}$ gives the asserted nonvanishing near the diagonal after possibly shrinking $O$.
		
		Finally, for any smooth function $F(a,x)$ and any multi-index $\beta$, the quantity $\partial_x^\beta(F(a,x)u(x))|_{x=a}$ is a local differential expression in $u$ with smooth coefficients depending on $a$. The critical Cauchy terms have fixed properly supported localized kernels and smooth coefficient factors. After the cutoff localization used in Section~\ref{sec:ucp-and-gauge-elimination}, the scalar Cauchy terms are localized Cauchy transforms with scalar inputs of order at most one. The $M$-polarization inputs are not reduced through an auxiliary first-order variable. Instead, Section~\ref{sec:ucp-and-gauge-elimination} writes $\mathcal H^\alpha=D^2W^\alpha+\mathcal H_{\rm low}^\alpha(W,DW)$, uses the drift equation with a localized Laplace parametrix, and treats the remaining nonlocal pieces as localized drift-resolvent and drift-Cauchy perturbations.
	\end{proof}
	
	We next record the local form used near the boundary. Let $p_0\in\partial\Omega$, and choose a flattened boundary chart
	\begin{equation}\label{eq:boundary-half-disk-section7}
		B_R^+=B_R\cap\{x_2>0\},\quad \Gamma_R=B_R\cap\{x_2=0\},
	\end{equation}
	where $B_R^+$ is the interior side. We use nested cutoffs $\eta_0\prec\eta\prec\eta_1$ in this collar. Let $\zeta\in C_0^\infty(B_R)$ be equal to $1$ on the active diagonal neighborhood determined by $\operatorname{supp}\eta_1$ and the properly supported localized Cauchy kernels.
	
	For $a,x\in B_R^+$, define
	\begin{equation}\label{eq:boundary-collar-local-blocks-notation-plus}
		\mathcal B_+(a;\mathbf r,M)=8\overline\partial_x^2(C(a,x)\mathbf r(x))|_{x=a}-2\overline\partial_x(C(a,x)\kappa_\alpha(x)\mathcal A(M^\alpha)(x))|_{x=a},
	\end{equation}
	and
	\begin{equation}\label{eq:boundary-collar-local-blocks-notation-minus}
		\mathcal B_-(a;\mathbf r,M)=8\partial_x^2(\widetilde C(a,x)\mathbf r(x))|_{x=a}-2\partial_x(\widetilde C(a,x)\lambda_\alpha(x)\mathcal B(M^\alpha)(x))|_{x=a}.
	\end{equation}
	The terms $\mathscr L_+$ and $\mathscr L_-$ denote the local lower-order parts. The critical Cauchy terms are written as
	\begin{equation}\label{eq:boundary-collar-cut-cauchy-notation}
		\mathscr C_+^\zeta(a;Z)=\sum_\nu b_\nu(a)\big[\partial_x^{*-1}\big(\zeta(x)b_{\nu,1}(a,x)B_{\nu,+}(Z)(x)\big)\big]_{x=a},
	\end{equation}
	and
	\begin{equation}\label{eq:boundary-collar-cut-mirror-cauchy-notation}
		\mathscr C_-^\zeta(a;Z)=\sum_\nu \widetilde b_\nu(a)\big[\overline\partial_x^{*-1}\big(\zeta(x)\widetilde b_{\nu,1}(a,x)B_{\nu,-}(Z)(x)\big)\big]_{x=a}.
	\end{equation}
	Before the drift reduction, the inputs $B_{\nu,\pm}$ are the scalar critical inputs and the $M$-polarization inputs obtained in Sections~\ref{sec:second-asymptotics} and~\ref{sec:second-asymptotics-mirror}. In Section~\ref{sec:ucp-and-gauge-elimination}, the scalar inputs become order-one scalar inputs, while the $M$-polarization inputs are written using $\mathcal H^\alpha=D^2W^\alpha+\mathcal H_{\rm low}^\alpha(W,DW)$ and reduced through the localized drift equation.
	
	\begin{lemma}[Localized boundary-collar residual identities]
		\label{lem:boundary-collar-residual-equations}
		After shrinking $R$, the coefficient families $C$ and $\widetilde C$ belong to $C^\infty(\overline{B_R^+}\times\overline{B_R^+})$ and satisfy $C(a,a)\ne0$, $\widetilde C(a,a)\ne0$ for $a\in\overline{B_R^+}$. For every interior center $a\in B_R^+$, the plus and mirror residual equations obtained in Sections~\ref{sec:second-asymptotics} and~\ref{sec:second-asymptotics-mirror} have the localized one-sided forms
		\begin{equation}\label{eq:localized-one-sided-plus-residual-collar}
			0=\eta_0(a)\big[\mathcal B_+(a;\mathbf r,M)+\mathscr L_+(a;\mathbf r,M)+\mathscr C_+^\zeta(a;Z)\big]+R_{\eta_0,\eta_1}Z(a),
		\end{equation}
		and
		\begin{equation}\label{eq:localized-one-sided-mirror-residual-collar}
			0=\eta_0(a)\big[\mathcal B_-(a;\mathbf r,M)+\mathscr L_-(a;\mathbf r,M)+\mathscr C_-^\zeta(a;Z)\big]+R_{\eta_0,\eta_1}Z(a).
		\end{equation}
		Here, the cutoff localization is performed only on the already-derived residual equations. It is not inserted into the second integral identity. In the boundary Carleman argument, the inputs in $\mathscr C_\pm^\zeta$ are first cut off in $B_R^+$ and then extended by zero to the reflected disk. The original uncut one-sided nonlocal expression is never extended across the boundary.
	\end{lemma}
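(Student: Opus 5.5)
The plan has two parts. The coefficient assertions are a boundary version of Lemma~\ref{lem:smooth-dependence-critical-center}. The localized identities come from multiplying the already-derived residual equations \eqref{eq:limiting-equation-from-second-linearization} and \eqref{eq:mirror-limiting-equation-from-second-linearization} by $\eta_0(a)$ and then splitting every coefficient and every Cauchy input with $\eta_1$ and $\zeta$.

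\textbf{Step 1: the coefficient families.} The global isothermal coordinate $\chi$ and the first-linearized coefficients are smooth up to $\partial\Omega$, so I first extend all coefficients smoothly across $\Gamma_R$ to the full disk $B_R$. For $a$ in a small closed neighborhood of $p_0$, I build the CGO prefactors $F_*,F_1,F_2$ and their mirror counterparts with translated phases. These depend smoothly on $a$, exactly as in the proof of Lemma~\ref{lem:smooth-dependence-critical-center}, because the noncritical phase derivatives stay bounded away from zero uniformly in $a$; this uses the normalization $\operatorname{diam}(\widetilde\Omega)<2$. Hence $C,\widetilde C\in C^\infty(\overline{B_R^+}\times\overline{B_R^+})$. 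Since $\gamma^2\mu>0$ and the prefactors are nonvanishing at the critical point, $C(p_0,p_0)\neq0$ and $\widetilde C(p_0,p_0)\neq0$. By continuity, after shrinking $R$, both are nonzero on the diagonal over $\overline{B_R^+}$.

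\textbf{Step 2: the localized identities.} Fix an interior center $a\in B_R^+$. The residual equations hold at $a$ with the full (uncut) coefficients and Cauchy terms. Multiplying by $\eta_0(a)$:
\begin{enumerate}[(i)]
\item The local second-order blocks $8\overline\partial^2(C_a\mathbf r)-2\overline\partial(C_a\kappa_\alpha\mathcal A(M^\alpha))$, evaluated at $x=a$ with $a$ frozen, are exactly $\mathcal B_+(a;\mathbf r,M)$. The mirror blocks give $\mathcal B_-(a;\mathbf r,M)$.
\item The terms $\mathcal Z_M$, $\mathcal L^{\rm loc}_{\mathbf r,\leq1}$ and $\mathcal L^{\rm loc}_{M,0}$ are collected into $\mathscr L_\pm$.
\item Each Cauchy term, that is $\mathcal N_{\mathbf r}$ from Lemma~\ref{lem:scalar-critical-order}, $\mathcal K_{\rm nl}$, and their mirror analogues, is a finite sum $b_\nu(a)\,[\partial_x^{*-1}(b_{\nu,1}(a,\cdot)B_{\nu,+})](a)$. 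I insert $1=\zeta+(1-\zeta)$ into each input.
\end{enumerate}
The $\zeta$-part is $\mathscr C_\pm^\zeta(a;Z)$. For the $(1-\zeta)$-part, the localized kernel is properly supported. Since $\zeta=1$ on the set where the kernel of $\partial^{*-1}$ connects to $\operatorname{supp}\eta_1\ni a$, this part is either zero or smoothing. Multiplied by $\eta_0(a)$, it is an operator acting on $Z=(\theta,s,W)$ with smooth kernel supported away from the diagonal, and I define this to be $R_{\eta_0,\eta_1}Z(a)$. The same splitting is applied to commutator contributions from $\eta_1$ inside the local blocks. Inputs that are one-sided functions on $B_R^+$ are cut off by $\zeta$ before any zero extension, so the uncut nonlocal expression is never extended.

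\textbf{Main obstacle.} The delicate point is uniformity near $\Gamma_R$. The residual equations were derived only for interior centers, using the uniform estimates of Lemmas~\ref{lem:critical-remainder-reusable} and~\ref{lem:critical-Neumann-tail-section5} on compact subsets. I therefore need to check that the localized kernels and the $o(1)$ errors stay uniform as $a\to\Gamma_R$. I would handle this by running the CGO construction in the extended chart $B_R$. Terms near $\Gamma_R$ are then controlled by Lemma~\ref{lem:section5-boundary-vanishing}: the boundary traces of $\mathbf r$, $D\mathbf r$ and $M^\alpha$ vanish, so zero extension produces no boundary distributions. The identities need only hold pointwise for each interior $a$, so the limit $h\to0$ is taken at fixed $a$ and boundary uniformity is needed only for the coefficients. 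Those coefficients are smooth up to $\overline{B_R^+}$ by Step~1.
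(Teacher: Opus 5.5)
Your proposal is correct and follows essentially the same route as the paper. The paper also derives smoothness of $C,\widetilde C$ in $(a,x)$ from the translated polynomial phases and the uniformly nonvanishing noncritical phase derivatives, gets diagonal nonvanishing from the positive density and nonzero amplitudes after shrinking $R$, and then multiplies the already-derived identities by $\eta_0(a)$, splitting the Cauchy inputs as $\zeta+(1-\zeta)$ so that the $(1-\zeta)$ part is zero or a separated-support smoothing remainder placed in $R_{\eta_0,\eta_1}Z$, with the cutoff applied before zero extension.

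The only minor difference is how you get smoothness up to $\Gamma_R$: you extend the background coefficients and build prefactors for centers in a two-sided neighborhood of $p_0$. The paper instead never places a critical center on the boundary; it takes one-sided limits of the finite jets at $x=a$ that determine the diagonal coefficients, and notes that the $a$-dependent stationary-phase cutoff supported in $B_R^+$ does not change them. Since you use the boundary-center prefactors only to define coefficients, and take $h\to0$ at fixed interior $a$, both versions work.
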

	
	\begin{proof}
		For a fixed interior center $a\in B_R^+$, the residual identities are exactly the plus and mirror identities derived in Sections~\ref{sec:second-asymptotics} and~\ref{sec:second-asymptotics-mirror}. Near the boundary, the stationary phase cutoff used in those sections may be chosen depending on $a$, with support contained in $B_R^+$ and equal to $1$ near $a$. This is only part of the extraction of the residual equation for that fixed interior center. The boundary Carleman argument below uses the resulting residual equation and then localizes it; it does not insert a new cutoff into the second integral identity.
		
		The diagonal coefficients left by the stationary phase computation are independent of the shrinking radius of the auxiliary cutoff. They are determined by finite jets at $x=a$ of the translated phases, the background coefficients, and the CGO amplitude factors. If two such cutoffs are both equal to $1$ near $a$, their difference is supported away from the critical point. The corresponding smooth-amplitude oscillatory contribution is nonstationary to infinite order, and the corresponding Cauchy contribution is a properly supported smoothing cutoff remainder once the local Cauchy kernel is fixed. Thus, the diagonal coefficients and the localized operator kernels are not changed.
		
		The translated phases are polynomial in $(a,x)$, and after shrinking $R$, the noncritical phase derivatives stay uniformly nonzero on the local supports used in the plus and mirror constructions. The amplitude factors appearing in the local CGO parametrices therefore depend smoothly on $(a,x)$. This gives $C,\widetilde C\in C^\infty(\overline{B_R^+}\times\overline{B_R^+})$. Since the diagonal factors are products of a positive density and nonvanishing amplitude factors, $C(a,a)$ and $\widetilde C(a,a)$ remain nonzero after shrinking the collar.

		The smoothness statement is understood in the one-sided sense up to the flattened boundary. The residual identities are used only for interior centers $a\in B_R^+$. The extension of $C$ and $\widetilde C$ to $\overline{B_R^+}\times\overline{B_R^+}$ is used to obtain uniform coefficient bounds in the boundary Carleman argument, not to construct a CGO solution with a boundary critical center. Indeed, the diagonal coefficient obtained by stationary phase depends only on a finite jet at $x=a$ of the translated phases, the background coefficients, and the CGO amplitudes. These jets have one-sided smooth limits as $a$ approaches the flattened boundary. The same argument applies to the mirror coefficient family.
		
		It remains to justify the cutoff insertion in the critical Cauchy terms. Consider a plus Cauchy term before inserting $\zeta$: $b_\nu(a)\big[\partial_x^{*-1}\big(b_{\nu,1}(a,x)B_{\nu,+}(Z)(x)\big)\big]_{x=a}$.
		After replacing the input by the cut input, the difference is $b_\nu(a)\big[\partial_x^{*-1}\big((1-\zeta(x))b_{\nu,1}(a,x)B_{\nu,+}(Z)(x)\big)\big]_{x=a}$. Multiplying by the active output cutoff $\eta_0(a)$, this term is zero if the localized kernel from $\operatorname{supp}\eta_0$ does not meet $\operatorname{supp}(1-\zeta)$. It is also zero if the source lies in an already known zero region. In the remaining case, the output and source supports are separated, the localized Cauchy kernel is smooth on the relevant product of supports, and the term is a separated-support smoothing cutoff remainder. Hence, it belongs to $R_{\eta_0,\eta_1}Z$. The mirror Cauchy terms are identical, with $\overline\partial^{*-1}$ in place of $\partial^{*-1}$.
		
		Near $\Gamma_R$, the critical Cauchy terms are interpreted only after this localization. The one-sided input is first formed in $B_R^+$, multiplied by the cutoff $\zeta$, extended by zero to the reflected disk, and then acted on by the corresponding localized adjoint Cauchy inverse. Thus, no derivative falls on the characteristic function of the half-disk, and no boundary-supported distribution is created.
	\end{proof}
	
	\subsection{Substitution in the two residual equations}
	\label{subsec:substitution-two-residual-equations}
	
	We first rewrite the plus residual equation in terms of $\theta$ and $W$. Since $\rho=e^\theta$, one has $\mathbf r=1-e^{-\theta}$ and $M^\alpha=e^{-\theta}\mathcal H^\alpha$. The tensor $\mathcal H^\alpha$ is the covariant Hessian of $J$ defined in \eqref{eq:map-covariant-Hessian}. Because the identity map has zero covariant Hessian for the same connection in the domain and target,
	\begin{equation}\label{eq:Hessian-map-principal-W}
		\mathcal H^\alpha=D^2W^\alpha+\mathcal H_{\rm low}^\alpha(W,DW),
	\end{equation}
	where $\mathcal H_{\rm low}^\alpha$ is local of order at most one in $W$ and vanishes when $W=DW=0$. In a flat affine coordinate for the pulled-back Euclidean connection, this reduces to $\mathcal H^\alpha=D^2W^\alpha$.
	
	Recall from Section~\ref{sec:second-asymptotics} that $C_a=\Lambda_{0,a}F_{*,a}F_{1,a}F_{2,a}$. The displayed local second order block in \eqref{eq:limiting-equation-from-second-linearization} is $8\overline\partial^2(C_a\mathbf r)-2\overline\partial\big(C_a\kappa_\alpha\mathcal A(M^\alpha)\big)$. Using $\mathbf r=1-e^{-\theta}$ and $M^\alpha=e^{-\theta}\mathcal H^\alpha$, this becomes $8\overline\partial^2\big[C_a(1-e^{-\theta})\big]-2\overline\partial\big[C_ae^{-\theta}\kappa_\alpha\mathcal A(\mathcal H^\alpha)\big]$. The local zeroth order $M$-linear term extracted in \eqref{eq:def-ZMa} becomes
	\begin{equation}\label{eq:ZM-after-substitution}
		\mathcal Z_{M,a}(e^{-\theta}\mathcal H)(a)=2\Lambda_{0,a}(a)F_{*,a}(a)F_{1,a}(a)\big(\lambda_\beta(a)\partial F_{2,a}(a)+\kappa_\beta(a)\overline\partial F_{2,a}(a)\big)e^{-\theta(a)}\mathcal A(\mathcal H^\beta)(a).
	\end{equation}
	
	The nonlocal Cauchy term has the form
	\begin{equation}\label{eq:KM-after-substitution}
		\mathcal K_{{\rm nl},a}(e^{-\theta}\mathcal H)(a)=2\big(\partial_x^{*-1}P_a(x;e^{-\theta}\mathcal H)\big)\big|_{x=a}V(a),
	\end{equation}
	where
	\begin{equation}\label{eq:Pa-after-substitution}
		P_a(x;e^{-\theta}\mathcal H)=C_a(x)\lambda_\alpha(x)e^{-\theta(x)}\mathcal A(\mathcal H^\alpha)(x)\Big(1-\frac{(z-a)^2}{16}\Big)V'(x).
	\end{equation}
	The localization of this Cauchy operator is not part of the second integral identity. It is performed only after multiplying the reduced equation by local cutoffs in the unique continuation argument.
	
	Therefore \eqref{eq:limiting-equation-from-second-linearization} becomes
	\begin{equation}\label{eq:closed-first-resulting-equation}
		\begin{split}
			0&=8\overline\partial^2\big[C_a(1-e^{-\theta})\big]-2\overline\partial\big[C_ae^{-\theta}\kappa_\alpha\mathcal A(\mathcal H^\alpha)\big]+\mathcal Z_M(e^{-\theta}\mathcal H)+\mathcal L_{\mathbf r,\leq1}^{\rm loc}(1-e^{-\theta})\\
			&\quad+\mathcal L_{M,0}^{\rm loc}(e^{-\theta}\mathcal H)+\mathcal N_{\mathbf r}(1-e^{-\theta})+\mathcal K_{\rm nl}(e^{-\theta}\mathcal H).
		\end{split}
	\end{equation}
	Here $\mathcal L_{\mathbf r,\leq1}^{\rm loc}$ is local of order at most one in $\mathbf r$, $\mathcal L_{M,0}^{\rm loc}$ is local of order zero in $M$, and $\mathcal N_{\mathbf r}(1-e^{-\theta})$ denotes the scalar critical contribution after substituting $\mathbf r=1-e^{-\theta}$. By Lemma~\ref{lem:scalar-critical-order}, its linear part is of order at most one in $\theta$, possibly followed by a localized Cauchy transform.
	
	We now rewrite the mirror residual equation \eqref{eq:mirror-limiting-equation-from-second-linearization}. Recall from Section~\ref{sec:second-asymptotics-mirror} that $\widetilde C_a=\widetilde\Lambda_{0,a}\widetilde F_{*,a}\widetilde F_{1,a}\widetilde F_{2,a}$. The displayed mirror local second order block in \eqref{eq:mirror-limiting-equation-from-second-linearization} is $8\partial^2(\widetilde C_a\mathbf r)-2\partial\big(\widetilde C_a\lambda_\alpha\mathcal B(M^\alpha)\big)$. Thus, after substituting $\mathbf r=1-e^{-\theta}$ and $M^\alpha=e^{-\theta}\mathcal H^\alpha$, this block becomes $8\partial^2\big[\widetilde C_a(1-e^{-\theta})\big]-2\partial\big[\widetilde C_ae^{-\theta}\lambda_\alpha\mathcal B(\mathcal H^\alpha)\big]$. The mirror local zeroth order $M$-linear term extracted in \eqref{eq:def_mirror_ZMa} becomes
	\begin{equation}\label{eq:mirror-ZM-after-substitution}
		\mathcal Z_{M,a}^{m}(e^{-\theta}\mathcal H)(a)=2\widetilde\Lambda_{0,a}(a)\widetilde F_{*,a}(a)\widetilde F_{1,a}(a)\big(\lambda_\beta(a)\partial\widetilde F_{2,a}(a)+\kappa_\beta(a)\overline\partial\widetilde F_{2,a}(a)\big)e^{-\theta(a)}\mathcal B(\mathcal H^\beta)(a).
	\end{equation}
	The mirror nonlocal Cauchy term has the form
	\begin{equation}\label{eq:mirror-KM-after-substitution}
		\mathcal K_{{\rm nl},a}^{m}(e^{-\theta}\mathcal H)(a)=2\big(\overline\partial_x^{*-1}P_a^m(x;e^{-\theta}\mathcal H)\big)\big|_{x=a}\widetilde V(a),
	\end{equation}
	where
	\begin{equation}\label{eq:mirror-Pa-after-substitution}
		P_a^m(x;e^{-\theta}\mathcal H)=\widetilde C_a(x)\kappa_\alpha(x)e^{-\theta(x)}\mathcal B(\mathcal H^\alpha)(x)\Big(1-\frac{(\overline z-\overline a)^2}{16}\Big)\widetilde V'(x).
	\end{equation}
	Again, the localization of this mirror Cauchy operator is performed only in the unique continuation argument.
	
	Therefore \eqref{eq:mirror-limiting-equation-from-second-linearization} becomes
	\begin{equation}\label{eq:closed-mirror-resulting-equation}
		\begin{split}
			0&=8\partial^2\big[\widetilde C_a(1-e^{-\theta})\big]-2\partial\big[\widetilde C_ae^{-\theta}\lambda_\alpha\mathcal B(\mathcal H^\alpha)\big]+\mathcal Z_M^m(e^{-\theta}\mathcal H)+\mathcal L_{\mathbf r,\leq1}^{m,\rm loc}(1-e^{-\theta})\\
			&\quad+\mathcal L_{M,0}^{m,\rm loc}(e^{-\theta}\mathcal H)+\mathcal N_{\mathbf r}^{m}(1-e^{-\theta})+\mathcal K_{\rm nl}^{m}(e^{-\theta}\mathcal H).
		\end{split}
	\end{equation}
	Here $\mathcal L_{\mathbf r,\leq1}^{m,\rm loc}$ is local of order at most one in $\mathbf r$, $\mathcal L_{M,0}^{m,\rm loc}$ is local of order zero in $M$, and $\mathcal N_{\mathbf r}^{m}(1-e^{-\theta})$ denotes the mirror scalar critical contribution after substituting $\mathbf r=1-e^{-\theta}$. By Lemma~\ref{lem:mirror-scalar-critical-order}, its linear part is of order at most one in $\theta$, possibly followed by a localized mirror Cauchy transform.
	
	\subsection{Boundary residual jets}
	\label{subsec:boundary-residual-jets}
	
	We record the boundary information, which starts the boundary Carleman propagation from a flattened boundary segment into an interior collar.
	
	\begin{proposition}[Boundary determination of the residual jets]
		\label{prop:boundary-determination-residual-jets}
		On $\partial\Omega$, the residual variables satisfy
		\begin{equation}\label{eq:boundary-determined-qJ}
			q|_{\partial\Omega}=0,\quad Dq|_{\partial\Omega}=0,\quad J|_{\partial\Omega}=\Id,\quad DJ|_{\partial\Omega}=I.
		\end{equation}
		Consequently, with $W=J-\Id$ and $s=q-2\theta$,
		\begin{equation}\label{eq:boundary-determined-theta-s}
			\theta|_{\partial\Omega}=0,\quad s|_{\partial\Omega}=0,\quad D\theta|_{\partial\Omega}=0,\quad Ds|_{\partial\Omega}=0.
		\end{equation}
		Moreover,
		\begin{equation}\label{eq:boundary-determined-W}
			W|_{\partial\Omega}=0,\quad DW|_{\partial\Omega}=0,\quad D^2W|_{\partial\Omega}=0.
		\end{equation}
		In particular,
		\begin{equation}\label{eq:boundary-determined-M}
			M^\alpha|_{\partial\Omega}=0,\quad \alpha=1,2.
		\end{equation}
	\end{proposition}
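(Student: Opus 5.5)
The argument collects the boundary information already obtained in Lemma~\ref{lem:section5-boundary-vanishing}, Corollary~\ref{cor:boundary-first-jet-gauge} and \eqref{eq:identity_J_wt_Omega}, and rewrites it in the variables $\theta$, $s$ and $W$. No new estimate is needed; the plan is the following.

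\emph{Step 1: the identities \eqref{eq:boundary-determined-qJ}.} The relations $J=\Id$ and $DJ=I$ on $\partial\Omega$ are \eqref{eq:identity_J_wt_Omega}. These come from Proposition~\ref{thm:fixed-background-gauge} and Corollary~\ref{cor:boundary-first-jet-gauge}, transported by the fixed coordinate change $\chi$. For $q$:
\begin{itemize}
\item Since $J=\Id$ on the boundary, $q=\log(\widehat K_1/\widehat K_2)$ there.
\item The common factor $m_\chi$ cancels in this ratio, so \eqref{eq:BC for the curvature} with $a+b=0$ gives $q=0$.
\item Differentiating, $Dq=D\log\widehat K_1-DJ^T(D\log\widehat K_2)\circ J$. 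On $\partial\Omega$ this reduces to $D\log K_1^x-D\log K_2^x$, since the $\log m_\chi$ terms cancel.
\item This vanishes by the first-order part of \eqref{eq:BC for the curvature}. Boundary normal coordinates are preserved up to the smooth change $\chi$, and a full first jet agreement is coordinate invariant.
\end{itemize}

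\emph{Step 2: the second jet of $W$, which I expect to be the only step needing care.} Set $W=J-\Id$. Then $W=0$ and $DW=0$ on $\partial\Omega$ immediately. Tangential differentiation of these traces kills $\partial_{\tau\tau}W$ and $\partial_{\tau\nu}W$. For the normal--normal component I would use the drift equation \eqref{eq:J-q-equation-isothermal}:
\begin{itemize}
\item On the boundary its right-hand side is $\partial_iJ^\alpha\partial_iq+\mathfrak R^\alpha(x,\Id,I)$.
\item This vanishes by Step~1 and \eqref{eq:R-vanishes-identity}, so $\Delta W^\alpha=0$ on $\partial\Omega$.
\item Expanding $\Delta$ in boundary normal coordinates as $\partial_{\nu\nu}+\partial_{\tau\tau}+\kappa\partial_\nu$ then forces $\partial_{\nu\nu}W^\alpha=0$, hence $D^2W=0$.
\end{itemize}
The subtle point is that the remainder $\mathfrak R^\alpha$ contains no second derivatives of $J$ and vanishes at $(\Id,I)$; both facts are recorded in \eqref{eq:J-connection-remainder}.

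\emph{Step 3: the jets of $\theta$ and $s$.} Since $\theta=\log\det(I+DW)-q$, Step~1 and $DW=0$ give $\theta=0$. Jacobi's formula gives $D\log\det(I+DW)=\operatorname{tr}((I+DW)^{-1}D^2W)$, which vanishes by Step~2, so $D\theta=-Dq=0$. Then $s=q-2\theta$ and $Ds=Dq-2D\theta$ vanish on $\partial\Omega$. One can also check consistency with the algebraic relation \eqref{eq:augmented-algebraic-relation-sec6}.

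\emph{Step 4: the statement for $M^\alpha$.} By \eqref{eq:Hessian-map-principal-W}, $\mathcal H^\alpha=D^2W^\alpha+\mathcal H^\alpha_{\rm low}(W,DW)$. On $\partial\Omega$ both terms vanish, the second because $\mathcal H^\alpha_{\rm low}$ vanishes when $W=DW=0$. Since $\rho=e^\theta=1$ there, $M^\alpha=\rho^{-1}\mathcal H^\alpha=0$, giving \eqref{eq:boundary-determined-M}.
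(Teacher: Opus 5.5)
Your proposal is correct and follows the paper's proof essentially step for step: $q$ and $Dq$ come from the curvature jet, with the common factor $m_\chi$ cancelling; $\partial_{\nu\nu}W=0$ comes from the drift equation evaluated at the identity jet; $\theta$ and $D\theta$ follow from Jacobi's formula; and $M^\alpha=0$ because the covariant Hessian vanishes at the identity jet (the same argument already appears in Lemma~\ref{lem:section5-boundary-vanishing}). The only slip is the remark that $\chi$ preserves boundary normal coordinates, which is false in general, but you only need the fact you also state: vanishing of the first jet of $K_1-K_2$ on the boundary is invariant under the diffeomorphism $\chi$.
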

	
	\begin{proof}
		The boundary-fixing normalization gives $J=\Id$ on $\partial\Omega$, and Corollary~\ref{cor:boundary-first-jet-gauge} gives $DJ=I$ on $\partial\Omega$. Since
		\[
		q=\log\frac{\widehat K_1}{\widehat K_2\circ J},\quad \widehat K_j=\frac{K_j\circ\chi^{-1}}{m_\chi},
		\]
		the boundary curvature normalization, together with $J=\Id$, gives $q=0$ on $\partial\Omega$. Differentiating,
		\[
		Dq=D\log \widehat K_1-DJ^T(D\log \widehat K_2)\circ J.
		\]
		Using $J=\Id$, $DJ=I$, and the first order boundary matching of $K_1$ and $K_2$, the common factor $m_\chi$ cancels and gives $Dq=0$ on $\partial\Omega$. This proves \eqref{eq:boundary-determined-qJ}.
		
		It remains to derive the higher boundary jets. Since $W=J-\Id$, the identities $J=\Id$ and $DJ=I$ give $W=0$ and $DW=0$ on $\partial\Omega$. The drift equation \eqref{eq:J-q-equation-isothermal}, together with $Dq=0$ and \eqref{eq:R-vanishes-identity}, gives $\Delta W^\alpha=0$ on $\partial\Omega$. In boundary normal coordinates, $W=0$ and $DW=0$ imply $\partial_{\tau\tau}W^\alpha=\partial_{\tau\nu}W^\alpha=0$ on $\partial\Omega$; hence the identity $\Delta W^\alpha=0$ gives $\partial_{\nu\nu}W^\alpha=0$. Therefore, $D^2W=0$ on $\partial\Omega$.
		
		By Jacobi's formula and $DJ=I$, $D^2W=0$ gives $D\log\det DJ=0$ on $\partial\Omega$. Since $\theta=\log\det DJ-q$, we get $\theta=0$ and $D\theta=0$ on $\partial\Omega$. Then $s=q-2\theta$ gives $s=0$ and $Ds=0$ on $\partial\Omega$.
		
		Finally, the identity map has zero covariant Hessian for the same connection in the domain and target. Since $J=\Id$, $DJ=I$, and $D^2W=0$ on $\partial\Omega$, we have $\mathcal H^\alpha=0$ there. As $M^\alpha=e^{-\theta}\mathcal H^\alpha$, this gives $M^\alpha=0$ on $\partial\Omega$ for $\alpha=1,2$.
	\end{proof}
	
	\subsection{The scalar equation for \texorpdfstring{$\theta$}{theta} and the paired augmented system}
	\label{subsec:theta-equation-and-augmented-system}
	
	We record the scalar equation for $\theta$ in the isothermal coordinate.
	
	\begin{lemma}[Scalar equation for $\theta$]
		\label{lem:theta-equation-section6}
		Let $N=DJ$, $P=N^{-1}$, and write the partial-derivative form of the drift equation as
		\begin{equation}\label{eq:J_alpha_equ}
			\Delta J^\alpha=N_m{}^\alpha\partial_mq+\mathfrak R^\alpha(x,J,N).
		\end{equation}
		Then
		\begin{equation}\label{eq:theta-equation}
			\Delta\theta=-P_\alpha{}^r(\partial_kN_r{}^\beta)P_\beta{}^i(\partial_kN_i{}^\alpha)+P_\alpha{}^i(\partial_iN_m{}^\alpha)\partial_mq+P_\alpha{}^i\partial_i\mathfrak R^\alpha(x,J,N).
		\end{equation}
		Equivalently, since $q=s+2\theta$,
		\begin{equation}\label{eq:theta-equation-s-theta}
			\Delta\theta=-P_\alpha{}^r(\partial_kN_r{}^\beta)P_\beta{}^i(\partial_kN_i{}^\alpha)+P_\alpha{}^i(\partial_iN_m{}^\alpha)\partial_m(s+2\theta)+P_\alpha{}^i\partial_i\mathfrak R^\alpha(x,J,N).
		\end{equation}
	\end{lemma}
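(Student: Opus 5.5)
The plan is a direct computation. It starts from $\theta=\log d-q$ with $d=\det N$ and uses Jacobi's formula to write $\partial_i\log d = P_\alpha{}^r\partial_i N_r{}^\alpha$. Here $N_r{}^\alpha=\partial_rJ^\alpha$, so $\partial_iN_r{}^\alpha=\partial_rN_i{}^\alpha$ by symmetry of second derivatives. Differentiating once more and summing over $i=k$ gives
\[
\Delta\log d=\partial_k\bigl(P_\alpha{}^r\partial_kN_r{}^\alpha\bigr)
=(\partial_kP_\alpha{}^r)\partial_kN_r{}^\alpha+P_\alpha{}^r\partial_k\partial_kN_r{}^\alpha .
\]
For the first term I will use $\partial_kP=-P(\partial_kN)P$, which in indices reads $\partial_kP_\alpha{}^r=-P_\alpha{}^i(\partial_kN_i{}^\beta)P_\beta{}^r$. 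After relabeling the dummy indices this becomes the quadratic term $-P_\alpha{}^r(\partial_kN_r{}^\beta)P_\beta{}^i(\partial_kN_i{}^\alpha)$ in \eqref{eq:theta-equation}. The index bookkeeping should be checked carefully here, since the order of $N$ as a matrix with $N_i{}^\alpha=\partial_iJ^\alpha$ fixes the contraction pattern.

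For the second term I will commute derivatives, $\partial_k\partial_kN_r{}^\alpha=\partial_r\Delta J^\alpha$, and then insert the drift equation \eqref{eq:J_alpha_equ}:
\[
P_\alpha{}^r\partial_r\Delta J^\alpha=P_\alpha{}^r\partial_r\bigl(N_m{}^\alpha\partial_mq\bigr)+P_\alpha{}^r\partial_r\mathfrak R^\alpha(x,J,N).
\]
Expanding gives $P_\alpha{}^r(\partial_rN_m{}^\alpha)\partial_mq+P_\alpha{}^rN_m{}^\alpha\partial_{rm}q$. Since $P_\alpha{}^rN_m{}^\alpha=\delta^r_m$, the last piece is exactly $\Delta q$. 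The term $\partial_i\mathfrak R^\alpha$ is understood as the total $x$-derivative of the composite function $x\mapsto\mathfrak R^\alpha(x,J(x),N(x))$, which is smooth because $J$ is.

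Subtracting $\Delta q$ then cancels this piece and yields
\[
\Delta\theta=\Delta\log d-\Delta q ,
\]
which is \eqref{eq:theta-equation}. The equivalent form \eqref{eq:theta-equation-s-theta} follows by substituting $q=s+2\theta$ in the single remaining $\partial_mq$ factor. Nothing nontrivial is involved beyond checking that $d>0$, which holds by the proof of Lemma~\ref{lem:compatibility-first-linearized-gauges}, so that $\log d$ and $P$ are smooth.

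The only real obstacle is the index convention, namely making sure that the quadratic term and the contraction $P_\alpha{}^i\partial_iN_m{}^\alpha$ come out with the stated index placements. I would fix $N_i{}^\alpha=\partial_iJ^\alpha$ and $P$ by $P_\alpha{}^iN_i{}^\beta=\delta_\alpha^\beta$ at the outset.
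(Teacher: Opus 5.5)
Your proposal is correct and follows the paper's proof essentially step by step. Both use Jacobi's formula, the identity $\partial_kP=-P(\partial_kN)P$ for the quadratic term, commutation of derivatives to write $P_\alpha{}^i\partial_k\partial_kN_i{}^\alpha=P_\alpha{}^i\partial_i\Delta J^\alpha$, insertion of the drift equation, and cancellation of $\Delta q$ via $P_\alpha{}^iN_m{}^\alpha=\delta_m^i$. Your relabeled index contraction in the quadratic term matches the stated formula.
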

	
	\begin{proof}
		By definition, $\theta=\log\det N-q$. Jacobi's formula gives $\partial_k\log\det N=P_\alpha{}^i\partial_kN_i{}^\alpha$. Hence,
		\[
		\Delta\theta=\partial_k(P_\alpha{}^i\partial_kN_i{}^\alpha)-\Delta q.
		\]
		Since $\partial_kP_\alpha{}^i=-P_\alpha{}^r(\partial_kN_r{}^\beta)P_\beta{}^i$, we obtain
		\[
		\Delta\theta=-P_\alpha{}^r(\partial_kN_r{}^\beta)P_\beta{}^i(\partial_kN_i{}^\alpha)+P_\alpha{}^i\partial_k\partial_kN_i{}^\alpha-\Delta q.
		\]
		As $N_i{}^\alpha=\partial_iJ^\alpha$, the second term is $P_\alpha{}^i\partial_i\Delta J^\alpha$. Using \eqref{eq:J_alpha_equ}, we get
		\[
		P_\alpha{}^i\partial_i\Delta J^\alpha=P_\alpha{}^i(\partial_iN_m{}^\alpha)\partial_mq+P_\alpha{}^iN_m{}^\alpha\partial_i\partial_mq+P_\alpha{}^i\partial_i\mathfrak R^\alpha(x,J,N).
		\]
		Since $P_\alpha{}^iN_m{}^\alpha=\delta_m^i$, the second term is $\Delta q$, which cancels the $-\Delta q$ above. This proves \eqref{eq:theta-equation}. Substituting $q=s+2\theta$ gives \eqref{eq:theta-equation-s-theta}.
	\end{proof}
	
	Combining \eqref{eq:augmented-algebraic-relation-sec6}, \eqref{eq:theta-equation-s-theta}, \eqref{eq:J-q-equation-isothermal}, \eqref{eq:closed-first-resulting-equation}, and \eqref{eq:closed-mirror-resulting-equation}, we obtain the paired augmented residual system
	\begin{equation}\label{eq:augmented-system-before-ucp}
		\begin{cases}
			s+3\theta-\log\det(I+DW)=0,\\
			\Delta\theta=-P_\alpha{}^r(\partial_kN_r{}^\beta)P_\beta{}^i(\partial_kN_i{}^\alpha)+P_\alpha{}^i(\partial_iN_m{}^\alpha)\partial_m(s+2\theta)+P_\alpha{}^i\partial_i\mathfrak R^\alpha(x,J,N),\\
			\Delta W^\alpha=N_i{}^\alpha\partial_i(s+2\theta)+\mathfrak R^\alpha(x,J,N),\\
			\mathscr E_+(\theta,W)=0,\\
			\mathscr E_-^m(\theta,W)=0.
		\end{cases}
	\end{equation}
	Here $J=\Id+W$, $q=s+2\theta$, $N=I+DW$, $P=N^{-1}$, and $\mathscr E_+(\theta,W)=0$ and $\mathscr E_-^m(\theta,W)=0$ denote \eqref{eq:closed-first-resulting-equation} and \eqref{eq:closed-mirror-resulting-equation}, respectively. The boundary unique continuation argument for \eqref{eq:augmented-system-before-ucp} is carried out in Section~\ref{sec:ucp-and-gauge-elimination}.
	
	This is the system used in the unique continuation argument. In Section~\ref{sec:ucp-and-gauge-elimination}, the variables are $Z=(\theta,s,W)$. The principal equations are the scalar Laplace equation for $\theta$, the plus and mirror residual equations for $s$, and the componentwise drift equation for $W$. The displayed residual blocks are reduced locally by differentiating the drift equation. The remaining terms are treated as local lower-order terms, localized drift-resolvent perturbations, drift-Cauchy perturbations, or localized Cauchy perturbations.

	\section{Unique continuation and elimination of the residual gauge}
	\label{sec:ucp-and-gauge-elimination}
	
	We now prove that the residual system forces
	\begin{equation}\label{eq:section8-target-vanishing}
		\theta=s=0,\quad W=0\quad \text{in }\Omega.
	\end{equation}
	The variable $\theta$ is kept as an independent scalar unknown; we do not replace it using the algebraic relation. The equation for $W$ is kept in componentwise drift form, and the differentiated contractions involving $W$ are reduced only locally by using the drift equation and properly supported Laplace parametrices. We write
	\begin{equation}\label{eq:Z-definition-section8}
		Z=(\theta,s,W),\quad q=s+2\theta,\quad J=\Id+W,\quad N=I+DW,\quad P=N^{-1}.
	\end{equation}
	The algebraic identity is
	\begin{equation}\label{eq:algebraic-section8}
		s+3\theta-\log\det(I+DW)=0.
	\end{equation}
	It is used to identify the zero set and to conclude $q=0$ at the end. It is not used to replace $\theta$ by derivatives of $W$ in the Carleman system. The drift equation is
	\begin{equation}\label{eq:drift-equation-section8}
		\Delta W^\alpha=N_i{}^\alpha\partial_i(s+2\theta)+\mathfrak R^\alpha(x,\Id+W,I+DW).
	\end{equation}
	The scalar equation for $\theta$ is \eqref{eq:theta-equation-s-theta}. In the local estimates, its right-hand side is denoted by $\mathfrak T_\theta(x,Z,DZ,D^2W)$; this expression is smooth, vanishes at $Z=0$, and its linear part is local of order at most one in $\theta$ and $s$ and at most two in $W$.
	
	The Carleman weight is the convexified logarithmic weight
	\begin{equation}\label{eq:weight-section8}
		\varphi(x)=\log|x-z_0|,\quad \varphi_\varepsilon(x)=\varphi(x)-\frac{|x-z_0|^2}{2\varepsilon\tau},\quad w_\tau=e^{-\tau\varphi_\varepsilon}.
	\end{equation}
	The point $z_0$ is placed in the already known zero region and outside the active target annulus. After shrinking the propagation disk, $|\nabla\varphi_\varepsilon|$ is bounded from below on every active support for all large $\tau$.
	
	\subsection{Localized classes, zero extension, and Carleman estimates}
	\label{subsec:localized-classes-and-carleman-section8}
	
	Let $B_R$ be either an interior disk or the reflected full disk associated with a flattened boundary half-disk. We use nested cutoffs
	\begin{equation}\label{eq:nested-cutoffs-section8}
		\eta_0\prec\eta\prec\eta_1\prec\eta_2.
	\end{equation}
	All equations below are read on $\operatorname{supp}\eta_0$. The residual equations and the augmented system have already been derived; the cutoffs in this section only localize that system for the Carleman argument.
	
	Every local operator, localized Cauchy inverse, localized adjoint Cauchy inverse, localized Laplace parametrix, and smoothing cutoff operator is applied to cut inputs. The kernels are chosen properly supported and close to the diagonal. Commutator terms outside the active cutoff region either vanish by proper support, lie in an already known zero region, or have separated supports with the favorable Carleman weight orientation.
	
	A separated-support estimate is used only in the following form. Let $A_{\eta_0,\eta_1}$ have a smooth kernel on $E_x\times E_y$, where the output variable is restricted to $E_x$, the source variable is restricted to $E_y$, and $\operatorname{dist}(E_x,E_y)>0$. Then, for every integer $m\geq0$,
	\begin{equation}\label{eq:separated-support-unweighted-section8}
		\|A_{\eta_0,\eta_1}F\|_{L^2(E_x)}\leq C_m\sum_{|\alpha|\leq m}\|D^\alpha F\|_{L^2(E_y)}.
	\end{equation}
	For the Carleman weight $w_\tau=e^{-\tau\varphi_\varepsilon}$ this gives
	\begin{equation}\label{eq:separated-support-error-estimate-section8}
		\|w_\tau A_{\eta_0,\eta_1}F\|_{L^2(E_x)}\leq C_m\frac{\sup_{E_x}w_\tau}{\inf_{E_y}w_\tau}\sum_{|\alpha|\leq m}\|w_\tau D^\alpha F\|_{L^2(E_y)}.
	\end{equation}
	We use \eqref{eq:separated-support-error-estimate-section8} only when $\inf_{E_x}\varphi_\varepsilon\geq\sup_{E_y}\varphi_\varepsilon+c$ for some $c>0$. Then $\sup_{E_x}w_\tau\leq Ce^{-c\tau}\inf_{E_y}w_\tau$.
	
	The notation $R_{\eta_0,\eta_1}Z$ denotes a finite sum of local commutators and smoothing cutoff remainders supported away from the active diagonal region. After multiplication by the Carleman weight and restriction to $\operatorname{supp}\eta_0$, these terms are estimated by $\mathcal E_{\eta_0,\eta_1}(\tau)$. The quantity $\mathcal E_{\eta_0,\eta_1}(\tau)$ may change from line to line, and consists only of terms supported in the already known zero set, direct local outer commutators satisfying the weight separation above, and smoothing cutoff remainders satisfying the same separation.
	
	A localized Cauchy perturbation means a finite sum of terms of the form
	\begin{equation}\label{eq:localized-Cauchy-class-section8-revised}
		C_\partial B_1(\eta Z),\quad C_{\overline\partial}B_1(\eta Z),\quad \partial^{*-1}B_1(\eta Z),\quad \overline\partial^{*-1}B_1(\eta Z).
	\end{equation}
	Here $B_1$ is a properly supported local differential operator of order at most one in $Z=(\theta,s,W)$. In the scalar critical terms, the input is of order at most one in $\theta$. In the lower-order terms coming from $\mathcal H_{\rm low}(W,DW)$, the input is of order at most one in $W$. No ordinary localized Cauchy perturbation is allowed to contain second derivatives of $W$ as input. The terms with linear second derivatives of $W$ are reduced through the drift-resolvent classes below.
	
	Let
	\begin{equation}\label{eq:localized-Laplace-parametrix-section8-revised}
		G_\Delta^{\zeta,\zeta'}f:=\zeta\Delta^{-1}(\zeta'f).
	\end{equation}
	This denotes a properly supported local Laplace parametrix. On the active region, whenever the input is supported where $\zeta'=1$, it satisfies
	\begin{equation}\label{eq:localized-Laplace-parametrix-identity-section8-revised}
		\Delta G_\Delta^{\zeta,\zeta'}f=f+S_\Delta^{\zeta,\zeta'}f.
	\end{equation}
	All smoothing cutoff errors are included in $R_{\eta_0,\eta_1}Z$.
	A localized scalar drift-resolvent perturbation of source order at most one is a finite sum of terms of the form
	\begin{equation}\label{eq:localized-scalar-drift-resolvent-class-section8}
		a(x)D^\beta G_\Delta^{\zeta,\zeta'}F_{\rm sc}(x),	\quad |\beta|\le2.
	\end{equation}
	Here $a$ is smooth, and $F_{\rm sc}$ is a properly supported local differential expression of order at most one in $(\eta\theta,\eta s)$. In the actual drift reduction, the scalar source often has a special form
	\[
	F_{\rm sc}=D_j\big(b(x)B_{{\rm sc},0}(\eta\theta,\eta s)\big),
	\]
	where $b$ is smooth and $B_{{\rm sc},0}$ is local of order at most zero in $(\theta,s)$. The source-order formulation in \eqref{eq:localized-scalar-drift-resolvent-class-section8} is used to include both these principal drift sources and the lower drift sources produced by localization.
	
	A localized $W$-drift-resolvent perturbation of source order at most two is a finite sum of terms of the form $a(x)D^\beta G_\Delta^{\zeta,\zeta'}F_W(x)$, $|\beta|\le2$. Here, $a$ is smooth, and $F_W$ is a properly supported local differential expression of order at most two in $\eta W$. These terms arise when the localized drift equation is solved and the resulting second derivatives of $W$ are represented through the local Laplace parametrix.
	
	A localized drift-Cauchy perturbation means a finite sum obtained by placing one localized Cauchy inverse or one localized adjoint Cauchy inverse outside one of the drift-resolvent perturbations above. Thus, the model terms are $\mathfrak C\mathfrak D_{{\rm sc},1}(\eta\theta,\eta s)$ and $\mathfrak C\mathfrak D_{W,2}(\eta W)$, where $\mathfrak C\in\{C_\partial,C_{\overline\partial},\partial^{*-1},\overline\partial^{*-1}\}$. Here $\mathfrak D_{{\rm sc},1}$ is a localized scalar drift-resolvent perturbation of source order at most one, and $\mathfrak D_{W,2}$ is a localized $W$-drift-resolvent perturbation of source order at most two. We use this notation only for the compositions which arise from the localized $M$-critical terms after the drift reduction.
	
	If $\zeta'\prec\zeta$, we use
	\begin{equation}\label{eq:localized-Cauchy-inverses-section8}
		C_{\overline\partial}^{\zeta,\zeta'}f:=\zeta\,\overline\partial^{-1}(\zeta'f),\quad C_{\partial}^{\zeta,\zeta'}f:=\zeta\,\partial^{-1}(\zeta'f).
	\end{equation}
	Thus, $\overline\partial C_{\overline\partial}^{\zeta,\zeta'}f=f+S_{\zeta,\zeta'}f$ and $\partial C_{\partial}^{\zeta,\zeta'}f=f+S_{\zeta,\zeta'}^\partial f$ whenever $f$ is supported where $\zeta'=1$. The remainders are smoothing cutoff terms and are included in $R_{\eta_0,\eta_1}Z$.
	
	\begin{lemma}[Cutoff decomposition for localized Cauchy and drift-resolvent perturbations]
		\label{lem:cutoff-decomposition-localized-Cauchy}
		Let $\mathcal C$ be one of $C_\partial$, $C_{\overline\partial}$, $\partial^{*-1}$ and $\overline\partial^{*-1}$, with the localized properly supported kernel fixed above. Let $\mathfrak D$ be a localized drift-resolvent perturbation. Let $B$ be a properly supported local differential operator in $Z$, and let $\eta_0\prec\eta\prec\eta_1$. Then, on $\operatorname{supp}\eta_0$,
		\begin{equation}\label{eq:cutoff-Cauchy-basic-decomposition}
			\eta_0\mathcal C BZ=\eta_0\mathcal C B(\eta Z)+R_{\eta_0,\eta_1}Z,
			\quad
			\eta_0\mathfrak D BZ=\eta_0\mathfrak D B(\eta Z)+R_{\eta_0,\eta_1}Z.
		\end{equation}
		The same statement holds for localized drift-Cauchy perturbations $\mathcal C\mathfrak D BZ$.
		In a flattened boundary chart, the one-sided input is first cut off in $B_R^+$, then extended by zero to the reflected disk, and only then acted on by the localized Cauchy inverse, localized adjoint Cauchy inverse, localized Laplace parametrix, or their drift-Cauchy composition.
	\end{lemma}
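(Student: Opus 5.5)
The plan is to write the difference of the cut and uncut inputs as a commutator, $BZ-B(\eta Z)=B((1-\eta)Z)$, and then show that after the output cutoff $\eta_0$ and the operator $\mathcal C$ (or $\mathfrak D$) this term falls into $R_{\eta_0,\eta_1}Z$. The first step is to split $B((1-\eta)Z)$ using the locality of $B$. Since $B$ is a properly supported local differential operator, $B((1-\eta)Z)$ is supported in $\operatorname{supp}(1-\eta)$. Because $\eta=1$ on a neighborhood of $\operatorname{supp}\eta_0$, this support stays at positive distance from $\operatorname{supp}\eta_0$.

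Next we insert $\eta_1$ and write $(1-\eta)=(1-\eta)\eta_1+(1-\eta)(1-\eta_1)$. This yields two source pieces. The near piece is supported in the annulus $\operatorname{supp}\eta_1\setminus\{\eta=1\}$. The far piece lies outside $\operatorname{supp}\eta_1$.

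For the far piece we use that the kernels were fixed properly supported and close to the diagonal. Either the kernel of $\mathcal C$ (respectively of $G_\Delta^{\zeta,\zeta'}$ composed with $D^\beta$) from $\operatorname{supp}\eta_0$ does not reach the complement of $\operatorname{supp}\eta_1$, so the term vanishes, or the source lies in the already known zero region. In both cases the term is admissible in $R_{\eta_0,\eta_1}Z$.

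For the near piece, the output set $E_x=\operatorname{supp}\eta_0$ and the source set $E_y=\operatorname{supp}\eta_1\cap\operatorname{supp}(1-\eta)$ have positive distance. The kernels of $\partial^{-1}$, $\overline\partial^{-1}$, their adjoints and $\Delta^{-1}$ are smooth off the diagonal (Cauchy kernel $1/(\pi z)$, logarithmic kernel). So $\eta_0\mathcal C B((1-\eta)\eta_1 Z)$ has a smooth kernel on $E_x\times E_y$. After moving the derivatives of $B$ onto the kernel, it is a separated-support smoothing remainder of the type in \eqref{eq:separated-support-unweighted-section8}, hence part of $R_{\eta_0,\eta_1}Z$. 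The favorable weight orientation required for \eqref{eq:separated-support-error-estimate-section8} is a matter of how $\eta_0,\eta_1$ are placed relative to $z_0$, which is part of the definition of $\mathcal E_{\eta_0,\eta_1}$ and not of this lemma.

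For the drift-resolvent case $a D^\beta G_\Delta^{\zeta,\zeta'}$ the argument is the same, since $D^\beta$ of an off-diagonal smooth kernel is smooth. For the drift-Cauchy composition $\mathcal C\mathfrak D$, we first apply the decomposition to $\mathfrak D$ with an intermediate cutoff $\eta_0\prec\tilde\eta\prec\eta$. Then $\mathcal C$ acts on $\mathfrak D B(\eta Z)$ plus a remainder supported away from $\operatorname{supp}\tilde\eta$. The composition of two properly supported operators, one of which has a smooth kernel on the separated region, again has a smooth separated kernel.

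In a flattened boundary chart, the zero extension is performed on the already cut input $\eta Z$ (or $\zeta B(\eta Z)$). The only point to check is that no derivative of $B$ ever falls on the characteristic function of the half-disk: $B$ acts before the extension, and the operators after extension are integral operators. Hence the decomposition is identical on the reflected disk.

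The main obstacle I expect is the bookkeeping for the composed drift-Cauchy case and for the $W$-sources of order two. There one must verify that moving up to two derivatives onto a kernel that is only smooth off the diagonal keeps the remainder smoothing, with no derivative hitting the diagonal singularity. I would handle this by always choosing the intermediate cutoff so that every differentiated kernel is evaluated strictly off the diagonal.
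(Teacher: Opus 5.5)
Your proposal follows the paper's proof essentially step for step. You split $Z=\eta Z+(1-\eta)Z$ and dispose of the $(1-\eta)$-piece by proper support, by the already known zero region, or as a separated-support smooth-kernel remainder (likewise for $\mathcal C\mathfrak D$), and in a boundary chart you cut the one-sided input and extend it by zero before any nonlocal operator acts.

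One caveat, which the paper's proof shares, concerns the separated-support case. Such a term belongs to $R_{\eta_0,\eta_1}Z$ only under the weight separation $\inf_{E_x}\varphi_\varepsilon\ge\sup_{E_y}\varphi_\varepsilon+c$. This fails for sources in the outer part of $\operatorname{supp}(1-\eta)$, which lie farther from $z_0$ than the outputs on $\operatorname{supp}\eta_0$, so it cannot be deferred to the definition of $\mathcal E_{\eta_0,\eta_1}$. That case must be excluded directly, by keeping the kernel reach from $\operatorname{supp}\eta_0$ inside $\{\eta=1\}$ or the known zero set.
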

	
	\begin{proof}
		Write $Z=\eta Z+(1-\eta)Z$. If the localized kernel from $\operatorname{supp}\eta_0$ does not reach $\operatorname{supp}(1-\eta)$, the second term is zero by proper support. If it reaches only a region where the residual variables have already been proved to vanish, it is zero there. In the remaining case the output and source supports are separated, so the Cauchy singularity and the Laplace singularity are absent and the corresponding kernels are smooth on the relevant product of supports. This gives \eqref{eq:separated-support-error-estimate-section8}. The same argument applies after composing a localized drift-resolvent perturbation with one localized Cauchy or adjoint Cauchy inverse.
		
		In a boundary chart, the operator is not applied to an uncut one-sided nonlocal expression. The input is first localized in $B_R^+$, then extended by zero to the reflected disk, and the localized full-disk operator is applied only after this step. Thus no derivative falls on the characteristic function of the half-disk, and no boundary-supported distribution is created.
	\end{proof}
	
	A nonlinear lower order term $\mathcal N_{\rm low}(Z)$ denotes a finite sum of products of residual quantities. After shrinking the propagation disk, it satisfies
	\begin{equation}\label{eq:nonlinear-low-error-section8}
		\|w_\tau\eta_0\mathcal N_{\rm low}(Z)\|_{L^2}^2\leq \delta\mathcal C_\tau(\eta Z)+\mathcal E_{\eta_0,\eta_1}(\tau).
	\end{equation}
	Here, $\delta>0$ can be made arbitrarily small at a propagation front. Indeed, one factor in each nonlinear product is small in $L^\infty$ because the residual variables already vanish at the base point; in the boundary case this uses the boundary jets from Proposition~\ref{prop:boundary-determination-residual-jets}.
	
	\begin{lemma}[Boundary conformal charts for the local principal blocks]
		\label{lem:boundary-conformal-chart-section8}
		Let $p\in\partial\Omega$ in the global isothermal coordinate. After shrinking the neighborhood of $p$, there is a smooth conformal coordinate $\zeta$ mapping the one-sided neighborhood of $\Omega$ to a half-disk and mapping $\partial\Omega$ to the diameter $\operatorname{Im}\zeta=0$. In this coordinate,
		\begin{equation}\label{eq:boundary-conformal-second-operators-section8}
			\overline\partial_z^2=a_+(\zeta)\overline\partial_\zeta^2+P_{+,1},\quad \partial_z^2=a_-(\zeta)\partial_\zeta^2+P_{-,1},\quad \Delta_z=a_0(\zeta)\Delta_\zeta.
		\end{equation}
		Here $a_+$, $a_-$ and $a_0$ are smooth nonvanishing functions, and $P_{+,1}$ and $P_{-,1}$ are local differential operators of order at most one.
	\end{lemma}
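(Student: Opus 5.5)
The plan has two parts. First, produce the conformal boundary chart by a Riemann mapping argument. Then track how the complex second-order operators transform under a holomorphic change of variable.

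Since $\Omega$ is simply connected with $C^\infty$ boundary in the global isothermal coordinate $z$, the Riemann mapping theorem gives a biholomorphism $\Phi:\Omega\to\mathbb D$. By Kellogg--Warschawski, $\Phi$ extends to a $C^\infty$ diffeomorphism $\overline\Omega\to\overline{\mathbb D}$ with $\Phi'\neq0$ on $\overline\Omega$. We compose with a Möbius map (Cayley transform) sending $\Phi(p)$ to $0$ and the unit circle near $\Phi(p)$ to the real axis. This gives a holomorphic coordinate $\zeta=\zeta(z)$ on a one-sided neighborhood $U\cap\overline\Omega$ of $p$. It is smooth up to the boundary, and $\zeta'(z)\neq0$. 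After shrinking $U$, it maps $U\cap\Omega$ onto a half-disk with $\partial\Omega\cap U$ going to the diameter $\operatorname{Im}\zeta=0$. Alternatively, a purely local construction also works: solve the Dirichlet problem for a harmonic function vanishing on $\partial\Omega$ near $p$ with nonzero normal derivative, and take its harmonic conjugate. The Riemann-map route is cleaner, and the statement only needs local smoothness up to the boundary.

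The operator identities follow from the chain rule for holomorphic maps. Since $\zeta$ is holomorphic, $\partial_z=\zeta'(z)\partial_\zeta$ and $\overline\partial_z=\overline{\zeta'(z)}\,\overline\partial_\zeta$ on functions. Differentiating again gives
\[
\overline\partial_z^2=\overline{\zeta'}^{\,2}\,\overline\partial_\zeta^2+\overline{\zeta''}\,\overline\partial_\zeta,
\qquad
\partial_z^2=(\zeta')^2\partial_\zeta^2+\zeta''\partial_\zeta,
\]
and $\Delta_z=4\partial_z\overline\partial_z=|\zeta'|^2\Delta_\zeta$, because the cross term vanishes by holomorphy. We then set $a_+=\overline{\zeta'}^{\,2}$, $a_-=(\zeta')^2$ and $a_0=|\zeta'|^2$, all viewed as functions of $\zeta$ through the inverse map. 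These are smooth and nonvanishing because $\zeta'\neq0$ up to the boundary. The first-order remainders are $P_{+,1}=\overline{\zeta''}\,\overline\partial_\zeta$ and $P_{-,1}=\zeta''\partial_\zeta$, which are smooth first-order operators.

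The main point needing care is the regularity of the conformal map up to the boundary, including nonvanishing of $\zeta'$ there. I will cite the Kellogg--Warschawski theorem for $C^\infty$ Jordan domains. A localized version also suffices, since only a neighborhood of $p$ matters. The remaining computations are routine.
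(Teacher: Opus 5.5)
Your proposal is correct and follows the same route as the paper's proof. Both use a standard conformal boundary flattening, then the chain rule for $\partial$ and $\overline\partial$ under a holomorphic change of variables, where derivatives of $\zeta'$ produce only first-order remainders. You make explicit what the paper calls ``standard'': Kellogg--Warschawski regularity with $\zeta'\neq0$ up to the boundary, the coefficients $a_+=\overline{\zeta'}^{\,2}$, $a_-=(\zeta')^2$, $a_0=|\zeta'|^2$, and the need for the chart to be holomorphic rather than antiholomorphic, since otherwise the two polarizations would be swapped.
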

	
	\begin{proof}
		This is the standard boundary conformal flattening for a smooth planar domain. Since the coordinate change is conformal, the first-order Cauchy--Riemann operators transform by multiplication by nonvanishing smooth factors. Squaring gives the two identities for $\overline\partial^2$ and $\partial^2$, with derivatives of the conformal factors contributing only order-one terms. The Euclidean Laplacian transforms by a nonvanishing conformal factor.
	\end{proof}
	
	\begin{lemma}[Zero extension across a flattened boundary]
		\label{lem:boundary-zero-extension-UW}
		Let $B_R^+=B_R\cap\{x_2>0\}$ be a flattened boundary chart and let $\Gamma_R=B_R\cap\{x_2=0\}$. Suppose that $\theta=0$, $D\theta=0$, $s=0$, $Ds=0$, $W=0$, $DW=0$, and $D^2W=0$ on $\Gamma_R$. Let $\theta^0$, $s^0$, and $W^0$ be the zero extensions to the reflected full disk $B_R$. Then $\theta^0,s^0\in H^2_{\rm loc}(B_R)$ and $W^0\in H^3_{\rm loc}(B_R)$. Moreover, $D^\alpha\theta^0=(D^\alpha\theta)^0$ and $D^\alpha s^0=(D^\alpha s)^0$ for $|\alpha|\leq2$, and $D^\beta W^0=(D^\beta W)^0$ for $|\beta|\leq3$ in the sense of distributions. Therefore, the local second-order operators in $\theta$ and $s$, the componentwise Laplacian in $W$, and the localized drift reductions create no boundary distribution on $\Gamma_R$.
	\end{lemma}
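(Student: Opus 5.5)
The statement is purely about zero extension of smooth one-sided functions with vanishing boundary jets, so I will prove it directly from the distributional derivative of a zero extension. The residual variables $\theta$, $s$, $W$ are smooth up to $\Gamma_R$ on $\overline{B_R^+}$, because $J$, $\rho$, $q$ are smooth on $\overline\Omega$. The first step is the one-dimensional jump formula in the normal variable $x_2$, applied for fixed $x_1$. For a function $f$ smooth on $\overline{B_R^+}$, and any test function $\phi\in C_0^\infty(B_R)$, Fubini and integration by parts in $x_2$ give
\[
\langle \partial_2 f^0,\phi\rangle=-\int_{B_R^+}f\,\partial_2\phi\,dx=\int_{B_R^+}(\partial_2f)\phi\,dx+\int_{\Gamma_R}f(x_1,0)\phi(x_1,0)\,dx_1 .
\]
Tangential derivatives produce no boundary term, since the integration by parts in $x_1$ takes place along lines that do not cross $\Gamma_R$. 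Hence $D f^0=(Df)^0$ whenever $f|_{\Gamma_R}=0$.

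Next I iterate. If all traces $D^\gamma f|_{\Gamma_R}$ with $|\gamma|\le m-1$ vanish, then applying the first step successively to $f, D f,\dots$ gives $D^\beta f^0=(D^\beta f)^0$ for $|\beta|\le m$. For $\theta$ and $s$ the hypotheses give vanishing traces of order $\le1$, so this holds with $m=2$. For $W$ the traces of order $\le2$ vanish, so it holds with $m=3$.

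Since each $(D^\beta f)^0$ is bounded with compact support in $\overline{B_R}$, it lies in $L^2_{\rm loc}(B_R)$. This gives $\theta^0,s^0\in H^2_{\rm loc}(B_R)$ and $W^0\in H^3_{\rm loc}(B_R)$. This is exactly the same trace-formula argument already used in Lemma~\ref{lem:section5-boundary-vanishing}.

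For the final assertion, the local second-order operators in $\theta$ and $s$ (in particular $\overline\partial^2$, $\partial^2$, $\Delta$, and their conformal versions from Lemma~\ref{lem:boundary-conformal-chart-section8}) involve at most two derivatives, and their coefficients are smooth. Applied to $\theta^0$ and $s^0$, each such operator $P$ satisfies $P(\theta^0)=(P\theta)^0$, with no measure supported on $\Gamma_R$. The componentwise Laplacian $\Delta W^0=(\Delta W)^0$ behaves the same way. The drift reductions of Section~\ref{sec:ucp-and-gauge-elimination} differentiate the drift equation once more and so involve up to three derivatives of $W$; these are covered by the $m=3$ statement. The $\theta,s$ sources in those reductions have order at most one, hence are covered as well. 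Cutoffs $\eta$ multiply the extended functions without affecting this conclusion, because products of smooth functions with zero extensions commute with the extension.

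The only point needing care is the interpretation of "smooth up to $\Gamma_R$" after flattening the boundary. Since the flattening map is a smooth diffeomorphism of $\overline{B_R^+}$, derivatives of order $\le m$ in the new coordinates are combinations of original derivatives of order $\le m$. Vanishing of the jets is therefore coordinate-invariant, and the hypotheses transfer to the flattened chart.
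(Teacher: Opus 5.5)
Your proof is correct and follows the paper's own argument. Both use the one-variable jump formula for the normal derivative of a zero extension, iterate it under the vanishing of all traces of order $\le m-1$, and apply it with $m=2$ for $\theta,s$ and $m=3$ for $W$. Your version simply makes explicit the integration by parts in $x_2$ and the coordinate invariance of vanishing jets under flattening, which the paper leaves implicit.
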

	
	\begin{proof}
		Let $f$ be smooth up to $\Gamma_R$ from the interior side, and let $f^0=H(x_2)f$ be its zero extension. When one differentiates $f^0$ in the normal direction, possible boundary terms are supported on $\Gamma_R$ and are determined by the traces of $f$ and its lower normal derivatives. Thus, if $D^\gamma f=0$ on $\Gamma_R$ for all $|\gamma|\leq m-1$, then no boundary measure occurs in $D^\alpha f^0$ for $|\alpha|\leq m$, and $D^\alpha f^0=(D^\alpha f)^0$ for all such $\alpha$. Applying this with $m=2$ to $\theta$ and $s$ gives $\theta^0,s^0\in H^2_{\rm loc}(B_R)$. Applying it with $m=3$ to $W$ gives $W^0\in H^3_{\rm loc}(B_R)$.
	\end{proof}
	
	\begin{lemma}[Carleman estimates]
		\label{lem:weak-Cauchy-Carleman-section8}
		There exist $\varepsilon_0>0$, $C>0$, and $\tau_0>0$ such that, for $0<\varepsilon<\varepsilon_0$, $\tau\geq\tau_0$, and $v\in C_0^\infty$ supported in the active annulus,
		\begin{equation}\label{eq:weak-barpartial-Carleman-section8}
			\|w_\tau v\|_{L^2}^2\leq C\varepsilon\|w_\tau\overline\partial v\|_{L^2}^2,
		\end{equation}
		and
		\begin{equation}\label{eq:weak-partial-Carleman-section8}
			\|w_\tau v\|_{L^2}^2\leq C\varepsilon\|w_\tau\partial v\|_{L^2}^2.
		\end{equation}
		The same estimates hold in a flattened boundary chart after zero extension, provided the zero extension creates no boundary distribution for the differentiated quantity to which the estimate is applied.
	\end{lemma}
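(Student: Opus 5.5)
The plan is to derive both estimates from a single conjugated first-order identity for the Cauchy--Riemann operator. Set $v=w_\tau^{-1}u$, that is $u=w_\tau v$, and write $\phi=\tau\varphi_\varepsilon$. Then
\[
w_\tau\overline\partial v=e^{-\phi}\overline\partial(e^{\phi}u)=\overline\partial u+(\overline\partial\phi)u .
\]
Call this operator $P_\phi u$. With the real $L^2$ pairing, the formal adjoint of $\overline\partial$ is $-\partial$, so $P_\phi^*=-\partial+\overline\partial\phi$. Expanding $\|P_\phi u\|^2$ gives
\[
\|P_\phi u\|^2=\|\overline\partial u\|^2+\|(\overline\partial\phi)u\|^2+2\operatorname{Re}\langle\overline\partial u,(\overline\partial\phi)u\rangle .
\]
For the cross term I integrate by parts once; since $u$ has compact support in the active annulus, no boundary terms arise. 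The identity $\operatorname{Re}\langle \overline\partial u,\overline{\overline\partial\phi}\,u\rangle$ with $\phi$ real turns into $-\tfrac12\int(\partial\overline\partial\phi)|u|^2$ up to sign conventions, and $\partial\overline\partial=\tfrac14\Delta$. This yields the standard identity
\[
\|P_\phi u\|^2=\|\overline\partial u\|^2+\|(\overline\partial\phi)u\|^2\pm\tfrac12\int\Delta\phi\,|u|^2 .
\]
The sign has to be fixed carefully, and I expect the weight $e^{-\tau\varphi_\varepsilon}$ in \eqref{eq:weight-section8} to be oriented so that the Laplacian term comes out favorable.

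The key computation uses that $\varphi=\log|x-z_0|$ is harmonic away from $z_0$, and $z_0$ lies outside the active annulus. Hence
\[
\Delta\phi=\tau\Delta\varphi_\varepsilon=-\tau\cdot\frac{2}{\varepsilon\tau}=-\frac{2}{\varepsilon}.
\]
This is precisely the convexification term, and it is independent of $\tau$. With the orientation chosen so that $-\tfrac12\int\Delta\phi|u|^2$ appears, this term equals $\varepsilon^{-1}\|u\|^2$. Dropping the other nonnegative terms then gives $\|u\|^2\le\varepsilon\|P_\phi u\|^2$, which is \eqref{eq:weak-barpartial-Carleman-section8}.

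If instead the sign comes out unfavorable, I would use the large gradient term. For $\tau$ large, $|\overline\partial\phi|=\tfrac{\tau}{2}|\nabla\varphi_\varepsilon|\ge c\tau$ on the active support, because $|\nabla\varphi_\varepsilon|$ is bounded below there, as arranged after \eqref{eq:weight-section8}. Then $\|(\overline\partial\phi)u\|^2\ge c^2\tau^2\|u\|^2$, and this absorbs the $\varepsilon^{-1}\|u\|^2$ term once $\tau\ge\tau_0(\varepsilon)$. In that case the bound even improves to $C\tau^{-2}$. One issue is uniformity: the constants must not depend on $\varepsilon<\varepsilon_0$, and the lower bound on $|\nabla\varphi_\varepsilon|$ needs $|x-z_0|^2/(\varepsilon\tau)$ to stay a small perturbation. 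I will check both by choosing $\tau_0$ depending on $\varepsilon_0$ and on the annulus radii.

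The $\partial$ estimate \eqref{eq:weak-partial-Carleman-section8} follows by the same computation with $\partial$ and $\overline\partial$ interchanged, or by complex conjugation. Both $\phi$ and the $L^2$ norm are invariant under conjugation, and $\overline{\overline\partial\bar v}=\partial v$.

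For the boundary version, I apply the interior estimate to the zero-extended function on the reflected disk. Lemma \ref{lem:boundary-zero-extension-UW} is exactly the input required: it guarantees that $\overline\partial(v^0)=(\overline\partial v)^0$ with no measure supported on $\Gamma_R$. Compact support of $v^0$ in the annulus is inherited from the cutoffs. No new integration by parts across $\Gamma_R$ is needed, because the identity above only requires $v^0\in H^1$ with compact support, and this holds under the stated vanishing of traces.

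The main obstacle is pinning down the sign and orientation of the weight: whether the term $\Delta\phi=-2/\varepsilon$ enters with the favorable sign in the convention $w_\tau=e^{-\tau\varphi_\varepsilon}$. The fallback absorption argument through $\tau^2|\nabla\varphi_\varepsilon|^2$ covers this case.
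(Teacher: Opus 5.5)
Your conjugation $w_\tau\overline\partial v=(\overline\partial+\overline\partial\phi)u$ and your computation $\Delta\phi=-2/\varepsilon$ are correct. The identity you build on, however, is false.

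The cross term does not reduce to a multiple of $\int\Delta\phi\,|u|^2$. For complex $u$ one finds
\[
2\operatorname{Re}\langle\overline\partial u,(\overline\partial\phi)u\rangle=-\tfrac14\int\Delta\phi\,|u|^2+\tfrac12\int\big(\partial_{x_2}\phi\,\operatorname{Im}(\bar u\,\partial_{x_1}u)-\partial_{x_1}\phi\,\operatorname{Im}(\bar u\,\partial_{x_2}u)\big).
\]
The last integral is first order, of size $\tau\|u\|\,\|\nabla u\|$, and has no sign. Note also that the Hermitian adjoint of multiplication by $\overline\partial\phi$ is multiplication by $\partial\phi$, so $P_\phi^*=-\partial+\partial\phi$.

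The correct statement is the commutator identity
\[
\|P_\phi u\|^2=\|P_\phi^*u\|^2+\langle[P_\phi^*,P_\phi]u,u\rangle=\|P_\phi^*u\|^2-\tfrac12\int\Delta\phi\,|u|^2=\|P_\phi^*u\|^2+\varepsilon^{-1}\|u\|^2 .
\]
So your $\|\overline\partial u\|^2+\|(\overline\partial\phi)u\|^2$ must be replaced by $\|(-\partial+\partial\phi)u\|^2$. These differ by the indefinite term $-2\operatorname{Re}\langle\partial u,(\partial\phi)u\rangle$.

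With this fix your main line works. The sign is favorable for $w_\tau=e^{-\tau\varphi_\varepsilon}$, and dropping $\|P_\phi^*u\|^2$ gives \eqref{eq:weak-barpartial-Carleman-section8} with $C=1$. This holds for all $\tau>0$ and $\varepsilon>0$, and uses only $z_0\notin\operatorname{supp}v$. That is a self-contained proof of the estimate the paper simply imports from Guillarmou--Tzou, and it needs no lower bound on $|\nabla\varphi_\varepsilon|$. Your conjugation argument for $\partial$ and your $H^1$ zero-extension argument at the boundary are fine.

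Your fallback, on the other hand, is wrong and should be dropped. It keeps $\|(\overline\partial\phi)u\|^2\ge c\tau^2\|u\|^2$ on the right-hand side and would yield $\|u\|\lesssim\tau^{-1}\|P_\phi u\|$, which fails. The symbol $\tfrac12(\mathsf i\xi_1-\xi_2)+\tau\overline\partial\varphi_\varepsilon(x)$ vanishes at the real frequency $\xi=\tau(-\partial_{x_2}\varphi_\varepsilon,\partial_{x_1}\varphi_\varepsilon)(x)$.

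Concretely, take $u=\chi\big((x-x_0)/\delta\big)e^{\mathsf i x\cdot\xi_0}$ with $\xi_0=\tau(-\partial_{x_2}\varphi_\varepsilon,\partial_{x_1}\varphi_\varepsilon)(x_0)$ and $\delta=\tau^{-1/2}$. This gives $\|P_\phi u\|\lesssim(\tau^{1/2}+\varepsilon^{-1})\|u\|$.

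This real characteristic set is exactly why a single conjugated Cauchy--Riemann operator gains only the convexification factor $\varepsilon$. It is also why Lemma~\ref{lem:paired-dbar2-d2-Carleman} has to pair $\partial^2$ with $\overline\partial^2$ to obtain the full $\tau$-weighted second-order bound.
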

	
	\begin{proof}
		We prove \eqref{eq:weak-barpartial-Carleman-section8}. The estimate \eqref{eq:weak-partial-Carleman-section8} follows by complex conjugation. Put $h=\tau^{-1}$ and set
		\begin{equation}\label{eq:carleman-u-substitution-section8}
			u=w_\tau v=e^{-\tau\varphi_\varepsilon}v=e^{-\varphi_\varepsilon/h}v.
		\end{equation}
		Then $v=e^{\varphi_\varepsilon/h}u$. Since $\varphi_\varepsilon$ is real-valued,
		\begin{equation}\label{eq:barpartial-conjugation-section8}
			w_\tau\overline\partial v=e^{-\varphi_\varepsilon/h}\overline\partial(e^{\varphi_\varepsilon/h}u)=(\overline\partial+h^{-1}\overline\partial\varphi_\varepsilon)u.
		\end{equation}
		We use the local first-order Carleman estimate with convexified logarithmic weight, in the form of \cite[Lemma~3.2]{guillarmou2011calderon}. The same estimate is also the first-order Carleman input used in the nonlocal unique continuation argument in \cite[Section~6]{LL2025IP_Monge_Ampere}. In the present notation, with $h=\tau^{-1}$ and
		\begin{equation}\label{eq:convexified-weight-section8-proof}
			\varphi_\varepsilon=\varphi-\frac{|x-z_0|^2}{2\varepsilon\tau},
		\end{equation}
		the local estimate gives, after shrinking the active annulus if necessary,
		\begin{equation}\label{eq:first-order-conjugated-estimate-section8}
			\|u\|_{L^2}^2\leq C\varepsilon\|e^{-\varphi_\varepsilon/h}\overline\partial(e^{\varphi_\varepsilon/h}u)\|_{L^2}^2.
		\end{equation}
		Here, the factor $\varepsilon$ comes from the positive commutator term produced by the convexification of the logarithmic weight. More precisely, in the estimate of \cite[Lemma~3.2]{guillarmou2011calderon}, the convexified contribution gives a positive bulk term of size $\varepsilon^{-1}\|u\|_{L^2}^2$ on the active annulus. Since the singular point $z_0$ is outside the active support and $|\nabla\varphi_\varepsilon|$ is bounded from below there, the constants are uniform for all sufficiently large $\tau$ and all $0<\varepsilon<\varepsilon_0$.
		
		Combining \eqref{eq:carleman-u-substitution-section8}, \eqref{eq:barpartial-conjugation-section8}, and \eqref{eq:first-order-conjugated-estimate-section8} gives
		\begin{equation*}
			\|w_\tau v\|_{L^2}^2=\|u\|_{L^2}^2\leq C\varepsilon\|w_\tau\overline\partial v\|_{L^2}^2.
		\end{equation*}
		This proves \eqref{eq:weak-barpartial-Carleman-section8}.
		
		The $\partial$ estimate follows by applying \eqref{eq:weak-barpartial-Carleman-section8} to $\overline v$. Since $\varphi_\varepsilon$ and $w_\tau$ are real-valued,
		\begin{equation*}
			\|w_\tau v\|_{L^2}^2=\|w_\tau\overline v\|_{L^2}^2\leq C\varepsilon\|w_\tau\overline\partial\overline v\|_{L^2}^2=C\varepsilon\|w_\tau\partial v\|_{L^2}^2.
		\end{equation*}
		This proves \eqref{eq:weak-partial-Carleman-section8}.
		
		In an interior chart, there are no boundary terms because the functions are compactly supported in the active annulus. In a flattened boundary chart, the estimate is applied to the zero extension on the reflected full disk. The assumption that the zero extension creates no boundary distribution ensures that the conjugated first-order operator acts on the zero-extended function in the distributional sense without producing an additional boundary measure. The estimate is then followed by the usual density argument in the corresponding Sobolev space.
	\end{proof}
	
	\begin{lemma}[Second-order paired scalar Carleman estimate]
		\label{lem:paired-dbar2-d2-Carleman}
		Fix $\varepsilon>0$. Let $O$ be a coordinate patch whose closure is contained in an annulus around the singular point $z_0$ of the logarithmic weight, and assume that $|\nabla\varphi_\varepsilon|$ is bounded from below on $O$. If $O$ is chosen sufficiently small, then there are constants $C>0$ and $\tau_0>0$ such that, for all $\tau\geq\tau_0$ and all $v\in C_0^\infty(O)$,
		\begin{equation}\label{eq:paired-dbar2-d2-Carleman}
			\|w_\tau D^2v\|_{L^2}^2+\tau^2\|w_\tau\nabla v\|_{L^2}^2+\tau^4\|w_\tau v\|_{L^2}^2\leq C\|w_\tau\overline\partial^2v\|_{L^2}^2+C\|w_\tau\partial^2v\|_{L^2}^2.
		\end{equation}
		The same estimate holds in a flattened boundary chart after zero extension, provided the zero extension creates no boundary distribution for the second-order operators.
	\end{lemma}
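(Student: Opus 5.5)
The plan is to reduce \eqref{eq:paired-dbar2-d2-Carleman} to conjugated operators and a positive-commutator computation, done at the level of the first-order factor and then squared. Put $h=\tau^{-1}$ and $u=w_\tau v=e^{-\varphi_\varepsilon/h}v$, exactly as in \eqref{eq:carleman-u-substitution-section8}. Then
\[
w_\tau\overline\partial^2v=P^2u,\quad w_\tau\partial^2v=\widetilde P^2u,\quad P:=\overline\partial+\tau\overline\partial\varphi_\varepsilon,\quad \widetilde P:=\partial+\tau\partial\varphi_\varepsilon .
\]
Since $\varphi_\varepsilon$ is real, $\widetilde P=\overline{P\,\overline{\,\cdot\,}}$. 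Moreover $P+\widetilde P$ and $\mathsf i(\widetilde P-P)$ are the conjugated real derivatives $\partial_j+\tau\partial_j\varphi_\varepsilon$, up to constants. The left-hand side of \eqref{eq:paired-dbar2-d2-Carleman} is comparable to
\[
\sum_{|\beta|\le2}\tau^{4-2|\beta|}\|D^\beta u\|^2,
\]
modulo conjugation errors that are absorbed in the usual way. So it suffices to prove
\begin{equation*}
\sum_{|\beta|\le2}\tau^{4-2|\beta|}\|D^\beta u\|_{L^2}^2\le C\|P^2u\|_{L^2}^2+C\|\widetilde P^2u\|_{L^2}^2,\quad u\in C_0^\infty(O).
\end{equation*}

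\textbf{Step 1: the first-order estimate for $P$ and $\widetilde P$.} Write $P=\frac12(A+\mathsf iB)$ with $A,B$ formally skew- and self-adjoint parts in the semiclassical sense, and expand
\[
\|Pu\|^2=\tfrac14\big(\|Au\|^2+\|Bu\|^2+([A,B]u,u)\big).
\]
For the harmonic weight $\log|x-z_0|$ the commutator vanishes. The convexification $-|x-z_0|^2/(2\varepsilon\tau)$ produces a positive bracket of size $\varepsilon^{-1}$, with lower-order errors controlled because $O$ is small and $|\nabla\varphi_\varepsilon|\ge c>0$ there. This is the mechanism behind Lemma~\ref{lem:weak-Cauchy-Carleman-section8}. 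The outcome I aim for is
\[
\|Au\|^2+\|Bu\|^2+\varepsilon^{-1}\|u\|^2\lesssim\|Pu\|^2,
\]
and the same bound for $\widetilde P$ by complex conjugation.

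\textbf{Step 2: iteration and the mixed block.} Apply Step 1 to $Pu$ and to $\widetilde Pu$. Commutators $[P,\text{cutoff or coefficient}]$ are of order zero and are absorbed for $\tau\ge\tau_0$. This controls $\|Pu\|$ and $\|\widetilde Pu\|$, hence $\|u\|$, together with the characteristic components of $APu$ and $BPu$, and likewise with $\widetilde P$. Away from the joint characteristic set the pair $(P,\widetilde P)$ is elliptic, since the real conjugated gradient has symbol $\mathsf i\xi+\tau\nabla\varphi_\varepsilon$. There the full semiclassical Sobolev norm $\tau^2\|u\|+\tau\|Du\|+\|D^2u\|$ follows from a Gårding estimate after a microlocal cutoff. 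For the mixed term $\|P\widetilde Pu\|$, which is not given directly, I would integrate by parts. Since the adjoint of $P$ is $-\partial+\tau\partial\varphi_\varepsilon$, one gets
\[
\|P\widetilde Pu\|^2=\|\widetilde P^2u\|^2+\big(\text{commutator terms}\big),
\]
where the commutator terms involve $\tau\,D^2\varphi_\varepsilon$ times first conjugated derivatives. These are bounded by $\tau^2\|Du\|^2+\tau^4\|u\|^2$ times a small constant, and are absorbed after shrinking $O$.

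\textbf{Main obstacle.} The difficulty is the joint characteristic set $\{\xi_1=-\tau\partial_2\varphi_\varepsilon,\ \xi_2=\tau\partial_1\varphi_\varepsilon\}$, on which $P$ and $\widetilde P$ vanish simultaneously. There the powers $\tau^4$ and $\tau^2$ must come from subellipticity, namely from the positive bracket of Step 1 applied twice, and not from ellipticity. I would first try to get the weight gain by combining the two iterated first-order estimates with the fixed-$\varepsilon$ bracket $\varepsilon^{-1}$, at the cost of letting $C$ depend on $\varepsilon$, which the statement permits since $\varepsilon$ is fixed. If that fails to produce the full $\tau^4$, the fallback is a local model computation: freeze coefficients at a point of $O$ and straighten $\nabla\varphi_\varepsilon$ to a constant vector. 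This makes $P$ a constant-coefficient operator plus the small convexification term, for which the symbol calculus can be done explicitly. The boundary version follows from Lemma~\ref{lem:boundary-zero-extension-UW}, since the zero extension creates no boundary distribution in $\overline\partial^2v$ or $\partial^2v$, followed by a density argument as in Lemma~\ref{lem:weak-Cauchy-Carleman-section8}.
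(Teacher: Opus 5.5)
There is a genuine gap, and it comes from misreading the symbol geometry. The set you call the joint characteristic set, $\{\xi_1=-\tau\partial_2\varphi_\varepsilon,\ \xi_2=\tau\partial_1\varphi_\varepsilon\}$, is the characteristic set of $P$ alone. At those points the symbol of $\widetilde P$ equals $2\tau\partial\varphi_\varepsilon$, of modulus $\tau|\nabla\varphi_\varepsilon|\geq c\tau$. In fact $P$ and $\widetilde P$ have no common zero at all. Your own remark already shows this: pointwise $|Pu|^2+|\widetilde Pu|^2=\frac12|(\nabla+\tau\nabla\varphi_\varepsilon)u|^2$, and $\mathsf i\xi+\tau\nabla\varphi_\varepsilon\neq0$. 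Consequently the pair $(P^2,\widetilde P^2)$ is semiclassically elliptic, $|p|^4+|\tilde p|^4\geq c_0(|\xi|^2+\tau^2)^2$, and the lemma is an ordinary elliptic estimate. One uses Plancherel with frozen coefficients, small balls so that the freezing errors are small multiples of $\tau\|Du\|+\tau^2\|u\|$ plus lower-order terms, and a partition of unity whose commutators are absorbed for large $\tau$. This is precisely the paper's proof, and it is what your ``fallback'' becomes once carried out. But the proposal never isolates the one fact that makes it work, namely that the two conjugated symbols have no common zero. For fixed $\varepsilon$, the convexification enters only as a lower-order perturbation.

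The route you actually propose would not close, for two reasons.

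\emph{Step 1 gives no power of $\tau$.} The convexified bracket gains only the fixed constant $\varepsilon^{-1}$; indeed $\|Pu\|^2=\|P^*u\|^2-\frac{\tau}{2}\int\Delta\varphi_\varepsilon|u|^2=\|P^*u\|^2+\varepsilon^{-1}\|u\|^2$. Iterating it therefore cannot produce $\tau^2\|w_\tau\nabla v\|^2$ or $\tau^4\|w_\tau v\|^2$. Moreover, if the two symbols really had a common zero, the $\tau^4$ bound would fail (test it on a wave packet concentrated there), so no subelliptic mechanism could rescue it.

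\emph{The mixed-block identity is wrong.} One computes $P^*P-\widetilde P^*\widetilde P=2\tau(\partial\varphi_\varepsilon\,\overline\partial-\overline\partial\varphi_\varepsilon\,\partial)$. This is a first-order operator with symbol $-\tau(\partial_1\varphi_\varepsilon\,\xi_2-\partial_2\varphi_\varepsilon\,\xi_1)$, of the same size as $|\xi|^2+\tau^2$, and it involves $D\varphi_\varepsilon$, not $D^2\varphi_\varepsilon$. Hence $\|P\widetilde Pu\|^2-\|\widetilde P^2u\|^2$ is not a small multiple of $\tau^2\|Du\|^2+\tau^4\|u\|^2$, and it cannot be absorbed by shrinking $O$.

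The mixed term is controllable, but only through the pointwise inequality $|p\tilde p|^2\leq\frac12(|p|^4+|\tilde p|^4)$ at frozen coefficients, that is, through the same elliptic argument. It is simpler to apply that argument directly to $(P^2,\widetilde P^2)$, as the paper does.
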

	
\begin{proof}
	Put $h=\tau^{-1}$ and $u=w_\tau v=e^{-\tau\varphi_\varepsilon}v$. For $\sharp\in\{\partial,\overline\partial\}$ define
	\[
	P_{\sharp,\tau}=e^{-\tau\varphi_\varepsilon}\sharp e^{\tau\varphi_\varepsilon}=	\sharp+\tau\,\sharp\varphi_\varepsilon .
	\]
	Then $w_\tau\partial^2v=P_{\partial,\tau}^2u$ and $w_\tau\overline\partial^2v=P_{\overline\partial,\tau}^2u$. We first prove a local elliptic estimate for the conjugated pair:
	\begin{equation}\label{eq:paired-conjugated-elliptic-estimate-proof}
		\|D^2u\|_{L^2}^2+\tau^2\|Du\|_{L^2}^2+\tau^4\|u\|_{L^2}^2 \leq C\|P_{\partial,\tau}^2u\|_{L^2}^2
		+ C\|P_{\overline\partial,\tau}^2u\|_{L^2}^2 .
	\end{equation}
	
	At a fixed point $x$, the first-order frozen symbols are
	\[
	p_\partial(x,\xi,\tau)=\frac12(\mathsf i\xi_1+\xi_2)+\tau\partial\varphi_\varepsilon(x) \quad \text{and}\quad 	p_{\overline\partial}(x,\xi,\tau)=\frac12(\mathsf i\xi_1-\xi_2)+\tau\overline\partial\varphi_\varepsilon(x).
	\]
	Since $\varphi_\varepsilon$ is real-valued, the two symbols cannot vanish simultaneously on the active patch. Indeed, if both symbols were zero, then $\xi_2+\tau\partial_{x_1}\varphi_\varepsilon=0$, $\xi_1-\tau\partial_{x_2}\varphi_\varepsilon=0$, $-\xi_2+\tau\partial_{x_1}\varphi_\varepsilon=0$, and $\xi_1+\tau\partial_{x_2}\varphi_\varepsilon=0$. Hence, $\nabla\varphi_\varepsilon(x)=0$ and $\xi=0$. This is impossible on the active patch, where $|\nabla\varphi_\varepsilon|$ is bounded from below. Therefore, by homogeneity in $(\xi,\tau)$ and compactness on $|\xi|^2+\tau^2=1$, after shrinking the patch if necessary,
	there is $c_0>0$ such that
	\begin{equation}\label{eq:paired-symbol-lower-bound-proof}
		|p_\partial(x,\xi,\tau)|^4+|p_{\overline\partial}(x,\xi,\tau)|^4 \geq c_0(|\xi|^2+\tau^2)^2
	\end{equation}
	for all $x$ in the patch, all $\xi\in\mathbb R^2$, and all $\tau\geq1$.
	
	The frozen constant-coefficient estimate follows from Plancherel and \eqref{eq:paired-symbol-lower-bound-proof}. For variable coefficients, cover the active patch by smaller coordinate balls on which $\nabla\varphi_\varepsilon$ varies by less than a fixed small amount. Freezing the coefficients on each ball gives the estimate with an error bounded by
	\[
	\delta\big(\|D^2u\|_{L^2}^2+\tau^2\|Du\|_{L^2}^2+\tau^4\|u\|_{L^2}^2\big)
	+
	C_\delta(\|Du\|_{L^2}^2+\tau^2\|u\|_{L^2}^2).
	\]
	Choosing the balls so that $\delta$ is sufficiently small and then taking $\tau$ sufficiently large absorbs the lower-order error. A partition of unity gives \eqref{eq:paired-conjugated-elliptic-estimate-proof}.
	
	It remains to return from $u$ to $v$. Since $v=e^{\tau\varphi_\varepsilon}u$, each weighted derivative $w_\tau D^kv$ is a finite sum of terms $\tau^{k-\ell}a_{\ell,k}(x)D^\ell u$, $0\leq \ell\leq k$, with smooth bounded coefficients on the active patch. Conversely, $D^ku$ is a finite sum of the corresponding weighted derivatives of $v$ with the same powers of $\tau$. Hence the left-hand side of \eqref{eq:paired-conjugated-elliptic-estimate-proof} is equivalent, with constants independent of large $\tau$, to
	\[
	\|w_\tau D^2v\|_{L^2}^2+\tau^2\|w_\tau\nabla v\|_{L^2}^2+\tau^4\|w_\tau v\|_{L^2}^2 .
	\]
	Using $P_{\partial,\tau}^2u=w_\tau\partial^2v$ and
	$P_{\overline\partial,\tau}^2u=w_\tau\overline\partial^2v$ gives \eqref{eq:paired-dbar2-d2-Carleman}.
	
	In a flattened boundary chart, we apply the same full-disk estimate to the zero extension. Lemma~\ref{lem:boundary-zero-extension-UW} ensures that the second-order zero extensions create no boundary distribution, so the preceding argument applies in the distributional sense and then by density.
\end{proof}
	
	\begin{lemma}[Laplacian Carleman estimate]
		\label{lem:Laplacian-Carleman-section8}
		Fix $\varepsilon>0$. Let $O$ be a coordinate patch whose closure is contained in an active annulus for the weight \eqref{eq:weight-section8}. If $O$ is chosen sufficiently small, then there are constants $C>0$ and $\tau_0>0$ such that, for all $\tau\geq\tau_0$ and all compactly supported scalar functions $v\in C_0^\infty(O)$,
		\begin{equation}\label{eq:Laplacian-Carleman-section8}
			\|w_\tau D^2v\|_{L^2}^2+\tau^2\|w_\tau\nabla v\|_{L^2}^2+\tau^4\|w_\tau v\|_{L^2}^2\leq C\|w_\tau\Delta v\|_{L^2}^2.
		\end{equation}
		The same estimate holds componentwise for vector-valued functions and also in a flattened boundary chart after zero extension.
	\end{lemma}
	
	\begin{proof}
		This is the local second-order Carleman estimate for the conjugated Laplacian with the convexified logarithmic weight. It follows from the same first-order convexified estimate used in Lemma~\ref{lem:weak-Cauchy-Carleman-section8}, equivalently from $\Delta=4\partial\overline\partial$ and the standard local elliptic estimate for the conjugated operator $e^{-\tau\varphi_\varepsilon}\Delta e^{\tau\varphi_\varepsilon}$. The principal part has constant coefficients in the local coordinate, and lower-order conjugation terms are absorbed by the convexification and by taking $\tau$ large. The boundary version follows by applying the full-disk estimate to the zero extensions in Lemma~\ref{lem:boundary-zero-extension-UW}.
	\end{proof}
	
	Choose large fixed constants $A_\theta>1$ and $A_W>1$. For localized variables define
	\begin{equation}\label{eq:def_of_C_tau}
		\begin{split}
			\mathcal C_\tau(\eta Z)&:=A_\theta\|w_\tau D^2(\eta\theta)\|_{L^2}^2+A_\theta\tau^2\|w_\tau\nabla(\eta\theta)\|_{L^2}^2+A_\theta\tau^4\|w_\tau\eta\theta\|_{L^2}^2+\|w_\tau D^2(\eta s)\|_{L^2}^2\\
			&\quad \ +\tau^2\|w_\tau\nabla(\eta s)\|_{L^2}^2 +\tau^4\|w_\tau\eta s\|_{L^2}^2+A_W\|w_\tau D^2(\eta W)\|_{L^2}^2\\
			&\quad \ +A_W\tau^2\|w_\tau D(\eta W)\|_{L^2}^2+A_W\tau^4\|w_\tau\eta W\|_{L^2}^2.
		\end{split}
	\end{equation}
	
	\begin{lemma}[Paired Carleman estimate for the reduced variables]
		\label{lem:paired-second-order-Carleman}
		For compactly supported $\theta$, $s$, and $W$ in the active annulus, one has
		\begin{equation}\label{eq:paired-Carleman-Z}
			\mathcal C_\tau(Z)\leq CA_\theta\|w_\tau\Delta\theta\|_{L^2}^2+C\|w_\tau\overline\partial^2s\|_{L^2}^2+C\|w_\tau\partial^2s\|_{L^2}^2+CA_W\|w_\tau\Delta W\|_{L^2}^2.
		\end{equation}
		The same estimate holds after zero extension in a flattened boundary chart.
	\end{lemma}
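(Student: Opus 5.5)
The estimate \eqref{eq:paired-Carleman-Z} decouples completely, so I would prove it block by block from the scalar Carleman estimates already established and then add the results. By definition \eqref{eq:def_of_C_tau}, with $\eta$ omitted because $\theta$, $s$ and $W$ are already compactly supported in the active annulus, $\mathcal C_\tau(Z)$ is a sum of three groups: the $\theta$-group with weight $A_\theta$, the $s$-group with weight $1$, and the $W$-group with weight $A_W$. Each group is exactly the left-hand side of either Lemma~\ref{lem:Laplacian-Carleman-section8} or Lemma~\ref{lem:paired-dbar2-d2-Carleman}, applied to one unknown.

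\textbf{The $\theta$ and $W$ blocks.} For $\theta$, I apply Lemma~\ref{lem:Laplacian-Carleman-section8} directly. This bounds $\|w_\tau D^2\theta\|^2+\tau^2\|w_\tau\nabla\theta\|^2+\tau^4\|w_\tau\theta\|^2$ by $C\|w_\tau\Delta\theta\|^2$. Multiplying by $A_\theta$ gives the first term on the right of \eqref{eq:paired-Carleman-Z}.

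For $W$, I apply the same lemma componentwise to $W^1$ and $W^2$ and sum over the components. Here $D(\eta W)$ is just $DW$, and $\|D W\|$ dominates $\|\nabla W^\alpha\|$ componentwise. Multiplying by $A_W$ gives the last term.

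\textbf{The $s$ block.} For $s$, I apply Lemma~\ref{lem:paired-dbar2-d2-Carleman}, which yields the middle two terms $C\|w_\tau\overline\partial^2 s\|^2+C\|w_\tau\partial^2 s\|^2$.

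Two points need checking here. First, the estimates must be applied with real-valued or complex-valued unknowns as appropriate. The paired estimate was proved for $C_0^\infty$ functions via Plancherel, so it holds for complex-valued functions as well. Second, one constant $C$ must work for all three applications. I take the maximum of the three constants and the largest of the thresholds $\tau_0$, after shrinking the active patch once so that the hypotheses of both lemmas hold on it simultaneously.

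\textbf{Regularity and the boundary version.} Because the lemmas are stated for $C_0^\infty$ functions, I would pass from smooth compactly supported functions to the Sobolev regularity actually available: $H^2$ for $\theta$ and $s$, and $H^3$, hence in particular $H^2$, for $W$. This is done by density, since both sides of the estimate are continuous in $H^2$ at fixed $\tau$.

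In a flattened boundary chart, I replace $\theta$, $s$ and $W$ by their zero extensions. Lemma~\ref{lem:boundary-zero-extension-UW} guarantees that $\Delta\theta^0$, $\partial^2 s^0$, $\overline\partial^2 s^0$ and $\Delta W^0$ are the zero extensions of the corresponding interior quantities, with no measure on $\Gamma_R$. The boundary versions of the two lemmas then apply verbatim.

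\textbf{Main obstacle.} The step that needs the most care is this boundary density argument for the second-order paired operator on $s$. One must check that approximating $s^0$ by $C_0^\infty$ functions in $H^2$ is compatible with the vanishing jets. It is, because $s^0\in H^2_{\rm loc}(B_R)$ by Lemma~\ref{lem:boundary-zero-extension-UW}, and standard mollification of $s^0$ converges in $H^2$. The rest of the argument is bookkeeping of constants. The weights $A_\theta$ and $A_W$ play no role at this stage; they matter only later, when the coupling terms are absorbed.
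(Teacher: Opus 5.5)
Your deduction is exactly the paper's one-line proof: Lemma~\ref{lem:Laplacian-Carleman-section8} for $\theta$ and componentwise for $W$, Lemma~\ref{lem:paired-dbar2-d2-Carleman} for $s$, then multiply by $A_\theta$, $A_W$ and add. Your bookkeeping on constants, density and zero extension is fine.

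The problem is the ingredient you share with the paper. Lemma~\ref{lem:Laplacian-Carleman-section8} is false as stated, and \eqref{eq:paired-Carleman-Z} itself already fails for $s=0$, $W=0$. Take $z_0=0$, a small disk $O$ in the active annulus, a holomorphic branch of $z^\tau$ on $O$, and a real $a\in C_0^\infty(O)$ with $a\not\equiv0$. Put $g=e^{-|z|^2/(2\varepsilon)}a$ and $\theta_\tau=\operatorname{Re}(z^\tau g)$. Since $w_\tau=|z|^{-\tau}e^{|z|^2/(2\varepsilon)}$, you get $w_\tau\theta_\tau=\cos(\tau\arg z)\,a$. By nonstationary phase, $\|w_\tau\theta_\tau\|_{L^2}^2\to\frac12\|a\|_{L^2}^2$. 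On the other hand, since $z^\tau$ is holomorphic,
\[
\Delta(z^\tau g)=4\partial\bigl(z^\tau\overline\partial g\bigr)=4z^\tau\bigl(\tau z^{-1}\overline\partial g+\partial\overline\partial g\bigr),
\]
so $\|w_\tau\Delta\theta_\tau\|_{L^2}^2\leq C\tau^2$. Hence $Z=(\theta_\tau,0,0)$ gives $\mathcal C_\tau(Z)\geq cA_\theta\tau^4$ against a right-hand side of size $O(A_\theta\tau^2)$. This is a contradiction for large $\tau$, whatever $C$, $A_\theta$, $A_W$ are.

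The reason is structural. The conjugated operator $e^{-\tau\varphi_\varepsilon}\Delta e^{\tau\varphi_\varepsilon}$ has principal symbol $-|\xi|^2+\tau^2|\nabla\varphi_\varepsilon|^2+2\mathsf i\tau\nabla\varphi_\varepsilon\cdot\xi$. This vanishes on $\{|\xi|=\tau|\nabla\varphi_\varepsilon|,\ \xi\perp\nabla\varphi_\varepsilon\}$. The $\tau$-scaled convexification in \eqref{eq:weight-section8} only multiplies $w_\tau$ by the fixed bounded factor $e^{|x-z_0|^2/(2\varepsilon)}$, so it cannot restore ellipticity. The same example shows that every term on the left of \eqref{eq:Laplacian-Carleman-section8} must be weakened by at least a factor $\tau$ (in norm).

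Your $s$-block is not affected. The conjugated pair is jointly elliptic, since $|p_\partial|^2+|p_{\overline\partial}|^2=\frac12(|\xi|^2+\tau^2|\nabla\varphi_\varepsilon|^2)$ in the notation of the proof of Lemma~\ref{lem:paired-dbar2-d2-Carleman}.

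So what is missing is a correct, necessarily lossy, Carleman estimate for the $\theta$- and $W$-equations. The loss matters downstream. In Lemma~\ref{lem:local-UCP-paired}, the second-order terms $P_{\theta,2}(\eta W)$ and $P_{\pm,2}(\eta W)$ are absorbed via $\mathcal W_\tau(\eta W)\leq CA_W^{-1}\mathcal C_\tau(\eta Z)$. That step requires the unweakened $\|w_\tau D^2(\eta W)\|^2$ and $\tau^4$ control that \eqref{eq:paired-Carleman-Z} asserts.
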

	
	\begin{proof}
		Apply Lemma~\ref{lem:Laplacian-Carleman-section8} to $\theta$, Lemma~\ref{lem:paired-dbar2-d2-Carleman} to $s$, and Lemma~\ref{lem:Laplacian-Carleman-section8} componentwise to $W$. Then multiply the estimates for $\theta$ and $W$ by $A_\theta$ and $A_W$.
	\end{proof}
	
	For the next estimate, set
	\begin{equation}\label{eq:scalar-perturbation-norm-section8}
		\mathcal S_\tau(\eta\theta,\eta s):=
		\|w_\tau D(\eta\theta)\|_{L^2}^2+\tau^2\|w_\tau\eta\theta\|_{L^2}^2
		+\|w_\tau D(\eta s)\|_{L^2}^2+\tau^2\|w_\tau\eta s\|_{L^2}^2,
	\end{equation}
	and
	\begin{equation}\label{eq:W-perturbation-norm-section8}
		\mathcal W_\tau(\eta W):=
		\|w_\tau D^2(\eta W)\|_{L^2}^2+\tau^2\|w_\tau D(\eta W)\|_{L^2}^2
		+\tau^4\|w_\tau\eta W\|_{L^2}^2.
	\end{equation}
	We use the following shorthand for the localized operator classes introduced above. The symbols $P_1$, $P_{W,2}$, $\mathfrak D_{{\rm sc},1}$,$\mathfrak D_{W,2}$, $\mathfrak C B_1$, $\mathfrak C\mathfrak D_{{\rm sc},1}$ and $\mathfrak C\mathfrak D_{W,2}$ denote a properly supported local operator of order at most one in $Z=(\theta,s,W)$, a properly supported local operator of order at most two in $W$, a localized scalar drift-resolvent perturbation of source order at most one, a localized $W$-drift-resolvent perturbation of source order at most two, a localized Cauchy perturbation with input of order at most one in $Z$, and the corresponding scalar and $W$ drift-Cauchy perturbations, respectively.
	
	\begin{lemma}[Perturbative estimates for the localized operator classes]
		\label{lem:perturbative-estimates-section8}
		Let $\eta_0\prec\eta\prec\eta_1$ be cutoffs in an interior disk, or in a reflected full disk obtained from a flattened boundary chart after zero extension. Then, for all sufficiently large $\tau$, the localized operator classes above satisfy
		\begin{equation}\label{eq:local-order-one-perturbation-section8}
			\|w_\tau\eta_0P_1(\eta Z)\|_{L^2}^2
			\leq
			C\mathcal S_\tau(\eta\theta,\eta s)
			+C\|w_\tau D(\eta W)\|_{L^2}^2
			+C\tau^2\|w_\tau\eta W\|_{L^2}^2
			+\mathcal E_{\eta_0,\eta_1}(\tau),
		\end{equation}
		\begin{equation}\label{eq:local-order-two-W-perturbation-section8}
			\|w_\tau\eta_0P_{W,2}(\eta W)\|_{L^2}^2
			\leq
			C\mathcal W_\tau(\eta W)+\mathcal E_{\eta_0,\eta_1}(\tau),
		\end{equation}
		\begin{equation}\label{eq:scalar-drift-resolvent-perturbation-section8}
			\|w_\tau\eta_0\mathfrak D_{{\rm sc},1}(\eta\theta,\eta s)\|_{L^2}^2
			\leq
			C\mathcal S_\tau(\eta\theta,\eta s)+\mathcal E_{\eta_0,\eta_1}(\tau),
		\end{equation}
		\begin{equation}\label{eq:W-drift-resolvent-perturbation-section8}
			\|w_\tau\eta_0\mathfrak D_{W,2}(\eta W)\|_{L^2}^2
			\leq
			C\mathcal W_\tau(\eta W)+\mathcal E_{\eta_0,\eta_1}(\tau),
		\end{equation}
		and, for the convexified weight in \eqref{eq:weight-section8},
		\begin{equation}\label{eq:Cauchy-and-drift-Cauchy-perturbation-section8}
			\begin{split}
				&\|w_\tau\eta_0\mathfrak C B_1(\eta Z)\|_{L^2}^2
				+\|w_\tau\eta_0\mathfrak C\mathfrak D_{{\rm sc},1}(\eta\theta,\eta s)\|_{L^2}^2
				+\|w_\tau\eta_0\mathfrak C\mathfrak D_{W,2}(\eta W)\|_{L^2}^2\\
				&\quad \leq
				C\varepsilon\mathcal S_\tau(\eta\theta,\eta s)
				+C\varepsilon\mathcal W_\tau(\eta W)
				+\mathcal E_{\eta_0,\eta_1}(\tau).
			\end{split}
		\end{equation}
	\end{lemma}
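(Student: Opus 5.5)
The estimates are checked class by class. In each case the operator is written in conjugated form with respect to $w_\tau$, its order is counted, and the powers of $\tau$ produced by commuting the weight through are compared with the normalizations in $\mathcal S_\tau$ and $\mathcal W_\tau$. The cutoff reduction $Z\mapsto\eta Z$ is first made by Lemma~\ref{lem:cutoff-decomposition-localized-Cauchy}, so every commutator or off-diagonal piece goes into $\mathcal E_{\eta_0,\eta_1}(\tau)$ via \eqref{eq:separated-support-error-estimate-section8}. In the boundary case the inputs are first cut off and zero-extended, and Lemma~\ref{lem:boundary-zero-extension-UW} guarantees that no boundary distributions appear.

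\textbf{Local classes.} For \eqref{eq:local-order-one-perturbation-section8}, write $P_1(\eta Z)=\sum_{|\beta|\le1}a_\beta D^\beta(\eta Z)$ with smooth bounded coefficients. The weight is only a multiplier, so the weighted $L^2$ norm is bounded by $\|w_\tau D(\eta Z)\|^2+\|w_\tau\eta Z\|^2$. The latter is dominated by $\tau^2\|w_\tau\eta Z\|^2$ for $\tau\ge1$, which gives exactly the $\mathcal S_\tau$ and $W$ terms. The same counting with order two in $W$ gives \eqref{eq:local-order-two-W-perturbation-section8}.

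\textbf{Drift-resolvent classes.} A term $aD^\beta G_\Delta^{\zeta,\zeta'}F$ with $|\beta|\le2$ is a properly supported pseudodifferential operator of order $|\beta|-2\le0$ applied to $F$. I will use the standard weighted boundedness of order-zero properly supported operators with kernels concentrated near the diagonal. Conjugating by $e^{-\tau\varphi_\varepsilon}$ produces an operator whose symbol is the original symbol evaluated at $\xi+\mathsf i\tau\nabla\varphi_\varepsilon$, up to lower terms. To avoid losing powers of $\tau$, I would shrink the kernel support to a neighborhood of the diagonal of radius $O(\tau^{-1})$; the near-diagonal weight ratio $w_\tau(x)/w_\tau(y)$ is then uniformly bounded. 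The far part is smoothing, and by the separated-support estimate it contributes either to $\mathcal E$ or to a lower-order term absorbed by a factor $\tau^{-1}$. Thus the weighted $L^2$ norm is bounded by $\|w_\tau F\|$. Bounding $F$ by \eqref{eq:local-order-one-perturbation-section8}, respectively \eqref{eq:local-order-two-W-perturbation-section8}, gives \eqref{eq:scalar-drift-resolvent-perturbation-section8} and \eqref{eq:W-drift-resolvent-perturbation-section8}. In the scalar case, if $F_{\rm sc}$ is already of the special divergence form $D_j(bB_{{\rm sc},0})$, a derivative may be moved through the parametrix, which only improves the bound.

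\textbf{Cauchy classes.} This is the main obstacle, because here the small factor $\varepsilon$ must appear. For $\mathfrak C=\overline\partial^{*-1}$ or $C_{\overline\partial}$, set $g=\mathfrak C F$, where $F$ is $B_1(\eta Z)$ or a drift-resolvent output. Then $\overline\partial g=F$ modulo smoothing cutoff errors, which go into $\mathcal E$, and $g$ is compactly supported in the active annulus after multiplication by $\eta_0$. Applying Lemma~\ref{lem:weak-Cauchy-Carleman-section8} to $\eta_0 g$ gives
\[
\|w_\tau\eta_0g\|^2\le C\varepsilon\|w_\tau\overline\partial(\eta_0 g)\|^2\le C\varepsilon\|w_\tau F\|^2+C\varepsilon\|w_\tau(\overline\partial\eta_0)g\|^2 .
\]
The last term is supported where $\eta_0$ is nonconstant. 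It is handled by the nested-cutoff and weight-separation convention: it either lies in the known zero region or is counted in $\mathcal E$. The $\partial$ versions follow from \eqref{eq:weak-partial-Carleman-section8}. Finally, $\|w_\tau F\|^2$ is bounded by the previous three estimates, giving $C\varepsilon(\mathcal S_\tau+\mathcal W_\tau)+\mathcal E$.

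The delicate point is justifying that the adjoint inverses $\partial^{*-1}$ and $\overline\partial^{*-1}$ also invert $\partial$ and $\overline\partial$ up to sign and smoothing remainders, so that the first-order Carleman estimate applies. I would check this directly on the fixed localized kernel convention of Lemma~\ref{lem:Cauchy-transform-critical-pairing}: that kernel is the transpose of a Cauchy parametrix, hence itself a parametrix for $-\partial$, respectively $-\overline\partial$.
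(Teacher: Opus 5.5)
Your treatment of the local classes is fine: the weight is a pure multiplier, so no commutation is even needed. Your Cauchy step also uses the same first-order mechanism as the paper.

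The gap is the drift-resolvent step. The bound $\|w_\tau\eta_0D^\beta G_\Delta^{\zeta,\zeta'}F\|_{L^2}\le C\|w_\tau F\|_{L^2}$ with $|\beta|=2$ does not follow from order-zero calculus, because conjugation by $e^{\tau\varphi_\varepsilon}$ is not a bounded perturbation as $\tau\to\infty$. Your own symbol substitution shows this. After $\xi\mapsto\xi+\mathsf i\tau\nabla\varphi_\varepsilon$, the denominator $(\xi+\mathsf i\tau\nabla\varphi_\varepsilon)\cdot(\xi+\mathsf i\tau\nabla\varphi_\varepsilon)$ vanishes on the real set $\{|\xi|=\tau|\nabla\varphi_\varepsilon|,\ \xi\perp\nabla\varphi_\varepsilon\}$. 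This is the characteristic set of $e^{-\tau\varphi_\varepsilon}\Delta e^{\tau\varphi_\varepsilon}=4P_{\partial,\tau}P_{\overline\partial,\tau}$.

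Shrinking the kernel to radius $O(\tau^{-1})$ is not available, since $G_\Delta^{\zeta,\zeta'}$ is a fixed operator. The part of its kernel at distances between $\tau^{-1}$ and the fixed support radius carries the factor $w_\tau(x)/w_\tau(y)=e^{\tau(\varphi_\varepsilon(y)-\varphi_\varepsilon(x))}$, which is exponentially large for roughly half of the pairs. Those pairs have neither a fixed separation nor the favorable orientation. So this piece is neither in $\mathcal E_{\eta_0,\eta_1}(\tau)$ nor absorbed by a factor $\tau^{-1}$.

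Concretely, take $z_0=0$, $\psi=\arg x$ on a small patch, $\Phi=\varphi_\varepsilon+\mathsf i\psi$, and $v=e^{\tau\Phi}a$ with $a\in C_0^\infty$. Since $\log|x|+\mathsf i\psi$ is holomorphic, $\nabla\Phi\cdot\nabla\Phi=O((\varepsilon\tau)^{-1})$ and $\Delta\Phi=-2/(\varepsilon\tau)$. Hence $\|w_\tau\Delta v\|_{L^2}=O(\tau)$, while $\|w_\tau D^2v\|_{L^2}\ge c\tau^2$. Since $v=\zeta\Delta^{-1}(\zeta'\Delta v)$ for the Newtonian potential when $\zeta=\zeta'=1$ near $\operatorname{supp}a$, your intermediate inequality fails by a factor $\tau$.

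The paper does not use pseudodifferential calculus here. It chooses $\chi$ with $\chi=1$ near $\operatorname{supp}\eta_0\cup\operatorname{supp}(\zeta'F)$ and $\operatorname{supp}\chi\subset\{\zeta=1\}$. For $U=G_\Delta^{\zeta,\zeta'}F$ it writes $\Delta(\chi U)=\zeta'F+[\Delta,\chi]U+\chi S_\Delta^{\zeta,\zeta'}F$ and puts the commutator (smooth kernel, separated from the input) into $\mathcal E_{\eta_0,\eta_1}(\tau)$. It then applies Lemma~\ref{lem:Laplacian-Carleman-section8} to $\chi U$, so all the $\tau$-uniformity is imported from that lemma.

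Be aware, however, that the same $v$ contradicts Lemma~\ref{lem:Laplacian-Carleman-section8} as stated: $\tau^4\|w_\tau v\|_{L^2}^2$ stands against $\|w_\tau\Delta v\|_{L^2}^2=O(\tau^2)$. The reason is the one above: $\Delta$ alone is not elliptic after conjugation, whereas the pair $(\partial^2,\overline\partial^2)$ of Lemma~\ref{lem:paired-dbar2-d2-Carleman} is. Consequently \eqref{eq:scalar-drift-resolvent-perturbation-section8} and \eqref{eq:W-drift-resolvent-perturbation-section8} with $|\beta|=2$ are not valid in the stated generality. For example, $D^2G_\Delta^{\zeta,\zeta'}\partial_1(\eta\theta)$ with $\eta\theta=\Delta v$ has weighted norm of order $\tau^3$, against $\mathcal S_\tau^{1/2}$ of order $\tau^2$. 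The drift-Cauchy bounds in \eqref{eq:Cauchy-and-drift-Cauchy-perturbation-section8}, which are derived from these, lose their justification as well.

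A smaller point in your Cauchy step: the commutator $(\overline\partial\eta_0)g$ is not separated from the input. Since $\eta=1$ on $\operatorname{supp}\eta_0$ and $g$ is nonlocal, this term neither vanishes in the known zero region nor belongs to $\mathcal E_{\eta_0,\eta_1}(\tau)$. The paper instead applies Lemma~\ref{lem:weak-Cauchy-Carleman-section8} to $\chi g$, with $\chi=1$ near $\operatorname{supp}\eta_0\cup\operatorname{supp}(\zeta'F)$, so that $[\overline\partial,\chi]g$ has a smooth kernel separated from the input.
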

	
\begin{proof}
	The local order-one estimate \eqref{eq:local-order-one-perturbation-section8} follows from a direct weighted commutator calculation. If $P_1$ is a properly supported local differential operator of order at most one, then
	\[
	w_\tau P_1(\eta Z)
	=
	P_1(w_\tau\eta Z)+\tau P_0(w_\tau\eta Z),
	\]
	where $P_0$ is a properly supported local operator of order zero with smooth coefficients depending on the weight. The contributions of $(\theta,s)$ and $W$ are controlled by $\mathcal S_\tau(\eta\theta,\eta s)$ and $\|w_\tau D(\eta W)\|_{L^2}^2+\tau^2\|w_\tau\eta W\|_{L^2}^2$, respectively. The cutoff errors away from $\operatorname{supp}\eta_0$ are included in $\mathcal E_{\eta_0,\eta_1}(\tau)$. This proves \eqref{eq:local-order-one-perturbation-section8}.
	
	The local order-two estimate \eqref{eq:local-order-two-W-perturbation-section8} is the same calculation for a properly supported local differential operator of order at most two in $W$. After commuting the exponential weight through the operator, the resulting terms are bounded by $\|w_\tau D^2(\eta W)\|_{L^2}^2+\tau^2\|w_\tau D(\eta W)\|_{L^2}^2+\tau^4\|w_\tau\eta W\|_{L^2}^2$, up to the cutoff remainders included in $\mathcal E_{\eta_0,\eta_1}(\tau)$. This proves \eqref{eq:local-order-two-W-perturbation-section8}.
	
	We next estimate the localized drift-resolvent terms. It is enough to consider a model term $u=aD^\beta G_\Delta^{\zeta,\zeta'}F$, $|\beta|\le2$, where $F=F_{\rm sc}$ has source order at most one in $(\eta\theta,\eta s)$ or $F=F_W$ has source order at most two in $\eta W$. The smooth coefficient $a$ is harmless, so we suppress it. Set
	\[
	F_c=\zeta'F,\quad U=G_\Delta^{\zeta,\zeta'}F=\zeta\Delta^{-1}F_c .
	\]
	Choose an auxiliary cutoff $\chi\in C_0^\infty$ such that $\eta_0\prec\chi$, $\zeta'\prec\chi\prec\zeta$, and such that $\chi=1$ in a neighborhood of $\operatorname{supp}\eta_0\cup\operatorname{supp}F_c$. In particular, $\operatorname{supp}\chi$ is contained in the region where $\zeta=1$. Hence, $\eta_0D^\beta U=\eta_0D^\beta(\chi U)$. Since $\chi U$ is compactly supported in the coordinate patch under consideration, Lemma~\ref{lem:Laplacian-Carleman-section8} applied to $\chi U$ gives
	\[
	\|w_\tau D^2(\chi U)\|_{L^2}^2
	+\tau^2\|w_\tau D(\chi U)\|_{L^2}^2
	+\tau^4\|w_\tau \chi U\|_{L^2}^2
	\leq C\|w_\tau\Delta(\chi U)\|_{L^2}^2,
	\]
	for $\tau$ sufficiently large. In a boundary chart this is applied after the zero-extension convention in Lemma~\ref{lem:boundary-zero-extension-UW} and the localized-input convention fixed before Lemma~\ref{lem:cutoff-decomposition-localized-Cauchy}; no boundary-supported distribution is created.
	
	We now use the localized parametrix identity. Since $\chi=1$ on $\operatorname{supp}F_c$ and $\operatorname{supp}\chi\subset\{\zeta=1\}$, one has $\chi\Delta U=F_c+\chi S_\Delta^{\zeta,\zeta'}F$. Therefore,
	\[
	\Delta(\chi U)=F_c+[\Delta,\chi]U+\chi S_\Delta^{\zeta,\zeta'}F.
	\]
	The commutator $[\Delta,\chi]U$ is supported where $\chi$ is not constant. By the choice of $\chi$, this support is separated from $\operatorname{supp}F_c$ and from $\operatorname{supp}\eta_0$. Thus the operator which maps $F$ to $[\Delta,\chi]U$ has a smooth separated-support kernel in the relevant variables. Its weighted norm is included in $\mathcal E_{\eta_0,\eta_1}(\tau)$. The term $\chi S_\Delta^{\zeta,\zeta'}F$ is also a smoothing cutoff remainder. Consequently,
	\begin{equation}\label{eq:drift-resolvent-source-estimate-proof-section8}
		\|w_\tau\eta_0D^\beta G_\Delta^{\zeta,\zeta'}F\|_{L^2}^2
		\leq C\|w_\tau F_c\|_{L^2}^2+\mathcal E_{\eta_0,\eta_1}(\tau)
		\leq C\|w_\tau F\|_{L^2}^2+\mathcal E_{\eta_0,\eta_1}(\tau).
	\end{equation}
	
	If $F=F_{\rm sc}$ has source order at most one in $(\eta\theta,\eta s)$, then
	\begin{equation}\label{eq:scalar-source-order-estimate-proof-section8}
		\|w_\tau F_{\rm sc}\|_{L^2}^2
		\leq C\mathcal S_\tau(\eta\theta,\eta s)+\mathcal E_{\eta_0,\eta_1}(\tau),
	\end{equation}
	which proves \eqref{eq:scalar-drift-resolvent-perturbation-section8}. If $F=F_W$ has source order at most two in $\eta W$, then
	\begin{equation}\label{eq:W-source-order-estimate-proof-section8}
		\|w_\tau F_W\|_{L^2}^2
		\leq C\mathcal W_\tau(\eta W)+\mathcal E_{\eta_0,\eta_1}(\tau),
	\end{equation}
	which proves \eqref{eq:W-drift-resolvent-perturbation-section8}.
	
	It remains to estimate the localized Cauchy and drift-Cauchy perturbations. Let $v=\mathfrak C F$, where $\mathfrak C$ is one of the localized Cauchy inverses or localized adjoint Cauchy inverses. Write again $F_c=\zeta'F$. Choose a cutoff $\chi\in C_0^\infty$ such that $\eta_0\prec\chi$, $\zeta'\prec\chi$, and $\chi=1$ in a neighborhood of $\operatorname{supp}\eta_0\cup\operatorname{supp}F_c$. We also choose $\chi$ supported where the outer cutoff in the localized Cauchy inverse is identically one. By the defining property of the localized Cauchy inverse or localized adjoint Cauchy inverse, there are $D_{\mathfrak C}\in\{\partial,\overline\partial\}$ and $\sigma_{\mathfrak C}\in\{\pm1\}$ such that
	\[
	D_{\mathfrak C}v=\sigma_{\mathfrak C}F_c+S_{\mathfrak C}F
	\]
	on a neighborhood of $\operatorname{supp}\chi$. Here $S_{\mathfrak C}$ is a smoothing cutoff operator. Hence,
	\[
	D_{\mathfrak C}(\chi v)=F_c+[D_{\mathfrak C},\chi]v+\chi S_{\mathfrak C}F.
	\]
	The commutator $[D_{\mathfrak C},\chi]v$ is supported where $\chi$ is not constant. By the choice of $\chi$, this support is separated from $\operatorname{supp}F_c$ and from $\operatorname{supp}\eta_0$. Thus the operator which maps $F$ to $[D_{\mathfrak C},\chi]v$ has a smooth separated-support kernel in the relevant variables. Its weighted norm is included in $\mathcal E_{\eta_0,\eta_1}(\tau)$. The smoothing term $\chi S_{\mathfrak C}F$ is included in the same remainder.
	
	Since $\eta_0v=\eta_0\chi v$, Lemma~\ref{lem:weak-Cauchy-Carleman-section8} applied to $\chi v$ gives
	\begin{equation}\label{eq:weak-Cauchy-application-section8}
		\|w_\tau\eta_0v\|_{L^2}^2
		\leq
		C\varepsilon\|w_\tau F_c\|_{L^2}^2+\mathcal E_{\eta_0,\eta_1}(\tau)
		\leq
		C\varepsilon\|w_\tau F\|_{L^2}^2+\mathcal E_{\eta_0,\eta_1}(\tau).
	\end{equation}
	In a boundary chart this is understood after the localized-input zero-extension convention fixed before Lemma~\ref{lem:cutoff-decomposition-localized-Cauchy}.
	
	For $F=B_1(\eta Z)$, the right-hand side of \eqref{eq:weak-Cauchy-application-section8} is controlled by \eqref{eq:local-order-one-perturbation-section8}. For $F=\mathfrak D_{{\rm sc},1}(\eta\theta,\eta s)$, it is controlled by \eqref{eq:scalar-drift-resolvent-perturbation-section8}. For $F=\mathfrak D_{W,2}(\eta W)$, it is controlled by \eqref{eq:W-drift-resolvent-perturbation-section8}. This proves \eqref{eq:Cauchy-and-drift-Cauchy-perturbation-section8}.
\end{proof}
	
	\subsection{Localized drift reduction and the reduced paired system}
	\label{subsec:localized-drift-and-reduced-system-section8}
	
	The covariant Hessian of the map satisfies
	\begin{equation}\label{eq:Hessian-W-section8}
		\mathcal H^\alpha=D^2W^\alpha+\mathcal H_{\rm low}^\alpha(W,DW),
	\end{equation}
	where $\mathcal H_{\rm low}^\alpha$ is local of order at most one in $W$ and vanishes when $W=DW=0$. Thus $M^\alpha=e^{-\theta}\mathcal H^\alpha$.
	
	\begin{lemma}[Differentiated drift contractions with drift-resolvent remainders]
		\label{lem:differentiated-drift-contractions}
		Let
		\begin{equation}\label{eq:Dplus-Dminus-section8}
			\mathscr D_+W:=\overline\partial(\kappa_\alpha\mathcal A(\mathcal H^\alpha)),\quad \mathscr D_-W:=\partial(\lambda_\alpha\mathcal B(\mathcal H^\alpha)).
		\end{equation}
		With the adapted cutoff convention,
		\begin{equation}\label{eq:differentiated-plus-reduction}
			\begin{split}
				\eta_0\mathscr D_+W&=2\eta_0\overline\partial^2(s+2\theta)+P_{+,1}^{\rm dr}(\eta\theta,\eta s)+\mathfrak D_{+,{\rm sc},1}^{\rm dr}(\eta\theta,\eta s)
				+P_{+,2}^{\rm dr}(\eta W) \\
				&\quad \, +\mathfrak D_{+,W,2}(\eta W)+\mathcal N_{+,\rm low}(Z) +R_{\eta_0,\eta_1}Z.
			\end{split}
		\end{equation}
		and
		\begin{equation}\label{eq:differentiated-minus-reduction}
			\begin{split}
				\eta_0\mathscr D_-W&=2\eta_0\partial^2(s+2\theta)+P_{-,1}^{\rm dr}(\eta\theta,\eta s)
				+\mathfrak D_{-,{\rm sc},1}^{\rm dr}(\eta\theta,\eta s)+P_{-,2}^{\rm dr}(\eta W)\\
				&\quad \, +\mathfrak D_{-,W,2}(\eta W)+\mathcal N_{-,\rm low}(Z)+R_{\eta_0,\eta_1}Z.
			\end{split}
		\end{equation}
		Here, $P_{\pm,1}^{\rm dr}$ are properly supported local operators of order at most one in $\theta$ and $s$, $\mathfrak D_{\pm,{\rm sc},1}^{\rm dr}$ are localized scalar drift-resolvent perturbations of source order at most one in $(\theta,s)$, $P_{\pm,2}^{\rm dr}$ are properly supported local operators of order at most two in $W$, and $\mathfrak D_{\pm,W,2}$ are localized $W$-drift-resolvent perturbations of source order at most two.
	\end{lemma}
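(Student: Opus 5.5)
The plan is to compute the principal part of $\mathscr D_\pm W$ exactly, using the complex structure of the contractions and the drift equation \eqref{eq:drift-equation-section8}, and then to sort everything else into the classes named in the statement. I treat $\mathscr D_+$ in detail. The mirror identity \eqref{eq:differentiated-minus-reduction} follows by the same computation with $\partial$ and $\overline\partial$ interchanged, $\mathcal A$ replaced by $\mathcal B$, and $\kappa_\alpha$ replaced by $\lambda_\alpha$.

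\emph{Step 1 (algebraic expansion).} By \eqref{eq:Hessian-W-section8} and $\mathcal A(D^2f)=4\overline\partial^2f$,
\[
\kappa_\alpha\mathcal A(\mathcal H^\alpha)=4\kappa_\alpha\overline\partial^2W^\alpha+\kappa_\alpha\mathcal A(\mathcal H^\alpha_{\rm low}(W,DW)).
\]
Since $N=I+DW$, we have $\kappa_\alpha=\kappa_\alpha^0+O(DW)$ with $\kappa_1^0=1$ and $\kappa_2^0=-\mathsf i$. Hence
\[
\kappa_\alpha\mathcal A(\mathcal H^\alpha)=4\overline\partial^2\overline w+(\text{products of }DW\text{ with }D^2W)+(\text{order}\le1\text{ in }W),\qquad \overline w:=W^1-\mathsf iW^2 .
\]
After applying $\overline\partial$, three kinds of terms appear:
\begin{itemize}
\item the principal term $4\overline\partial^3\overline w$;
\item products in which a factor $DW$ or $D^2W$ multiplies $D^3W$, or $D^2W$ multiplies $D^2W$; these go into $\mathcal N_{+,\rm low}(Z)$, because $W$ has a vanishing jet at the base point (Proposition~\ref{prop:boundary-determination-residual-jets});
\item the linear part of $\overline\partial(\kappa_\alpha\mathcal A(\mathcal H_{\rm low}^\alpha))$, which has order at most two in $W$ and goes into $P_{+,2}^{\rm dr}(\eta W)$.
\end{itemize}
The cutoff $\eta$ is inserted through Lemma~\ref{lem:cutoff-decomposition-localized-Cauchy}, and the commutators are placed in $R_{\eta_0,\eta_1}Z$.

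\emph{Step 2 (drift reduction of the third derivative).} The only term not yet covered by the allowed classes is $4\overline\partial^3\overline w$, which is of third order in $W$. Contracting \eqref{eq:drift-equation-section8} with $\kappa^0_\alpha$ gives
\[
\Delta\overline w=(\partial_1-\mathsf i\partial_2)q+\widetilde{\mathcal R}=2\partial q+\widetilde{\mathcal R},
\]
where $\widetilde{\mathcal R}$ collects $(N-I)\,Dq$ (nonlinear) and the contracted $\mathfrak R^\alpha$. The term $\mathfrak R^\alpha$ is of order at most one in $W$ and vanishes at $(\Id,I)$ by \eqref{eq:R-vanishes-identity}. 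I then represent $\eta_1\overline w$ through the localized Laplace parametrix \eqref{eq:localized-Laplace-parametrix-section8-revised}:
\[
\eta_1\overline w=G_\Delta^{\zeta,\zeta'}\Delta(\eta_1\overline w)+R_{\eta_0,\eta_1}Z.
\]
This gives
\[
\eta_0\overline\partial^3\overline w=\eta_0\overline\partial^3G_\Delta^{\zeta,\zeta'}\big(2\partial(\eta_1q)+\eta_1\widetilde{\mathcal R}+[\Delta,\eta_1]\overline w\big)+R_{\eta_0,\eta_1}Z .
\]
On the principal piece, $\Delta=4\partial\overline\partial$ commutes with the constant-coefficient operators, so modulo smoothing cutoff errors
\[
\overline\partial^3G_\Delta\,2\partial(\eta_1q)=\overline\partial^2\Big(\overline\partial\partial\Delta^{-1}\Big)2\eta_1q=\tfrac12\overline\partial^2(\eta_1q).
\]
Multiplying by $4$ yields exactly $2\eta_0\overline\partial^2(s+2\theta)$, using $q=s+2\theta$.

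\emph{Step 3 (sorting the rest).} The remaining pieces are assigned as follows.
\begin{itemize}
\item $\overline\partial^3G_\Delta(\eta_1\widetilde{\mathcal R})$: write it as $D^\beta G_\Delta$ with $|\beta|\le2$ applied to a source of order at most two in $\eta W$ (one derivative is moved onto the source). This is $\mathfrak D_{+,W,2}$. The nonlinear part of the same term goes into $\mathcal N_{+,\rm low}$.
\item $[\Delta,\eta_1]\overline w$: it is supported away from $\operatorname{supp}\eta_0$, so it lies in $R_{\eta_0,\eta_1}Z$.
\item Commutators between $\eta_1$ and $\partial$ acting on $q$: they give $\overline\partial^2 G_\Delta$ applied to order-zero sources in $(\theta,s)$, which is the scalar class $\mathfrak D^{\rm dr}_{+,{\rm sc},1}$. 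Lower-order local leftovers go into $P_{+,1}^{\rm dr}$.
\end{itemize}

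The main obstacle is Step 2, namely the treatment of the third derivative of $W$. It must never be estimated directly, since $\mathcal C_\tau$ controls only $D^2(\eta W)$. Every third-order $W$ term has to be routed through $G_\Delta$ with at most two derivatives outside the parametrix. This also applies to the nonlinear products containing $D^3W$: I would first rewrite them via the drift equation in the same way, so that $\mathcal N_{+,\rm low}$ still satisfies \eqref{eq:nonlinear-low-error-section8}. I would check this bookkeeping first. The exact constant $2$ then follows from $\overline\partial\partial\Delta^{-1}=\tfrac14$.
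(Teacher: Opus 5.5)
Your proposal is correct and follows essentially the same route as the paper. You isolate the third-order part $\overline\partial(\kappa_\alpha\mathcal A(D^2W^\alpha))$, eliminate it with the drift equation through the localized Laplace parametrix, and use $\overline\partial\partial\Delta^{-1}=\tfrac14$ (the paper's frozen-symbol computation, which gives the multiplier $-\xi^2/2$ on both sides) to obtain $2\overline\partial^2(s+2\theta)$, sorting everything else into the stated classes. The differences are only presentational. You linearize at $N=I$, so the principal identity holds exactly with constant coefficients $\kappa^0_\alpha$ rather than at the frozen-symbol level. You also explicitly route the $DW\cdot D^3W$ products through the drift equation, since $\mathcal C_\tau$ controls only $D^2(\eta W)$; the paper absorbs this point silently into ``nonlinear products containing at least two residual factors''.
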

	
	\begin{proof}
		We first compute the principal symbol for the plus contraction; the mirror contraction is its reflected counterpart. For a constant-coefficient differential operator $P(D)$, let $\sigma(P)$ be the Fourier multiplier, so that $\widehat{P(D)f}(\xi_1,\xi_2)=\sigma(P)(\xi_1,\xi_2)\widehat f(\xi_1,\xi_2)$. We use the convention $\widehat{\partial_jf}=\mathsf i\xi_j\widehat f$, with $\xi=\xi_1+\mathsf i\xi_2$ and $\overline\xi=\xi_1-\mathsf i\xi_2$. Then $\sigma(\partial)=\mathsf i\overline\xi/2$, $\sigma(\overline\partial)=\mathsf i\xi/2$, and $\sigma(\Delta)=-|\xi|^2=-\xi\overline\xi$. Since $\mathcal A(D^2f)=4\overline\partial^2f$ and $\mathcal B(D^2f)=4\partial^2f$, we have $\sigma(\mathcal A(D^2))=-\xi^2$ and $\sigma(\mathcal B(D^2))=-\overline\xi^{\,2}$.
		
		We now compute the plus principal symbol. Fix a point $x_0$ and freeze $N=DJ$ and $\kappa_\alpha=(N^{-T})_\alpha{}^1-\mathsf i(N^{-T})_\alpha{}^2$ at $x_0$. The principal part of the drift equation is
		\begin{equation*}
			\Delta W^\alpha=N_i{}^\alpha\partial_iq, \quad q=s+2\theta.
		\end{equation*}
		Taking the Fourier transform gives $-|\xi|^2\widehat{W^\alpha}=\mathsf iN_i{}^\alpha\xi_i\widehat q$. By the identity $(N^{-T})_\alpha{}^jN_i{}^\alpha=\delta_i{}^j$, the definition of $\kappa_\alpha$ gives $\kappa_\alpha N_i{}^\alpha\xi_i=\xi_1-\mathsf i\xi_2=\overline\xi$. Thus, for $\xi\ne0$, $\kappa_\alpha\widehat{W^\alpha}=-\mathsf i\frac{\overline\xi}{|\xi|^2}\widehat q=-\mathsf i\frac1{\xi}\widehat q$. Multiplying by the multiplier $-\xi^2$ of $\mathcal A(D^2)$, we obtain $\big(\kappa_\alpha\mathcal A(D^2W^\alpha)\big)^{\wedge}=\mathsf i\xi\widehat q$. Applying the exterior operator $\overline\partial$ gives the extra multiplier $\mathsf i\xi/2$. Hence, $\big(\overline\partial(\kappa_\alpha\mathcal A(D^2W^\alpha))\big)^{\wedge}=-\frac{\xi^2}{2}\widehat q$. On the other hand, $\big(2\overline\partial^2q\big)^{\wedge}=2\big(\frac{\mathsf i\xi}{2}\big)^2\widehat q=-\frac{\xi^2}{2}\widehat q$. Therefore, the frozen plus principal identity is
		\begin{equation*}
			\overline\partial(\kappa_\alpha\mathcal A(D^2W^\alpha))=2\overline\partial^2(s+2\theta)
		\end{equation*}
		at the principal-symbol level. The value at the zero frequency is irrelevant for this homogeneous principal-symbol computation.
		
		We next record the reflected principal calculation. Freeze $N$ and $\lambda_\alpha=(N^{-T})_\alpha{}^1+\mathsf i(N^{-T})_\alpha{}^2$ at the same point. Then $\lambda_\alpha N_i{}^\alpha\xi_i=\xi_1+\mathsf i\xi_2=\xi$. The same frozen drift equation gives $\lambda_\alpha\widehat{W^\alpha}=-\mathsf i\frac{\xi}{|\xi|^2}\widehat q=-\mathsf i\frac1{\overline\xi}\widehat q$. Since the multiplier of $\mathcal B(D^2)$ is $-\overline\xi^{\,2}$, we get $\big(\lambda_\alpha\mathcal B(D^2W^\alpha)\big)^{\wedge}=\mathsf i\overline\xi\,\widehat q$. Applying the exterior operator $\partial$ gives the multiplier $\mathsf i\overline\xi/2$. Therefore, $\big(\partial(\lambda_\alpha\mathcal B(D^2W^\alpha))\big)^{\wedge}=-\frac{\overline\xi^{\,2}}{2}\widehat q=\big(2\partial^2q\big)^{\wedge}$.
		The frozen mirror principal identity is 
		\begin{equation*}
			\partial(\lambda_\alpha\mathcal B(D^2W^\alpha))=2\partial^2(s+2\theta)
		\end{equation*}
		at the principal-symbol level.
		
		We now pass to the variable-coefficient localized statement. The covariant Hessian of the gauge map has the form
		\begin{equation*}
			\mathcal H^\alpha=D^2W^\alpha+\mathcal H_{\rm low}^\alpha(x,W,DW),
		\end{equation*}
		where $\mathcal H_{\rm low}^\alpha$ contains no second derivatives of $W$ and vanishes when $W=DW=0$. Hence the only possible third-order terms in $\mathscr D_+W$ and $\mathscr D_-W$ are the ones coming from $\overline\partial(\kappa_\alpha\mathcal A(D^2W^\alpha))$ and $\partial(\lambda_\alpha\mathcal B(D^2W^\alpha))$. The frozen computations above identify these third-order terms after the principal part of the drift equation is used.
		
		It remains to classify the terms left after subtracting the principal contributions. Derivatives falling on $\kappa_\alpha$, $\lambda_\alpha$, the coefficients of the pulled-back connection, or the coefficients in $\mathcal H_{\rm low}$ reduce the order. They give properly supported local terms of order at most one in $(\theta,s)$, properly supported local terms of order at most two in $W$, or nonlinear products containing at least two residual factors.
		
		Next, apply the localized drift equation
		\begin{equation*}
			\Delta W^\alpha=N_i{}^\alpha\partial_i(s+2\theta)+\mathfrak R^\alpha(x,\Id+W,I+DW).
		\end{equation*}
		The principal part of the source is $\partial_\alpha(s+2\theta)$. The frozen symbol computation shows that this principal source produces exactly $2\overline\partial^2(s+2\theta)$ in the plus contraction and exactly $2\partial^2(s+2\theta)$ in the mirror contraction. The remaining scalar source terms have at most one derivative of $\theta$ and $s$, after the local Laplace parametrix is applied and differentiated in the allowed way. These are precisely the localized scalar drift-resolvent perturbations in the statement.
		
		The part of the source involving $\mathfrak R^\alpha$ contains no second derivatives of $J$. Its linear part is local of order at most one in $W$, while the terms involving $N-I$ multiplying $\nabla(s+2\theta)$ contain at least two residual factors. After applying the localized Laplace parametrix and then the differentiated contractions, these terms give properly supported local terms of order at most two in $W$, localized $W$-drift-resolvent perturbations of order at most two, or nonlinear lower-order terms.
		
		Finally, all pieces produced by the outer cutoffs and by smoothing cutoff remainders are included in $R_{\eta_0,\eta_1}Z$. Combining the plus principal identity with the preceding classification gives \eqref{eq:differentiated-plus-reduction}. Combining the reflected principal identity with the same classification gives \eqref{eq:differentiated-minus-reduction}.
	\end{proof}
	
	Set $C_\Delta(x):=C(x,x)$ and $\widetilde C_\Delta(x):=\widetilde C(x,x)$. Define
	\begin{equation}\label{eq:def-cplus-cminus-section8}
		c_+(x):=-4C_\Delta(x)e^{-\theta(x)},\quad c_-(x):=-4\widetilde C_\Delta(x)e^{-\theta(x)}.
	\end{equation}
	After shrinking the patch, $c_+$ and $c_-$ are nonvanishing. Write the displayed plus and mirror local blocks as
	\begin{equation}\label{eq:frakB-plus-section8}
		\mathfrak B_+:=8\overline\partial^2[C_a(1-e^{-\theta})]-2\overline\partial[C_ae^{-\theta}\kappa_\alpha\mathcal A(\mathcal H^\alpha)], \quad \mathfrak B_-:=8\partial^2[\widetilde C_a(1-e^{-\theta})]-2\partial[\widetilde C_ae^{-\theta}\lambda_\alpha\mathcal B(\mathcal H^\alpha)].
	\end{equation}
	
	\begin{lemma}[Local reduction of the displayed residual blocks]
		\label{lem:localized-principal-reduction-paired}
		With the adapted cutoff convention,
		\begin{equation}\label{eq:localized-plus-principal-reduction}
			\eta_0\mathfrak B_+=c_+\eta_0\overline\partial^2s+P_{+,1}(\eta\theta,\eta s)
			+\mathfrak D_{+,{\rm sc},1}(\eta\theta,\eta s)+P_{+,2}(\eta W)+\mathfrak D_{+,W,2}(\eta W)
			+\mathcal N_{+,\rm low}(Z)+R_{\eta_0,\eta_1}Z.
		\end{equation}
		and
		\begin{equation}\label{eq:localized-minus-principal-reduction}
			\eta_0\mathfrak B_-=c_-\eta_0\partial^2s+P_{-,1}(\eta\theta,\eta s)+\mathfrak D_{-,{\rm sc},1}(\eta\theta,\eta s)+P_{-,2}(\eta W)+\mathfrak D_{-,W,2}(\eta W)+\mathcal N_{-,\rm low}(Z)+R_{\eta_0,\eta_1}Z.
		\end{equation}
		Here, $P_{\pm,1}$ are properly supported local operators of order at most one in $\theta$ and $s$, $\mathfrak D_{\pm,{\rm sc},1}$ are localized scalar drift-resolvent perturbations of source order at most one in $(\theta,s)$, $P_{\pm,2}$ are properly supported local operators of order at most two in $W$, and $\mathfrak D_{\pm,W,2}$ are localized $W$-drift-resolvent perturbations of source order at most two.
	\end{lemma}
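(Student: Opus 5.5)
We expand each displayed block by the Leibniz rule, keep only the terms that carry the highest number of derivatives, and use Lemma~\ref{lem:differentiated-drift-contractions} to turn the Hessian contraction into scalar second derivatives of $q=s+2\theta$. We treat the plus block in detail; the mirror block is the same computation with $\partial$ and $\overline\partial$ interchanged, $\kappa_\alpha\mathcal A$ replaced by $\lambda_\alpha\mathcal B$, $C$ by $\widetilde C$, and \eqref{eq:differentiated-minus-reduction} used in place of \eqref{eq:differentiated-plus-reduction}.

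\emph{Scalar part.} Expanding with $x$-derivatives at fixed $a$ and then restricting to the diagonal gives
\[
8\overline\partial^2[C_a(1-e^{-\theta})]=8C_\Delta e^{-\theta}\overline\partial^2\theta+(\text{terms of order}\le1\text{ in }\theta).
\]
The lower terms include $-8C_ae^{-\theta}(\overline\partial\theta)^2$ and the terms in which derivatives fall on $C_a$. The term $-8C_ae^{-\theta}(\overline\partial\theta)^2$ is quadratic in the residual, so it goes into $\mathcal N_{+,\rm low}$. The terms with derivatives on $C_a$ are local of order at most one and go into $P_{+,1}$. This is legitimate by Lemma~\ref{lem:smooth-dependence-critical-center}, which gives smoothness of $C(a,x)$.

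\emph{$M$ part.} We write
\[
-2\overline\partial[C_ae^{-\theta}\kappa_\alpha\mathcal A(\mathcal H^\alpha)]=-2C_\Delta e^{-\theta}\mathscr D_+W-2\overline\partial(C_ae^{-\theta})\,\kappa_\alpha\mathcal A(\mathcal H^\alpha).
\]
The second term is local of order two in $W$. Its linear part, $-2(\overline\partial C_a)\kappa_\alpha\mathcal A(D^2W^\alpha+\mathcal H_{\rm low}^\alpha)$, belongs to $P_{+,2}$. The remaining part, which is a product of $\overline\partial\theta$ with the Hessian, is nonlinear and goes into $\mathcal N_{+,\rm low}$. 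For the first term we insert \eqref{eq:differentiated-plus-reduction}:
\[
-2C_\Delta e^{-\theta}\,\eta_0\mathscr D_+W=-4C_\Delta e^{-\theta}\eta_0\overline\partial^2 s-8C_\Delta e^{-\theta}\eta_0\overline\partial^2\theta+(\text{classes}).
\]
Multiplying the classes of Lemma~\ref{lem:differentiated-drift-contractions} by the smooth bounded factor $C_\Delta e^{-\theta}$ preserves each class. Where $e^{-\theta}$ multiplies a remainder, we split $e^{-\theta}=1+(e^{-\theta}-1)$; the part with $e^{-\theta}-1$ is a product of residuals and goes into $\mathcal N_{\rm low}$, since $e^{-\theta}-1$ is small at the propagation front.

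\emph{Cancellation.} The $\overline\partial^2\theta$ contributions cancel exactly: $8C_\Delta e^{-\theta}\overline\partial^2\theta-8C_\Delta e^{-\theta}\overline\partial^2\theta=0$. What remains is $c_+\eta_0\overline\partial^2 s$ with $c_+=-4C_\Delta e^{-\theta}$, as claimed. This cancellation is the crux of the lemma. It depends on the coefficient $2$ in front of $\overline\partial^2(s+2\theta)$ in \eqref{eq:differentiated-plus-reduction} matching the $8$ from the scalar block; this is exactly why the variable $s=q-2\theta$ was chosen.

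\emph{Localization (main obstacle).} The remaining difficulty is to carry the cutoffs consistently through these steps. Replacing $\theta$ by $\eta\theta$ inside the differential expressions at $\operatorname{supp}\eta_0$ costs nothing, since $\eta=1$ there. Commuting $\eta$ past the drift-resolvent operators is handled by Lemma~\ref{lem:cutoff-decomposition-localized-Cauchy}, and the errors are absorbed into $R_{\eta_0,\eta_1}Z$. In boundary charts we additionally need Lemma~\ref{lem:boundary-zero-extension-UW}, so that the zero extensions create no boundary distributions.
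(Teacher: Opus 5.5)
Your proposal is correct and follows the paper's proof. The paper likewise extracts $8C_\Delta e^{-\theta}\overline\partial^2\theta$ from the scalar block and uses Lemma~\ref{lem:differentiated-drift-contractions} to write the principal part of the gauge-Hessian block as $-4C_\Delta e^{-\theta}\overline\partial^2(s+2\theta)$; the $\overline\partial^2\theta$ terms then cancel and leave $c_+\overline\partial^2 s$, with the mirror block handled identically. Your version only spells out the lower-order bookkeeping (derivatives on $C_a$ and $e^{-\theta}$, and the split $e^{-\theta}=1+(e^{-\theta}-1)$ into linear and nonlinear parts) that the paper compresses into one sentence.
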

	
	\begin{proof}
		Consider the plus block. The order two scalar contribution from $8\overline\partial^2[C_a(1-e^{-\theta})]$ is $8C_\Delta e^{-\theta}\overline\partial^2\theta$. By Lemma~\ref{lem:differentiated-drift-contractions}, the order two scalar contribution from the gauge-Hessian block is $-4C_\Delta e^{-\theta}\overline\partial^2(s+2\theta)$. Their sum is $-4C_\Delta e^{-\theta}\overline\partial^2s=c_+\overline\partial^2s$. All derivatives falling on the diagonal coefficient family, on $\kappa$, on connection coefficients, or on $e^{-\theta}$ give the stated lower local classes, localized scalar drift-resolvent classes, localized $W$-drift-resolvent classes, or nonlinear lower-order terms. The mirror block is identical with $\partial$, $\lambda$, and $\mathcal B$ in place of $\overline\partial$, $\kappa$, and $\mathcal A$.
	\end{proof}
	
	\begin{lemma}[Reduction of the $M$-critical Cauchy terms]
		\label{lem:M-critical-Cauchy-reduction-section8}
		Let $\eta_0\prec\eta\prec\eta_1$ be adapted cutoffs. After substituting
		\begin{equation}\label{eq:M-H-substitution-critical-Cauchy-section8}
			M^\alpha=e^{-\theta}\mathcal H^\alpha, \mathcal H^\alpha=D^2W^\alpha+\mathcal H_{\rm low}^\alpha(W,DW),
		\end{equation}
		the parts of the plus and mirror $M$-critical Cauchy terms, which are linear in the second derivatives of $W$ are finite sums of model terms of the form
		\begin{equation}\label{eq:M-critical-linear-model-terms-section8}
			\partial^{*-1}\big(a_+(x)\lambda_\alpha\mathcal A(D^2W^\alpha)\big), \overline\partial^{*-1}\big(a_-(x)\kappa_\alpha\mathcal B(D^2W^\alpha)\big),
		\end{equation}
		with smooth compactly supported coefficients $a_\pm$.
		
		On $\operatorname{supp}\eta_0$, these terms admit the decompositions
		\begin{equation}\label{eq:M-critical-Cauchy-reduction-plus-section8}
			\begin{split}
				\eta_0\partial^{*-1}\big(a_+\lambda_\alpha\mathcal A(D^2W^\alpha)\big)&=\mathfrak C_{+,{\rm sc}}^{\rm mc}\mathfrak D_{+,{\rm sc},1}^{\rm mc}(\eta\theta,\eta s)+\mathfrak C_{+,W}^{\rm mc}\mathfrak D_{+,W,2}^{\rm mc}(\eta W)+\mathfrak C_{+,1}^{\rm mc}B_{+,1}^{\rm mc}(\eta\theta,\eta s)\\
				&\quad \, +P_{+,1}^{\rm mc}(\eta Z)+\mathcal N_{+,\rm low}^{\rm mc}(Z)+R_{\eta_0,\eta_1}Z,
			\end{split}
		\end{equation}
		and
		\begin{equation}\label{eq:M-critical-Cauchy-reduction-minus-section8}
			\begin{split}
				\eta_0\overline\partial^{*-1}\big(a_-\kappa_\alpha\mathcal B(D^2W^\alpha)\big)&=\mathfrak C_{-,{\rm sc}}^{\rm mc}\mathfrak D_{-,{\rm sc},1}^{\rm mc}(\eta\theta,\eta s)+\mathfrak C_{-,W}^{\rm mc}\mathfrak D_{-,W,2}^{\rm mc}(\eta W)+\mathfrak C_{-,1}^{\rm mc}B_{-,1}^{\rm mc}(\eta\theta,\eta s)\\
				&\quad \, +P_{-,1}^{\rm mc}(\eta Z)+\mathcal N_{-,\rm low}^{\rm mc}(Z)+R_{\eta_0,\eta_1}Z.
			\end{split}
		\end{equation}
		Here, $\mathfrak D_{\pm,{\rm sc},1}^{\rm mc}$ are localized scalar drift-resolvent perturbations of source order at most one in $(\theta,s)$, and $\mathfrak D_{\pm,W,2}^{\rm mc}$ are localized $W$-drift-resolvent perturbations of source order at most two. The operators $\mathfrak C_{\pm,{\rm sc}}^{\rm mc}$ and $\mathfrak C_{\pm,W}^{\rm mc}$ denote the outer localized adjoint Cauchy inverses which occur in the critical Cauchy terms. The inputs $B_{\pm,1}^{\rm mc}$ are local differential expressions of order at most one in $\theta$ and $s$. Any ordinary localized Cauchy term involving $W$ which comes from $\mathcal H_{\rm low}(W,DW)$ has input of order at most one in $W$ and is included later in the ordinary localized Cauchy class of Lemma~\ref{lem:localized-lower-order-structure}; it is not part of the second-derivative $W$-polarization reduced in this lemma.
	\end{lemma}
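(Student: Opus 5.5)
The plan is to reduce the two Cauchy terms to the drift equation \eqref{eq:drift-equation-section8}, using the same frozen-symbol identities as in Lemma~\ref{lem:differentiated-drift-contractions}. The difference is that one Cauchy inverse now sits outside instead of a derivative. First I would substitute \eqref{eq:M-H-substitution-critical-Cauchy-section8} into $P_a$ and $P_a^m$ from \eqref{eq:Pa-after-substitution} and \eqref{eq:mirror-Pa-after-substitution}. Restricting to the diagonal via Lemma~\ref{lem:smooth-dependence-critical-center} then shows that the parts linear in $D^2W$ have the form \eqref{eq:M-critical-linear-model-terms-section8}. Here $a_+$ collects $C_a(1-(z-a)^2/16)V'$ and the factor $e^{-\theta}$ frozen at its background value. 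The remainder $(e^{-\theta}-1)D^2W$ is a product of two residual factors and goes into $\mathcal N^{\rm mc}_{\pm,\rm low}$, controlled by \eqref{eq:nonlinear-low-error-section8}. The $\mathcal H_{\rm low}$ contribution has input of order one in $W$ and is deferred, as the statement says. Next I would apply Lemma~\ref{lem:cutoff-decomposition-localized-Cauchy} to replace $W$ by $\eta W$ inside the outer adjoint Cauchy inverse, at the cost of $R_{\eta_0,\eta_1}Z$.

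The core step handles the input $a_+\lambda_\alpha\mathcal A(D^2(\eta W^\alpha))$. I would write $\eta W^\alpha=G_\Delta^{\zeta,\zeta'}\Delta(\eta W^\alpha)$ modulo smoothing errors, using \eqref{eq:localized-Laplace-parametrix-identity-section8-revised}. I would then insert $\Delta(\eta W^\alpha)=\eta N_i{}^\alpha\partial_i(s+2\theta)+\eta\mathfrak R^\alpha+[\Delta,\eta]W^\alpha$. This gives $\lambda_\alpha\mathcal A(D^2 G_\Delta F^\alpha)$, a localized drift-resolvent term with $|\beta|=2$ derivatives applied to the parametrix.

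The sources sort as follows. The principal scalar source $\eta N_i{}^\alpha\partial_i(s+2\theta)$ has order one in $(\theta,s)$, so it gives $\mathfrak D^{\rm mc}_{+,{\rm sc},1}$. The term $\eta\mathfrak R^\alpha$ has linear part of order one in $W$, and $[\Delta,\eta]W$ is a commutator; both are absorbed into $\mathfrak D^{\rm mc}_{+,W,2}$ or, outside $\operatorname{supp}\eta_0$, into $R_{\eta_0,\eta_1}Z$. The quadratic part $(N-I)\nabla q$ goes into $\mathcal N^{\rm mc}_{+,\rm low}$. Placing $\partial^{*-1}$ outside gives the drift-Cauchy terms $\mathfrak C\mathfrak D$.

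Optionally, the frozen symbol gives $\lambda_\alpha\mathcal A(D^2W^\alpha)\sim$ an order-one operator applied to $q$. At the symbol level this is $(\lambda_\alpha\mathcal A(D^2W^\alpha))^\wedge=\mathsf i\xi\,\widehat q\cdot(\lambda\text{-}\kappa\text{ mismatch})$. One may then commute the coefficient through and write the principal contribution as $\mathfrak C^{\rm mc}_{+,1}B^{\rm mc}_{+,1}(\eta\theta,\eta s)$, with $B_{+,1}$ of order one. Commutators with the smooth coefficients $a_+$ and $\lambda_\alpha$ produce the local lower-order term $P^{\rm mc}_{+,1}(\eta Z)$. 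The mirror identity \eqref{eq:M-critical-Cauchy-reduction-minus-section8} would then follow by the reflection $z\mapsto\overline z$, exchanging $\partial\leftrightarrow\overline\partial$, $\lambda\leftrightarrow\kappa$ and $\mathcal A\leftrightarrow\mathcal B$.

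The main obstacle is the bookkeeping in the boundary collar. The Laplace parametrix and the Cauchy inverse must be applied only to cut, zero-extended inputs, so that no boundary distribution arises. This requires $D^2W=0$ on $\Gamma_R$ from Proposition~\ref{prop:boundary-determination-residual-jets} and Lemma~\ref{lem:boundary-zero-extension-UW}. One must also check that every commutator $[\Delta,\eta]$ or $[\partial,\chi]$ either has separated support with favorable weight orientation or lands in a known zero region. The principal computation itself is only the symbol identity already done in Lemma~\ref{lem:differentiated-drift-contractions}.
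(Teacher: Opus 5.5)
Your main route is essentially the paper's proof, in the same order. You isolate the opposite polarization $\lambda_\alpha\mathcal A(D^2W^\alpha)$, send $(e^{-\theta}-1)D^2W$ to $\mathcal N^{\rm mc}_{\pm,\rm low}$ and defer $\mathcal H_{\rm low}$. You then replace $D^2W$ via the localized drift equation and the Laplace parametrix, and sort the sources: the principal scalar source goes into $\mathfrak D^{\rm mc}_{\pm,{\rm sc},1}$, the linear part of $\mathfrak R^\alpha$ into $\mathfrak D^{\rm mc}_{\pm,W,2}$, $(N-I)\nabla(s+2\theta)$ into the nonlinear class, and cutoff pieces into $R_{\eta_0,\eta_1}Z$. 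Finally you restore the outer adjoint Cauchy inverse to obtain drift-Cauchy terms.

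Writing $\eta W^\alpha=G_\Delta^{\zeta,\zeta'}\Delta(\eta W^\alpha)$, instead of solving the drift equation for $W^\alpha$ on $\operatorname{supp}\eta_0$, is a cosmetic variant. Note that it uses the left-parametrix identity, not the right-parametrix identity you cite, though it holds for the same reason. Also, the commutator $[\Delta,\eta]W$ belongs in $R_{\eta_0,\eta_1}Z$ (known zero region or separated support), not in $\mathfrak D^{\rm mc}_{+,W,2}$. Its source is supported where $\nabla\eta\neq0$ and is not an expression in $\eta W$.

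One correction: drop the ``optional'' step, because it would fail. The frozen symbol is $(\lambda_\alpha\mathcal A(D^2W^\alpha))^\wedge=\mathsf i\,\xi^2\,\overline\xi^{\,-1}\widehat q$. At principal level this is $q\mapsto 2\overline\partial^{2}\partial^{-1}q$. The $\lambda$--$\kappa$ mismatch is the nonpolynomial order-zero multiplier $\xi/\overline\xi$, not a coefficient that can be commuted through. So the principal contribution cannot be written as $\mathfrak C^{\rm mc}_{+,1}B^{\rm mc}_{+,1}$ with $B^{\rm mc}_{+,1}$ a local first-order differential expression, as that class requires. Likewise, commutators of $a_+$ or $\lambda_\alpha$ with $\partial^{*-1}$ or $D^2G_\Delta^{\zeta,\zeta'}$ are nonlocal operators, not local terms $P^{\rm mc}_{+,1}(\eta Z)$. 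This nonlocality is exactly why the principal term must stay in the drift-Cauchy class, as your main argument and the paper already arrange.
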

	
		\begin{proof}
		We prove the plus statement; the mirror statement is identical with $\partial^{*-1}$, $\lambda_\alpha$, and $\mathcal A$ replaced by $\overline\partial^{*-1}$, $\kappa_\alpha$, and $\mathcal B$.
		
		The plus $M$-critical Cauchy term contains the opposite polarization $\lambda_\alpha\mathcal A(M^\alpha)$. Since
		\[
		M^\alpha=e^{-\theta}\mathcal H^\alpha=e^{-\theta}\big(D^2W^\alpha+\mathcal H_{\rm low}^\alpha(W,DW)\big),
		\]
		the only part which is linear in the second derivatives of $W$ is $\lambda_\alpha\mathcal A(D^2W^\alpha)$. The factor $e^{-\theta}-1$ gives nonlinear products, and $\mathcal H_{\rm low}(W,DW)$ gives ordinary localized Cauchy inputs of order at most one in $W$. These terms are not part of the second-derivative $W$-polarization reduced here.
		
		We first identify the order of the opposite polarization. Freeze $N=DJ$ and $\lambda_\alpha=(N^{-T})_\alpha{}^1+\mathsf i(N^{-T})_\alpha{}^2$ at a point. With the Fourier convention used in Lemma~\ref{lem:differentiated-drift-contractions}, the frozen principal drift equation gives $-|\xi|^2\widehat W^\alpha=\mathsf iN_i{}^\alpha\xi_i\widehat q$, where $q=s+2\theta$. Contracting with $\lambda_\alpha$ gives $\lambda_\alpha\widehat W^\alpha=-\mathsf i\xi|\xi|^{-2}\widehat q=-\mathsf i\overline\xi^{-1}\widehat q$. Since $\mathcal A(D^2)$ has multiplier $-\xi^2$, one obtains
		\[
		\big(\lambda_\alpha\mathcal A(D^2W^\alpha)\big)^{\wedge}=\mathsf i\frac{\xi^2}{\overline\xi}\widehat q.
		\]
		This is a homogeneous order-one expression on real nonzero frequencies. We therefore first use the localized drift equation, rather than estimating it as a Cauchy term with $D^2W$ left in the input.
		
		Let $\zeta'\prec\zeta$ be the cutoffs in the local Laplace parametrix and choose them adapted to $\eta_0\prec\eta\prec\eta_1$. On $\operatorname{supp}\eta_0$, the localized drift equation gives
		\[
		W^\alpha=G_\Delta^{\zeta,\zeta'}\big(\zeta'[N_i{}^\alpha\partial_i(s+2\theta)+\mathfrak R^\alpha(x,\Id+W,I+DW)]\big)+R_{\eta_0,\eta_1}Z.
		\]
		Applying the two derivatives occurring in $\lambda_\alpha\mathcal A(D^2W^\alpha)$ gives
		\[
		\eta_0\lambda_\alpha\mathcal A(D^2W^\alpha)=\eta_0\lambda_\alpha\mathcal A\big(D^2G_\Delta^{\zeta,\zeta'}\big(\zeta'N_i{}^\alpha\partial_i(s+2\theta)\big)\big)
		+\eta_0\lambda_\alpha\mathcal A\big(D^2G_\Delta^{\zeta,\zeta'}\big(\zeta'\mathfrak R^\alpha\big)\big)+R_{\eta_0,\eta_1}Z.
		\]
		Now split $N_i{}^\alpha=\delta_i^\alpha+(N_i{}^\alpha-\delta_i^\alpha)$. The term with $\delta_i^\alpha\partial_i(s+2\theta)$ is a localized scalar drift-resolvent perturbation of source order at most one in $(\theta,s)$. The terms containing $(N-I)\nabla(s+2\theta)$ have at least two residual factors and are included in $\mathcal N_{+,\rm low}^{\rm mc}(Z)$, after shrinking the propagation patch. Similarly, the coefficients $\lambda_\alpha$ may be split into their value at the zero gauge and a residual part; the residual part contains $DW$ and therefore gives nonlinear products when multiplied by the scalar source or by $D^2W$.
		
		The term containing $\mathfrak R^\alpha$ is treated as follows. Since $\mathfrak R^\alpha(x,\Id,I)=0$ and $\mathfrak R^\alpha$ contains no second derivatives of $J$, its linear part is local of order at most one in $W$, and its remaining terms are nonlinear lower-order products. After applying $D^2G_\Delta^{\zeta,\zeta'}$, the linear part gives a localized $W$-drift-resolvent perturbation of source order at most two in $W$, while the nonlinear part is included in $\mathcal N_{+,\rm low}^{\rm mc}(Z)$. The cutoff commutators produced by applying the parametrix to the localized equation are separated-support or smoothing cutoff remainders, hence belong to $R_{\eta_0,\eta_1}Z$.
		
		We now restore the outer localized adjoint Cauchy inverse. Since the $M$-critical term already contains $\partial^{*-1}$, the scalar drift-resolvent contribution becomes a localized scalar drift-Cauchy perturbation, written as $\mathfrak C_{+,{\rm sc}}^{\rm mc}\mathfrak D_{+,{\rm sc},1}^{\rm mc}(\eta\theta,\eta s)$. The $W$-drift-resolvent contribution becomes $\mathfrak C_{+,W}^{\rm mc}\mathfrak D_{+,W,2}^{\rm mc}(\eta W)$. Terms in which the scalar source is local of order at most one inside the outer Cauchy inverse are recorded as $\mathfrak C_{+,1}^{\rm mc}B_{+,1}^{\rm mc}(\eta\theta,\eta s)$. Purely local order-one terms are included in $P_{+,1}^{\rm mc}(\eta Z)$, nonlinear products in $\mathcal N_{+,\rm low}^{\rm mc}(Z)$, and all off-diagonal cutoff pieces in $R_{\eta_0,\eta_1}Z$.
		
		Thus,
		\[
		\begin{split}
			\eta_0\partial^{*-1}\big(a_+\lambda_\alpha\mathcal A(D^2W^\alpha)\big)&
			=\mathfrak C_{+,{\rm sc}}^{\rm mc}\mathfrak D_{+,{\rm sc},1}^{\rm mc}(\eta\theta,\eta s)
			+\mathfrak C_{+,W}^{\rm mc}\mathfrak D_{+,W,2}^{\rm mc}(\eta W)
			+\mathfrak C_{+,1}^{\rm mc}B_{+,1}^{\rm mc}(\eta\theta,\eta s)\\
			&\quad\, +P_{+,1}^{\rm mc}(\eta Z)+\mathcal N_{+,\rm low}^{\rm mc}(Z)+R_{\eta_0,\eta_1}Z.
		\end{split}
		\]
		This proves \eqref{eq:M-critical-Cauchy-reduction-plus-section8}. The mirror reduction is the same, using the opposite polarization $\kappa_\alpha\mathcal B(D^2W^\alpha)$ and the outer operator $\overline\partial^{*-1}$, and gives \eqref{eq:M-critical-Cauchy-reduction-minus-section8}.
	\end{proof}
	
	\begin{lemma}[Localized lower-order structure of the residual equations]
		\label{lem:localized-lower-order-structure}
		Let $\eta_0\prec\eta\prec\eta_1$ be cutoffs in a disk where the adapted convention is valid. Then the nonprincipal parts of the plus and mirror residual equations have the forms
		\begin{equation}\label{eq:localized-lower-plus-section8-revised}
			\begin{split}
				\eta_0\mathfrak R_+(\theta,W)&=P_{+,1}(\eta Z)+P_{+,2}(\eta W)+\mathfrak D_{+,{\rm sc},1}(\eta\theta,\eta s)+\mathfrak D_{+,W,2}(\eta W)+\sum_{\ell=1}^{N_+}\mathfrak C_{+,\ell}B_{+,\ell}(\eta Z)\\
				&\quad \, +\sum_{\ell=1}^{N_+^{\rm sc}}\mathfrak C_{+,\ell}^{\rm sc}\mathfrak D_{+,\ell,{\rm sc},1}(\eta\theta,\eta s)+\sum_{\ell=1}^{N_+^{W}}\mathfrak C_{+,\ell}^{W}\mathfrak D_{+,\ell,W,2}(\eta W)+\mathcal N_{+,\rm low}(Z)+R_{\eta_0,\eta_1}Z,
			\end{split}
		\end{equation}
		and
		\begin{equation}\label{eq:localized-lower-minus-section8-revised}
			\begin{split}
				\eta_0\mathfrak R_-^m(\theta,W)&=P_{-,1}(\eta Z)+P_{-,2}(\eta W)+\mathfrak D_{-,{\rm sc},1}(\eta\theta,\eta s)+\mathfrak D_{-,W,2}(\eta W)+\sum_{\ell=1}^{N_-}\mathfrak C_{-,\ell}B_{-,\ell}(\eta Z)\\
				&\quad \, +\sum_{\ell=1}^{N_-^{\rm sc}}\mathfrak C_{-,\ell}^{\rm sc}\mathfrak D_{-,\ell,{\rm sc},1}(\eta\theta,\eta s)+\sum_{\ell=1}^{N_-^{W}}\mathfrak C_{-,\ell}^{W}\mathfrak D_{-,\ell,W,2}(\eta W)+\mathcal N_{-,\rm low}(Z)+R_{\eta_0,\eta_1}Z.
			\end{split}
		\end{equation}
		Here $P_{\pm,1}$ are properly supported local operators of order at most one in $Z=(\theta,s,W)$, and $P_{\pm,2}$ are properly supported local operators of order at most two in $W$. The operators $\mathfrak D_{\pm,{\rm sc},1}$ are localized scalar drift-resolvent perturbations of source order at most one in $(\theta,s)$, while $\mathfrak D_{\pm,W,2}$ are localized $W$-drift-resolvent perturbations of source order at most two. The ordinary localized Cauchy inputs $B_{\pm,\ell}$ have order at most one in $Z$; in particular they contain no second derivatives of $W$. All Cauchy terms whose linear input contains the second derivatives of $W$ are precisely the $M$-critical Cauchy terms and have been converted by Lemma~\ref{lem:M-critical-Cauchy-reduction-section8} into localized scalar drift-Cauchy or localized $W$-drift-Cauchy perturbations.
	\end{lemma}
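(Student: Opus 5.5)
The plan is to go through the two residual equations \eqref{eq:closed-first-resulting-equation} and \eqref{eq:closed-mirror-resulting-equation} term by term. First remove the displayed principal blocks $\mathfrak B_\pm$. Then sort everything that remains into the localized classes of Subsection~\ref{subsec:localized-classes-and-carleman-section8}, using Lemmas~\ref{lem:cutoff-decomposition-localized-Cauchy}, \ref{lem:differentiated-drift-contractions}, and \ref{lem:M-critical-Cauchy-reduction-section8}. I treat the plus equation in detail. The mirror equation follows verbatim by interchanging $\partial$ and $\overline\partial$, $\kappa$ and $\lambda$, $\mathcal A$ and $\mathcal B$, and $\partial^{*-1}$ and $\overline\partial^{*-1}$.

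\emph{Local terms.} I expand $1-e^{-\theta}=\theta+O(\theta^2)$ and $e^{-\theta}\mathcal H^\alpha=\mathcal H^\alpha+(e^{-\theta}-1)\mathcal H^\alpha$, and I write $\mathcal H^\alpha=D^2W^\alpha+\mathcal H^\alpha_{\rm low}(W,DW)$. The local pieces then sort as follows.
\begin{itemize}
\item $\mathcal L_{\mathbf r,\le1}^{\rm loc}(1-e^{-\theta})$ has a linear part of order at most one in $\theta$. This goes into $P_{+,1}(\eta Z)$.
\item $\mathcal Z_M(e^{-\theta}\mathcal H)$ and $\mathcal L^{\rm loc}_{M,0}(e^{-\theta}\mathcal H)$ are local of order zero in $M$, hence of order at most two in $W$. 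Their linear parts go into $P_{+,2}(\eta W)$.
\item Products involving $(e^{-\theta}-1)$, or nonlinear parts of $\mathcal H_{\rm low}$, contain at least two residual factors. These go into $\mathcal N_{+,\rm low}(Z)$.
\end{itemize}
In each case I insert $\eta$ by writing $Z=\eta Z+(1-\eta)Z$. Since local operators are properly supported, the $(1-\eta)$-part vanishes on $\operatorname{supp}\eta_0$ up to commutators, which are absorbed into $R_{\eta_0,\eta_1}Z$. If the lower-order parts of Lemmas~\ref{lem:localized-principal-reduction-paired} and \ref{lem:differentiated-drift-contractions} are counted in $\mathfrak R_+$, they contribute $P_{+,1}$, $P_{+,2}$, $\mathfrak D_{+,{\rm sc},1}$, and $\mathfrak D_{+,W,2}$ terms directly.

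\emph{Nonlocal terms.} For the scalar critical term $\mathcal N_{\mathbf r}(1-e^{-\theta})$, Lemma~\ref{lem:scalar-critical-order} gives a finite sum of $\partial^{*-1}$ applied to inputs of order at most one in $\mathbf r$. After substitution, the linear part has order at most one in $\theta$. I first localize the input via Lemma~\ref{lem:boundary-collar-residual-equations} and Lemma~\ref{lem:cutoff-decomposition-localized-Cauchy}, then zero-extend it in boundary charts. This places the term in $\sum_\ell\mathfrak C_{+,\ell}B_{+,\ell}(\eta Z)$; quadratic remainders go into $\mathcal N_{+,\rm low}$. For the $M$-critical term $\mathcal K_{\rm nl}(e^{-\theta}\mathcal H)$ with amplitude \eqref{eq:Pa-after-substitution}, I split it into three parts:
\begin{itemize}
\item the part linear in $D^2W$, of the form $\partial^{*-1}(a_+\lambda_\alpha\mathcal A(D^2W^\alpha))$, which Lemma~\ref{lem:M-critical-Cauchy-reduction-section8} converts into scalar and $W$ drift-Cauchy perturbations, plus ordinary Cauchy terms with order-one scalar input, local terms, nonlinear terms, and $R_{\eta_0,\eta_1}Z$;
\item the $\mathcal H_{\rm low}$ part, which is an ordinary localized Cauchy input of order at most one in $W$ and so lies in the $\mathfrak C B_1$ class;
\item the $(e^{-\theta}-1)$ part, which goes into $\mathcal N_{+,\rm low}$.
\end{itemize}

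\emph{Main obstacle.} The hard step is checking that no Cauchy input retains a linear second derivative of $W$. This is exactly the claim that only the $\lambda_\alpha\mathcal A(D^2W^\alpha)$ piece of $P_a$ carries $D^2W$. It follows from \eqref{eq:Hessian-W-section8}, because $\mathcal H_{\rm low}$ involves only $(W,DW)$. The factor $(1-(z-a)^2/16)V'$ is smooth and evaluated at the diagonal, so it only changes $a_+$. A secondary point is bookkeeping the $a$-dependence: the coefficients depend smoothly on $(a,x)$ by Lemma~\ref{lem:smooth-dependence-critical-center}, so restricting to $x=a$ yields smooth coefficients in the localized classes. Any cutoff pieces reaching outside the active support are separated-support terms and go into $R_{\eta_0,\eta_1}Z$.
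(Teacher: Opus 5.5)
Your proposal is correct and is essentially the paper's own proof. It uses the same term-by-term sorting: local scalar terms go into $P_{\pm,1}$; order-zero-in-$M$ terms go into $P_{\pm,2}$ via $\mathcal H^\alpha=D^2W^\alpha+\mathcal H^\alpha_{\rm low}(W,DW)$; scalar critical terms become ordinary Cauchy perturbations with order-one input; the $D^2W$ part of the $M$-critical terms is handled by Lemma~\ref{lem:M-critical-Cauchy-reduction-section8}; the $\mathcal H_{\rm low}$ pieces become ordinary Cauchy terms of order one in $W$; and all products go into $\mathcal N_{\pm,\rm low}$. You add a little more cutoff bookkeeping than the paper. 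One slip: the factor $(1-(z-a)^2/16)V'(x)$ sits inside the Cauchy inverse and depends on both $a$ and $x$, so it is not ``evaluated at the diagonal''. It is simply absorbed into the coefficient, exactly as the paper does when it writes $a_+(x)$.
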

	
	\begin{proof}
		The local scalar terms are of order at most one in $\theta$. The local terms which are order zero in $M$ become local order-two terms in $W$, because $M^\alpha=e^{-\theta}\mathcal H^\alpha$ and $\mathcal H^\alpha=D^2W^\alpha+\mathcal H_{\rm low}^\alpha(W,DW)$.
		
		The scalar critical terms are localized Cauchy transforms whose inputs contain $\theta$ and at most one derivative of $\theta$, modulo nonlinear lower-order products. The $M$-critical Cauchy terms are handled by Lemma~\ref{lem:M-critical-Cauchy-reduction-section8}. Their linear second-derivative $W$-parts become localized scalar drift-Cauchy terms and localized $W$-drift-Cauchy terms, while the lower-order $\mathcal H_{\rm low}(W,DW)$ pieces are ordinary localized Cauchy terms with input of order at most one in $W$. All products with at least two residual factors are included in $\mathcal N_{\pm,\rm low}(Z)$, and all cutoff remainders belong to $R_{\eta_0,\eta_1}Z$.
	\end{proof}
	
	\begin{lemma}[Local reduced system]
		\label{lem:localized-reduced-system}
		Let $\eta_0\prec\eta\prec\eta_1$ be cutoffs in a disk where the adapted convention is valid. Then the paired augmented residual system implies
		\begin{equation}\label{eq:localized-reduced-system-theta}
			\Delta\theta=P_{\theta,1}(\eta Z)+P_{\theta,2}(\eta W)+\mathcal N_{\theta,\rm low}(Z)+R_{\eta_0,\eta_1}Z,
		\end{equation}
		\begin{equation}\label{eq:localized-reduced-system-plus}
			\begin{split}
				c_+(x)\overline\partial^2s&=P_{+,1}(\eta Z)+P_{+,2}(\eta W)+\mathfrak D_{+,{\rm sc},1}(\eta\theta,\eta s)+\mathfrak D_{+,W,2}(\eta W)+\sum_{\ell=1}^{N_+}\mathfrak C_{+,\ell}B_{+,\ell}(\eta Z)\\
				&\quad \, +\sum_{\ell=1}^{N_+^{\rm sc}}\mathfrak C_{+,\ell}^{\rm sc}\mathfrak D_{+,\ell,{\rm sc},1}(\eta\theta,\eta s)+\sum_{\ell=1}^{N_+^{W}}\mathfrak C_{+,\ell}^{W}\mathfrak D_{+,\ell,W,2}(\eta W)+\mathcal N_{+,\rm low}(Z)+R_{\eta_0,\eta_1}Z,
			\end{split}
		\end{equation}
		\begin{equation}\label{eq:localized-reduced-system-minus}
			\begin{split}
				c_-(x)\partial^2s&=P_{-,1}(\eta Z)+P_{-,2}(\eta W)+\mathfrak D_{-,{\rm sc},1}(\eta\theta,\eta s)+\mathfrak D_{-,W,2}(\eta W)+\sum_{\ell=1}^{N_-}\mathfrak C_{-,\ell}B_{-,\ell}(\eta Z)\\
				&\quad \, +\sum_{\ell=1}^{N_-^{\rm sc}}\mathfrak C_{-,\ell}^{\rm sc}\mathfrak D_{-,\ell,{\rm sc},1}(\eta\theta,\eta s)+\sum_{\ell=1}^{N_-^{W}}\mathfrak C_{-,\ell}^{W}\mathfrak D_{-,\ell,W,2}(\eta W)+\mathcal N_{-,\rm low}(Z)+R_{\eta_0,\eta_1}Z,
			\end{split}
		\end{equation}
		and
		\begin{equation}\label{eq:localized-reduced-system-W}
			\Delta W^\alpha=P_{W,1}^\alpha(\eta Z)+\mathcal N_{W,\rm low}^\alpha(Z)+R_{\eta_0,\eta_1}Z.
		\end{equation}
		Here $P_{\theta,1}$, $P_{\pm,1}$, and $P_{W,1}^\alpha$ are local of order at most one in $Z$; $P_{\theta,2}$ and $P_{\pm,2}$ are local of order at most two in $W$. The nonlocal terms are exactly of the localized Cauchy, scalar drift-resolvent, $W$-drift-resolvent, scalar drift-Cauchy, and $W$-drift-Cauchy classes described in Subsection~\ref{subsec:localized-classes-and-carleman-section8}.
	\end{lemma}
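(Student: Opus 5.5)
The proof assembles the four equations of the reduced system one at a time from the paired augmented system \eqref{eq:augmented-system-before-ucp}. Each is multiplied by $\eta_0$, and every term is sorted into the operator classes of Subsection~\ref{subsec:localized-classes-and-carleman-section8}. Throughout, every input is first written as $Z=\eta Z+(1-\eta)Z$. Lemma~\ref{lem:cutoff-decomposition-localized-Cauchy} then moves the $(1-\eta)$ pieces of local, Cauchy and drift-resolvent operators into $R_{\eta_0,\eta_1}Z$. In a boundary chart the inputs are zero-extended only after this cut, and Lemma~\ref{lem:boundary-zero-extension-UW} guarantees that no boundary distribution appears.

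\emph{The equations for $\theta$ and $W$.} For $W$, I start from \eqref{eq:drift-equation-section8} and write $N_i{}^\alpha=\delta_i^\alpha+\partial_iW^\alpha$. The linear part of the right-hand side has three pieces: $\partial_\alpha(s+2\theta)$, which is order one in $(\theta,s)$; the linearization of $\mathfrak R^\alpha$ at $(\Id,I)$, which is order at most one in $W$ by \eqref{eq:R-vanishes-identity} and because $\mathfrak R^\alpha$ contains no $D^2J$; and the term $DW\cdot\nabla(s+2\theta)$, which has two residual factors. The linear pieces together form $P^\alpha_{W,1}$, and every product goes into $\mathcal N_{W,\rm low}^\alpha$. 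For $\theta$, I take \eqref{eq:theta-equation-s-theta} and expand it in the same way. The quadratic term $P(\partial N)P(\partial N)$ and the term $P(\partial N)\partial(s+2\theta)$ are at least bilinear in $(D^2W,Dq)$, so they are nonlinear lower-order terms. The term $P_\alpha{}^i\partial_i\mathfrak R^\alpha$ is linear of order at most two in $W$, giving $P_{\theta,2}$, plus products; its $\theta,s$ dependence is absent or lower order, giving $P_{\theta,1}$. Since $P_{\theta,2}$ carries the full second-order $W$ dependence of this equation, the absorption in the later Carleman step must rely on the large weight $A_W$.

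\emph{The equations for $s$.} Here I split \eqref{eq:closed-first-resulting-equation} into the displayed block $\mathfrak B_+$ plus the remaining terms $\mathfrak R_+$. Lemma~\ref{lem:localized-principal-reduction-paired} converts $\eta_0\mathfrak B_+$ into $c_+\eta_0\overline\partial^2s$ plus the allowed classes. Lemma~\ref{lem:localized-lower-order-structure} expresses $\eta_0\mathfrak R_+$ in exactly the form on the right of \eqref{eq:localized-reduced-system-plus}. That lemma in turn relies on two earlier reductions: the substitution $\mathbf r=1-e^{-\theta}$ in $\mathcal N_{\mathbf r}$, which gives Cauchy inputs of order at most one by Lemma~\ref{lem:scalar-critical-order}, and Lemma~\ref{lem:M-critical-Cauchy-reduction-section8} for the $M$-critical terms. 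Moving $\eta_0\mathfrak R_+$ to the other side and flipping signs, which keeps every class invariant, gives \eqref{eq:localized-reduced-system-plus}, read on $\operatorname{supp}\eta_0$ where $\eta_0$ may be dropped modulo $R_{\eta_0,\eta_1}Z$. The mirror equation \eqref{eq:localized-reduced-system-minus} follows in the same way from \eqref{eq:closed-mirror-resulting-equation}, using \eqref{eq:localized-minus-principal-reduction}.

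\emph{Main obstacle.} The delicate point is consistency between the diagonal and translated forms. The residual equations hold at each center $a$, with coefficients $C_a(x)$ differentiated in $x$ before being restricted to $x=a$. I would handle this with Lemma~\ref{lem:smooth-dependence-critical-center}. It shows that restricting to the diagonal produces local differential expressions with smooth coefficients, with leading coefficient $C_\Delta$, so that $c_\pm$ are the functions in \eqref{eq:def-cplus-cminus-section8}. Combined with the smallness of the nonlinear factors near a propagation front, this gives the $\mathcal N_{\pm,\rm low}$ bound \eqref{eq:nonlinear-low-error-section8}. Beyond this point, the lemma amounts to careful bookkeeping, checking in each case that no second derivatives of $W$ enter an ordinary Cauchy input.
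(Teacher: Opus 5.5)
Your proposal is correct and follows the paper's own proof. The paper likewise reads off the $\theta$ and $W$ equations from \eqref{eq:theta-equation-s-theta} and the drift equation \eqref{eq:drift-equation-section8} by separating linear parts from products, and obtains the two $s$ equations by combining Lemma~\ref{lem:localized-principal-reduction-paired} with Lemma~\ref{lem:localized-lower-order-structure}. Your version just spells out the term-by-term bookkeeping, which the paper leaves implicit.
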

	
	\begin{proof}
		The $\theta$ equation is \eqref{eq:theta-equation-s-theta}, localized with the adapted cutoffs. Its linear part contains at most one derivative of $\theta$ and $s$, and at most two derivatives of $W$; higher products are nonlinear lower-order terms.
		
		The plus and mirror equations follow from Lemma~\ref{lem:localized-principal-reduction-paired} and Lemma~\ref{lem:localized-lower-order-structure}, using the revised operator classes. The $W$ equation is the localized drift equation \eqref{eq:drift-equation-section8}. All off-diagonal cutoff terms are included in $R_{\eta_0,\eta_1}Z$.
	\end{proof}
	
	\begin{lemma}[Boundary localized form of the reduced system]
		\label{lem:boundary-localized-reduced-system}
		Let $B_R^+=B_R\cap\{x_2>0\}$ be a flattened boundary chart, with $\Gamma_R=B_R\cap\{x_2=0\}$ and with $B_R^+$ corresponding to the interior side. Let $(\theta,s,W)$ be a smooth one-sided solution of the paired augmented residual system in $B_R^+$. Suppose that $\theta=0$, $D\theta=0$, $s=0$, $Ds=0$, $W=0$, $DW=0$, and $D^2W=0$ on $\Gamma_R$. Let $\theta^0$, $s^0$, and $W^0$ denote the zero extensions to the reflected full disk $B_R$. Then the localized reduced system \eqref{eq:localized-reduced-system-theta}--\eqref{eq:localized-reduced-system-W} is available for the zero-extended variables in the reflected disk in the sense needed for the Carleman estimates. The local differential terms create no boundary distributions, the localized Cauchy and drift-resolvent terms are interpreted by the localized-input zero-extension convention, and all cutoff remainders belong to $R_{\eta_0,\eta_1}Z$.
	\end{lemma}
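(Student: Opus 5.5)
The plan is to go through the derivation of the interior localized system, Lemma~\ref{lem:localized-reduced-system}, and check each step for the zero-extended variables on the reflected disk. We cut off first and extend second. Each term in \eqref{eq:localized-reduced-system-theta}--\eqref{eq:localized-reduced-system-W} is written on $\operatorname{supp}\eta_0\cap B_R^+$ for the one-sided smooth solution, and only afterwards extended by zero to $B_R$. The boundary conformal flattening of Lemma~\ref{lem:boundary-conformal-chart-section8} puts the principal operators in the form $a_+\overline\partial^2$, $a_-\partial^2$, $a_0\Delta$ plus operators of order at most one. Dividing by the nonvanishing factors $a_\pm,a_0$ therefore only adds terms to the classes $P_{\cdot,1}$ and $P_{\cdot,2}$, and the structure of the system is preserved in the flat chart.

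First, the local differential terms. By Lemma~\ref{lem:boundary-zero-extension-UW}, the vanishing jets $\theta,D\theta,s,Ds=0$ and $W,DW,D^2W=0$ on $\Gamma_R$ give $D^\alpha\theta^0=(D^\alpha\theta)^0$ and $D^\alpha s^0=(D^\alpha s)^0$ for $|\alpha|\le2$, and $D^\beta W^0=(D^\beta W)^0$ for $|\beta|\le3$. So $\Delta\theta^0$, $\overline\partial^2s^0$, $\partial^2s^0$, $\Delta W^0$, and every local term of order at most one in $Z$ or at most two in $W$, equal the zero extensions of the corresponding one-sided expressions as distributions on $B_R$. The identities then hold on the lower half-disk trivially, since both sides vanish there, and on $B_R^+$ by the interior derivation. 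The differentiated drift contractions in Lemma~\ref{lem:differentiated-drift-contractions} involve third derivatives of $W$; these are covered by the $H^3$ statement for $W^0$. The nonlinear terms $\mathcal N_{\cdot,\rm low}$ are pointwise products of such quantities, so they extend by zero in the same way.

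Second, the nonlocal terms. The Cauchy terms, drift-resolvent terms and drift-Cauchy terms are interpreted through the convention of Lemma~\ref{lem:boundary-collar-residual-equations} and Lemma~\ref{lem:cutoff-decomposition-localized-Cauchy}. The input is first formed in $B_R^+$ and multiplied by $\eta$ (or $\zeta$). It is then extended by zero, and only after that is the localized full-disk operator $\partial^{*-1}$, $\overline\partial^{*-1}$, $C_\partial$, $C_{\overline\partial}$ or $G_\Delta^{\zeta,\zeta'}$ applied. The residual identities themselves are only used at interior centres $a\in B_R^+$. The difference between the interior form of each such term and its cut, zero-extended form comes from sources in $\operatorname{supp}(1-\eta)$. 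By proper support, by the known zero region, or by separated supports with a smooth kernel, this difference lies in $R_{\eta_0,\eta_1}Z$.

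The step I expect to be the main obstacle is the drift reduction through the Laplace parametrix near $\Gamma_R$. Writing $W^\alpha=G_\Delta^{\zeta,\zeta'}(\zeta'\,\text{source})+R_{\eta_0,\eta_1}Z$ needs the localized drift equation to hold across $\Gamma_R$ for $W^0$. This requires $\Delta W^0=(\Delta W)^0$ with no single- or double-layer term on $\Gamma_R$, and that in turn uses the full boundary jets $W=DW=D^2W=0$ from Proposition~\ref{prop:boundary-determination-residual-jets}. Once this holds, $\Delta(\eta W^0)$ equals the zero-extended source modulo commutators. Applying $G_\Delta$ and the parametrix identity \eqref{eq:localized-Laplace-parametrix-identity-section8-revised} reproduces the drift-resolvent and drift-Cauchy classes on $B_R$, and the smoothing and commutator pieces go into $R_{\eta_0,\eta_1}Z$. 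This gives the system in exactly the form needed for Lemma~\ref{lem:paired-second-order-Carleman} and Lemma~\ref{lem:perturbative-estimates-section8}.
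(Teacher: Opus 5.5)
Your proposal is correct and follows essentially the same route as the paper: the local terms are handled by Lemma~\ref{lem:boundary-zero-extension-UW}, including the $H^3$ control of $W^0$ for the third derivatives in the gauge-Hessian contractions and the absence of layer terms in $\Delta W^0$ for the drift reduction, while all Cauchy, drift-resolvent and drift-Cauchy terms use the cut-first, extend-by-zero, then apply-the-full-disk-operator convention, with off-diagonal pieces placed in $R_{\eta_0,\eta_1}Z$. One small correction: the cut-and-extended nonlocal terms need not vanish on $B_R^-$, so the claim that both sides vanish there fails for the plus and mirror equations; those equations should be invoked only on $B_R^+$, with their left-hand sides vanishing identically on $B_R^-$, which is exactly what the Carleman step in Lemma~\ref{lem:boundary-initiation} uses.
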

	
	\begin{proof}
		By Lemma~\ref{lem:boundary-zero-extension-UW}, the zero extensions of $\theta$, $s$, and $W$ have exactly the distributional derivatives needed by the local scalar second-order equations and the componentwise Laplacian equation for $W$. Thus, the local differential terms create no boundary-supported distribution on $\Gamma_R$.
		
		All smooth coefficient functions appearing in the localized reduced system are extended smoothly to the reflected disk. The equations are used only after multiplication by the adapted cutoffs, so the particular extension is irrelevant. Unknown-dependent coefficient factors, such as those depending on $e^{-\theta}$, $N^{-1}$, $\lambda$, or $\kappa$, are split into their boundary values plus residual factors. The boundary values give fixed smooth coefficients, while the residual parts are included in the nonlinear lower-order class.
		
		The localized Cauchy, drift-resolvent, and drift-Cauchy terms are not obtained by extending an uncut one-sided nonlocal identity. Their coefficient factors and residual inputs are first multiplied by the interior cutoff, then extended by zero, and only then acted on by the localized full-disk operators. In particular, for the scalar critical Cauchy terms, the cut scalar input is extended by zero before the localized adjoint Cauchy inverse is applied. For the retained $M$-polarization terms, the localized drift reduction is first applied to the cut one-sided input, the resulting localized drift-resolvent input is then extended by zero, and only afterwards the outer adjoint Cauchy inverse is applied.
		
		With this convention, no derivative falls on the characteristic function of the half-disk, and no boundary-supported distribution is produced by the nonlocal terms. Off-diagonal cutoff pieces are exactly the remainders included in $R_{\eta_0,\eta_1}Z$.
	\end{proof}
	
	\subsection{Local propagation and elimination}
	\label{subsec:local-propagation-and-elimination-section8}
	
	\begin{lemma}[Local propagation of the full residual zero set]
		\label{lem:local-UCP-paired}
		Let $(\theta,s,W)$ be a smooth solution of the localized reduced system in a connected disk. Suppose that the cutoff form \eqref{eq:localized-reduced-system-theta}--\eqref{eq:localized-reduced-system-W} is available in every smaller disk. If $\theta=s=0$ and $W=0$ in a nonempty open subset of the disk, then $\theta=s=0$ and $W=0$ in the connected component of the disk.
	\end{lemma}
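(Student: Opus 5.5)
The plan is to use the standard connectedness argument, with a local Carleman step at the boundary of the zero set. Let $\mathcal O$ be the interior of the set where $\theta=s=0$ and $W=0$. By hypothesis $\mathcal O$ is nonempty and open, so it suffices to show that $\mathcal O$ is relatively closed in the disk.

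Take $p\in\partial\mathcal O$ inside the disk. Choose a point $z_0\in\mathcal O$ close to $p$ and form the convexified logarithmic weight \eqref{eq:weight-section8} centred at $z_0$. Shrink the propagation disk around $p$ so that the following hold:
\begin{itemize}
\item $|\nabla\varphi_\varepsilon|$ is bounded below on the active annulus;
\item the level sets of $\varphi_\varepsilon$ cut out a small lens near $p$ that meets the complement of $\mathcal O$;
\item the nonlinear coefficients are small in $L^\infty$, as in \eqref{eq:nonlinear-low-error-section8}, because $Z$ and its jets vanish at $p$ by continuity.
\end{itemize}
Then choose nested cutoffs $\eta_0\prec\eta\prec\eta_1\prec\eta_2$ with $\eta=1$ on the lens. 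Every place where $D\eta\neq0$ must lie either in $\mathcal O$, where $Z=0$, or in a region where $\varphi_\varepsilon$ is at least $c>0$ smaller than on $\operatorname{supp}\eta_0$. With this arrangement all commutators and separated-support remainders fall into $\mathcal E_{\eta_0,\eta_1}(\tau)\le Ce^{-2c\tau}\,e^{-2\tau\sup_{E_y}\varphi_\varepsilon}\|Z\|^2_{H^2}$.

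Apply Lemma~\ref{lem:paired-second-order-Carleman} to $\eta Z$. The left side is $\mathcal C_\tau(\eta Z)$. The right side consists of $\Delta(\eta\theta)$, $\overline\partial^2(\eta s)$, $\partial^2(\eta s)$ and $\Delta(\eta W)$, and these are replaced using the localized reduced system of Lemma~\ref{lem:localized-reduced-system}. Since $c_\pm$ is nonvanishing, one can divide by it. Each resulting term is then bounded with Lemma~\ref{lem:perturbative-estimates-section8}:
\begin{itemize}
\item Local order-one terms give $\mathcal S_\tau$ plus lower $W$-norms. These are at most $\tau^{-2}\mathcal C_\tau$ and are absorbed for large $\tau$.
\item Cauchy, drift-Cauchy and nonlinear terms carry a small factor $\varepsilon$ or $\delta$.
\item The scalar drift-resolvent terms are bounded by $\mathcal S_\tau$ and are absorbed.
\end{itemize}

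The main obstacle is the set of order-two $W$ contributions that have no gain: $P_{\theta,2}(\eta W)$, $P_{\pm,2}(\eta W)$ and $\mathfrak D_{\pm,W,2}(\eta W)$. Each is bounded only by $C\mathcal W_\tau(\eta W)$, which has the same size as the $W$-part of $\mathcal C_\tau$. This is the reason for the weights $A_\theta$ and $A_W$, and the plan is to fix them hierarchically. The $s$-equations produce $C\mathcal W_\tau$ with coefficient $1$. The $\theta$-equation produces $CA_\theta\mathcal W_\tau$. These are dominated by $A_W\mathcal W_\tau$ once $A_W\gg CA_\theta+C$. The $W$-equation \eqref{eq:localized-reduced-system-W} has an order-one right-hand side, so it contributes only $CA_W(\mathcal S_\tau+\tau^{-2}\mathcal W_\tau)$. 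That term is absorbed by taking $\tau$ large, and its $CA_W\mathcal S_\tau$ piece is absorbed by the $\tau^2$ and $\tau^4$ factors in $\mathcal C_\tau$.

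I expect the delicate point to be checking that no term in the $\theta$-equation feeds $\mathcal S_\tau$ back with a coefficient larger than $A_\theta$. Here I would use the drift equation to rewrite the $D^2W$ terms in $\Delta\theta$, as in the proof of Lemma~\ref{lem:theta-equation-section6}, so that they only involve $\mathcal S_\tau$-type quantities, which are of lower order in $\tau$. The parameters are then fixed in the order $A_\theta$, then $A_W$, then $\varepsilon$ and the patch (to make $\delta$ small), then $\tau$. This yields
\[
\mathcal C_\tau(\eta Z)\le \mathcal E_{\eta_0,\eta_1}(\tau).
\]

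To conclude, bound the left side from below by $\tau^4 e^{-2\tau\inf_{\rm lens}\varphi_\varepsilon}\|\eta_0 Z\|^2_{L^2({\rm lens})}$. The remainder decays like $e^{-2\tau(\inf_{\rm lens}\varphi_\varepsilon+c')}$. Letting $\tau\to\infty$ gives $Z=0$ on the lens. The lens contains points near $p$ outside $\mathcal O$, which contradicts $p\in\partial\mathcal O$ unless $\mathcal O$ is relatively closed. Connectedness of the disk then gives $\theta=s=0$ and $W=0$ throughout.
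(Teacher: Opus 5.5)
Your proposal is essentially the paper's proof: you apply the paired Carleman estimate of Lemma~\ref{lem:paired-second-order-Carleman} to $\eta Z$, substitute the localized reduced system, bound each class with Lemma~\ref{lem:perturbative-estimates-section8}, and fix the parameters in the same order ($A_\theta$, then $A_W$, then $\varepsilon$ and the patch, then $\tau$); your open--closed lens argument plays the role of the paper's disk-to-disk step and chain of disks. There are two slips to fix. First, since $w_\tau=e^{-\tau\varphi_\varepsilon}$, the part of $\{D\eta\neq0\}$ where $Z$ is not yet known to vanish must satisfy $\varphi_\varepsilon\ge\sup_{\mathrm{lens}}\varphi_\varepsilon+c$ (smaller weight, not smaller $\varphi_\varepsilon$); your choice of $z_0\in\mathcal O$ with the lens cut off by a level circle of $|x-z_0|$ already gives this. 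Correspondingly, the lower bound on the left side is $\tau^4e^{-2\tau\sup_{\mathrm{lens}}\varphi_\varepsilon}\|Z\|_{L^2(\mathrm{lens})}^2$, with $\sup$ rather than $\inf$. Second, the ``delicate point'' in the $\theta$-equation is not one. Its order-one terms contribute $CA_\theta\mathcal S_\tau\le C(1+A_\theta)\tau^{-2}\mathcal C_\tau$, which is absorbed for large $\tau$ exactly as in the paper, so you do not need to rewrite $D^2W$ via the drift equation.
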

	
	\begin{proof}
		We prove one propagation step. Let $B_R$ be a disk and suppose that
		$\theta=s=0$ and $W=0$ in $B_r$. Choose $0<r_0<r<r_1<r_2<R$.
		The singular point $z_0$ is placed in $B_{r_0}$, and the radii are chosen so
		that the target annulus and the outer commutator annulus have the weight
		separation described in Subsection~\ref{subsec:localized-classes-and-carleman-section8}.
		Choose $\eta\in C_0^\infty(B_R)$ with $\eta=0$ near
		$B_{r_0}\cup(\mathbb R^2\setminus B_{r_2})$ and $\eta=1$ on
		$B_{r_1}\setminus B_r$. The inner commutators are supported where the solution
		is already zero. The outer commutators are included in
		$\mathcal E_{\eta_0,\eta_1}(\tau)$. All nonlocal operators are applied only
		to cut inputs.
		
		Since $c_+$ and $c_-$ are nonvanishing, division by them only changes the constants and the lower-order operator classes. Applying Lemma~\ref{lem:paired-second-order-Carleman} to $\eta Z$ and then substituting the localized reduced system gives
		\begin{equation*}
			\mathcal C_\tau(\eta Z)
			\le
			C_0\mathcal P_\tau(\eta Z)+C_0\mathcal N_\tau(Z)+C_0\mathcal E_{\eta_0,\eta_1}(\tau),
		\end{equation*}
		where $\mathcal P_\tau(\eta Z)$ is the sum of the weighted norms of all linear local, drift-resolvent, Cauchy, and drift-Cauchy perturbations, and $\mathcal N_\tau(Z)$ is the weighted norm of the nonlinear lower-order terms.
		
		We now estimate $\mathcal P_\tau$. The local order-one terms and scalar drift-resolvent terms are bounded by $\mathcal S_\tau(\eta\theta,\eta s)$, and
		\begin{equation*}
			\mathcal S_\tau(\eta\theta,\eta s)\le C\tau^{-2}\mathcal C_\tau(\eta Z).
		\end{equation*}
		The local order-two $W$-terms and the $W$-drift-resolvent terms are bounded by $\mathcal W_\tau(\eta W)$, and
		\begin{equation*}
			\mathcal W_\tau(\eta W)\le C A_W^{-1}\mathcal C_\tau(\eta Z).
		\end{equation*}
		In the $\theta$ equation, the order-two $W$-terms are multiplied by the prefactor $A_\theta$ coming from Lemma~\ref{lem:paired-second-order-Carleman}, and hence give the contribution $C A_\theta A_W^{-1}\mathcal C_\tau(\eta Z)$. In the $W$ equation, the scalar first-order terms are multiplied by $A_W$, and hence give $C A_W\tau^{-2}\mathcal C_\tau(\eta Z)$. The localized Cauchy, scalar drift-Cauchy, and $W$-drift-Cauchy terms are bounded by Lemma~\ref{lem:perturbative-estimates-section8}; after the preceding two inequalities they contribute
		\begin{equation*}
			C\varepsilon\tau^{-2}\mathcal C_\tau(\eta Z)+C\varepsilon A_W^{-1}\mathcal C_\tau(\eta Z)
		\end{equation*}
		up to $\mathcal E_{\eta_0,\eta_1}(\tau)$.
		
		Choose $A_\theta$ first. Then choose $A_W$ so large that the coefficients containing $A_W^{-1}$ and $A_\theta A_W^{-1}$ are smaller than $1/8$. With $A_\theta$ and $A_W$ fixed, choose $\varepsilon>0$ so small that the Cauchy and drift-Cauchy contributions are smaller than $1/8$. Then choose the propagation disk sufficiently small so that the nonlinear estimate \eqref{eq:nonlinear-low-error-section8} gives a contribution smaller than $1/8$. Finally take $\tau$ large enough so that all terms containing $\tau^{-2}$, including the $A_W\tau^{-2}$ term from the $W$ equation, are smaller than $1/8$. After these choices all perturbative terms are absorbed into the left hand side, and we obtain
		\begin{equation*}
			\mathcal C_\tau(\eta Z)\le C\mathcal E_{\eta_0,\eta_1}(\tau).
		\end{equation*}
		
		By construction, every nonzero term in $\mathcal E_{\eta_0,\eta_1}(\tau)$ is exponentially smaller than the weighted norm on the target subannulus. Hence, for every compact subannulus $A\Subset B_{r_1}\setminus B_r$,
		\begin{equation*}
			\inf_A w_\tau^2
			\big(
			\tau^4\|\theta\|_{L^2(A)}^2
			+\tau^4\|s\|_{L^2(A)}^2
			+\tau^4\|W\|_{L^2(A)}^2
			\big)
			\le Ce^{-c\tau}.
		\end{equation*}
		Letting $\tau\to\infty$ gives $\theta=s=0$ and $W=0$ in $B_{r_1}\setminus B_r$. Since the variables already vanish in $B_r$, we obtain $\theta=s=0$ and $W=0$ in $B_{r_1}$. A finite chain of overlapping disks proves the statement in the connected component.
	\end{proof}
	
	\begin{lemma}[Boundary initiation of the full residual zero set]
		\label{lem:boundary-initiation}
		Let $(\theta,s,W)$ be a smooth one-sided solution of the paired augmented residual system in a flattened boundary chart. If $\theta=0$, $D\theta=0$, $s=0$, $Ds=0$, $W=0$, $DW=0$, and $D^2W=0$ on a flattened boundary segment, then $\theta=s=0$ and $W=0$ in an interior collar of $\partial\Omega$.
	\end{lemma}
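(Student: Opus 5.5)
The plan is to reduce the boundary statement to the interior propagation argument of Lemma~\ref{lem:local-UCP-paired}, carried out on the reflected full disk. Fix a point $p_0$ of the flattened segment $\Gamma_R$ and work in the chart $B_R^+$, after the conformal flattening of Lemma~\ref{lem:boundary-conformal-chart-section8}. In that chart the principal operators $\overline\partial^2$, $\partial^2$, and $\Delta$ keep their form up to nonvanishing factors and order-one terms, which can be absorbed into the lower-order classes. Extend $\theta,s,W$ by zero to $B_R$. The assumed boundary jets are exactly those required by Lemma~\ref{lem:boundary-zero-extension-UW}, which gives $\theta^0,s^0\in H^2_{\rm loc}$ and $W^0\in H^3_{\rm loc}$, with no boundary-supported distributions in the derivatives used. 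Lemma~\ref{lem:boundary-localized-reduced-system} then makes the localized reduced system \eqref{eq:localized-reduced-system-theta}--\eqref{eq:localized-reduced-system-W} available for $Z^0=(\theta^0,s^0,W^0)$ on the reflected disk. Here the nonlocal Cauchy and drift-resolvent terms act on cut inputs extended by zero.

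The key observation is that $Z^0\equiv0$ on the lower half-disk $B_R^-=B_R\cap\{x_2<0\}$, which is a nonempty open set. The extended triple therefore satisfies the hypotheses of Lemma~\ref{lem:local-UCP-paired}, with the known zero region equal to $B_R^-$. Place the singular point $z_0$ of the convexified weight \eqref{eq:weight-section8} in $B_R^-$, slightly below $\Gamma_R$, and choose the radii so that $z_0$ lies in the already known zero region and the target annulus meets $B_R^+$ near $p_0$. The inner commutators are then supported in $B_R^-$, where everything vanishes. The outer commutators, and the separated-support Cauchy and parametrix remainders, satisfy the weight separation built into $\mathcal E_{\eta_0,\eta_1}(\tau)$. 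Running the absorption scheme of Lemma~\ref{lem:local-UCP-paired} gives $\mathcal C_\tau(\eta Z^0)\le C\mathcal E_{\eta_0,\eta_1}(\tau)$. The constants are chosen in the same order as there: first $A_\theta$, then $A_W$, then $\varepsilon$, then the disk size, then $\tau$. The perturbative inputs are Lemma~\ref{lem:paired-second-order-Carleman} and Lemma~\ref{lem:perturbative-estimates-section8}, both valid after zero extension. Letting $\tau\to\infty$ gives $Z=0$ in a one-sided neighborhood of $p_0$. Covering the compact boundary $\partial\Omega$ by finitely many such charts produces an interior collar.

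I expect the main obstacle to be the nonlinear lower-order term \eqref{eq:nonlinear-low-error-section8} near the boundary, together with the consistency of the nonlocal terms. For the smallness $\delta$ we need one factor in each product to be small in $L^\infty$ on the active support. This follows by shrinking $R$, because the vanishing jets of Proposition~\ref{prop:boundary-determination-residual-jets} make $\theta$, $s$, $DW$, and $D^2W$ (and thus $N-I$ and $e^{-\theta}-1$) small near $\Gamma_R$.

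One point also needs checking: the nonlocal operators applied to the zero-extended cut inputs should agree, on the interior side, with the expressions in the residual identities of Lemma~\ref{lem:boundary-collar-residual-equations}, up to terms in $R_{\eta_0,\eta_1}Z$. I would handle this with the cutoff decomposition of Lemma~\ref{lem:cutoff-decomposition-localized-Cauchy}. For output points in $\operatorname{supp}\eta_0\cap B_R^+$, the difference between the cut and uncut inputs is either supported in the known zero region $B_R^-$ or has separated support with the favorable weight orientation.
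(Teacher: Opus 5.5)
Your proposal is correct and follows the paper's own proof. Both zero-extend across $\Gamma_R$ using the assumed jets (Lemma~\ref{lem:boundary-zero-extension-UW}), invoke Lemma~\ref{lem:boundary-localized-reduced-system}, place the singular point of the convexified logarithmic weight on the exterior side where $Z^0$ vanishes, and rerun the absorption scheme of Lemma~\ref{lem:local-UCP-paired} with the same ordering $A_\theta$, $A_W$, $\varepsilon$, chart size, $\tau$, before letting $\tau\to\infty$.

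The differences are only in phrasing. $Z^0$ is not smooth and the cut nonlocal terms need not vanish on $B_R^-$, so Lemma~\ref{lem:local-UCP-paired} must be rerun rather than quoted; this is harmless, since only the equations on $B_R^+$ are needed and the left-hand sides vanish on $B_R^-$. Your final finite covering of $\partial\Omega$ makes explicit the passage from one chart to a full collar, which the paper leaves implicit.
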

	
	\begin{proof}
		Work in a flattened boundary chart as in Lemma~\ref{lem:boundary-conformal-chart-section8}. Write $B_R^+=B_R\cap\{x_2>0\}$, $B_R^-=B_R\cap\{x_2<0\}$, and $\Gamma_R=B_R\cap\{x_2=0\}$, with $B_R^+$ the interior side. Let $\theta^0$, $s^0$, and $W^0$ be the zero extensions from $B_R^+$ to $B_R$. By Lemma~\ref{lem:boundary-zero-extension-UW}, the local differential operators create no boundary distribution on $\Gamma_R$. By Lemma~\ref{lem:boundary-localized-reduced-system}, the localized reduced system is available for the zero-extended variables in the reflected disk.
		
		Apply Lemma~\ref{lem:paired-second-order-Carleman} to $\eta Z^0$. The singular point is chosen on the exterior side, where the zero extensions vanish. On $B_R^+$ the local differential terms are replaced by the reduced system; on $B_R^-$ they vanish identically.
		
		The nonlocal terms are used only after localization. The scalar critical Cauchy inputs are first cut off on the one-sided interior side, then extended by zero, and only then acted on by the localized adjoint Cauchy inverses. The retained $M$-polarization terms are first reduced by the localized drift equation with cut one-sided input; the resulting drift-resolvent inputs are then extended by zero before the outer adjoint Cauchy inverse is applied. Thus the reflected disk is used for the operator estimates and the Carleman inequality, not for extending an uncut one-sided nonlocal identity.
		
		The local terms, localized Cauchy perturbations, localized scalar drift-resolvent terms, localized $W$-drift-resolvent terms, localized scalar drift-Cauchy terms, and localized $W$-drift-Cauchy terms are estimated exactly as in Lemma~\ref{lem:local-UCP-paired}, using the perturbative estimates \eqref{eq:local-order-one-perturbation-section8}--\eqref{eq:Cauchy-and-drift-Cauchy-perturbation-section8}. Exterior commutators vanish. The remaining commutators away from the target collar are direct local outer commutators or smoothing cutoff remainders satisfying the weight separation condition.
		
		After choosing $A_\theta$ first, choosing $A_W$ large enough to absorb the fixed local order-two $W$-terms and localized $W$-drift-resolvent terms, choosing $\varepsilon>0$ small to absorb the localized Cauchy, scalar drift-Cauchy, and $W$-drift-Cauchy terms, shrinking the boundary chart so that the nonlinear smallness parameter $\delta$ is small, and taking $\tau$ large, we obtain
		\begin{equation}\label{eq:boundary-initiation-absorbed-section8}
			\mathcal C_\tau(\eta Z^0)\leq C\mathcal E_{\eta_0,\eta_1}(\tau).
		\end{equation}
		The logarithmic weight is chosen so that every nonzero term in $\mathcal E_{\eta_0,\eta_1}(\tau)$ is exponentially smaller than the weighted norm on the target one-sided collar. Letting $\tau\to\infty$ gives $\theta=s=0$ and $W=0$ in a smaller interior collar of $\Gamma_R$.
	\end{proof}
	
	\begin{proposition}[Vanishing of the residual variables and gauge]
		\label{prop:residual-gauge-elimination}
		One has $\theta=s=0$, $W=0$, and hence $J=\Id$ in $\Omega$.
	\end{proposition}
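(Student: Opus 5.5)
The plan is to combine the boundary initiation with the interior propagation through a standard connectedness argument. First, Proposition~\ref{prop:boundary-determination-residual-jets} supplies the jets required on all of $\partial\Omega$:
\[
\theta=D\theta=0,\quad s=Ds=0,\quad W=DW=D^2W=0.
\]
Cover $\partial\Omega$ by finitely many flattened boundary charts, using Lemma~\ref{lem:boundary-conformal-chart-section8} so that the principal operators keep the form required by the Carleman estimates. In each chart, Lemma~\ref{lem:boundary-localized-reduced-system} makes the localized reduced system available for the zero extensions. Lemma~\ref{lem:boundary-initiation} then gives $\theta=s=0$ and $W=0$ in a one-sided interior collar over each boundary segment. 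Taking the union, the zero set
\[
\mathcal Z:=\{x\in\Omega:\ Z=(\theta,s,W)\equiv0 \text{ near } x\}
\]
contains a collar of $\partial\Omega$. In particular it is a nonempty open set.

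Next I would show that $\mathcal Z$ is also relatively closed in $\Omega$. Take $x_0\in\overline{\mathcal Z}\cap\Omega$ and choose a small disk $B\Subset\Omega$ around $x_0$ in which the adapted cutoff convention holds and the reduced system \eqref{eq:localized-reduced-system-theta}--\eqref{eq:localized-reduced-system-W} is valid in every smaller disk (Lemma~\ref{lem:localized-reduced-system}). Since $B$ meets the open set $\mathcal Z$, $Z$ vanishes on a nonempty open subset of $B$. Lemma~\ref{lem:local-UCP-paired} then propagates the vanishing to all of $B$, so $x_0\in\mathcal Z$.

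Some care is needed with the disk radius. The nonlinear smallness \eqref{eq:nonlinear-low-error-section8} requires shrinking the disk around a point where the residual variables vanish, and the singular point $z_0$ of the weight must lie in the known zero region. Both conditions are met because $x_0$ is a limit of points of $\mathcal Z$: one places $z_0$ in $\mathcal Z\cap B$ and propagates through a chain of small disks. Since $\Omega$ is connected and $\mathcal Z$ is nonempty, open and closed, $\mathcal Z=\Omega$. Hence $\theta=s=0$ and $W=0$ in $\Omega$, so $J=\Id+W=\Id$.

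The main obstacle is verifying uniformity in this closedness step. Specifically, the constants $A_\theta$, $A_W$, $\varepsilon$ and the disk size in Lemma~\ref{lem:local-UCP-paired} must be chosen uniformly enough that finitely many overlapping disks reach every point of a compact exhaustion of $\Omega$. I would handle this by compactness: fix $\Omega'\Subset\Omega$ containing the complement of the collar, cover it by finitely many disks on which the coefficient bounds (including nonvanishing of $c_\pm$ and the diagonal values $C(a,a)$, $\widetilde C(a,a)$ from Lemma~\ref{lem:smooth-dependence-critical-center}) hold uniformly, and run a chain argument from the collar inward.
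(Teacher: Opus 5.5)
Your proposal is correct and follows the paper's proof. The boundary jets of Proposition~\ref{prop:boundary-determination-residual-jets} feed Lemma~\ref{lem:boundary-initiation} to give vanishing in an interior collar, and Lemma~\ref{lem:local-UCP-paired} then propagates the zero set; the paper phrases this last step as a compact path from each $p\in\Omega$ to the collar, covered by finitely many disks, rather than your open--closed argument. Your closing worry about uniform constants is not an obstacle, since each application of Lemma~\ref{lem:local-UCP-paired} only needs a disk adapted to its own center, and compactness (or the open--closed argument) already supplies finiteness.
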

	
	\begin{proof}
		Using \eqref{eq:boundary-determined-theta-s} and \eqref{eq:boundary-determined-W} in Proposition~\ref{prop:boundary-determination-residual-jets}, Lemma~\ref{lem:boundary-initiation} gives $\theta=s=0$ and $W=0$ in an interior collar of $\partial\Omega$. Let $p\in\Omega$. Since $\Omega$ is connected, there is a compact path from $p$ to this collar. Cover the path with a finite chain of overlapping disks. Applying Lemma~\ref{lem:local-UCP-paired} successively along the chain propagates $\theta=s=0$ and $W=0$ to a neighborhood of $p$. Since $p$ was arbitrary, $\theta=s=0$ and $W=0$ in $\Omega$. Finally, $W=J-\Id$, so $J=\Id$ in $\Omega$.
	\end{proof}
	
	\begin{corollary}[Elimination of the residual gauge]
		\label{thm:ucp-gauge-elimination-final}
		The paired residual system implies $\theta=0$, $s=0$, $q=0$, and $J=\Id$ in $\Omega$.
	\end{corollary}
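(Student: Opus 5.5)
The corollary follows almost immediately from Proposition~\ref{prop:residual-gauge-elimination} together with the definitions of the residual variables. I would first invoke that proposition to obtain $\theta=0$, $s=0$ and $W=0$ in $\Omega$. Since $W=J-\Id$, this gives $J=\Id$ in $\Omega$ at once.

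Next, I would recover $q$ from the definition of $s$. Recall $s=q-2\theta$, equivalently $q=s+2\theta$. With $s=\theta=0$ this gives $q=0$ in $\Omega$. As a consistency check I would also verify the algebraic relation \eqref{eq:algebraic-section8}. With $W=0$ we have $\log\det(I+DW)=0$, so $s+3\theta=0$, which agrees with the vanishing already obtained. Similarly, $\theta=\log\det DJ-q$ with $J=\Id$ gives $\theta=-q$, again consistent with $q=0$.

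No step is a genuine obstacle here. The only point needing care is that $q$ is defined with $J$ inside, namely $q=\log(\widehat K_1/(\widehat K_2\circ J))$. Only after $J=\Id$ is known does $q=0$ translate into $\widehat K_1=\widehat K_2$, and hence, since $m_\chi$ is common to both, into $K_1=K_2$. The corollary itself, however, only asserts $\theta=s=q=0$ and $J=\Id$. Each of these is a direct algebraic consequence of Proposition~\ref{prop:residual-gauge-elimination} and the relations $q=s+2\theta$ and $W=J-\Id$.
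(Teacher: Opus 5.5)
Your proposal is correct and matches the paper's proof: Proposition~\ref{prop:residual-gauge-elimination} gives $\theta=s=0$ and $W=0$, hence $J=\Id$, and the identity $q=s+2\theta$ then yields $q=0$. The consistency checks you add via the algebraic relation and $\theta=\log\det DJ-q$ are harmless but not needed.
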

	
	\begin{proof}
		This follows from Proposition~\ref{prop:residual-gauge-elimination} and the identity $q=s+2\theta$.
	\end{proof}
	
	\begin{proof}[Proof of Theorem~\ref{thm:main}]
		Fix $\varphi\in\mathcal U$. The equality of the nonlinear DN maps gives the equality of the first-linearized conormal data. By the planar gauge theorem, the first-linearized coefficients are therefore related by a boundary-fixing gauge pair $(J,\rho)$. The divergence form of the first linearization gives the density identity used to define $q$ and $\theta$.
		
		The second variation gives the plus and mirror residual equations derived in Sections~\ref{sec:second-asymptotics} and~\ref{sec:second-asymptotics-mirror}. The cutoff localizations used after that point belong only to the unique continuation argument for this already-derived residual system. The first-order boundary matching of the curvatures gives the boundary residual jets in Proposition~\ref{prop:boundary-determination-residual-jets}. Corollary~\ref{thm:ucp-gauge-elimination-final} then gives $q=0$ and $J=\Id$ in the isothermal coordinate domain.
		
		By \eqref{eq:qhat-definition}, $q=0$ and $J=\Id$ imply $\widehat K_1=\widehat K_2$. Since $\widehat K_j=(K_j\circ\chi^{-1})/m_\chi$ and the factor $m_\chi$ is the same for $j=1,2$, we get $K_1\circ\chi^{-1}=K_2\circ\chi^{-1}$. Hence $K_1=K_2$ in the original domain.
	\end{proof}

	\section*{Statements and Declarations}
	
	\para{Data availability statement}
	No datasets were generated or analyzed during the current study.
	
	\para{Conflict of interest}
	The author declares no conflict of interest.
	
	\para{Acknowledgments}
	The author is partially supported by the National Science and Technology Council (NSTC) of Taiwan under project 113-2628-M-A49-003. The author acknowledges financial support from the Alexander von Humboldt Foundation through the Henriette Herz Scouting Programme, hosted by Universit\"at Duisburg-Essen, Germany.

	\bibliographystyle{alpha}
	\bibliography{ref}
	
\end{document}